\definecolor{darkgreen}{rgb}{0,0.45,0}
\def\@cite#1#2{[{#1\if@tempswa ,~#2\fi}]}
\DeclareMathAlphabet{\mathbf}{OT1}{cmr}{b}{n}
\def\matrixobject@{%
  \edef \next@{={\DirectionfromtheDirection@ }}%
  \expandafter \toks@ \next@ \plainxy@
  \let\xy@@ix@=\xyq@@toksix@
  \xyFN@ \OBJECT@}
\let\xy@entry@@norm=\entry@@norm
\def\entry@@norm@patched{%
  \let\object@=\matrixobject@
  \xy@entry@@norm }
\newcommand{\twosymb}[3][0.5]{\ar@{}[#2] \save ?(#1)*{#3}\restore}
\newcommand{\twocong}[2][0.5]{\ar@{}[#2] \save ?(#1)*{\cong}\restore}
\newcommand{\twoeq}[2][0.5]{\ar@{}[#2] \save ?(#1)*{=}\restore}
\newcommand{\rtwocell}[3][0.5]{\ar@{}[#2] \ar@{=>}?(#1)+/l 0.2cm/;?(#1)+/r 0.2cm/^{#3}}
\newcommand{\ltwocell}[3][0.5]{\ar@{}[#2] \ar@{=>}?(#1)+/r 0.2cm/;?(#1)+/l 0.2cm/^{#3}}
\newcommand{\ltwocello}[3][0.5]{\ar@{}[#2] \ar@{=>}?(#1)+/r 0.2cm/;?(#1)+/l 0.2cm/_{#3}}
\newcommand{\dtwocell}[3][0.5]{\ar@{}[#2] \ar@{=>}?(#1)+/u  0.2cm/;?(#1)+/d 0.2cm/^{#3}}
\newcommand{\dltwocell}[3][0.5]{\ar@{}[#2] \ar@{=>}?(#1)+/ur  0.2cm/;?(#1)+/dl 0.2cm/^{#3}}
\newcommand{\drtwocell}[3][0.5]{\ar@{}[#2] \ar@{=>}?(#1)+/ul  0.2cm/;?(#1)+/dr 0.2cm/^{#3}}
\newcommand{\dthreecell}[3][0.5]{\ar@{}[#2] \ar@3{->}?(#1)+/u  0.2cm/;?(#1)+/d 0.2cm/^{#3}}
\newcommand{\utwocell}[3][0.5]{\ar@{}[#2] \ar@{=>}?(#1)+/d 0.2cm/;?(#1)+/u 0.2cm/_{#3}}
\newcommand{\dtwocelltarg}[3][0.5]{\ar@{}#2 \ar@{=>}?(#1)+/u  0.2cm/;?(#1)+/d 0.2cm/^{#3}}
\newcommand{\utwocelltarg}[3][0.5]{\ar@{}#2 \ar@{=>}?(#1)+/d  0.2cm/;?(#1)+/u 0.2cm/_{#3}}
\newcommand{\pullbackcorner}[1][dr]{\save*!/#1-1.2pc/#1:(-1,1)@^{|-}\restore}
\newcommand{\pullbackcornerl}[1][dl]{\save*!/#1-1.5pc/#1:(-1,1)@^{|-}\restore}
\newcommand{\sh}[2]{**{!/#1 -#2/}}
\newcommand{\cat}[1]{\mathrm{\mathcal #1}}
\newcommand{\thg}{{\mathord{\text{--}}}}
\newcommand{\dbr}[1]{{\left\llbracket{#1}\right\rrbracket}}
\newcommand{\res}[2]{\left.{#1}\right|_{#2}}
\newcommand{\spn}[1]{{\langle{#1}\rangle}}
\newcommand{\defeq}{\mathrel{\mathop:}=}
\newcommand{\cd}[2][]{\vcenter{\hbox{\xymatrix#1{#2}}}}
\renewcommand{\phi}{\varphi}
\newcommand{\A}{{\mathcal A}}
\newcommand{\B}{{\mathcal B}}
\newcommand{\C}{{\mathcal C}}
\newcommand{\D}{{\mathcal D}}
\newcommand{\E}{{\mathcal E}}
\newcommand{\G}{{\mathcal G}}
\renewcommand{\H}{{\mathcal H}}
\newcommand{\I}{{\mathcal I}}
\renewcommand{\L}{{\mathcal L}}
\newcommand{\M}{{\mathcal M}}
\renewcommand{\O}{{\mathcal O}}
\renewcommand{\P}{{\mathcal P}}
\newcommand{\R}{{\mathcal R}}
\let\sec=\S
\renewcommand{\S}{{\mathcal S}}
\newcommand{\T}{{\mathcal T}}
\newcommand{\xtor}[1]{\cdl[@1]{{} \ar[r]|-{\object@{|}}^{#1} & {}}}
\def\hookleftarrowfill@{\arrowfill@\leftarrow\relbar{\relbar\joinrel\rhook}}
\def\twoheadleftarrowfill@{\arrowfill@\twoheadleftarrow\relbar\relbar}
\def\leftbararrowfill@{\arrowdoublefill@{\leftarrow\mkern-5mu}\relbar\mapstochar\relbar\relbar}
\def\Leftbararrowfill@{\arrowdoublefill@{\Leftarrow\mkern-2mu}\Relbar\Mapstochar\Relbar\Relbar}
\def\leftringarrowfill@{\arrowdoublefill@{\leftarrow\mkern-3mu}\relbar{\mkern-3mu\circ\mkern-2mu}\relbar\relbar}
\def\lefttriarrowfill@{\arrowfill@{\mathrel\triangleleft\mkern0.5mu\joinrel\relbar}\relbar\relbar}
\def\Lefttriarrowfill@{\arrowfill@{\mathrel\triangleleft\mkern1mu\joinrel\Relbar}\Relbar\Relbar}
\def\hookrightarrowfill@{\arrowfill@{\lhook\joinrel\relbar}\relbar\rightarrow}
\def\twoheadrightarrowfill@{\arrowfill@\relbar\relbar\twoheadrightarrow}
\def\rightbararrowfill@{\arrowdoublefill@{\relbar\mkern-0.5mu}\relbar\mapstochar\relbar\rightarrow}
\def\Rightbararrowfill@{\arrowdoublefill@{\Relbar\mkern-2mu}\Relbar\Mapstochar\Relbar\Rightarrow}
\def\rightringarrowfill@{\arrowdoublefill@\relbar\relbar{\mkern-2mu\circ\mkern-3mu}\relbar{\mkern-3mu\rightarrow}}
\def\righttriarrowfill@{\arrowfill@\relbar\relbar{\relbar\joinrel\mkern0.5mu\mathrel\triangleright}}
\def\Righttriarrowfill@{\arrowfill@\Relbar\Relbar{\Relbar\joinrel\mkern1mu\mathrel\triangleright}}
\def\leftrightarrowfill@{\arrowfill@\leftarrow\relbar\rightarrow}
\def\mapstofill@{\arrowfill@{\mapstochar\relbar}\relbar\rightarrow}
\renewcommand*\xleftarrow[2][]{\ext@arrow 20{20}0\leftarrowfill@{#1}{#2}}
\providecommand*\xLeftarrow[2][]{\ext@arrow 60{22}0{\Leftarrowfill@}{#1}{#2}}
\providecommand*\xhookleftarrow[2][]{\ext@arrow 10{20}0\hookleftarrowfill@{#1}{#2}}
\providecommand*\xtwoheadleftarrow[2][]{\ext@arrow 60{20}0\twoheadleftarrowfill@{#1}{#2}}
\providecommand*\xleftbararrow[2][]{\ext@arrow 10{22}0\leftbararrowfill@{#1}{#2}}
\providecommand*\xLeftbararrow[2][]{\ext@arrow 50{24}0\Leftbararrowfill@{#1}{#2}}
\providecommand*\xleftringarrow[2][]{\ext@arrow 10{26}0\leftringarrowfill@{#1}{#2}}
\providecommand*\xlefttriarrow[2][]{\ext@arrow 80{24}0\lefttriarrowfill@{#1}{#2}}
\providecommand*\xLefttriarrow[2][]{\ext@arrow 80{24}0\Lefttriarrowfill@{#1}{#2}}
\renewcommand*\xrightarrow[2][]{\ext@arrow 01{20}0\rightarrowfill@{#1}{#2}}
\providecommand*\xRightarrow[2][]{\ext@arrow 04{22}0{\Rightarrowfill@}{#1}{#2}}
\providecommand*\xhookrightarrow[2][]{\ext@arrow 00{20}0\hookrightarrowfill@{#1}{#2}}
\providecommand*\xtwoheadrightarrow[2][]{\ext@arrow 03{20}0\twoheadrightarrowfill@{#1}{#2}}
\providecommand*\xrightbararrow[2][]{\ext@arrow 01{22}0\rightbararrowfill@{#1}{#2}}
\providecommand*\xRightbararrow[2][]{\ext@arrow 04{24}0\Rightbararrowfill@{#1}{#2}}
\providecommand*\xrightringarrow[2][]{\ext@arrow 01{26}0\rightringarrowfill@{#1}{#2}}
\providecommand*\xrighttriarrow[2][]{\ext@arrow 07{24}0\righttriarrowfill@{#1}{#2}}
\providecommand*\xRighttriarrow[2][]{\ext@arrow 07{24}0\Righttriarrowfill@{#1}{#2}}
\providecommand*\xmapsto[2][]{\ext@arrow 01{20}0\mapstofill@{#1}{#2}}
\providecommand*\xleftrightarrow[2][]{\ext@arrow 10{22}0\leftrightarrowfill@{#1}{#2}}
\providecommand*\xLeftrightarrow[2][]{\ext@arrow 10{27}0{\Leftrightarrowfill@}{#1}{#2}}
\numberwithin{equation}{section}
\theoremstyle{plain}
\newtheorem{Thm}{Theorem}[section]
\newtheorem*{Thm*}{Theorem}
\newtheorem*{Prop*}{Proposition}
\newtheorem{Prop}[Thm]{Proposition}
\newtheorem{Cor}[Thm]{Corollary}
\newtheorem{Lemma}[Thm]{Lemma}
\theoremstyle{definition}
\newtheorem{Defn}[Thm]{Definition}
\newtheorem{Ex}[Thm]{Example}
\newtheorem{Exs}[Thm]{Examples}
\newtheorem{Rk}[Thm]{Remark}
\newcommand{\sheq}[2]{{\dbr{{#1}\mathrel{\!\texttt{\upshape =}\!}{#2}}}}
\newcommand{\fneq}[3][P]{{\dbr{{#2}\mathbin{\texttt{\upshape =}_{#1}}{#3}}}}
\newcommand{\hcon}[1][jr]{\mathrm{#1}\cat{Cat}\mathord{\mkern1mu/\!/_{\!h}\mkern1mu} \C}
\newcommand{\hconn}[1][jr]{\mathrm{#1}\cat{Cat}\mathord{\mkern1mu/\!/_{\!h}\mkern1mu} (\C',\C)}
\newcommand{\rcat}[1][r]{\mathrm{#1}\cat{Cat}\mathord{\mkern1mu/\!/\mkern1mu}\C}
\newcommand{\rcatt}[1][r]{\mathrm{#1}\cat{Cat}\mathord{\mkern1mu/\!/\mkern1mu}(\C',\C)}
\newcommand{\parte}{\mathrm{pe}\cat{Cat}_c(\C)}
\newcommand{\partee}{\mathrm{pe}\cat{Cat}_c(\C',\C)}
\newcommand{\partc}{\mathrm{p}\cat{Cat}_c(\C)}
\newcommand{\partcc}{\mathrm{p}\cat{Cat}_c(\C',\C)}
\newcommand{\tot}{\C \mathord{\mkern1.5mu/_{\!t}\mkern1.5mu}X}
\newcommand{\lh}{\C \mathord{\mkern1.5mu/_{\!\ell h}\mkern1.5mu}X}
\newcommand{\shv}{\cat{Sh}(X)}
\newcommand{\pshv}{\cat{Psh}(X)}
\begin{document}
\leftmargini=2em
\title[Generalising the \'etale groupoid--pseudogroup correspondence]{Generalising the \'etale groupoid--\\complete pseudogroup correspondence}
\author{Robin Cockett}
\address{Department of Computer Science, University of Calgary, 2500 University Dr. NW, Calgary, Alberta, Canada T2N 1N4}
\email{robin@ucalgary.ca}
\author{Richard Garner} 
\address{Centre of Australian Category Theory, Department of
  Mathematics \& Statistics, Macquarie University, NSW 2109, Australia} 
\email{richard.garner@mq.edu.au}

\subjclass[2000]{Primary: }
\date{\today}

\thanks{The support of Australian Research Council grants DP160101519
  and FT160100393 is gratefully acknowledged.}

\begin{abstract}
  We prove a generalisation of the correspondence, due to Resende and
  Lawson--Lenz, between \'etale groupoids---which are topological groupoids
  whose source map is a local homeomorphisms---and complete
  pseudogroups---which are inverse monoids equipped with a particularly
  nice representation on a topological space.

  Our generalisation improves on the existing functorial
  correspondence in four ways. Firstly, we enlarge the classes of maps
  appearing to each side. Secondly, we generalise on one side from
  inverse monoids to inverse categories, and on the other side, from
  \'etale groupoids to what we call \emph{partite} \'etale groupoids.
  Thirdly, we generalise from \'etale groupoids to source-\'etale
  categories, and on the other side, from inverse monoids to
  restriction monoids. Fourthly, and most far-reachingly, we
  generalise from topological \'etale groupoids to \'etale groupoids
  internal to any \emph{join restriction category} $\C$ with local
  glueings; and on the other side, from complete pseudogroups to
  ``complete $\C$-pseudogroups'', i.e., inverse monoids with a nice
  representation on an object of $\C$. Taken together, our results
  yield an equivalence, for a join restriction category $\C$ with
  local glueings, between join restriction categories with a
  well-behaved functor to $\C$, and partite source-\'etale internal
  categories in $\C$. In fact, we obtain this by cutting
  down a larger adjunction between arbitrary restriction categories
  over $\C$, and partite internal categories in $\C$.

  Beyond proving this main result, numerous applications are given,
  which reconstruct and extend existing correspondences in the
  literature, and provide general formulations of completion processes.
\end{abstract}
\maketitle
\tableofcontents
\section{Introduction}
\label{sec:introduction}
It is a well-known and classical
fact~\cite[Theorem~II.1.2.1]{Godement1958Topologie} that sheaves on a
topological space $X$ can be presented in two ways. On the one hand,
they can be seen as functors $\O(X)^\mathrm{op} \rightarrow \cat{Set}$
defined on the poset of open subsets of $X$ satisfying a glueing
condition. On the other hand, they can be seen as local homeomorphisms
$Y \rightarrow X$ over $X$. The functor
$\O(X)^\mathrm{op} \rightarrow \cat{Set}$ associated to a local
homeomorphism $p \colon Y \rightarrow X$ has its value at
$U \in \O(X)$ given by the set of partial sections of $p$ defined on
the open set $U$; while the local homeomorphism $p \colon Y \rightarrow X$
associated to a functor
$F \colon \O(X)^\mathrm{op} \rightarrow \cat{Set}$ has $Y$ given by
glueing together copies of open sets in $X$, taking one copy of $U$
for each element of $FU$, and $p$ given by the induced map from this glueing
into $X$.


These different views on sheaves underlie a richer correspondence
between \emph{\'etale groupoids} and \emph{complete pseudogroups},
whose basic idea dates back to work of
Ehresmann~\cite{Ehresmann1954Structures} and
Haefliger~\cite{Haefliger1958Structures}. On the one hand, an \emph{\'etale groupoid}
is a topological groupoid whose source map (and hence also target map)
is a local homeomorphism. On the other hand, a \emph{complete
  pseudogroup} is a certain kind of \emph{inverse monoid}, i.e., a
monoid $S$ such that for every $s \in S$ there is a unique
$s^\ast \in S$ with $ss^\ast s = s$ and $s^\ast s s^\ast = s^\ast$.
The key example of a complete pseudogroup is $\I(X)$, the monoid of
partial self-homeomorphisms of a space $X$; a general complete
pseudogroup is an inverse monoid $S$ ``modelled on some
$\I(X)$'', meaning that it is equipped with a monoid homomorphism
$\theta \colon S \rightarrow \I(X)$ such that:
\begin{enumerate}[(i),itemsep=0.25\baselineskip]
\item $S$ has all joins of compatible
families of elements, in a sense to be made precise in
Section~\ref{sec:join-inverse-categ} below; we call such an $S$ a \emph{join inverse monoid}.
\item $\theta$ restricts to an isomorphism $E(S) \rightarrow E(\I(X))$
  on sets of idempotents; we call such a $\theta$
  \emph{hyperconnected}.
\end{enumerate}

The \'etale groupoid--complete
pseudogroup correspondence  builds on the sheaf correspondence
as follows. On the one hand, if $\mathbb{A}$ is an \'etale groupoid,
then the corresponding complete pseudogroup is of the form
$\theta \colon \Phi(\mathbb{A}) \rightarrow \I(\mathbb{A}_0)$, where
$\Phi(\mathbb{A})$ given by the set of \emph{partial bisections} of
$\mathbb{A}$, i.e., partial sections
$s \colon U \rightarrow \mathbb{A}_1$ of the source map
$\sigma \colon \mathbb{A}_1 \rightarrow \mathbb{A}_0$ whose composite
with the target map
$\tau \colon \mathbb{A}_1 \rightarrow \mathbb{A}_0$ is a partial
bijection. Now $\theta$ is the function $s \mapsto \tau \circ s$,
while the inverse monoid structure of $\Phi(\mathbb{A})$ is derived
from the groupoid structure on $\mathbb{A}$.

On the other hand, if $\theta \colon S \rightarrow \I(X)$ is a
complete pseudogroup, then the corresponding \'etale groupoid
$\Psi(S)$ is of the form
$\sigma \colon X \leftarrow Y \rightarrow X \colon \tau$, where $Y$ is
given by glueing together copies of open sets in $X$, taking one copy
of $U$ for each $s \in S$ with $\mathrm{dom}(\theta(s)) = U$; where
$\sigma$ is the induced map from this glueing into $X$; where $\tau$
acts as $\theta(s)$ on the patch corresponding to $s \in S$; and where
the groupoid structure comes from the inverse monoid
structure of $S$.

At this level of generality, the \'etale groupoid--pseudogroup
correspondence was first established in~\cite[Theorem
I.2.15]{Resende2006Lectures}. However, it was not
until~\cite{Lawson2013Pseudogroups} that the correspondence was made
\emph{functorial}, by equating not only the objects but also suitable
classes of morphisms to each side of the correspondence. The choices
of these morphism are non-obvious: between \'etale
groupoids,~\cite{Lawson2013Pseudogroups} takes them to be the \emph{covering
  functors} (also known as \emph{discrete opfibrations}), while
between complete pseudogroups, they are the rather delicate class of
\emph{callitic} morphisms. (Let us note also that the definition of
``complete pseudogroup'' in~\cite{Lawson2013Pseudogroups} is slightly
different to the above, involving a join inverse monoid $S$
\emph{without} an explicit representation in some $\I(X)$; we will
return to this point later.)



The objective of this article is to describe a wide-ranging
generalisation of the correspondences in~\cite{Resende2006Lectures,
  Lawson2013Pseudogroups}; we will describe natural extensions along
four distinct axes. These extensions allow for a range of practically
useful applications, and, we hope, shed light on what makes the
construction work.

\textbf{Richer functoriality}. Our first axis of generalisation
enlarges the classes of maps found in~\cite{Lawson2013Pseudogroups}.
The maps of complete pseudogroups we consider are much simpler: a map
from $\theta \colon S \rightarrow \I(X)$ to
$\chi \colon T \rightarrow \I(Y)$ is just a monoid homomorphism
$\alpha \colon S \rightarrow T$ and a continuous function
$\beta \colon Y \rightarrow X$, compatible with $\theta$ and $\chi$ in
the evident way. The corresponding maps of \'etale groupoids are
perhaps less natural: rather than functors, they are
\emph{cofunctors}. These are a class of maps between groupoids
introduced by Higgins and Mackenzie~\cite{Higgins1993Duality} as the
most general class with respect to which the passage from a Lie
groupoid to its associated Lie algebroid is functorial; since this
passage involves many of the same ideas as the passage from an \'etale
groupoid to a complete pseudogroup, it is perhaps unsurprising that
cofunctors arise here too.

\textbf{Many objects}. Our three other axes of generalisation enlarge
the classes of objects to each side of the correspondence. For the
first of these, we replace, on the one side, complete pseudogroups
with their many-object generalisations, ``complete pseudogroupoids''.
These are particular kinds of \emph{inverse categories}, i.e.,
categories $\C$ such that for each $s \in \C(x,y)$ there is a unique
$s^\ast \in \C(y,x)$ satisfying $ss^\ast s = s$ and
$s^\ast s s^\ast = s^\ast$. The key example of a complete
pseudogroupoid is the inverse category $\cat{Top}_{ph}$ of topological
spaces and partial homeomorphisms; the general example takes the form
$P \colon \A \rightarrow \cat{Top}_{ph}$ where $\A$ is an inverse
category $\A$ with joins, and $P$ is a \emph{hyperconnected} functor
(i.e., inducing isomorphisms on sets of idempotent arrows).

On the other side, we replace \'etale groupoids by what we call
\emph{partite \'etale groupoids}. These involve a set $I$ of
``components''; for each $i \in I$, a space of objects $A_i$; for
each $i,j \in I$, a space of morphisms $A_{ij}$; and
continuous~maps
\begin{equation*}
  A_i \xleftarrow{\sigma_{ij}} A_{ij} \xrightarrow{\tau_{ij}} A_j \quad
  \eta_i \colon A_i \rightarrow A_{ii} \quad \mu_{ijk} \colon A_{jk}
  \times_{A_j} A_{ij} \rightarrow A_{ik} \quad \sigma_{ij} \colon
  A_{ij} \rightarrow A_{ji}
\end{equation*}
with each $\sigma_{ij}$ a local homeomorphism, 
satisfying the obvious analogues of the groupoid axioms. 

\textbf{Drop invertibility}. For our next axis of generalisation, we
replace, to the one side, \'etale groupoids by \emph{source-\'etale
  categories}; these are topological categories whose source map is a
local homeomorphism. On the other side we replace complete
pseudogroups $\theta \colon S \rightarrow \I(X)$ by ``complete
pseudomonoids'' ${\theta \colon S \rightarrow \M(X)}$. Rather than
inverse monoids, these are examples of \emph{restriction
  monoids}~\cite{Jackson2001An-invitation, Cockett2002Restriction};
these are monoids $S$ endowed with an operation
$(\overline{\mathstrut\ }) \colon S \rightarrow S$ assigning to each
$s \in S$ an idempotent, called a \emph{restriction idempotent},
measuring its ``domain of definition''. The key example of a complete
pseudomonoid is the monoid $\M(X)$ of partial continuous endofunctions
of a space $X$; in general, they take the form
$\theta \colon S \rightarrow \M(X)$, where $S$ is a restriction monoid
with joins and $\theta$ is \emph{hyperconnected} in the sense of
inducing an isomorphism on restriction idempotents.

\textbf{Arbitrary base}. Our final axis of
generalisation is the most far-reaching: we replace \'etale
\emph{topological} groupoids with \'etale groupoids living in some
other world. By a ``world'', we here mean a \emph{join restriction
  category}~\cite{Cockett2011Differential,Guo2012Products}; these are
the common generalisation of join restriction monoids and join inverse
categories, and provide a purely algebraic setting for discussing
notions of partiality and glueing such as arise in sheaf theory.
A basic example is the category $\cat{Top}_p$ of topological spaces
and partial continuous maps, but other important examples include
$\cat{Smooth}_p$ (smooth manifolds and partial smooth maps) and
$\cat{Sch}_p$ (schemes and partial scheme morphisms).

In any join restriction category, we can define what it means for a
map to be a \emph{partial isomorphism} or a \emph{local
  homeomorphism}, and when $\C$ satisfies an additional condition of
\emph{having local glueings} (defined in
Section~\ref{sec:local-atlases} below), we can recreate the
correspondence between local homeomorphisms and sheaves within the
$\C$-world. We can then build on this, like before, to establish the
correspondence between \emph{\'etale groupoids internal to $\C$} and
\emph{complete $\C$-pseudogroups}. An \'etale groupoid internal to
$\C$ is simply an internal groupoid whose source map is a local
homeomorphism; while a complete $\C$-pseudogroup comprises a join
inverse monoid $S$, an object $X \in \C$, and a hyperconnected map
$S \rightarrow \I(X)$ into the join inverse monoid of all partial
automorphisms of $X \in \C$. 

The main result of this paper will arise from performing all four of
the above generalisations simultaneously; we can state it as
follows:
\begin{Thm*}
  Let $\C$ be a join restriction category with local glueings. There
  is an equivalence
  \begin{equation}\label{eq:65}
    \parte \simeq  \hcon
  \end{equation}
  between the category of source-\'etale partite internal categories
  in $\C$, with cofunctors as morphisms, and the category of join
  restriction categories with a hyperconnected functor to $\C$, with
  lax-commutative triangles as morphisms.
\end{Thm*}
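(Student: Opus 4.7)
The plan is to obtain the equivalence~\eqref{eq:65} by first constructing a larger adjunction
\[
  \Psi \dashv \Phi \colon \rcat \rightleftarrows \partc
\]
between arbitrary restriction categories over $\C$ (with lax-commutative triangles as morphisms) and partite internal categories in $\C$ (with cofunctors as morphisms), and then identifying the full subcategory of objects on which the unit is invertible as $\hcon$, and the full subcategory of objects on which the counit is invertible as $\parte$. The stated equivalence then emerges as the restriction of the adjunction to these two fixed subcategories.

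The right adjoint $\Phi$ sends a partite internal category $\mathbb{A} = (A_i, A_{ij}, \sigma_{ij}, \tau_{ij}, \eta_i, \mu_{ijk})$ to the restriction category whose objects are the components $i \in I$, and whose morphisms $i \to j$ are partial sections of the source map $\sigma_{ij} \colon A_{ij} \to A_i$; composition is built from $\mu_{ijk}$, units from $\eta_i$, and the restriction idempotent of such a section $s$ is its domain of definition. The structure functor to $\C$ sends $s$ to the composite partial map $\tau_{ij} \circ s$. On morphisms, $\Phi$ acts on cofunctors by pulling back partial sections along the lifting data. Dually, the left adjoint $\Psi$ uses the local glueing hypothesis on $\C$ to assign to $P \colon \A \to \C$ the partite internal category whose object of components is $\ob(\A)$, with $A_i \defeq P(i)$, and whose object of morphisms $A_{ij}$ is obtained by glueing one copy of the domain of $P(\bar s)$ for each $s \in \A(i,j)$, identifying these copies along the overlaps recorded by restriction (and, where present, joins). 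The source map is locally an identity, hence a local homeomorphism whenever $\A$ has joins; the target map, units, and composition are assembled from $P(s)$ and the categorical composition of $\A$.

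The unit $\eta_P \colon P \to \Phi\Psi(P)$ then sends a morphism $s \in \A(i,j)$ to its tautological section into $A_{ij}$; I expect it to be an isomorphism precisely when every partial section of the source map of $\Psi(P)$ arises uniquely as a join of such generators, which is exactly the assertion that $\A$ is a join restriction category and $P$ is hyperconnected. Dually, the counit $\epsilon_\mathbb{A} \colon \Psi\Phi(\mathbb{A}) \to \mathbb{A}$ is an isomorphism exactly when each $\sigma_{ij}$ is a local homeomorphism, for then $A_{ij}$ is recovered from its local sections by glueing --- that is, exactly when $\mathbb{A}$ is source-\'etale. Together these identify the two fixed subcategories as $\hcon$ and $\parte$ respectively, giving the equivalence at the level of objects.

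The main obstacle will be the compatibility of the adjunction with morphisms, that is, proving naturally in $P$ and $\mathbb{A}$ the bijection
\[
  \partc\bigl(\Psi(P),\mathbb{A}\bigr) \;\cong\; \rcat\bigl(P, \Phi(\mathbb{A})\bigr)\,.
\]
A cofunctor $\Psi(P) \to \mathbb{A}$ consists of a function $\ob(\A) \to I$ together with an object-indexed lifting that associates to each $\mathbb{A}$-arrow out of an image component a corresponding $\Psi(P)$-arrow; unwinding this through the glueing presentation of $\Psi(P)$ and the partial-section presentation of $\Phi$, the lifting data must correspond exactly to the assignment of a partial section in $\Phi(\mathbb{A})$ to each morphism of $\A$, and the asymmetry between covariance on objects and contravariance on arrows in a cofunctor explains why only a lax, rather than strict, triangle of restriction categories can be expected. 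Verifying that this correspondence is indeed bijective and functorial, that $\eta$ and $\epsilon$ are natural, and that the triangle identities hold, is where the bulk of the technical work sits; once this is established, restricting to the two fixed subcategories yields the theorem.
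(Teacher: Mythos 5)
Your proposal follows essentially the same route as the paper: the paper likewise constructs the larger adjunction $\Psi \dashv \Phi$ between $\rcat$ and $\partc$ (with $\Phi$ given by partial sections of the source maps and $\Psi$ by glueing the local atlases of $P$-valued equalities), identifies the unit as invertible exactly on the hyperconnected join restriction functors and the counit as invertible exactly on the source-\'etale partite categories, and restricts to the fixpoints. One small correction: the source maps of $\Psi(P)$ are local homeomorphisms for \emph{every} $P$ in $\rcat$ (the glueing happens in $\C$, not in $\A$), not only when $\A$ has joins --- this is precisely what makes the adjunction Galois.
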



In fact, we will derive this theorem from a more general one.
Rather than constructing the stated equivalence directly, we will
construct a larger adjunction, and then cut down the objects on
each side.

\begin{Thm*}
  Let $\C$ be a join restriction category with local glueings. There
  is an adjunction
  \begin{equation}\label{eq:64}
    \cd{
      {\partc} \ar@<-4.5pt>[r]_-{\Phi} \ar@{<-}@<4.5pt>[r]^-{\Psi} \ar@{}[r]|-{\bot} &
      {\rcat} 
    }
  \end{equation}
  between $\partc$, the category of partite internal categories in
  $\C$, with cofunctors as morphisms, and $\rcat$, the category of
  restriction categories with a restriction functor to $\C$, with
  lax-commutative triangles as morphisms.
\end{Thm*}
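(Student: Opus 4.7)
The plan is to construct $\Phi$ and $\Psi$ explicitly, define unit and counit, and verify the triangle identities, using the local glueings in $\C$ to realise the classical sheaf--local-homeomorphism heuristic inside the $\C$-internal setting.

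For the right adjoint $\Phi$, given a partite internal category $\mathbb{A}$ in $\C$ with components $I$, object spaces $A_i$, arrow spaces $A_{ij}$, and structure maps $\sigma_{ij}, \tau_{ij}, \eta_i, \mu_{ijk}$, I take $\Phi(\mathbb{A})$ to be the restriction category whose objects are the elements of $I$ and whose morphisms $i \to j$ are the \emph{partial sections of $\sigma_{ij}$}---arrows $s : A_i \to A_{ij}$ in $\C$ with $\sigma_{ij} \circ s = \overline{s}$. Restriction of $s$ is $\sigma_{ij} \circ s$; the identity at $i$ is $\eta_i$; and composition of $s: i \to j$ with $t: j \to k$ pairs $s$ with $t \circ \tau_{ij} \circ s$ into the relevant pullback and postcomposes with $\mu_{ijk}$. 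The restriction functor to $\C$ sends $i \mapsto A_i$ and $s \mapsto \tau_{ij} \circ s$. The action on a cofunctor is the evident pullback-and-push construction.

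For the left adjoint $\Psi$, given a restriction functor $P : \X \to \C$, I take the components of $\Psi(P)$ to be $\mathrm{ob}(\X)$ with object spaces $A_x = P(x)$. For each pair $x, y$, the arrow space $A_{xy}$ is the local glueing in $\C$ of the family of restriction idempotents $\{\overline{P(f)} : f \in \X(x,y)\}$ on $P(x)$, with the overlap between $f$ and $f'$ dictated by the extent to which $f$ and $f'$ agree in $\X$ (computed using the restriction structure of $\X$). The source $\sigma_{xy}$ is the canonical local homeomorphism out of this glueing, the target $\tau_{xy}$ is assembled from the $P(f)$ patch-by-patch, and identities and multiplication come analogously from the units and composition of $\X$. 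The unit $\eta_P : P \to \Phi\Psi(P)$ is the identity on objects and sends $f$ to the canonical partial section picking out its patch; the counit $\varepsilon_\mathbb{A} : \Psi\Phi(\mathbb{A}) \to \mathbb{A}$ is the cofunctor evaluating partial sections against arrows.

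The principal technical obstacle is verifying that the glueing defining $A_{xy}$ and its structure maps is well-defined: one must check that the compatibility data coming from the restriction structure of $\X$ satisfies the cocycle condition required by the local-glueing universal property of $\C$, and that, with this in hand, the source, target, unit and multiplication assembled patch-by-patch satisfy the partite internal-category axioms. The hypothesis of local glueings in $\C$ is used precisely here. Once established, the universal property of $A_{xy}$ reduces the triangle identities and the unit/counit naturality to routine identities in $\X$ and $\mathbb{A}$ respectively. The passage from this adjunction to the equivalence of the first theorem is then obtained by identifying the source-\'etale partite internal categories as those at which $\varepsilon$ is an isomorphism, and the restriction categories with hyperconnected functor to $\C$ as those at which $\eta$ is an isomorphism.
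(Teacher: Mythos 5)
Your proposal follows essentially the same route as the paper: $\Phi$ externalises a partite internal category by taking partial sections of the source maps (with the same identities, composition via $\mu_{ijk}$ and pullback along $\tau_{ij}$, and projection $s \mapsto \tau_{ij}\circ s$), $\Psi$ internalises $P \colon \A \to \C$ by glueing, for each hom-set, the local atlas of $P$-valued agreement idempotents $\fneq f g$ (whose cocycle conditions are exactly the paper's Lemma~\ref{lem:15}), and the unit sends $f$ to its canonical section $s_f$; the only methodological difference is that the paper establishes adjointness through the universal property of the unit (unique factorisation of lax triangles through $\eta_P$, which also yields functoriality of $\Psi$ for free) rather than by constructing the counit and checking the triangle identities, which is equivalent but saves the separate verification of $\Psi$'s functoriality and the counit's naturality. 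One typing slip to fix: the restriction of $s \in \Phi\mathbb{A}(i,j)$ must itself be a morphism $i \to i$ of $\Phi\mathbb{A}$, namely the partial section $\eta_i \circ \sigma_{ij} \circ s = \eta_i\overline{s}$ of $\sigma_{ii}$, not the bare idempotent $\sigma_{ij}\circ s \in \O(A_i)$ of $\C$.
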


The left adjoint $\Psi$ of this adjunction \emph{internalises} a
restriction category over the base $\C$ to a source \'etale partite
category in $\C$; while the right adjoint $\Phi$ \emph{externalises} a
partite category in $\C$ to a join restriction category sitting above
$\C$ via a hyperconnected join restriction functor. These two
processes are evidently not inverse to each other, but constitute a
so-called \emph{Galois adjunction}; this means that applying either
$\Phi$ or $\Psi$ yields a \emph{fixpoint}, that is, an object at which
the counit $\Psi(\Phi(\mathbb{A})) \rightarrow \mathbb{A}$ or unit
$P \rightarrow \Phi(\Psi(P))$, as appropriate, is invertible. These
fixpoints turn out to be precisely the source-\'etale partite internal
categories, respectively the join restriction functors hyperconnected
over $\C$, and so by restricting the adjunction to these, we
reconstruct our main equivalence. On the other hand, we see that the
process $P \mapsto \Phi(\Psi(P))$ is a universal way of turning an
arbitrary restriction functor into a hyperconnected join restriction
functor; while $\mathbb{A} \mapsto \Psi(\Phi(\mathbb{A}))$ is a
universal way of turning an arbitrary internal partite category into a
source \'etale one.

We now give a more detailed overview of the contents of the paper
which, beyond proving our main theorem, also develops a body of
supporting theory, and gives a range of applications to problems of
practical interest. We begin in Section~\ref{sec:restr-categ-inverse}
by recalling necessary background on restriction categories and
inverse categories, along with a range of running examples. The basic
ideas here have been developed both by semigroup
theorists~\cite{Jackson2001An-invitation, Hollings2007Partial} and by
category theorists~\cite{Grandis1990Cohesive, Cockett2002Restriction},
but we tend to follow the latter---in particular because the further
developments around \emph{joins} in restriction categories are due to
this school~\cite{Cockett2011Differential, Guo2012Products}.

In Section~\ref{sec:etale-maps}, we develop the theory of \emph{local
  homeomorphisms} in a join restriction category $\C$. This builds on
a particular characterisation of local homeomorphisms of topological
spaces: they are precisely the total maps $p \colon Y \rightarrow X$
which can be written as a join (with respect to the inclusion ordering
on partial maps) of partial isomorphisms $Y \rightarrow X$. This
definition carries over unchanged to any join restriction category; we
show that it inherits many of the desirable properties of the
classical notion so long as the join restriction category $\C$ has
\emph{local glueings}---an abstract analogue of the property of being
able to build local homeomorphisms over $X$ by glueing together open
subsets of $X$.

Section~\ref{sec:local-home-sheav} exploits the preceding material to
explain how the correspondence between sheaves as functors, and
sheaves as local homeomorphisms, generalises to any join restriction
category $\C$ with local glueings. We show that, for any object
$X \in \C$, there is an equivalence as to the left in:
\begin{equation*}
  \cd{
    {\lh} \ar@<-4.5pt>[r]_-{\Gamma} \ar@{<-}@<4.5pt>[r]^-{\Delta} \ar@{}[r]|-{\sim} &
    {\shv}
  } \qquad \qquad 
  \cd{
    {\tot} \ar@<-4.5pt>[r]_-{\Gamma} \ar@{<-}@<4.5pt>[r]^-{\Delta} \ar@{}[r]|-{\bot} &
    {\pshv} 
  }
\end{equation*}
between local homeomorphisms over $X$ in $\C$, and sheaves on
$X \in \C$. Here, a \emph{sheaf} on $X \in \C$ can be defined as a
functor $\O(X)^\mathrm{op} \rightarrow \cat{Set}$ satisfying a glueing
condition, where here $\O(X)$ is the complete lattice of restriction
idempotents on $X \in \C$; in fact, we prefer to use an equivalent
formulation due to Fourman and Scott~\cite{Fourman1979Sheaves}. Just
as in the classical case (and as in our main theorem), we obtain the
desired equivalence by cutting down a Galois adjunction, as to the
right above, between \emph{total} maps over $X$ and \emph{presheaves}
on $X$.

In Section~\ref{sec:main-theorem}, we are finally ready to prove our
main result: first constructing the Galois adjunction~\eqref{eq:64},
and then restricting it back to the
equivalence~\eqref{eq:65}---exploiting along the way the
correspondence between local homeomorphisms and sheaves of the
preceding section. Having established this theorem, the remainder of
the paper is used to illustrate some of its consequences.

In Section \ref{sec:groupoid-case}, we roll back some generality by
adapting our main results to the groupoid case. An \emph{internal
  partite groupoid} is an internal partite category equipped with
inverse operations on arrows satisfying the expected identities; and
one can ask to what these correspond under the
equivalence~\eqref{eq:65}. We provide two different---albeit
equivalent---answers. The first and most immediate answer is that they
correspond to \emph{\'etale} join restriction categories
hyperconnected over the base. A join restriction category is
\emph{\'etale} (see Definition~\ref{def:24}) if every map is a join of
partial isomorphisms. The second answer, which corresponds more
closely to that provided in the theory of pseudogroups, is that they
correspond to join inverse categories (see Definition \ref{def:18})
which are hyperconnected over the base. The answers are equivalent as
the categories of join inverse categories and the category of \'elate
categories are equivalent (see Corollary \ref{cor:6}). Finally, by
restricting the generality even further back, one obtains the
correspondence between (non-partite) internal \'etale groupoids and
one-object join inverse categories hyperconnected over the base
$\C$---in other words, \emph{complete $\C$-pseudogroups}. 

Even when $\C = \cat{Top}_p$, this last result goes beyond those in
the literature by virtue of its richer functoriality. In
Section~\ref{sec:local-hyperc-maps}, we consider how this too may be
rolled back. As we have mentioned, in~\cite{Lawson2013Pseudogroups}
the morphisms considered between \'etale topological groupoids are the
so-called \emph{covering functors}; we characterise these as the
partite internal cofunctors which are bijective on components and
arrows, and show that under externalization, these correspond to what
we call \emph{localic} join restriction functors
(Definition~\ref{def:41})---our formulation of the \emph{callitic}
morphisms of~\cite{Lawson2013Pseudogroups}. The names \emph{localic}
and \emph{hyperconnected} for classes of functors originate in topos
theory, where they provide a fundamental factorization system on
geometric morphisms~\cite[\sec A4.6]{Johnstone2002Sketches}. For mere
restriction categories, a (localic, hyperconnected) factorisation
system was described and used in \cite{Cockett2014Restriction}; the
corresponding factorisation system for \emph{join} restriction
categories is, not surprisingly, fundamental to this development. The
factorization of a join restriction functor into localic and
hyperconnected parts is achieved as a direct application of our main
theorem: by internalising and then externalising the given join
restriction functor over its codomain, we reflect it into a
\emph{hyperconnected} join restriction functor, which is the second
part of the desired (localic, hyperconnected) factorisation.

In Section \ref{sec:applications} we turn to applications. First we
consider the analogues of \emph{Haefliger groupoids} in our setting.
These originate in the observation of
Ehresmann~\cite{Ehresmann1954Structures} that the space of germs of
partial isomorphisms between any two spaces is itself a space
$\Pi(A,B)$: Haefliger~\cite{Haefliger1971Homotopy} considered smooth
manifolds and the spaces $\Pi(A) = \Pi(A,A)$, which form Lie groupoids
with object space $A$. Generalising this, we define the
\emph{Haefliger category} of a join restriction category $\C$ with
local glueings as the source-\'etale partite internal category
$\H(\C)$ in $\C$ obtained by internalising the \emph{identity} functor
$\C \rightarrow \C$, and the \emph{Haefliger groupoid} $\Pi(\C)$ of
$\C$ as the internalisation of the inclusion
${\sf PIso}(\C) \subseteq \C$ of the category of partial isomorphisms
in $\C$. Because our Haefliger groupoid is \emph{partite}, it includes
in the topological case \emph{all} of the $\Pi(A,B)$'s as its spaces
of arrows. As an application, we use these ideas to show that the
category $\mathrm{Lh}(\C)$ of local homeomorphisms in $\C$ has binary
products; this generalises a result of
Selinger~\cite{Selinger1994LH-has-nonempty} for spaces.



In Sections~\ref{sec:resende-corr} and~\ref{sec:laws-lenz-corr}, we
relate our main results to the motivating ones in the literature. We
first consider the correspondence, due to
Resende~\cite{Resende2007Etale}, between \'etale groupoids in the
category of locales $\cat{Loc}_p$ and what he calls \emph{abstract
  complete pseudogroups}; in our nomenclature, these are simply join
inverse monoids. \emph{A priori}, our results yield a correspondence
between \'etale localic groupoids and complete
$\cat{Loc}_p$-pseudogroups; however, $\cat{Loc}_p$ has the special
property that any join restriction category $\A$ has an
essentially unique hyperconnected map
$\O \colon \A \rightarrow \cat{Loc}_p$ to $\cat{Loc}_p$ called the
\emph{fundamental functor}---and this allows us to identify complete
$\cat{Loc}_p$-pseudogroups with abstract complete pseudogroups, so
re-finding (with additional functoriality) Resende's result.


We then turn to the Lawson--Lenz
correspondence~\cite{Lawson2013Pseudogroups} for \'etale topological
groupoids. As in the Resende correspondence, the structures to which
such groupoids are related are not the expected complete
$\cat{Top}_p$-pseudogroups, but rather abstract complete pseudogroups;
this time, however, the correspondence is only a Galois adjunction,
restricting to an equivalence only for \emph{sober} \'etale groupoids
and \emph{spatial} complete pseudogroups. We explain this in terms of
the Galois adjunction between $\cat{Top}_p$ and $\cat{Loc}_p$, induced
by the fundamental functor
$\O \colon \cat{Top}_p \rightarrow \cat{Loc}_p$ and its right adjoint.

In Section~\ref{sec:ehresm-sche-namb}, we describe how the
Ehresmann--Schein--Nambooripad correspondence, as generalized by
DeWolf and Pronk~\cite{DeWolf2018The-Ehresmann-Schein-Nambooripad} can
be explained in terms of our correspondence; and finally in Section
\ref{sec:completion-processes} we exploit the power of having the
adjunction~\eqref{eq:64}, rather than merely the
equivalence~\eqref{eq:65}, to describe the construction of \emph{full
monoids} and \emph{relative join completions}.

%

\section{Background}
\label{sec:restr-categ-inverse}

\subsection{Restriction categories}
\label{sec:restr-categ}

We begin by recalling the notion of \emph{restriction category} which
will be central to our investigations.

\begin{Defn}\cite{Cockett2002Restriction}
  \label{def:15}
  A \emph{restriction category} is a category $\C$ equipped with an
  operation assigning to each map $f \colon A \to B$ in $\C$ a map
  $\bar f \colon A \to A$, called its \emph{restriction}, subject to
  the four axioms:
\begin{enumerate}[(i)]
\item $f \bar f = f$ for all $f \colon A \to B$;
\item $\bar f \bar g = \bar g \bar f$ for all $f \colon A \to B$ and $g \colon A \to C$;
\item $\overline{g \bar f} = \bar g \bar f$ for all $f \colon A \to B$
  and $g \colon A \to C$; 
\item $\bar g f = f \overline{gf}$ for all $f \colon A \to B$ and $g \colon B \to C$.
\end{enumerate}
\end{Defn}
Just as an inverse monoid is an abstract monoid of partial
automorphisms, so a restriction category is an abstract category of
partial maps. We now illustrate this with a range of examples of
restriction categories.
\begin{Exs}
  \label{ex:2}
\begin{itemize}[itemsep=0.25\baselineskip]
\item The category $\cat{Set}_p$ of sets and partial functions, where
  the restriction of a partial function $f \colon A \rightharpoonup B$
  is taken as the partial function $\bar f \colon A \rightharpoonup A$
  with $\bar f(a)$ taken to be $a$ if $f(a)$ is defined, and to be undefined
  otherwise. 
\item The category $\cat{Top}_p$ of topological spaces and partial
  continuous maps defined on an open subset of the domain. In this
  example, and in most of the others which follow, the
  restriction is defined just as in $\cat{Set}_p$.
\item The category $\cat{Pos}_p$ of partially ordered sets and
  partial monotone maps defined on a down-closed subset of the domain.
\item The category $\cat{Loc}_p$ of locales and partial locale maps. A
  \emph{locale} is a complete lattice $A$ satisfying the infinite
  distributive law $a \wedge \bigvee_i b_i = \bigvee_i a \wedge b_i$;
  a partial locale map $f \colon A \rightharpoonup B$ is a monotone
  function $f^\ast \colon B \rightarrow A$ preserving binary meets and
  arbitrary joins; we call $f^\ast$ the \emph{inverse image map} of
  $f$. The restriction $\bar f \colon A \rightharpoonup A$ of $f$ has
  inverse image map $a \mapsto f^\ast(\top) \wedge a$.
\item The category $\cat{Man}_p$ of topological manifolds and partial
  continuous maps defined on an open sub-manifold. Here, by a
  topological manifold, we simply mean a topological space which is
  locally homeomorphic to a Euclidean space; we do not require
  conditions such as being Hausdorff or paracompact.
\item The category $\cat{Smooth}_p$ of smooth manifolds (again, with
  no further topological conditions) and partial
  smooth maps defined on an open sub-manifold.
\item The category $\cat{L\R\T op}_p$ of locally ringed topological
  spaces and partial continuous maps. Objects are pairs $(X, \O_X)$ of
  a space $X$ and a sheaf of local rings $\O_X$ on $X$
  (cf.~\cite[Chapter II.2]{Hartshorne1977Algebraic}); a map
  $(f, \varphi) \colon (X, \O_X) \rightarrow (Y, \O_Y)$ is a
  map $f \colon X \rightarrow Y$ of $\mathrm{Top}_p$ and a local homomorphism
  $\varphi \colon f^\ast(\O_Y) \rightarrow \smash{\res{\O_X}{\mathrm{dom}(f)}}$ of sheaves
  of rings on $\mathrm{dom}(f) \subseteq X$. The restriction of $(f, \varphi)$ is
  $(\smash{\bar f}, \mathrm{id})$ where the first component is restriction in
  $\cat{Top}_p$.
\item The full subcategory $\cat{Sch}_p$ of $\cat{L\R\T op}_p$ whose
  objects are schemes, i.e., locally ringed spaces $(X, \O_X)$ for
  which there is an open cover $X = \bigcup_i U_i$ such that each $(U_i,
  \res{\O_X}{U})$ is isomorphic to an affine scheme
  $(\mathrm{Spec}\,R, \O_{\mathrm{Spec}\,R})$.

\item The \emph{category of partial maps} $\cat{Par}(\C, \M)$, where $\C$ is
  a category and $\M$ a pullback-stable, composition-closed class
  of monics in $\C$. Its objects are those
  of $\C$; maps $X \rightharpoonup Y$ are isomorphism-classes of
  spans $m \colon X \leftarrowtail X' \rightarrow Y \colon f$ with $m
  \in \M$; and the restriction of $[m,f] \colon X
  \rightharpoonup Y$ is $[m,m] \colon X \rightharpoonup X$. 
\item The category $\cat{Bun}(\C)$ of bundles in a restriction
  category $\C$. Objects of $\cat{Bun}(\C)$ are \emph{total} maps (as
  defined below) $x \colon X' \rightarrow X$ in $\C$, while morphisms
  are commuting squares. Restriction is given componentwise:
  $\overline{(f,g)} = (\overline{f}, \overline{g})$.
  
\item Any meet-semilattice $M$ can be seen as a one-object restriction
  category $\Sigma M$, where maps are the elements of $M$, composition
  is given by binary meet, and every map is its own restriction.
\end{itemize}
\end{Exs}

It follows from the axioms that the operation $f \mapsto \bar f$ in a
restriction category is idempotent, and that maps of the form $\bar f$
are themselves idempotent; we call them \emph{restriction
  idempotents}, and write $\O(A)$ for the set of restriction
idempotents of $A \in \C$. As presaged by the last example above,
$\O(A)$ is a meet-semilattice under the operation of composition.
We call a map $f \in \C(A,B)$ \emph{total} if $\overline{f} = 1_A$.
Each hom-set $\C(A,B)$ in a restriction category comes endowed with
two relations: the \emph{canonical partial order} $\leqslant$, and the
\emph{compatibility relation} $\smile$, defined by
\begin{equation*}
  f \leqslant g \text{ iff } f = g\overline{f} \qquad \text{and}
  \qquad f \smile g \text{ iff } f \overline{g} = g \overline{f}\rlap{ .}
\end{equation*}
When $\C = \cat{Set}_p$, the natural partial order is given by
inclusion of graphs, while $f \smile g$ precisely when $f$ and $g$
agree on their common domain of definition.


\begin{Defn}
  \label{def:19}
  A functor $F \colon \C \rightarrow \D$ between restriction
  categories is a \emph{restriction functor} if
  $F\overline{f} = \overline{Ff}$ for all $f \in \C(A,B)$. A
  restriction functor is \emph{hyperconnected} if each of the induced
  functions $\O(A) \rightarrow \O(FA)$ is an isomorphism. We write
  $\mathrm{r}\cat{Cat}$ for the $2$-category of restriction
  categories, restriction functors, and natural transformations with
  total components.
\end{Defn}


\subsection{Join restriction categories}
\label{sec:join-restr-categ}
We now turn our attention to restriction categories in which
compatible families of parallel maps can be glued together.
\begin{Defn}
  \label{def:17}
  A restriction category $\C$ is a \emph{join restriction
    category}~\cite{Cockett2011Differential,Guo2012Products} if every
  pairwise-compatible family of maps in $\C(A,B)$ admits a join $\bigvee_i f_i$
  with respect to the natural partial order, and these joins are
  preserved by precomposition, i.e.,
  \begin{equation}\label{eq:27}
    \textstyle(\bigvee_{i}f_i) g = \bigvee_{i}(f_ig) 
    \text{for all $g \in \C(A',A)$.}
  \end{equation}
\end{Defn}

The restriction to pairwise-compatible families in this definition is
necessary; indeed, if the parallel maps $\{f_i\}$ have \emph{any}
upper bound $h$, then
$f_i \overline{f_j} = h \overline{f_i}\, \overline{f_j} = h
\overline{f_j}\, \overline{f_i} = f_j \overline{f_i}$ so that
$f_i \smile f_j$ for each $i,j$.

\begin{Lemma}
  \label{lem:18}
  In a join restriction category $\C$ we have that:
  \begin{enumerate}[(i)]
  \item $\overline{\bigvee_{i} f_i} = \bigvee_{i}\overline{f_i}$ for
    all compatible families $\{f_i \colon A \rightarrow B\}$;
  \item $h(\bigvee_i f_i) = \bigvee_i hf_i$ for all compatible $\{f_i
    \colon A \rightarrow B\}$ 
    and all $h \colon B \rightarrow B'$.
  \end{enumerate}
\end{Lemma}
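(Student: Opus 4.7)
My plan is to establish (i) first and then deduce (ii) from it. The main obstacle is that~\eqref{eq:27} provides preservation of joins under \emph{pre}composition only, so both parts must be bootstrapped from this together with the restriction axioms.

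For (i), set $f = \bigvee_i f_i$. The first step is to check that $\overline{f_i} \leq \overline{f}$: from $f_i \leq f$ one has $f_i = f\,\overline{f_i}$, and axiom~(iii) then yields $\overline{f_i} = \overline{f\,\overline{f_i}} = \overline{f}\,\overline{f_i}$. Since restriction idempotents are mutually compatible by axiom~(ii), the join $e := \bigvee_i \overline{f_i}$ exists and satisfies $e \leq \overline{f} \leq 1_A$. Any $a \leq 1_A$ equals its own restriction (since $a = 1_A\cdot\overline{a} = \overline{a}$), so $e$ is itself a restriction idempotent. The key next step is to observe that $f_i\,e = f_i$ for each $i$: axiom~(iii) applied with $e = \overline{e}$ gives $\overline{f_i\,e} = \overline{f_i}\,e = \overline{f_i}$ (the second equality from $\overline{f_i} \leq e$), whence $f_i\,e = f_i\cdot \overline{f_i\,e} = f_i\,\overline{f_i} = f_i$ by axiom~(i). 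Applying~\eqref{eq:27} now yields $f\cdot e = \bigvee_i (f_i\,e) = \bigvee_i f_i = f$, and a final application of axiom~(iii) gives $\overline{f} = \overline{f\,e} = \overline{f}\,e$, so $\overline{f} \leq e$, completing (i).

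For (ii), let $g = \bigvee_i f_i$. From $f_i = g\,\overline{f_i}$ one obtains $hf_i = hg\,\overline{f_i} \leq hg$, so the family $\{hf_i\}$ is compatible (it has $hg$ as an upper bound, as noted after Definition~\ref{def:17}) and $\bigvee_i hf_i \leq hg$. Using (i) together with axioms~(ii) and~(iii) and with~\eqref{eq:27}, the plan is to compute
\[
\overline{\bigvee_i hf_i} \;=\; \bigvee_i \overline{hf_i} \;=\; \bigvee_i \overline{hg}\,\overline{f_i} \;=\; \Bigl(\bigvee_i \overline{f_i}\Bigr)\overline{hg} \;=\; \overline{g}\,\overline{hg} \;=\; \overline{hg},
\]
where the final equality uses $\overline{hg}\leq \overline{g}$, which follows from $g = g\,\overline{g}$ and axiom~(iii). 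Since two comparable maps with the same restriction must coincide, we conclude $\bigvee_i hf_i = hg$, proving (ii).
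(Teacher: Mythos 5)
Your proof is correct. Note that the paper itself gives no argument for this lemma---its ``proof'' is a citation to Proposition~2.14 of the Cockett--Cruttwell--Gallagher paper on differential restriction categories---so there is nothing in-text to compare against; what you have produced is a self-contained derivation from the axioms, and it matches the standard argument in the cited literature. Your structure is the natural one: for (i), show $\bigvee_i \overline{f_i}$ and $\overline{\bigvee_i f_i}$ dominate each other, the non-trivial direction being that $e = \bigvee_i \overline{f_i}$ is itself a restriction idempotent (via $e \leqslant 1_A$) so that precomposition~\eqref{eq:27} applies to give $fe = f$ and hence $\overline{f} \leqslant e$; for (ii), exhibit $hg$ as an upper bound of the compatible family $\{hf_i\}$ and then show the join has the same restriction as $hg$, using (i) to push the restriction through the join and axiom~(ii) plus~\eqref{eq:27} to commute the idempotents past each other. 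Every step checks out; the only places where you lean on unstated facts are standard ones the paper also uses freely: that $\overline{\overline{f}} = \overline{f}$, that $\leqslant$ is a genuine partial order, and that $f\bar e \leqslant f$ for a restriction idempotent $\bar e$ (equivalently $f\bar e = f\,\overline{f \bar e}$, which is immediate from axioms~(i) and~(iii) since $f \overline{f\bar e} = f \bar f \bar e = f \bar e$). You use this last fact twice---once in (i) for $f_i e = f_i\,\overline{f_i e}$ and once in (ii) for $hg\,\overline{f_i} \leqslant hg$---and it might be worth isolating it as a preliminary observation, but this is a presentational point, not a gap.
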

\begin{proof}
  See~\cite[Proposition~2.14]{Cockett2011Differential}.
\end{proof}

\begin{Exs}
  \label{ex:3}
  \begin{itemize}[itemsep=0.25\baselineskip]
  \item   Each of $\cat{Set}_p$, $\cat{Top}_p$, $\cat{Pos}_p$,
  $\cat{Man}_p$ and $\cat{Smooth}_p$ is a join
  restriction category. In $\cat{Set}_p$, a family of partial
  functions $f_i \colon A \rightharpoonup B$ is pairwise-compatible
  just when the union of the graphs of the $f_i$'s is again the graph
  of a partial function---which is then the join $\bigvee_i f_i$. Much
  the same thing happens in $\cat{Top}_p$, $\cat{Pos}_p$,
  $\cat{Man}_p$ and $\cat{Smooth}_p$. 
\item   $\cat{Loc}_p$ is a join restriction category. A family of partial
  locale maps $f_i \colon A \rightharpoonup B$ is compatible when
  $f_i^\ast(b) \wedge f_j^\ast(\top) \leqslant f_j^\ast(b)$ for all
  $i,j$, and the corresponding join has inverse image map
  $(\bigvee f_i)^\ast(b) = \bigvee_i (f_i^\ast b)$.
\item $\cat{L\R\T op}_p$ is a join restriction category. Given a
  compatible family of maps
  $(f_i, \varphi_i) \colon (X, \O_X) \rightarrow (Y, \O_Y)$, where
  each $f_i$ is defined on $U_i \subseteq X$, we first glue the
  compatible maps $f_i$ to a continuous map $f \colon X \rightarrow Y$
  defined on $U = \bigcup_i U_i$. Now on $U$ we may consider the sheaf
  $\mathrm{Hom}_\ell(f^\ast \O_Y, \res {\O_X} U)$ of local
  homomorphisms of rings; the family of maps $\varphi_i$ is a matching
  family of local sections, and so can be glued to a global section
  $\varphi \colon f^\ast \O_Y \rightarrow \res{\O_X}U$. The map
  $(f, \varphi)$ so obtained is the desired join of the
  $(f_i, \varphi_i)$'s. It follows that the full subcategory
  $\cat{Sch}_p$ of schemes is also a join restriction category.

\item   A restriction category of the form $\cat{Par}(\C, \M)$ is a join
  restriction category precisely when $\C$ admits pullback-stable unions
  of $\M$-subobjects which are computed as the pushout over the
  pairwise intersections; see~\cite[Theorem~11]{Lin2019Presheaves}. In
  particular, if $\E$ is a Grothendieck topos, and $\M$ comprises all
  the monomorphisms, then $\cat{Par}(\E, \M)$ is a join restriction
  category.
\item If $\C$ is a join restriction category, then so too is the
  restriction category of bundles $\cat{Bun}(\C)$, with joins computed pointwise.
\item If $M$ is a meet-semilattice, then the corresponding one-object
  restriction category $\Sigma M$  is a join restriction category just
  when $M$ is a locale.
  \end{itemize}
\end{Exs}
In general, if $A$ is an object of a join restriction category, then
any subset of the set of restriction idempotents $\O(A)$ is
compatible; whence each $\O(A)$ is a
locale.
The joins of this locale endow each hom-set $\C(A,B)$ with an
``$\O(A)$-valued equality'', which will be important in what follows.

\begin{Defn}
  \label{def:34}
  For maps $f, g \colon A \rightrightarrows B$ in a join
  restriction category $\C$ we define
  \begin{equation*}
    \sheq f g = \bigvee \{e \in \O(A) : e \leqslant \bar f \bar g
    \text{ and } fe = ge\}\rlap{ .}
  \end{equation*}
\end{Defn}
\begin{Lemma}
  \label{lem:30}
  For any $f,g,h \colon A \rightarrow B$ in a join restriction
  category $\C$:
  \begin{enumerate}[(i)]
  \item If $f \smile g$ then $\sheq f g = \bar f \bar g$; in
    particular, if $f \leqslant g$ then $\sheq f g = \bar f$.
  \item $\sheq f g = \sheq g f$.
  \item $\sheq g h \sheq f g \leqslant \sheq f h$.
  \item $f \sheq f g = g \sheq f g$, and this is the meet $f \wedge g$ 
    in $(\C(A,B), \leqslant)$.
  \end{enumerate}
\end{Lemma}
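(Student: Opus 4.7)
All four parts are verifications directly from the definition of $\sheq f g$, using only the restriction axioms, the commutativity of restriction idempotents, and the fact (Lemma~\ref{lem:18}(ii)) that composition on the left preserves joins.

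For (ii), observe that the condition defining the family of $e$'s is manifestly symmetric in $f$ and $g$: $e \leqslant \bar f \bar g = \bar g \bar f$ (restriction idempotents commute) and $fe=ge$ iff $ge=fe$. So the two joins are over identical sets. For (i), assume $f \smile g$, i.e., $f\bar g = g \bar f$. I would verify that $e_0 \defeq \bar f \bar g$ itself lies in the defining set: it is certainly below $\bar f \bar g$, and $fe_0 = f\bar f \bar g = f\bar g = g \bar f = g \bar g \bar f = ge_0$. Since every $e$ in the set satisfies $e \leqslant \bar f \bar g$, this $e_0$ is the top of the set, so $\sheq f g = \bar f \bar g$. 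The special case $f \leqslant g$ gives $f \smile g$ and $\bar f \leqslant \bar g$, so $\sheq f g = \bar f$.

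For (iii), set $d = \sheq g h$ and $e = \sheq f g$; I claim that $de$ belongs to the set defining $\sheq f h$. The bound $de \leqslant \bar f \bar g \bar h \leqslant \bar f \bar h$ is immediate. The main computation is
\begin{equation*}
  f \cdot de = (fe)d = (ge)d = g(ed) = g(de) = (gd)e = (hd)e = h \cdot de,
\end{equation*}
where the key fact $(fe)d = f(ed)$ etc.\ is associativity, and $ed = de$ because $d$ and $e$ are restriction idempotents (the crucial step is to commute $d$ past $e$ after applying $fe = ge$). Hence $de \leqslant \sheq f h$.

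For (iv), the equation $f\sheq f g = g \sheq f g$ follows by pulling $f$ and $g$ inside the join using Lemma~\ref{lem:18}(ii), since the defining condition $fe = ge$ makes the two resulting joins equal termwise. To identify this common value with the meet, I would first note that $fe \leqslant f$ whenever $e \in \O(A)$ (because $\overline{fe} = \bar f e = e\bar f$ gives $f \overline{fe} = fe$), so $f \sheq f g$ is below both $f$ and $g$. Conversely, given any $h \leqslant f$ and $h \leqslant g$, we have $h = f\bar h = g\bar h$ with $\bar h \leqslant \bar f \bar g$, so $\bar h$ is in the defining set of $\sheq f g$, whence $\bar h \leqslant \sheq f g$ and therefore $h = f\bar h \leqslant f\sheq f g$ (multiplication by $f$ preserves the order on restriction idempotents). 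The only step needing care is checking this order-preservation; the main obstacle throughout is simply bookkeeping of associativity and commutation of restriction idempotents in (iii).
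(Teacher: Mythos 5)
Your proofs of (i), (ii) and (iv) are correct and essentially the paper's own: it dismisses (i) and (ii) as clear, and its argument for (iv) is exactly yours (distributivity of the join to get $f\sheq f g = g \sheq f g$, then the observation that $h \leqslant f$ and $h \leqslant g$ force $\bar h \leqslant \sheq f g$, whence $h = f\bar h \leqslant f \sheq f g$).

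Part (iii) is where you genuinely diverge. The paper expands the product $\sheq g h \sheq f g$ by distributivity into $\bigvee \{ed : d \leqslant \bar f \bar g,\ e \leqslant \bar g \bar h,\ fd = gd,\ ge = he\}$ and checks that each individual product $ed$ lies in the set defining $\sheq f h$, using only the defining conditions on the \emph{elements} $d$ and $e$. You instead work with the full joins $e = \sheq f g$ and $d = \sheq g h$, and your computation $f(de) = h(de)$ invokes $fe = ge$ and $gd = hd$ --- which is precisely the first assertion of part (iv), applied to $(f,g)$ and to $(g,h)$. As written, your (iii) therefore cites a fact you only establish afterwards. This is not a genuine circularity, since your proof of $f\sheq f g = g\sheq f g$ uses only Lemma~\ref{lem:18}(ii) and nothing from (iii); but you should reorder the write-up (first the equation of (iv), then (iii), then the meet claim) or at least flag the dependency. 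With that repair, your version of (iii) is actually slicker than the paper's --- no expansion over pairs of elements, just one commutation of the two restriction idempotents $d$ and $e$ (which are indeed restriction idempotents, being joins of such, by Lemma~\ref{lem:18}(i)) --- at the cost of making the four parts interdependent, where the paper's proof keeps (iii) self-contained. Both routes rest on the same distributivity input.
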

(Note that the meets described in (iv) do not necessarily make $\C$
into a \emph{meet restriction category} in the sense
of~\cite[Chapter~4]{Guo2012Products}; one might call the present meet
an ``interior'' meet which arises purely from the join structure.)
\begin{proof}
  (i) and (ii) are clear. For (iii), by distributivity of
  joins we have
  \begin{equation*}
    \sheq g h\sheq f g = \bigvee \{ed : d \leqslant
    \overline{f}\overline{g}, e \leqslant \overline{g}\overline{h},
    fd = gd, ge = he\}\rlap{ ,}
  \end{equation*}
  so it suffices to show each $ed$ as to the right is an
  element of the join defining $\sheq f h$. For this we observe that
  $ed \leqslant \bar g \bar h \bar f \bar g \leqslant \bar f\, \bar h$
  and $fed = fde = gde = ged = hed$, as desired. Finally, for (iv), we
  have by distributivity that
  \begin{equation*}
    f \sheq f g = \bigvee \{fe : e \leqslant \overline{f}
    \overline{g}, fe=ge\} = \bigvee \{ge : e \leqslant \overline{f}
    \overline{g}, fe=ge\} = g \sheq f g\rlap{ .}
  \end{equation*}
  To prove this element is $f \wedge g$, observe that if
  $h \leqslant f$ and $h \leqslant g$ then $h = f \bar h = g \bar h$, so
  that $\bar h \leqslant \sheq f g$ and so $h = f \bar h \leqslant f
  \sheq f g$ as desired.
\end{proof}

Just as before, join restriction categories assemble into a $2$-category.

\begin{Defn}
  \label{def:32}
  A \emph{join restriction functor} between join restriction
  categories is a restriction functor $F \colon \C \rightarrow \D$
 which preserves joins in that $F(\bigvee_i f_i) = \bigvee_i Ff_i$.
  We write $\mathrm{jr}\cat{Cat}$ for the $2$-category of join
  restriction categories, join restriction functors and total
  transformations.
\end{Defn}

\begin{Lemma}
  \label{lem:19}
  A restriction functor $F \colon \C \rightarrow \D$ between join
  restriction categories preserves joins whenever it preserves joins
  of restriction idempotents---in particular, whenever $F$ is hyperconnected.
\end{Lemma}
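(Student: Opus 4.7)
The plan is to reduce the preservation of a general join to the preservation of the join of the associated restriction idempotents, using Lemma~\ref{lem:18} both in $\C$ and in $\D$. Let $\{f_i : A \to B\}$ be a compatible family in $\C$ with join $f = \bigvee_i f_i$.

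First I would observe that $F$, being a restriction functor, preserves the compatibility relation and the natural partial order: from $f_i \overline{f_j} = f_j \overline{f_i}$ we get $Ff_i \overline{Ff_j} = Ff_j \overline{Ff_i}$, and from $f_i = f\overline{f_i}$ we get $Ff_i = Ff \cdot \overline{Ff_i}$. Hence $\{Ff_i\}$ is a compatible family in $\D$ admitting a join, and $Ff$ is an upper bound for it; so $\bigvee_i Ff_i \leqslant Ff$.

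For the reverse inequality, I would apply Lemma~\ref{lem:18}(i) in $\C$ to get $\overline{f} = \bigvee_i \overline{f_i}$. Applying $F$ and using the hypothesis that $F$ preserves joins of restriction idempotents, together with $F\overline{(\cdot)} = \overline{F(\cdot)}$, gives $\overline{Ff} = \bigvee_i \overline{Ff_i}$ in $\D$. Then using Lemma~\ref{lem:18}(ii) in $\D$, distributivity of composition over joins, and $f \overline{f_i} = f_i$ (since $f_i \leqslant f$), I compute
\begin{equation*}
Ff \;=\; Ff \cdot \overline{Ff} \;=\; Ff \cdot \textstyle\bigvee_i \overline{Ff_i} \;=\; \bigvee_i Ff \cdot F\overline{f_i} \;=\; \bigvee_i F(f \overline{f_i}) \;=\; \bigvee_i Ff_i\rlap{ ,}
\end{equation*}
establishing the desired equality. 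The only non-trivial step is the appeal to the hypothesis; everything else is a routine manipulation of restriction-category axioms, and I expect no genuine obstacle.

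For the ``in particular'' clause, suppose $F$ is hyperconnected, so that $F \colon \O(A) \to \O(FA)$ is a bijection. Since $F$ is a restriction functor, this bijection preserves the meet-semilattice structure on restriction idempotents (composition); being a bijection of meet-semilattices, it is automatically an order isomorphism (because the order on each $\O(A)$ is recovered from meets by $e \leqslant e'$ iff $e = ee'$). An order isomorphism preserves all existing joins, so $F$ preserves joins of restriction idempotents, and the first part applies.
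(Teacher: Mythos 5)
Your proof is correct and takes essentially the same approach as the paper's: both sides are compared by first noting $\bigvee_i Ff_i \leqslant F(\bigvee_i f_i)$ and then using the hypothesis together with Lemma~\ref{lem:18}(i) to show the two sides have the same restriction. The only cosmetic difference is the finishing move---the paper concludes from the fact that comparable maps with equal restrictions coincide, whereas you expand $Ff\cdot\overline{Ff}$ over the join of idempotents via Lemma~\ref{lem:18}(ii); your spelled-out verification that hyperconnectedness implies preservation of joins of restriction idempotents (a bijection preserving meets is an order isomorphism) is also correct, and fills in a detail the paper leaves implicit.
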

\begin{proof}
  We always have $\bigvee_i Ff_i \leqslant F(\bigvee_i f_i)$. If $F$
  preserves joins of restriction idempotents, then 
  $\overline{\bigvee_i Ff_i} = \bigvee_i F\overline{f_i} = F\bigvee_i
  \overline{f_i} = \overline{F\bigvee_i f_i }$ whence
  $\bigvee_i Ff_i = F(\bigvee_i f_i)$.
\end{proof}

\subsection{Inverse categories}
\label{sec:inverse-categories}

In the introduction, we defined an \emph{inverse category} to be a
category $\I$ such that for each $s \in \I(A,B)$, there is a unique
$s^\ast \in \I(B,A)$ such that $ss^\ast s = s$ and
$s^\ast ss^\ast = s^\ast$. By a standard
argument~\cite[Theorem~5]{Lawson1998Inverse}, it is equivalent to ask
that there exists \emph{some} $s^\ast$ with these properties, and that
additionally all idempotents in $\C$ commute. Since inverse structure
is equationally defined, it is preserved by any functor; it thus makes
sense to give:

\begin{Defn}
  \label{def:21}
  $\mathrm{i}\cat{Cat}$ is the $2$-category of inverse
  categories, arbitrary functors, and arbitrary natural isomorphisms.
\end{Defn}

Importantly, any inverse category $\I$ is a restriction category on
defining the restriction of $s \colon A \rightarrow B$ to be
$\bar s = s^\ast s \colon A \rightarrow A$;
see~\cite[Theorem~2.20]{Cockett2002Restriction}. In this way, we
obtain a full embedding of $2$-categories
$\mathrm{i}\cat{Cat} \rightarrow \mathrm{r}\cat{Cat}$. The image of
this embedding can be characterised in terms of the following notion.

\begin{Defn}
  \label{def:16}
  A \emph{partial isomorphism} in a restriction category $\C$ is a map
  ${s \colon A \rightarrow B}$ with a \emph{partial
    inverse}: a map $s^\ast \colon B \rightarrow A$ with
  $s^\ast s = \overline{s}$ and $ss^\ast = \overline{s^\ast}$.
\end{Defn}
\begin{Exs}
  \label{ex:11}
  \begin{itemize}[itemsep=0.25\baselineskip]
  \item A partial isomorphism $s \colon X \rightharpoonup Y$ in
    $\cat{Set}_p$ is one whose graph describes a bijection between a
    subset $A \subseteq X$ and a subset $B \subseteq Y$. The same
    holds in $\cat{Top}_p$, $\cat{Pos}_p$, $\cat{Man}_p$ and
    $\cat{Smooth}_p$, where, for example, in $\cat{Top}_p$ this involves a
    bijection between \emph{open} subsets of $X$ and $Y$.
  \item A partial locale map $f \colon A \rightharpoonup B$ is a
    partial isomorphism when there exist $a \in A$ and $b \in B$ such that $f^\ast$
    maps $\mathord\downarrow b$ bijectively onto $\mathord\downarrow
    a$ and we have a factorisation
    \begin{equation*}
      f^\ast = B \xrightarrow{(\thg) \wedge b} \mathord\downarrow b
      \xrightarrow{f^\ast} \mathord\downarrow a
      \xhookrightarrow{\text{include}} A\rlap{ .}
    \end{equation*}
  \item Partial isomorphisms in $\cat{Par}(\C, \M)$ are spans 
    of the form $m \colon X \leftarrowtail Z \rightarrowtail Y \colon
    n$ for which both $m$ and $n$ are in $\M$.
  \end{itemize}
\end{Exs}
The following lemma describes the key properties of partial
isomorphisms.
\begin{Lemma}
  \label{lem:9}
  \begin{enumerate}[(i)]
  \item If $s^\ast$ is partial inverse to $s$, then $ss^\ast s = s$
    and $s^\ast s s^\ast = s$.
  \item Partial inverses are unique when they exist.
  \item Partial isomorphisms are composition-closed with $(st)^\ast =
    t^\ast s^\ast$.
  \item The idempotent partial isomorphisms are exactly the restriction
    idempotents.
  \item If $s, t \colon A \rightarrow B$ are partial isomorphisms,
    then $s \smile t$ just when $st^\ast$ is a restriction
    idempotent.
  \end{enumerate}
\end{Lemma}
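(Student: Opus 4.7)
Each part is a direct verification from the four restriction axioms and the defining equations $s^{\ast}s = \bar s$, $ss^{\ast} = \overline{s^{\ast}}$. I do not see any deep obstacle; the main care is in tracking composition direction and object-types, and in choosing the right instance of axiom (iv) (the ``twisting'' axiom $\bar g f = f \overline{gf}$) at each step.

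\textbf{(i)} By axiom (i), $ss^{\ast}s = s\bar s = s$, and likewise $s^{\ast}ss^{\ast} = s^{\ast}\overline{s^{\ast}} = s^{\ast}$ (applying (i) to $s^{\ast}$). \textbf{(ii)} Suppose $s^{\ast}$ and $s^{\dagger}$ both satisfy the defining equations. Note that $\bar s = s^{\ast}s = s^{\dagger}s$, so $\bar s s^{\dagger} = s^{\dagger}s s^{\dagger} = s^{\dagger}$ (by (i) for $s^{\dagger}$). Substituting $s = ss^{\dagger}s$ into $s^{\ast} = s^{\ast}ss^{\ast}$ gives
\[
  s^{\ast} \;=\; (s^{\ast}s)\,s^{\dagger}\,(ss^{\ast}) \;=\; \bar s\, s^{\dagger}\,\overline{s^{\ast}} \;=\; s^{\dagger}\,\overline{s^{\ast}}\rlap{ .}
\]
Symmetrically $s^{\dagger} = s^{\ast}\,\overline{s^{\dagger}}$. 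Substituting the first into the second and using commutativity of restriction idempotents on $B$ together with axiom (i) yields $s^{\dagger} = s^{\ast}$.

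\textbf{(iii)} For composable partial isomorphisms $s$ and $t$, verify that $(t^{\ast}s^{\ast})(st) = \overline{st}$ and $(st)(t^{\ast}s^{\ast}) = \overline{t^{\ast}s^{\ast}}$. The first unfolds to $t^{\ast}\bar s\,t$; applying axiom (iv) with $g=\bar s$, $f=t$ gives $\bar s\,t = t\,\overline{\bar s\, t}$, whence $t^{\ast}\bar s\,t = \bar t\,\overline{\bar s\, t}$. Computing $\overline{st}$ via axiom (iii) gives $\overline{st} = \overline{s \bar t}$... more directly, one checks $\overline{\bar s t} = \bar t\,\overline{st}$ and $\overline{st}\leqslant \bar t$, which together collapse $\bar t\,\overline{st}$ to $\overline{st}$. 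The second identity is dual. \textbf{(iv)} A restriction idempotent $e$ satisfies $\bar e = e$ and $ee = e$, so it is its own partial inverse; conversely, an idempotent partial isomorphism $e$ has $e = e^{\ast}$ by uniqueness, and then $e = ee = e^{\ast}e = \bar e$.

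\textbf{(v)} For the forward direction, assume $s\bar t = t\bar s$ and precompose both sides with $t^{\ast}$ (acting on the right in the paper's convention). On the left, $\bar t\,t^{\ast} = (t^{\ast}t)t^{\ast} = t^{\ast}\,\overline{t^{\ast}} = t^{\ast}$ by axiom~(i), so the left side becomes $st^{\ast}$. On the right, axiom~(iv) applied with $g=s$, $f=t^{\ast}$ gives $\bar s\, t^{\ast} = t^{\ast}\,\overline{st^{\ast}}$, whence the right side becomes $tt^{\ast}\overline{st^{\ast}} = \overline{t^{\ast}}\cdot\overline{st^{\ast}}$, a product of two restriction idempotents on $B$ and therefore itself a restriction idempotent. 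For the converse, suppose $st^{\ast} = \overline{st^{\ast}}$; then
\[
  s\bar t \;=\; (st^{\ast})\,t \;=\; \overline{st^{\ast}}\,t \;=\; t\,\overline{\,\overline{st^{\ast}}\, t\,} \;=\; t\,\overline{s\bar t} \;=\; t\,\bar s\bar t \;=\; t\bar t\,\bar s \;=\; t\bar s\rlap{ ,}
\]
using axiom~(iv), axiom~(iii) to simplify $\overline{s\bar t}=\bar s\bar t$, and commutativity of restriction idempotents together with $t\bar t = t$.
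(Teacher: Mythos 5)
Parts (i), (ii), (iii) and (v) of your proof are correct and, modulo cosmetic differences, follow the same route as the paper: (ii) establishes the two inequalities $s^\ast \leqslant s^\dagger$ and $s^\dagger \leqslant s^\ast$ and concludes by antisymmetry of the natural order; (iii) verifies the two defining equations for $t^\ast s^\ast$ using axiom (iv) and the identity $\overline{\bar s t} = \overline{st}$; and (v) is the same restriction-idempotent computation (your converse is in fact slightly more direct than the paper's, which first extracts $st^\ast = ts^\ast$ from $st^\ast$ being its own partial inverse). One blemish in (iii): the aside ``$\overline{st} = \overline{s\bar t}$'' does not typecheck, since in (iii) $s$ and $t$ are composable rather than parallel, so $s\bar t$ is not a legal composite --- but you abandon that line immediately, and the argument you actually give is sound.

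Part (iv), however, has a genuine gap in the converse direction. You assert that an idempotent partial isomorphism $e$ satisfies $e = e^\ast$ ``by uniqueness''. Uniqueness (your part (ii)) applies only to maps already known to be partial inverses of $e$; to invoke it here you would first have to check that $e$ itself satisfies the defining equations of a partial inverse of $e$, namely $ee = \bar e$. Since $ee = e$ by hypothesis, that condition is literally $e = \bar e$ --- the very statement you are trying to prove --- so the argument is circular. The conclusion is of course true, and the repair is a short direct computation such as the paper's:
\begin{equation*}
  \bar e \;=\; e^\ast e \;=\; e^\ast(ee) \;=\; \bar e\, e \;=\; e\,\overline{ee} \;=\; e\bar e \;=\; e\rlap{ ,}
\end{equation*}
using $e^\ast e = \bar e$, idempotency of $e$, axiom (iv) in the form $\bar e\, e = e\,\overline{ee}$, and axiom (i). Only \emph{after} this does $e = e^\ast$ follow, since a restriction idempotent is its own partial inverse and partial inverses are unique.
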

\begin{proof}
  For (i), we have $s = s \bar s = ss^\ast s$ and $s^\ast = s^\ast \bar
  {s^\ast} = s^\ast ss^\ast$. For (ii), if $t$
  and $u$ are partial inverse to $s$ then $t = tst = \bar s t = ust = u \bar
  t$. So $t \leqslant u$; dually $u \leqslant t$ and so $t =
  u$.
  For (iii), we calculate that $s^\ast t^\ast ts = s^\ast
  \overline{t}s = s^\ast s \overline{ts} = \overline{s}\,\overline{ts}
  = \overline{ts}$ and dually $tss^\ast t^\ast = \overline{s^\ast
    t^\ast}$; whence $(ts)^\ast = s^\ast t^\ast$ by (ii).

  For (iv), each restriction idempotent is clearly its own partial
  inverse; conversely, if $e$ is an idempotent partial isomorphism
  then
  $\overline{e} = e^\ast e = e^\ast ee = \overline{e}e = e
  \overline{ee} = e\overline{e} = e$, so that $e$ is a restriction
  idempotent. Finally, for (v), if $s \smile t$ then
  $st^\ast t = ts^\ast s$, whence
  $st^\ast = st^\ast tt^\ast = ts^\ast st^\ast = \overline{st^\ast}$
  is a restriction idempotent. Conversely, if $st^\ast$ is a
  restriction idempotent, then it is its own partial inverse, so 
  $s t^\ast = ts^\ast$ and
  \begin{equation*}
  s \overline{t} = st^\ast t = st^\ast st^\ast t = ts^\ast st^\ast t
  = t \overline{s}\,\overline{t} = t \overline{s}\rlap{ .}\qedhere
\end{equation*}
\end{proof}
Using this, we thus obtain the following, which is again~\cite[Theorem~2.20]{Cockett2002Restriction}:
\begin{Prop}
\label{prop:28}
A restriction category $\C$ is in the essential image of
$\mathrm{i}\cat{Cat} \rightarrow \mathrm{r}\cat{Cat}$ just when every
morphism in $\C$ is a partial isomorphism.
\end{Prop}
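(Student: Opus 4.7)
The plan is to prove both directions of the ``just when'', using the lemmas already established about partial isomorphisms.

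For the forward direction, suppose $\C$ is equivalent, as a restriction category, to the restriction category associated to some inverse category $\I$. Since the property of every morphism being a partial isomorphism is clearly invariant under restriction equivalence, it suffices to verify it in $\I$ itself. By construction of the restriction structure on $\I$, the restriction of $s \colon A \to B$ is $\bar s = s^\ast s$, and symmetrically $\overline{s^\ast} = (s^\ast)^{\ast\ast} s^\ast = s s^\ast$ using uniqueness of inverses in $\I$. Hence the inverse-category inverse $s^\ast$ satisfies the two conditions of Definition~\ref{def:16}, and so is a partial inverse of $s$.

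For the converse, suppose every map in $\C$ is a partial isomorphism. By Lemma~\ref{lem:9}(ii), each $s$ has a unique partial inverse $s^\ast$, giving a well-defined operation $({\thg})^\ast$ on $\C$. By Lemma~\ref{lem:9}(i), this operation satisfies $s s^\ast s = s$ and $s^\ast s s^\ast = s^\ast$ for every $s$. To deduce that $\C$ is an inverse category, I invoke the standard characterisation cited before Definition~\ref{def:21}: it suffices to check, in addition, that all idempotents in $\C$ commute. But any idempotent $e$ of $\C$ is by hypothesis a partial isomorphism, hence by Lemma~\ref{lem:9}(iv) is a restriction idempotent; and restriction idempotents commute by axiom~(ii) of a restriction category.

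Finally, the restriction structure induced on this inverse category via $\bar s \defeq s^\ast s$ coincides with the original restriction on $\C$, by the very definition of partial inverse. Thus $\C$ literally \emph{equals} the restriction category underlying the inverse category just constructed, so it lies in the essential image. The only mildly subtle step is the commutativity of idempotents in the converse direction, which is resolved immediately by Lemma~\ref{lem:9}(iv); everything else is a direct bookkeeping of the definitions.
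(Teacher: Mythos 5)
Your proof is correct and follows exactly the route the paper intends: the paper defers this result to~\cite[Theorem~2.20]{Cockett2002Restriction} "using" Lemma~\ref{lem:9}, and your argument fills in precisely those details --- Lemma~\ref{lem:9}(i),(ii) for existence and uniqueness of the operation $({\thg})^\ast$, Lemma~\ref{lem:9}(iv) plus restriction axiom~(ii) for commutation of idempotents, the characterisation of inverse categories cited before Definition~\ref{def:21}, and the observation that the induced restriction $s^\ast s$ agrees with the given one. The only blemish is the notational slip ``$(s^\ast)^{\ast\ast}$'', which should read $(s^\ast)^\ast s^\ast = s s^\ast$ via $(s^\ast)^\ast = s$; the computation you intend is clearly right.
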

We are thus justified in confusing an inverse category with the
corresponding restriction category. We also obtain Proposition~2.24 of
\emph{loc.~cit.}

\begin{Cor}
  \label{cor:4}
  The full inclusion $2$-functor
  $\mathrm{i}\cat{Cat} \rightarrow \mathrm{r}\cat{Cat}$ has a right
  $2$-adjoint
  $\mathrm{PIso} \colon \mathrm{r}\cat{Cat} \rightarrow
  \mathrm{i}\cat{Cat}$ sending each restriction category $\C$ to its
  subcategory $\mathrm{PIso}(\C)$ comprising all objects and all
  partial isomorphisms between them.
\end{Cor}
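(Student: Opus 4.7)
The plan is to verify three things: that $\mathrm{PIso}(\C)$ is a well-defined inverse category for each restriction category $\C$; that $\mathrm{PIso}$ extends to a $2$-functor $\mathrm{r}\cat{Cat} \to \mathrm{i}\cat{Cat}$; and that the inclusions $\iota_\C \colon \mathrm{PIso}(\C) \hookrightarrow \C$ serve as the counit of a $2$-adjunction. Most of the work is bundled into Lemma~\ref{lem:9}, which has already been proved.

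The first point is immediate from Lemma~\ref{lem:9}: identities are self-inverse, so lie in $\mathrm{PIso}(\C)$; by (iii), partial isomorphisms are closed under composition; and by (i)--(ii), each partial isomorphism $s$ admits a unique $s^\ast$ satisfying $s s^\ast s = s$ and $s^\ast s s^\ast = s^\ast$. So $\mathrm{PIso}(\C)$ is an inverse category, whose induced restriction agrees with that of $\C$ by Lemma~\ref{lem:9}(iv).

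For $2$-functoriality, I would observe that any restriction functor $F \colon \C \to \D$ sends partial isomorphisms to partial isomorphisms: since $F$ preserves composition and restriction, the defining equations $s^\ast s = \bar s$ and $s s^\ast = \overline{s^\ast}$ yield $Fs^\ast \cdot Fs = \overline{Fs}$ and $Fs \cdot Fs^\ast = \overline{Fs^\ast}$, so that $Fs^\ast$ is partial inverse to $Fs$. Hence $F$ restricts to a functor on partial isomorphisms, and the assignment is clearly $2$-functorial, taking total natural transformations to arbitrary natural transformations.

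For the adjunction, I would exhibit the counit as the evident inclusion $\iota_\C$, which is itself a restriction functor by Lemma~\ref{lem:9}(iv). The universal property on $1$-cells reads: any restriction functor $F \colon \I \to \C$ from an inverse category factors uniquely through $\iota_\C$. Existence holds because every morphism of $\I$ is a partial isomorphism (with partial inverse supplied by the inverse-category structure), so by the preservation argument above, $F$ lands in $\mathrm{PIso}(\C)$; uniqueness is immediate from faithfulness of $\iota_\C$. The main subtlety---and the step I would treat most carefully---is the $2$-cell correspondence. Given $F,G \colon \I \to \C$ restriction functors and $\alpha \colon F \To G$ with total components, one must verify that each $\alpha_A$ is an isomorphism in $\mathrm{PIso}(\C)$; the plan is to combine naturality at $f \colon A \to B$ with naturality at its partial inverse $f^\ast \colon B \to A$, and exploit the equational characterisation of partial isomorphisms from Lemma~\ref{lem:9}, to exhibit an explicit two-sided inverse to $\alpha_A$ built from $\alpha^\ast$. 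Conversely, any natural isomorphism between the lifts into $\mathrm{PIso}(\C)$ has components which are isomorphisms, hence in particular total, giving a total natural transformation back in $\C$.
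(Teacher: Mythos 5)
Your one-dimensional argument is sound, and it is essentially the only argument available: note that the paper itself offers no proof of this corollary, simply citing \cite[Proposition~2.24]{Cockett2002Restriction}. The pieces you assemble---closure of partial isomorphisms under composition, existence of partial inverses, preservation of partial isomorphisms by restriction functors, and the factorisation of any restriction functor $\I \rightarrow \C$ through the inclusion $\mathrm{PIso}(\C) \subseteq \C$---are exactly what is needed for the adjunction of underlying $1$-categories. (One cosmetic imprecision: Lemma~\ref{lem:9}(ii) gives uniqueness of \emph{partial} inverses, whereas an inverse category requires uniqueness of \emph{generalised} inverses $t$ with $sts=s$, $tst=t$; this needs Lemma~\ref{lem:9}(iv) together with commutativity of restriction idempotents, i.e.\ the criterion of Lawson quoted before Definition~\ref{def:21}, or simply an appeal to Proposition~\ref{prop:28}.)

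The genuine gap is the step you yourself flag as the main subtlety: the $2$-cell correspondence. No argument of the kind you propose can succeed. Take $\I = \mathbf{1}$ the terminal inverse category and $\C = \cat{Set}_p$. Restriction functors $\mathbf{1} \rightarrow \cat{Set}_p$ are just sets, and a natural transformation with total components between two such is an \emph{arbitrary} function $X \rightarrow Y$; naturality at $f$ and at $f^\ast$ is vacuous, since the only $f$ is an identity. Such a function need not be a bijection---indeed it need not even be a partial isomorphism, so its component does not lie in $\mathrm{PIso}(\cat{Set}_p)$ at all. The same example undercuts your earlier claim that $\mathrm{PIso}$ is ``clearly $2$-functorial, taking total natural transformations to arbitrary natural isomorphisms'': $\mathrm{PIso}$ cannot act on a general $2$-cell of $\mathrm{r}\cat{Cat}$ by restricting its components. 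In fact the obstruction lies in the two-dimensional reading of the statement rather than in your ingenuity: with the $2$-cells of Definitions~\ref{def:19} and~\ref{def:21}, the hom-categories $\mathrm{r}\cat{Cat}(\mathbf{1}, \cat{Set}_p)$ (sets and functions) and $\mathrm{i}\cat{Cat}(\mathbf{1}, \mathrm{PIso}(\cat{Set}_p))$ (sets and bijections) are not even equivalent, so no choice of argument yields a strict $2$-adjunction (or biadjunction) with these $2$-cells. What is true, and what your proof should be trimmed to establish, is the $1$-adjunction (the coreflection), together with its $2$-dimensional refinement for those $2$-cells of $\mathrm{r}\cat{Cat}$ whose components are partial isomorphisms---for instance natural isomorphisms---which is presumably the reading intended by the citation to \cite{Cockett2002Restriction}.
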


For example, $\mathrm{PIso}(\cat{Set}_p)$ is the inverse category
$\cat{P\I nj}$ of
sets and partial isomorphisms, $\mathrm{PIso}(\mathrm{Top}_p)$ is the
category of spaces and partial homeomorphisms, and so on.

\subsection{Join inverse categories}
\label{sec:join-inverse-categ}
Joins in inverse categories are slightly different from joins in
restriction categories. We say that parallel maps
$s,t \colon A \rightarrow B$ in an inverse category are
\emph{bicompatible}, written $s \asymp t$, if both $s \smile t$ and
$s^\ast \smile t^\ast$.

\begin{Defn}
  \label{def:18}
  An inverse category $\I$ is a \emph{join inverse category} if every
  pairwise-bicompatible family of maps $\{s_i\} \subseteq \I(A,B)$ admits a join
  $\bigvee_i s_i$ with respect to the natural partial order, and these
  joins are preserved by precomposition. We write
  $\mathrm{ji}\cat{Cat}$ for the $2$-category of join inverse
  categories, functors preserving joins, and natural isomorphisms.
\end{Defn}
This time, it is pairwise-\emph{bi}compatibility that is a necessary
condition for a join to exist in an inverse category. 
The gap between
compatibility and bicompatibility complicates the relation between
join inverse and join restriction categories. The more
straightforward direction is from join restriction to join inverse.
\begin{Lemma}
  \label{lem:22}
  Let $\{s_i\}$ be a compatible family of partial isomorphisms in a
  join restriction category. The join $\bigvee_i s_i$ is a partial
  isomorphism if and only if $\{s_i\}$ is a bicompatible family.
\end{Lemma}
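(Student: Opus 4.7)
The plan is to treat the two directions separately, with the main content lying in how the restriction axioms interact with partial inverses and the distributivity of joins over composition.

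For the forward direction, suppose $s = \bigvee_i s_i$ is a partial isomorphism, with partial inverse $s^\ast$. The family $\{s_i\}$ is compatible by hypothesis, so the task is to show $\{s_i^\ast\}$ is compatible as well. My key claim is that whenever $s_i \leqslant s$ with both $s_i$ and $s$ partial isomorphisms, then $s_i^\ast \leqslant s^\ast$. To see this, write $s_i = s \overline{s_i}$, so that taking partial inverses (using Lemma~2.28(iii) and the fact that $\overline{s_i}$ is self-inverse) gives $s_i^\ast = \overline{s_i} s^\ast$. Restriction axiom~(iv) applied with $f = s^\ast$ and $g = \overline{s_i}$ yields $\overline{s_i} s^\ast = s^\ast \overline{\overline{s_i} s^\ast} = s^\ast \overline{s_i^\ast}$, so indeed $s_i^\ast \leqslant s^\ast$. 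Since all the $s_i^\ast$ share the upper bound $s^\ast$, the observation following Definition~\ref{def:17} shows they are pairwise compatible, as required.

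For the backward direction, suppose $\{s_i\}$ is bicompatible, so that both $s = \bigvee_i s_i$ and $t = \bigvee_i s_i^\ast$ exist in $\C$. I will show $t$ is partial inverse to $s$, from which the result follows by Lemma~2.28(ii). Using distributivity of joins over composition on both sides (Definition~\ref{def:17} and Lemma~\ref{lem:18}(ii)), we have $ts = \bigvee_{i,j} s_i^\ast s_j$. Since $s_i^\ast \smile s_j^\ast$, Lemma~2.28(v) applied to the partial isomorphisms $s_i^\ast, s_j^\ast$ tells us that $s_i^\ast (s_j^\ast)^\ast = s_i^\ast s_j$ is a restriction idempotent on $A$. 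As a restriction idempotent, $s_i^\ast s_j = \overline{s_i^\ast s_j} \leqslant \overline{s_j}$ by the standard fact $\overline{fg} \leqslant \overline{g}$ (immediate from axiom~(iii)). Therefore $s_i^\ast s_j \leqslant \overline{s_j} \leqslant \overline{s}$, and so $ts \leqslant \overline{s}$. Conversely, Lemma~\ref{lem:18}(i) gives $\overline{s} = \bigvee_i \overline{s_i} = \bigvee_i s_i^\ast s_i \leqslant ts$. Hence $ts = \overline{s}$; by the symmetric argument (swapping the roles of $\{s_i\}$ and $\{s_i^\ast\}$), $st = \overline{t}$, so $t = s^\ast$.

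The computations are all short, and the only subtlety is the forward direction, where one must recognise that $s_i \leqslant s$ entails $s_i^\ast \leqslant s^\ast$---this is not a formal consequence of the definitions but requires exactly one application of axiom~(iv). Once that is in hand, both directions collapse to routine uses of distributivity and the characterisations of compatibility provided by Lemma~2.28.
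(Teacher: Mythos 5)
Your proof is correct and takes essentially the same route as the paper's: the forward direction rests on the observation that $s_i \leqslant s$ forces $s_i^\ast \leqslant s^\ast$ via Lemma~\ref{lem:9}(iii), and the backward direction expands the composite of the two joins by distributivity and uses Lemma~\ref{lem:9}(v) to see that the cross-terms ($s_i^\ast s_j$ in your version, $s_i s_j^\ast$ in the paper's) are restriction idempotents. The only differences are presentational: the paper collapses the double join onto its diagonal terms rather than proving two inequalities, and it leaves implicit the small axiom-(iv) step you spell out when deducing $s_i^\ast \leqslant s^\ast$.
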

\begin{proof}
  If $\{s_i\}$ is bicompatible, then $\{s_j^\ast\}$ is compatible, and
  so $\bigvee_j s_j^\ast$ exists. Now compatibility of $\{s_i\}$ shows
  by Lemma~\ref{lem:9}(v) that
  each $s_i s_j^\ast$ is a restriction idempotent, whence
  $s_i s_j^\ast = \overline{s_i s_j^\ast} \leqslant
  \overline{s_j^\ast} = s_j s_j^\ast$. This gives the starred equality in
  \begin{equation*}
    \textstyle(\bigvee_i s_i)(\bigvee_j s_j^\ast) = \bigvee_{i,j}
    s_is_j^\ast \stackrel{(\ast)}{=} \bigvee_j s_js_j^\ast  =  
    \bigvee_j \overline{s_j^\ast} = \overline{\bigvee_j s_j^\ast} \rlap{ .}
  \end{equation*}
  This, together with the dual calculation (swapping the $s_i$'s and
  $s_i^\ast$'s) shows that
  $\bigvee_i s_i$ and $\bigvee_j s_j^\ast$ are partial inverse.
  Suppose conversely that $s = \bigvee_i s_i$ is a partial
  isomorphism. Since $s_i \leqslant s$ for each $i$, also
  $s_i^\ast \leqslant s^\ast$ by Lemma~\ref{lem:9}(iii), and so both
  $\{s_i\}$ and $\{s_i^\ast\}$ are pairwise-compatible, i.e.,
  $\{s_i\}$ is pairwise-bicompatible.
\end{proof}
It follows from this that:
\begin{Prop}\label{prop:16}
  If $\C$ is a join restriction category, then $\mathrm{PIso}(\C)$ is
  a join inverse category; this assignation yields a $2$-functor
  $\mathrm{PIso} \colon \mathrm{jr}\cat{Cat} \rightarrow
  \mathrm{ji}\cat{Cat}$.
\end{Prop}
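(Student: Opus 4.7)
The plan splits naturally into two parts: first verify that $\mathrm{PIso}(\C)$ satisfies the axioms of a join inverse category, and second extend this to a $2$-functor by combining with the action of $\mathrm{PIso}$ already established in Corollary~\ref{cor:4}. For the first part, given a pairwise-bicompatible family $\{s_i\} \subseteq \mathrm{PIso}(\C)(A,B)$, bicompatibility implies compatibility, so the join $t = \bigvee_i s_i$ exists in $\C$. Lemma~\ref{lem:22} then says exactly that bicompatibility of $\{s_i\}$ is the condition which forces $t$ to be a partial isomorphism; hence $t \in \mathrm{PIso}(\C)(A,B)$. Since the canonical partial order on $\mathrm{PIso}(\C)(A,B)$ is defined by the same restriction structure as on $\C(A,B)$, the element $t$ is also the join in $\mathrm{PIso}(\C)(A,B)$.

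For preservation of joins by precomposition, fix a partial isomorphism $u \colon A' \to A$ and consider $\{s_i u\}$. Compatibility of $\{s_i\}$ transfers to $\{s_i u\}$ since compatibility in any restriction category is preserved by precomposition (a direct consequence of axiom~(iv) of Definition~\ref{def:15}); and compatibility of $\{s_i^\ast\}$ transfers in the same way to $\{u^\ast s_i^\ast\} = \{(s_i u)^\ast\}$. Applying the preceding argument to $\{s_i u\}$, its join in $\mathrm{PIso}(\C)$ coincides with $\bigvee_i (s_i u)$ in $\C$, which by~\eqref{eq:27} equals $(\bigvee_i s_i) u = t u$.

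For the $2$-functorial extension, note that a join restriction functor $F \colon \C \to \D$ automatically restricts to a functor $\mathrm{PIso}(\C) \to \mathrm{PIso}(\D)$, since partial isomorphisms are equationally defined. To see it preserves joins of bicompatible families, observe that if $\{s_i\}$ is bicompatible in $\mathrm{PIso}(\C)$ then $\{Fs_i\}$ and $\{F(s_i^\ast)\} = \{(Fs_i)^\ast\}$ are both compatible in $\D$ (restriction functors preserve the compatibility relation), so $\{Fs_i\}$ is bicompatible, and $F(\bigvee_i s_i) = \bigvee_i F s_i$ by the join-preservation hypothesis on $F$. On $2$-cells, we invoke Corollary~\ref{cor:4}: the action of $\mathrm{PIso}$ on total transformations is already known to yield well-defined $2$-cells in $\mathrm{i}\cat{Cat}$, and this action descends unchanged. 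I expect no substantive obstacle beyond bookkeeping; the only genuinely delicate point is the passage from compatibility to bicompatibility, which is exactly what Lemma~\ref{lem:22} has already handled.
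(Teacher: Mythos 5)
Your proposal is correct and takes essentially the same route as the paper, which derives Proposition~\ref{prop:16} immediately from Lemma~\ref{lem:22}: your write-up just makes explicit the routine verifications (joins of bicompatible families computed as in $\C$, stability under precomposition via~\eqref{eq:27}, and $2$-functoriality deferred to Corollary~\ref{cor:4}) that the paper leaves implicit. The only slight gloss is calling the passage from $\{s_i^\ast\}$ to $\{u^\ast s_i^\ast\}$ an instance of precomposition when it is really postcomposition with $u^\ast$; since compatibility in a restriction category is stable under both, nothing breaks.
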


In the other direction, however, a join inverse category \emph{qua}
restriction category is \emph{not} typically a join restriction
category, since it has only joins of bicompatible families and not
compatible ones. However, we can freely adjoin these:
\begin{Defn}
  \label{def:22}
  Let $\I$ be a join inverse category. Its \emph{join restriction
    completion} $\mathrm{jr}(\I)$ has the same objects as $\I$, while
  a morphism $A \rightarrow B$ is a compatible subset
  $S \subseteq \I(A,B)$ which is closed downwards and under joins of
  bicompatible families. The identity on $A$ is the downset
  $\mathop\downarrow 1_A$; the composite of $S \colon A \rightarrow B$
  and $T \colon B \rightarrow C$ is the closure of
  $\{ts : t \in T, s\in S\}$ downwards and under bicompatible joins;
  and the restriction $\overline{S} \colon A \rightarrow A$ is 
  the downset of $\bigvee_{s\in S} \overline{s}$.
\end{Defn}

\begin{Prop}
  \label{prop:12}
  $\mathrm{jr}(\I)$ is a join restriction category for any join
  inverse category $\I$. It provides the value at $\I$ of a left
  $2$-adjoint to
  $\mathrm{PIso} \colon \mathrm{jr}\cat{\cat{Cat}} \rightarrow
  \mathrm{ji}\cat{Cat}$.
\end{Prop}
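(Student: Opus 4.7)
The plan is threefold: first, check that $\mathrm{jr}(\I)$ as described forms a join restriction category; second, define a unit functor $\eta_\I \colon \I \to \mathrm{PIso}(\mathrm{jr}(\I))$ in $\mathrm{ji}\cat{Cat}$; third, verify the universal property of this unit against join restriction categories $\D$ equipped with a join-preserving functor from $\I$ to $\mathrm{PIso}(\D)$.

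For stage one, the critical well-definedness check is that $\{ts : t \in T, s \in S\}$ is pairwise-compatible in $\I$ whenever $S$ and $T$ are. By Lemma \ref{lem:9}(v), this reduces to showing that $(ts)(t's')^\ast = t(ss'^\ast)t'^\ast$ is a restriction idempotent; since $s \smile s'$ makes $ss'^\ast \in \O(B)$ a restriction idempotent, this follows from a calculation using commutativity of restriction idempotents together with the restriction axiom $\bar g f = f \overline{gf}$ of Definition \ref{def:15}(iv). The restriction axioms for $\overline{S} = \mathop\downarrow \bigvee_{s \in S} \bar s$ reduce to corresponding identities in $\I$. For joins, given a compatible family $\{S_i\}$ in $\mathrm{jr}(\I)(A,B)$, compatibility of the union $\bigcup_i S_i$ in $\I$ is inherited from that of the $S_i$'s, so its closure under downward-passage and bicompatible joins is a well-defined morphism and is visibly the join in $\mathrm{jr}(\I)$; preservation by precomposition is immediate from the definition of composition.

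For stage two, $\eta_\I(s) \defeq \mathop\downarrow s$ lands in $\mathrm{PIso}(\mathrm{jr}(\I))$ with partial inverse $\mathop\downarrow s^\ast$, and is functorial and preserves bicompatible joins by construction. For stage three, the universal property rests on the identity $S = \bigvee_{s \in S} \mathop\downarrow s$, which holds in $\mathrm{jr}(\I)$ since $S$ is already closed under downwards passage and bicompatible joins. Consequently, for any $F \colon \I \to \mathrm{PIso}(\D)$ in $\mathrm{ji}\cat{Cat}$, any extension is forced to satisfy $\tilde F(S) \defeq \bigvee_{s \in S} F(s)$; this formula does indeed define a functor, because $F$ preserves the restriction-idempotent characterisation of compatibility from Lemma \ref{lem:9}(v), so the family $\{F(s) : s \in S\}$ is compatible in $\D$ and the requisite join exists. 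Functoriality, preservation of restriction, and preservation of joins for $\tilde F$ then all follow from joint continuity of composition and restriction in the join restriction category $\D$. At the 2-cell level, a total natural isomorphism $\alpha$ in $\mathrm{ji}\cat{Cat}$ transports component-wise to a total natural transformation $\tilde \alpha$ in $\mathrm{jr}\cat{Cat}$, completing the verification of 2-naturality.

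The main obstacle is the initial compatibility-under-composition verification, which requires a careful interplay of the restriction axioms with Lemma \ref{lem:9}(v); once this is in hand, the rest of the proof is essentially structural, as the construction of $\mathrm{jr}(\I)$ is forced by the desideratum that every morphism be the join of its underlying partial isomorphisms, and the adjunction unfolds accordingly.
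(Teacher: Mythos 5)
The paper offers no argument of its own here: its ``proof'' is the single line ``See~\cite[Theorem~3.1.33]{Guo2012Products}''. So there is nothing to compare line-by-line; what can be said is that your direct verification follows exactly the route the paper implicitly relies on. The unit $\eta_\I(f) = \mathop\downarrow f$ and the extension $\tilde F(S) = \bigvee_{s \in S} F(s)$ (equivalently the counit $\varepsilon_\C(S) = \bigvee S$) that you construct are precisely the ones the paper takes for granted in the discussion preceding Proposition~\ref{prop:14} and in that proposition's proof. Your central well-definedness computation for composition is correct: writing $(ts)(t's')^\ast = t(ss'^\ast)t'^\ast$, the inner factor $ss'^\ast$ is a restriction idempotent by Lemma~\ref{lem:9}(v), axiom (iv) of Definition~\ref{def:15} moves it past $t'^\ast$ at the cost of a restriction idempotent, and what remains is a product of restriction idempotents, hence one itself; so $ts \smile t's'$, again by Lemma~\ref{lem:9}(v).

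One step you gloss over deserves explicit attention, since it is the only place where something could silently go wrong. In constructing joins in $\mathrm{jr}(\I)$ you assert that compatibility of a family $\{S_i\}$ \emph{as morphisms of $\mathrm{jr}(\I)$} hands you pairwise compatibility of the union $\bigcup_i S_i$ \emph{in $\I$}. This is true but not immediate: $S_i \smile S_j$ only says that $S_i\overline{S_j}$ and $S_j\overline{S_i}$ coincide as subsets, not that elements match up pointwise. The repair is short: for $s \in S_i$ and $u \in S_j$, both $s\bar u$ and $u\bar s$ are generating elements of the single compatible set $S_i\overline{S_j} = S_j\overline{S_i}$, hence $s\bar u \smile u\bar s$; expanding this as $s\bar u\,\overline{u\bar s} = u\bar s\,\overline{s\bar u}$ and simplifying (restriction idempotents commute, and $s\bar s = s$, $u \bar u = u$) yields $s\bar u = u\bar s$, as required. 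In the same spirit you should record that the natural partial order on $\mathrm{jr}(\I)$ is subset inclusion, so that the closure of $\bigcup_i S_i$ really is the \emph{least} upper bound, and so that your identity $S = \bigvee_{s \in S} \mathop\downarrow s$—on which uniqueness in the universal property rests—is justified. With these points patched, your outline is a sound stand-in for the external citation.
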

\begin{proof}
  See~\cite[Theorem~3.1.33]{Guo2012Products}.
\end{proof}

The unit of the $2$-adjunction $\mathrm{jr} \dashv \mathrm{PIso}$ at
$\I \in \mathrm{ji}\cat{Cat}$ is the join-preserving functor
$\eta_\I \colon \I \rightarrow \mathrm{PIso}(\mathrm{jr}(\I))$ sending
$f$ to $\mathop \downarrow f$; while the counit at
$\C \in \mathrm{jr}\cat{Cat}$ is the join restriction functor
$\varepsilon_\C \colon \mathrm{jr}(\mathrm{PIso}(\C)) \rightarrow \C$ sending
$S \colon A \rightarrow B$ to
$\bigvee S \colon A \rightarrow B$. What was not observed
in~\cite{Guo2012Products} is that:

\begin{Prop}
  \label{prop:14}
  The unit $\eta$ of the $2$-adjunction $\mathrm{jr} \dashv \mathrm{PIso}$ is
  an isomorphism. Each counit component $\varepsilon_\C$ is bijective on objects and
  faithful, and is full precisely when each map in $\C$ is a join of
  partial isomorphisms.
\end{Prop}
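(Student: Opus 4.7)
The proposition splits into three claims; I address each in turn.

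\textbf{Unit is an isomorphism.} As $\eta_\I$ is identity on objects by construction, I need only show that $f \mapsto \mathop\downarrow f$ is a bijection from $\I(A,B)$ onto the set of partial isomorphisms $A \to B$ in $\mathrm{jr}(\I)$. The forward direction is routine: a short calculation with the restriction axioms shows that $\mathop\downarrow f^\ast$ is partial inverse to $\mathop\downarrow f$, since the generating elements $ts$ of $(\mathop\downarrow f^\ast)(\mathop\downarrow f)$ reduce to $\overline{f}\,\overline{tf}\,\overline{s} \leqslant \overline f$, yielding the composite $\mathop\downarrow \overline f = \overline{\mathop\downarrow f}$. For the converse, fix a partial isomorphism $S \in \mathrm{jr}(\I)(A,B)$ with inverse $T$. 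A routine verification shows that the canonical partial order on $\mathrm{jr}(\I)(A,B)$ coincides with subset inclusion; consequently $\mathop\downarrow s \leqslant S$ for each $s \in S$, and since the partial-inverse operation is order-preserving on partial isomorphisms, $\mathop\downarrow s^\ast \leqslant T$, so $s^\ast \in T$. Hence for any $s_1, s_2 \in S$ the product $s_1^\ast s_2 \in TS = \overline S$ is a restriction idempotent on $A$; by Lemma~\ref{lem:9}(v) this is exactly the condition $s_1^\ast \smile s_2^\ast$. Combined with the compatibility of $S$, the family $\{s : s \in S\}$ is \emph{bi}compatible in $\I$, so admits a join $f$; closure of $S$ under bicompatible joins gives $f \in S$, and downward-closure then forces $S = \mathop\downarrow f$.

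\textbf{Counit is bijective on objects and faithful.} Bijectivity on objects is immediate. For faithfulness, suppose $\bigvee S = \bigvee S' = f$ in $\C$. Any $s \in S$ and $s' \in S'$ both lie below $f$ and so are compatible, whence Lemma~\ref{lem:30}(iv) together with distributivity of meet over compatible joins in $\C$ (derived from $\sheq{s}{\bigvee_i g_i} = \bigvee_i \sheq{s}{g_i}$) yields $s = s \wedge f = \bigvee_{s' \in S'}(s \wedge s')$. Each meet $s \wedge s' = s' \sheq{s}{s'}$ is a partial isomorphism $\leqslant s'$, so lies in $S'$; and the family $\{s \wedge s' : s' \in S'\}$, being bounded by $s$, is bicompatible. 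Since $S'$ is closed under bicompatible joins, $s \in S'$. By symmetry $S = S'$.

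\textbf{Fullness characterisation.} ``Only if'' is tautological: any $\varepsilon_\C(S)$ is by definition a join of partial isomorphisms. Conversely, if $f = \bigvee_i s_i$ expresses $f$ as such a join, the downward-and-bicompatible-join closure $S$ of $\{s_i\}$ in $\mathrm{PIso}(\C)(A,B)$ is an element of $\mathrm{jr}(\mathrm{PIso}(\C))(A,B)$ (compatibility holds since all $s_i \leqslant f$), and $\varepsilon_\C(S) = \bigvee S = f$. The principal obstacle throughout is the converse of the first claim: promoting an abstract partial isomorphism $S$ of $\mathrm{jr}(\I)$ to an actual element of $\I$. The leap is from the compatibility of $S$ (built into its definition) to the \emph{bi}compatibility required for a join to exist in the join inverse category $\I$; the bridge is that the partial inverse $T$ must contain $s^\ast$ for each $s \in S$, after which $TS = \overline S$ and Lemma~\ref{lem:9}(v) together supply the missing condition.
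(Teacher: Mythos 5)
Your proof is correct, and it follows the paper's decomposition into the same three claims; the fullness characterisation is argued identically. The differences are in mechanism. For the unit, the paper writes a partial isomorphism $S$ of $\mathrm{jr}(\I)$ as $S = \bigvee_{s \in S}\eta_\I(s)$ and invokes the criterion of Lemma~\ref{lem:22} (a compatible join of partial isomorphisms is a partial isomorphism iff the family is bicompatible), transferring bicompatibility to $\I$ along the faithful $\eta_\I$; you instead work with the explicit partial inverse $T$, proving $s^\ast \in T$ for every $s \in S$ and deducing bicompatibility via Lemma~\ref{lem:9}(v). This is more hands-on but costs you the auxiliary fact that the natural order on $\mathrm{jr}(\I)$-homs coincides with subset inclusion (your ``routine verification'', whose nontrivial direction genuinely needs compatibility of the larger morphism and closure of the smaller one under bicompatible joins); note also that once you have $s^\ast \in T$ for all $s \in S$, pairwise compatibility of the morphism $T$ already gives $s_1^\ast \smile s_2^\ast$, so your detour through $TS = \overline{S}$ and Lemma~\ref{lem:9}(v) is avoidable. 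For counit faithfulness, your identity $s = \bigvee_{s' \in S'}(s \wedge s')$ is the paper's computation $t = \bigvee_{s}s\,\overline{t}$ in disguise, since for compatible maps one has $s \wedge s' = s'\overline{s}$ by Lemma~\ref{lem:30}(i) and (iv); but you reach it through the distributivity law $\sheq{s}{\bigvee_i g_i} = \bigvee_i \sheq{s}{g_i}$, which is true for compatible families $\{g_i\}$ but is proved nowhere in the paper and deserves a line of justification (decompose $e = \bigvee_i \overline{g_i}e$ and use compatibility of the $g_i$'s), whereas the paper's direct computation carries no such dependency.
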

\begin{proof}
  Clearly each
  $\eta_\I \colon \I \rightarrow \mathrm{PIso}(\mathrm{jr}(\I))$ is
  bijective on objects and faithful. To show it is full, consider a map
  $S \colon A \rightarrow B$ in $\mathrm{jr}(\I)$ which is a
  partial isomorphism. Note that $S = \bigvee_{s \in S} \eta_\I(s)$ in
  $\mathrm{jr}(\I)$, so that by Proposition~\ref{prop:15},
  $\{\eta_\I(s) : s \in S\}$ is a bicompatible family. By fidelity of
  $\eta_\I$, also $S$ is a bicompatible family in
  $\I$ so that $\bigvee S$ exists. But now
  $\eta_\I(\bigvee S) = \bigvee_{s \in S} \eta_\I(s) = S$ so that $\eta_\I$
  is full.

  Turning now to
  $\varepsilon_\C \colon \mathrm{jr}(\mathrm{PIso}(\C)) \rightarrow
  \C$, clearly it is bijective on objects. For fidelity, we claim
  that any $S \colon A \rightarrow B$ in
  $\mathrm{jr}(\mathrm{PIso}(\C))$ is determined by
  $\varepsilon_\C(S)$ as
  $S = \{\, t \in \C(A,B) : t \text{ a partial isomorphism and }t
  \leqslant \varepsilon_\C(S)\,\}$. For the non-trivial inclusion,
  suppose that $t \leqslant \varepsilon_\C(S) = \bigvee S$ is a
  partial isomorphism; then we have
  $t = (\bigvee S) \overline{t} = \textstyle\bigvee_{s \in S}
  s\overline{t}$. The family $\{s \overline{t} : s \in S\}$ is
  contained in $S$, since $S$ is down-closed; it is also bicompatible,
  since it is bounded above by the partial isomorphism $t$. Since $S$
  is closed under bicompatible joins, it follows that
  $t = \bigvee_{s} s\overline{t} \in S$ as required.

  Finally, we consider when $\varepsilon_\C$ is full. If
  $p = \bigvee_{i}p_i \colon A \rightarrow B$ is any join of partial
  isomorphisms in $\C$, then on taking $S \subseteq \C(A,B)$ to be the
  closure of $\{p_i\}$ downwards and under bicompatible joins, we have
  $p = \bigvee_{i}p_i = \bigvee S = \varepsilon_\C(S)$. So the maps in
  the image of $\varepsilon_\C$ are precisely the joins of partial
  isomorphisms, and $\varepsilon_\C$ is full just when every map in
  $\C$ is of this form.
  %
\end{proof}
This leads us to make:
\begin{Defn}
  \label{def:24}
  An \emph{\'etale} map in a join restriction category $\C$ is a join
  of partial isomorphisms. We call $\C$ an \emph{\'etale join
    restriction category} if all its maps are~\'etale.
\end{Defn}
And we thus obtain:
\begin{Cor}
  \label{cor:6}
  The $2$-functor
  $\mathrm{jr} \colon \mathrm{ji}\cat{Cat} \rightarrow
  \mathrm{jr}\cat{Cat}$ is a full coreflective embedding of
  $2$-categories; its essential image comprises the \'etale join
  restriction categories. The induced coreflector
  $\mathrm{jr} \circ \mathrm{PIso}$ on $\mathrm{jr}\cat{Cat}$ sends a
  join restriction category to its subcategory $\mathrm{Et}(\C)$ of
  \'etale maps.
\end{Cor}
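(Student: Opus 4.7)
The plan is to assemble the corollary directly from Propositions~\ref{prop:12} and~\ref{prop:14}. Proposition~\ref{prop:12} provides the $2$-adjunction $\mathrm{jr} \dashv \mathrm{PIso}$, and Proposition~\ref{prop:14} shows its unit $\eta$ is invertible. In any $2$-adjunction, invertibility of the unit means the left adjoint induces isomorphisms on hom-categories, so $\mathrm{jr}$ is $2$-categorically fully faithful. As a fully faithful left adjoint, it then exhibits $\mathrm{ji}\cat{Cat}$ as a full coreflective sub-$2$-category of $\mathrm{jr}\cat{Cat}$, with coreflector $\mathrm{PIso}$ and induced comonad $\mathrm{jr} \circ \mathrm{PIso}$ on $\mathrm{jr}\cat{Cat}$.

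For the essential image, observe that $\C \in \mathrm{jr}\cat{Cat}$ lies there precisely when the counit $\varepsilon_\C \colon \mathrm{jr}(\mathrm{PIso}(\C)) \to \C$ is an equivalence. By Proposition~\ref{prop:14}, $\varepsilon_\C$ is always bijective on objects and faithful, so being an equivalence (in fact an isomorphism) reduces to being full. The same proposition pinpoints fullness as equivalent to every map of $\C$ being a join of partial isomorphisms---that is, to $\C$ being \'etale.

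For the concrete description of the coreflector as $\mathrm{Et}$, I first need to check that $\mathrm{Et}(\C)$ is really a join restriction subcategory of $\C$: identities are partial isomorphisms; composition closure uses distributivity~\eqref{eq:27} together with Lemma~\ref{lem:18}(ii) to expand $(\bigvee_j q_j)(\bigvee_i p_i) = \bigvee_{i,j} q_j p_i$, with each $q_j p_i$ again a partial isomorphism by Lemma~\ref{lem:9}(iii); and closure under compatible joins is clear, since an iterated compatible join of joins of partial isomorphisms remains a join of partial isomorphisms. Then $\varepsilon_\C$ is faithful, bijective on objects, and has image precisely $\mathrm{Et}(\C) \subseteq \C$, so it factors through the inclusion $\mathrm{Et}(\C) \hookrightarrow \C$ as an isomorphism of join restriction categories.

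The substantive work is already accomplished in Propositions~\ref{prop:12} and~\ref{prop:14}; the only genuinely new verification required is that $\mathrm{Et}(\C)$ is closed under composition and bicompatible joins of partial isomorphisms, which is routine from the distributivity properties of join restriction categories. Thus the main ``obstacle'' here is organisational, namely recognising that the unit/counit analysis in Proposition~\ref{prop:14} already pins down both the essential image and the pointwise form of the coreflector.
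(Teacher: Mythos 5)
Your proposal is correct and follows essentially the same route as the paper, which presents this corollary as an immediate consequence of Propositions~\ref{prop:12} and~\ref{prop:14}: the invertible unit gives the full coreflective embedding, and the characterisation of fullness of $\varepsilon_\C$ identifies the essential image and the pointwise coreflector. Your additional check that $\mathrm{Et}(\C)$ is closed under composition and compatible joins (via distributivity and Lemma~\ref{lem:9}(iii)) is a detail the paper leaves implicit, but it is the same argument.
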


\section{The theory of local homeomorphisms}
\label{sec:etale-maps}

The previous result highlights the importance of \'etale maps in join
restriction categories. Of particular relevance to us will be the
total \'etale maps, which we call \emph{local homeomorphisms}.
\begin{Exs}
  \label{ex:12} 
  \begin{itemize}[itemsep=0.25\baselineskip]
  \item Any map in $\cat{Set}_p$ is a local homemorphism.
  \item The local homeomorphisms in $\cat{Top}_p$ or $\cat{Man}_p$ are
    the local
    homeomorphisms in the usual sense, while in $\cat{Smooth}_p$ they
    are the local diffeomorphisms. 
  \item A total map $f \colon P \rightarrow Q$ in $\cat{Pos}_p$ is
    a local homeomorphism if and only if it is a \emph{discrete
      fibration}: that is, for any $p \in P$ and $q \leqslant f(p)$ in
    $Q$, there exists a unique $p' \leqslant p$ in $P$ with
    $f(p') = q$.
  \item A total map $f \colon L \rightarrow M$ in $\cat{Loc}_p$ is a
    local homeomorphism just when there is a decomposition $\top_L =
    \bigvee_i \varphi_i$ and corresponding elements $\psi_i \in M$
    such that for each $i$, the map $m \mapsto f^\ast(m) \wedge
    \varphi_i$ gives a
    bijection $\mathord \downarrow \psi_i \rightarrow
    \mathord\downarrow \varphi_i$.
  \item A total map
    $(f, \varphi) \colon (X, \O_X) \rightarrow (Y, \O_Y)$ in
    $\cat{L\R\T op}_p$ is a local homeomorphism just when
    $f \colon X \rightarrow Y$ is a local homeomorphism of spaces and
    $\varphi$ is invertible; and similarly in the full subcategory of
    schemes (such maps are usually called \emph{local isomorphisms} of
    schemes.)
  \item In a join restriction category 
    $\cat{Par}(\C, \M)$, the total map specified by 
    $f \colon X \rightarrow Y$ in $\C$ is a local homeomorphism 
    just when there is a jointly surjective family of $\M$-maps
    $(m_i \colon X_i \rightarrowtail X)_{i \in I}$ such that each
    $fm_i \colon X_i \rightarrow Y$ is in $\M$.
  \item If $\C$ is a join restriction category, then a local
    homeomorphism in $\cat{Bun}(\C)$ over an object $x \colon X'
    \rightarrow X$ can be shown to be a \emph{pullback} square
    \begin{equation}\label{eq:58}
      \cd{
        {A'} \ar[r]^-{a} \ar[d]_{p'} &
        {A} \ar[d]^{p} \\
        {X'} \ar[r]_-{x} &
        {X}
      }
    \end{equation}
    where both $p$ and $p'$ are local homeomorphisms.
  \end{itemize}
\end{Exs}
In this section, we redevelop the classical theory of local
homeomorphisms between spaces in the context of a general join
restriction category $\C$.

%

\subsection{Local atlases and local glueings }
\label{sec:local-atlases}


If $A \rightarrow X$ is a local homeomorphism in a join restriction
category, then we may think of $A$ as being built by patching together
local fragments of $X$. The following structure specifies the data for
such a patching; in the nomenclature of~\cite{Fourman1979Sheaves}, it
would be called an \emph{$\O(X)$-set}.
\begin{Defn}
  \label{def:23}
  A \emph{local atlas} on an object $X$ of a join restriction category
  comprises a set $I$ and a matrix of restriction idempotents
  $\bigl(\varphi_{ij} \in \O(X) : i,j \in I\bigr)$ such that
  \begin{equation*}
    \varphi_{ij} = \varphi_{ji} \qquad \text{and} \qquad 
    \varphi_{jk} \varphi_{ij} \leqslant \varphi_{ik}.
  \end{equation*}
\end{Defn}
The manner in which a local homeomorphism induces a local atlas is
given by:

\begin{Lemma}
  \label{lem:24}
  Let $p = \bigvee_i p_i \colon A \rightarrow X$ be a local
  homeomorphism, and let each $p_i$ have the partial inverse
  $s_i \colon X \rightarrow A$. The family $\varphi_{ij} = p_j s_i$ is
  a local atlas on $X$.
\end{Lemma}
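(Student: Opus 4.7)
The plan is to verify the three defining conditions of a local atlas: (a) each $\varphi_{ij}$ is a restriction idempotent; (b) $\varphi_{ij} = \varphi_{ji}$; and (c) $\varphi_{jk}\varphi_{ij} \leqslant \varphi_{ik}$. The central observation driving (a) and (b) is that the family $\{p_i\}$ is pairwise compatible, since each $p_i \leqslant p = \bigvee_j p_j$, and hence any two are bounded by a common map (recall that any family with a common upper bound is pairwise compatible, as noted just after Definition~\ref{def:17}). I can then invoke Lemma~\ref{lem:9}(v): for two compatible partial isomorphisms $p_i, p_j$, the composite $p_j s_i = p_j p_i^\ast$ is a restriction idempotent. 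That immediately gives (a). For (b), the same lemma (or the symmetric version $st^\ast = ts^\ast$ proved in its final line) yields $p_j s_i = p_i s_j$, i.e.\ $\varphi_{ij} = \varphi_{ji}$.

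For (c), the idea is a short restriction-algebraic calculation. Using $s_j p_j = \overline{p_j}$,
\begin{equation*}
\varphi_{jk}\varphi_{ij} \;=\; (p_k s_j)(p_j s_i) \;=\; p_k \overline{p_j}\, s_i\rlap{ .}
\end{equation*}
Now applying restriction axiom~(iv) with $f = s_i$ and $g = p_j$ gives $\overline{p_j}\, s_i = s_i \overline{p_j s_i} = s_i\,\varphi_{ij}$, whence
\begin{equation*}
\varphi_{jk}\varphi_{ij} \;=\; p_k s_i\, \varphi_{ij} \;=\; \varphi_{ik}\,\varphi_{ij}\rlap{ .}
\end{equation*}
Since $\varphi_{ij}$ is a restriction idempotent (by (a)), the product $\varphi_{ik}\varphi_{ij}$ is again a restriction idempotent and satisfies $\varphi_{ik}\varphi_{ij} \leqslant \varphi_{ik}$ (it is the meet of $\varphi_{ik}$ and $\varphi_{ij}$ in the semilattice $\O(X)$). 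This establishes (c).

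There is no real obstacle: the whole argument reduces to two invocations of Lemma~\ref{lem:9}(v) plus a one-line computation using axiom~(iv). The only subtlety worth flagging is that one must justify at the outset why $\{p_i\}$ is pairwise compatible, and this is ensured by their common upper bound $p$.
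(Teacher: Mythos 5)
Your proof is correct and follows essentially the same route as the paper's: Lemma~\ref{lem:9}(v) applied to the compatible family $\{p_i\}$ gives both idempotency and the symmetry $p_j s_i = p_i s_j$, and the cocycle inequality comes from the computation $p_k s_j p_j s_i = p_k \overline{p_j}\, s_i \leqslant p_k s_i$, which is exactly what you do (you merely make the intermediate step $\overline{p_j}\, s_i = s_i \varphi_{ij}$ via restriction axiom~(iv) explicit, where the paper invokes monotonicity of composition directly).
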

\begin{proof}
  By compatibility of the $p_i$'s and Lemma~\ref{lem:9}(v), each
  $p_j s_i$ is a restriction idempotent. Since a restriction
  idempotent is its own restriction inverse, it follows that
  $p_j s_i = p_i s_j$. Finally, we calculate that
  $p_k s_j p_j s_i = p_k \overline{p_j} s_i \leqslant p_k s_i$.
\end{proof}

In fact, the local atlas associated to a local homeomorphism $p$
determines it up to unique isomorphism over $X$. This follows \emph{a
  fortiori} from the following result, which states that maps out of
the domain of a local homeomorphism $A \rightarrow X$ are determined
by their action on each of the patches of $X$ from which $A$ is built.
(In fact, maps into $A$ are also so determined, but we will not need this).
\begin{Lemma}
  \label{lem:10}
  Let $p = \bigvee_i p_i \colon A \rightarrow X$ be a local
  homeomorphism where each $p_i$ has partial inverse $s_i$. Let
  $\varphi_{ij} = p_j s_i$ be the associated local
  atlas. 
Precomposition with the $s_i$'s
  induces a bijection between maps $f \colon A \rightarrow B$ and
  families of maps $(f_i \colon X \rightarrow B)_{i \in I}$ such that
  \begin{equation}\label{eq:5}
    f_i \varphi_{ii} = f_i \qquad \text{and} \qquad f_j \varphi_{ij}
    \leqslant f_i \qquad \text{for all $i,j \in I$.}
  \end{equation}
  Under this bijection, $f$ is total if and only if $\overline{f_i} =
  \varphi_{ii}$ for each $i \in I$.
\end{Lemma}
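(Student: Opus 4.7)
My plan is to exhibit explicit mutually inverse maps
\[
  \Phi \colon f \mapsto (fs_i)_{i\in I} \qquad \text{and} \qquad \Psi
  \colon (f_i)_{i\in I} \mapsto \textstyle\bigvee_i f_i p_i\rlap{ ,}
\]
and then check the totality clause. That $\Phi(f)$ satisfies~\eqref{eq:5} is immediate from restriction axiom~(i): since $\varphi_{ii} = p_i s_i = \overline{s_i}$, we have $fs_i \cdot \varphi_{ii} = fs_i \overline{s_i} = fs_i$; and expanding $fs_j \cdot \varphi_{ij} = fs_j p_j s_i = f\overline{p_j} s_i$, then using axiom~(iv) to rewrite $\overline{p_j}s_i = s_i \overline{p_j s_i} = s_i \varphi_{ji}$, we obtain $fs_j \varphi_{ij} = f_i \varphi_{ji} \leqslant f_i$.

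The principal obstacle is to show that $\Psi$ is defined, i.e., that the family $\{f_i p_i\}$ is pairwise compatible, so that the join exists in $\C$. The plan is to expand $f_i p_i \overline{f_j p_j}$ as follows: first note $\overline{f_jp_j} \leqslant \overline{p_j}$, so that compatibility of the $p_i$'s gives $p_i \overline{f_j p_j} = p_i \overline{p_j} \overline{f_j p_j} = \varphi_{ji} p_j \overline{f_j p_j}$; next use axiom~(iv) in reverse to rewrite $p_j \overline{f_j p_j} = \overline{f_j} p_j$; and finally use the hypothesis $f_i \varphi_{ji} \leqslant f_j$ to rewrite $f_i \varphi_{ji}\overline{f_j} = f_j \overline{f_i \varphi_{ji}}$. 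Reversing these moves on the other side then identifies the result with $f_j p_j \overline{f_i p_i}$, as required.

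For the round-trip identities: $\Psi\Phi(f) = \bigvee_i fs_i p_i = \bigvee_i f\overline{p_i} = f \overline{\bigvee_i p_i} = f \overline{p} = f$, using $s_i p_i = \overline{p_i}$, Lemma~\ref{lem:18}, and totality of $p$. Conversely $\Phi\Psi(f_i)$ has $k$-th component $\bigvee_i f_i p_i s_k = \bigvee_i f_i \varphi_{ki}$; by~\eqref{eq:5}, the $i=k$ term is $f_k$ and all other terms are below $f_k$, so this join is $f_k$ as required.

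For the final statement on totality, the plan is to compute $\overline{f_i p_i} = s_i \overline{f_i} p_i$ by applying axiom~(iv) to $\overline{f_i}p_i$ and multiplying by $s_i$ (absorbing the resulting $\overline{p_i}$ since $\overline{f_i p_i} \leqslant \overline{p_i}$). If $f$ is total and $f_i = fs_i$, then $\overline{f_i} = \overline{s_i}$ follows by an analogous calculation $\overline{fs_i} = p_i \overline{f} s_i = p_i s_i = \varphi_{ii}$. Conversely, if each $\overline{f_i} = \varphi_{ii} = \overline{s_i}$, then $\overline{f_i p_i} = s_i \overline{s_i} p_i = s_i p_i = \overline{p_i}$, and hence $\overline{f} = \overline{\bigvee_i f_i p_i} = \bigvee_i \overline{p_i} = \overline{p} = 1_A$ by Lemma~\ref{lem:18} and totality of $p$.
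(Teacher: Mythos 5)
Your proof is correct and follows essentially the same route as the paper's: the same pair of mutually inverse assignations $f \mapsto (fs_i)$ and $(f_i) \mapsto \bigvee_i f_i p_i$, with the crux in both cases being compatibility of the family $\{f_i p_i\}$, and the remaining verifications ($fs_k = f_k$, uniqueness, totality) done by the same computations. The only real difference is at the compatibility step, which the paper dispatches more quickly via the one-sided domination $f_j p_j \overline{f_i p_i} \leqslant f_j p_j \overline{p_i} = f_j \varphi_{ij} p_i \leqslant f_i p_i$ for all $i,j$ (which suffices, since mutual domination implies compatibility), whereas you derive the compatibility equation $f_i p_i \overline{f_j p_j} = f_j p_j \overline{f_i p_i}$ head-on by a longer---but, as far as I can check, valid---chain of rewrites.
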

\begin{proof}
  Since $s_i \varphi_{ii} = s_i p_i s_i = s_i$ and
  $s_j \varphi_{ij} = s_j p_j s_i = \overline{p_j} s_i \leqslant s_i$,
  any family of maps $(fs_i)$ satisfies~\eqref{eq:5}. Conversely, for
  a family $(f_i)$ satisfying~\eqref{eq:5}, we have 
  $f_j p_j \overline{f_i p_i} \leqslant f_j p_j \overline{p_i} = f_j
  p_j s_i p_i = f_j \varphi_{ij} p_i \leqslant f_i p_i$ so that the
  family  $(f_i p_i \colon A \rightarrow B)$ is compatible. Let
  $f = \bigvee_i f_i p_i$ and note 
  $fs_i = \bigvee_j f_j p_j s_i = f_i \varphi_{ii} \vee \bigvee_{j
    \neq i} f_j \varphi_{ij} = f_i$, where the last equality
  uses both clauses in~\eqref{eq:5}. On the other hand, if
  $g \colon A \rightarrow B$ satisfies $gs_i = f_i$ for all $i$, then
  $g = \bigvee_i g \,\overline{p_i} = \bigvee_i g s_i p_i = \bigvee_i f s_i p_i = f$. Finally, if $f$
  is total, then
  $\overline{fs_i} = \overline{s_i} = p_i s_i = \varphi_{ii}$.
  Conversely, if $\overline{f_i} = \varphi_{ii}$ for each $i$, then
  $\overline{f} = \bigvee_i \overline{f_ip_i} = \bigvee_i
  \overline{\overline{f_i}p_i} = \bigvee_i
  \overline{\varphi_{ii}p_i} = \bigvee_i \overline{p_i s_i p_i} =
  \bigvee_i \overline{p_i} = 1_A$.
\end{proof}

The preceding result implies that local homeomorphisms are completely
determined by their local atlases. However, some local atlases in $\C$
may not be induced by any local homeomorphism; the following
definition rectifies this.

\begin{Defn}
  \label{def:25}
  A join restriction category $\C$ has \emph{local glueings} if every
  local atlas $\varphi$ on every $X \in \C$ is induced by some local
  homeomorphism $p \colon A \rightarrow X$. We call $p$ (or sometimes
  merely the object $A$) a \emph{glueing} of the local atlas
  $\varphi$.
\end{Defn}

\begin{Ex}
  \label{ex:5}
  The join restriction category $\cat{Top}_p$ has local glueings. For
  any $X \in \cat{Top}_p$, we can identify $\O(X)$ with the lattice of
  open subsets of $X$. Thus an $I$-object local atlas on a
  space $X$ comprises a family of open subsets
  $(U_{ij})_{i,j \in I}$ satisfying $U_{ij} = U_{ji}$ and
  $U_{jk} \cap U_{ij} \subseteq U_{ik}$ for all $i,j,k \in I$. 

  In this situation, for each $x \in X$ we may define on the set
  $\{i \in I : x \in U_{ii}\}$ an equivalence relation $\sim_x$ with
  $i \sim_x j$ just when $x \in U_{ij}$. If $i \in I$ with
  $x \in U_{ii}$, then we write $i_x$ for its $\sim_x$-equivalence
  class, and call it the \emph{germ of $i$ at $x$}. We write $U_x$ for
  the set of all germs at $x$. Now the glueing $p \colon A \rightarrow X$
  of the given local atlas has $A \defeq \sum_{x \in X} U_x$ the
  disjoint union of the sets of $U$-germs, with topology generated by
  the basic open sets
  \begin{equation*}
    \spn{i,V} = \{ i_x : x \in V\} \qquad \text{for all $i \in I$ and open $V
      \subseteq U_{ii}$}\rlap{ .}
  \end{equation*}

  The projection $p \colon A \rightarrow X$ sends $i_x$ to $x$;
  it is a local homeomorphism since $p = \bigvee_i p_i$, where
  $p_i \colon A \rightarrow X$ is the partial isomorphism
  obtained by restricting $p$ to the open set $\spn{i, U_{ii}}$, with
  partial inverse $s_i \colon X \rightarrow A$ given by $x \mapsto
  i_x$ on the open set $U_{ii}$.
  For any $i,j \in I$, the partial identity $p_j s_i \colon X
  \rightarrow X$ is defined at $x$ just when $i_x$ and $j_x$ are
  defined and equal---that is, just when $x \in U_{ij}$. So $p \colon
  A \rightarrow X$ induces the local atlas $U$, as desired.
\end{Ex}

\begin{Ex}
  \label{ex:8}
  In the situation of the previous example, it is easy to see that the
  glueing of a local atlas on a discrete space is again discrete.
  Therefore $\cat{Set}_p$, identified with the full sub-restriction
  category of $\cat{Top}_p$ on the discrete spaces, also admits local
  glueings. Similarly, the glueing of a local atlas on an
  \emph{Alexandroff} space is again Alexandroff, so that
  $\cat{Pos}_p$, identified with the full subcategory of $\cat{Top}_p$
  on the Alexandroff spaces, has all local glueings.

  In a similar manner, the glueing of a local atlas on a topological
  manifold is again a topological manifold: indeed, if $X$ is locally
  homeomorphic to a Euclidean space, and $p \colon A \rightarrow X$ is
  a local homeomorphism, then $A$ is again locally homeomorphic to
  Euclidean space. So $\cat{Man}_p$ has local glueings. The property
  of smoothness is easily seen to be preserved in this situation so
  that $\cat{Smooth}_p$ also has local glueings.
\end{Ex}
\begin{Ex}
  \label{ex:7}
  The join restriction category $\cat{Loc}_p$ has local glueings. In
  this case, for any $X \in \cat{Loc}_p$ we can identify $\O(X)$ with
  $X$ itself, so that an $I$-object atlas comprises a family of
  elements $(\varphi_{ij} \in X)$ such that
  $\varphi_{ij} = \varphi_{ji}$ and
  $\varphi_{jk} \wedge \varphi_{ij} \leqslant \varphi_{ik}$.
  The corresponding glueing is the locale 
  \begin{equation*}
    A = \bigl\{(\theta_i \in X : i \in I) \mid \theta_i \leqslant
    \varphi_{ii} \text{ and } \theta_{j} \wedge \varphi_{ij} \leqslant
    \theta_i \text{ for all $i,j \in I$}\bigr\}
  \end{equation*}
  under the pointwise ordering. Joins and binary meets are computed
  pointwise, while the top element is $(\varphi_{ii} : i \in I)$. The
  total projection map $p \colon A \rightarrow X$ is defined by
  $p^\ast(\psi) = \bigl(\psi \wedge \varphi_{ii} : i \in I\bigr)$, and
  this is the join of the partial isomorphisms
  $p_i \colon A \rightarrow X$ defined by
  $p_i^\ast(\psi) = (\psi \wedge \varphi_{ij} : j \in I)$, with
  corresponding partial inverses $s_i \colon X \rightarrow A$ given by
  $s_i^\ast(\theta) = \theta_i$. Since $s_j p_i \colon X
  \rightarrow X$ is the partial identity $\psi \mapsto \psi \wedge
  \varphi_{ij}$, we see that $p \colon A \rightarrow X$ induces the
  local atlas $\varphi$, as desired.
\end{Ex}
\begin{Ex}
  \label{ex:10}
  The join restriction category $\cat{\L\R\T op}_p$ has local
  glueings. In this case, an atlas on $(X, \O_X)$ is simply an atlas
  on $X$, for which we have the associated glueing
  $p \colon A \rightarrow X$ in $\cat{Top}_p$; the corresponding
  glueing in $\cat{\L\R\T op}_p$ can be taken to be $(p, \mathrm{id})
  \colon (A, p^\ast \O_X) \rightarrow (X, \O_X)$. Just as in the case
  of manifolds, the glueing of any local atlas on a scheme will itself
  be a scheme, so that $\cat{Sch}_p$ also has all local glueings.
\end{Ex}
\begin{Ex}
  \label{ex:9}
  In a join restriction category of the form $\cat{Par}(\C, \M)$, an
  $I$-object local atlas on the object $X$ can be identified with a
  family of $\M$-subobjects
  $(\varphi_{ij} \colon U_{ij} \rightarrowtail X)$ in $\C$ subject to
  the usual axioms. In this case, a local glueing can be obtained as
  the colimit $A$ of a diagram of the form
  \begin{equation*}
    \cd{
      & U_{ij} \ar@{ >->}[dl]_-{} \ar@{ >->}[dr]^-{} & U_{ik} \ar@{ >->}[dll]_-{}
      \ar@{ >->}[drr]^-{} & \dots & U_{jk} \ar@{ >->}[dll]_-{} \ar@{ >->}[d]_-{} \\
      U_{ii} & & U_{jj} & \dots & U_{kk}\rlap{ ,}
    }
  \end{equation*}
  whenever this colimit exists. The local homeomorphism $p \colon A
  \rightarrow X$ is induced by the cocone of maps
  $\varphi_{ij} \colon U_{ij} \rightarrowtail X$ under this diagram.
\end{Ex}
\begin{Ex}
  \label{ex:14}
  If $\C$ is a join restriction category with local glueings, then so
  too is $\cat{Bun}(\C)$, with glueings computed componentwise.
  Indeed, a restriction idempotent on $x \colon X' \rightarrow X$ in
  $\cat{Bun}(\C)$ is a map of the form $(\overline{ex}, e)$ for some
  $e \in \O(X)$; it follows that an $I$-object local atlas on $x$
  comprises $I$-object atlases $\varphi$ on $X$ and $\varphi'$ on $X'$
  related by $\varphi'_{ij} = \overline{\varphi_{ij}x}$. If
  $p' \colon A' \rightarrow X'$ and $p\colon A \rightarrow X$ are
  glueings for these two atlases, then by applying the argument of
  Section~\ref{sec:pullb-comp-local} below, we obtain a pullback
  square of the required form~\eqref{eq:58}.
\end{Ex}
As these examples show, many join restriction categories of practical
interest already admit local glueings. We now show that, if we do not
have local glueings, then we can always adjoin them in a
straightforward manner. This is a special case of Grandis'
\emph{manifold completion}~\cite{Grandis1990Cohesive}, and so we only
sketch the details.

\begin{Prop}
  \label{prop:15}
  The inclusion of the full sub-$2$-category
  $\mathrm{jr}\cat{Cat}_{lg} \rightarrow \mathrm{jr}\cat{Cat}$ of join
  restriction categories with local glueings has a left biadjoint
  $\cat{Gl}$; each unit component
  $\C \rightarrow \cat{Gl}(\C)$ of the biadjunction is
  injective on objects and 
  fully faithful, and is an equivalence whenever $\C$ has local
  glueings.
\end{Prop}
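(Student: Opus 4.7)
The plan is to construct $\cat{Gl}(\C)$ as a formal ``completion by glueings'', whose objects are local atlases on objects of $\C$, and whose morphisms are matrices of maps encoding exactly the data that Lemma~\ref{lem:10} shows must determine a morphism between actual glueings. Concretely, take an object of $\cat{Gl}(\C)$ to be a pair $(X, \varphi)$ with $X \in \C$ and $\varphi = (\varphi_{ij})_{i,j \in I}$ a local atlas on $X$; and take a morphism $(X, \varphi) \rightarrow (Y, \psi)$ to be an $I \times J$-indexed family $f = (f_{ji} \colon X \rightarrow Y)$ satisfying the four coherence conditions
\begin{equation*}
  \overline{f_{ji}} \leqslant \varphi_{ii}, \quad \psi_{jj}\, f_{ji} = f_{ji}, \quad f_{ji}\, \varphi_{ii'} \leqslant f_{ji'}, \quad \psi_{j'j}\, f_{ji} \leqslant f_{j'i}
\end{equation*}
which arise by reading off the properties of the matrix $q_j\, f\, s_i$ were $f$ a morphism between hypothetical glueings with partial sections $s_i$ and $t_j$. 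Composition is given by generalised matrix multiplication $(gf)_{ki} = \bigvee_{j} g_{kj}\, f_{ji}$ (the family $\{g_{kj}\, f_{ji}\}_j$ being pairwise compatible thanks to the coherence conditions), with identities given by the atlas matrices themselves.

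I would next verify that $\cat{Gl}(\C)$ is a join restriction category: restriction is defined componentwise, joins of pairwise-compatible families are taken entrywise, and the restriction category axioms reduce to those in $\C$ together with the four coherence conditions. The critical point---and the main obstacle---is that $\cat{Gl}(\C)$ has local glueings. For this, given a local atlas $(\theta_{\ell m})_{\ell, m \in L}$ on the object $(X, \varphi)$, each $\theta_{\ell m}$ is a matrix of restriction idempotents on $X$ whose entries are bounded by the $\varphi_{ij}$'s, and these data can be assembled into a refined local atlas on $X$ indexed by $I \times L$, whose associated object of $\cat{Gl}(\C)$ serves as the required glueing. The verification is straightforward but notationally heavy, and is the only genuinely nontrivial check in the whole proof.

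The unit $\eta_\C \colon \C \rightarrow \cat{Gl}(\C)$ sends $X$ to the trivial singleton atlas $(X, 1_X)$ indexed by $\{\ast\}$, and acts by singleton matrices on morphisms; it is manifestly injective on objects and fully faithful, since morphisms $(X, 1_X) \rightarrow (Y, 1_Y)$ are precisely single maps $X \rightarrow Y$. When $\C$ itself has local glueings, any $(X, \varphi)$ is then isomorphic via a canonical comparison map to $\eta_\C(A)$ for $A$ a chosen glueing of $\varphi$ in $\C$, rendering $\eta_\C$ essentially surjective and hence an equivalence.

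For the biadjoint property, given a join restriction functor $F \colon \C \rightarrow \D$ with $\D$ having local glueings, extend $F$ to $\hat F \colon \cat{Gl}(\C) \rightarrow \D$ by sending $(X, \varphi)$ to a chosen glueing in $\D$ of the transported atlas $(F\varphi_{ij})$, and using Lemma~\ref{lem:10} to determine its action on morphisms; one then checks that $\hat F \circ \eta_\C \cong F$ and that $\hat F$ is universal among such extensions, both up to canonical isomorphism. The coherent choices involved in picking the glueings are exactly what make this a biadjunction rather than a strict $2$-adjunction, in line with Grandis' original manifold completion~\cite{Grandis1990Cohesive}.
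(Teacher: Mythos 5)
Your overall architecture---objects as pairs $(X,\varphi)$, morphisms as matrices of maps in $\C$, composition by matrix multiplication with joins, glueings of atlases on $(X,\varphi)$ assembled into a refined product-indexed atlas on $X$, the unit via singleton atlases, and the extension of $F$ by choosing glueings of $F\varphi$ in $\D$---is exactly the paper's (this is Grandis' manifold completion, as you note). But your coherence conditions on morphisms are genuinely too weak, and this breaks the construction. Where you impose only the diagonal condition $\psi_{jj}f_{ji} = f_{ji}$ together with the inequality $\psi_{j'j}f_{ji} \leqslant f_{j'i}$, the paper requires the \emph{equality}
\begin{equation*}
  \psi_{j'j}\,f_{ji} \;=\; f_{j'i}\,\overline{f_{ji}}\rlap{ ,}
\end{equation*}
which says that on the \emph{whole} common domain of $f_{ji}$ and $f_{j'i}$ the two entries agree via the transition idempotent; your inequality only constrains them on the (possibly smaller) domain of $\psi_{j'j}f_{ji}$. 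This is a single-valuedness condition, and it does not follow from your four conditions. Concretely, take $\C = \cat{Set}_p$, let $X = Y$ be a one-point set, $\varphi$ the singleton atlas $\{1_X\}$, and $\psi$ the two-element atlas with $\psi_{00} = \psi_{11} = 1_Y$ and $\psi_{01} = \psi_{10} = \bot$ (whose glueing is the two-point set $B$). The matrix $f$ with both entries $f_{00} = f_{10} = 1$ satisfies all four of your conditions but violates the displayed equality: it encodes the ``multivalued map'' sending the point of $X$ to \emph{both} points of $B$.

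This spurious morphism is fatal in three ways. First, your composition is not well-defined: composing $f$ with the genuine morphism $g \colon (Y,\psi) \rightarrow (Z,\{1_Z\})$, where $Z$ is a two-point set and $g_{00}, g_{01}$ pick out its two distinct elements, the family $\{g_{0j}f_{j0}\}_j$ consists of two total maps from the point to $Z$ with different values, which is not compatible, so the join defining the composite does not exist---your parenthetical claim that compatibility follows from the coherence conditions needs precisely the stronger equality. Second, even granting composition, your $\cat{Gl}(\cat{Set}_p)$ would have four morphisms $(X,\{1_X\}) \rightarrow (Y,\psi)$ while $\cat{Set}_p$ has only three maps from the point into $B$, so $\eta_\C$ would fail to be an equivalence when $\C$ has local glueings, contradicting the very statement being proven. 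Third, the extension $\hat F$ is undefined at $f$: applying Lemma~\ref{lem:10} requires forming $\bigvee_j t_j f_{j0}$ with $t_j$ the sections of the glueing of $\psi$, and that family is again incompatible. The repair is to adopt the equality condition (in the paper's indexing, $\psi_{k\ell}f_{ik} = f_{i\ell}\overline{f_{ik}}$); with it, all three problems disappear and the rest of your argument proceeds as the paper's does.
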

\begin{proof}[Proof (sketch)]
  $\cat{Gl}(\C)$ has as objects, pairs
  $(X,\varphi)$ where $X \in \C$ and $\varphi$ is a
   local atlas on $X$, and as maps 
  $(X, \varphi) \rightarrow (Y, \psi)$, 
  families of
  maps $(f_{ik} \colon X \rightarrow Y)$ in $\C$
  with
  \begin{equation*}
    \psi_{k\ell}f_{ik} = f_{i\ell} \overline{f_{ik}} \quad
    \text{and} \quad
    f_{ik}\varphi_{ii} = f_{ik} \quad
    \text{and} \quad f_{jk}\varphi_{ij} \leqslant
    f_{ik}\rlap{ .}
  \end{equation*}
  The identity on $(X, \varphi)$ has
  components
  $\varphi_{ij} \colon X \rightarrow X$;
the composite of maps
$f \colon (X,\varphi) \rightarrow (Y, \psi)$ and
  $g \colon (Y, \psi) \rightarrow (Z, \rho)$ has 
  \begin{equation*}
  \textstyle  (gf)_{im} \defeq \bigl(\bigvee_k g_{km}f_{ik}\bigr)_{i \in I, m \in M}\rlap{ ;}
  \end{equation*}
  while the restriction of
  $f \colon (X, \varphi) \rightarrow (Y, \psi)$ is 
  $(\bar f)_{ij} = \varphi_{ij} \bigl(\bigvee_k
  \overline{f_{ik}}\bigr)$.
  
  With extensive verification, we can check that these data are
  well-defined, and make $\cat{Gl}(\C)$ into a join restriction
  category. 
  To show it has local glueings, consider a $K$-object
  atlas $(\psi_{k\ell})$ on $(X, \varphi)$; this is given by a family of
  restriction idempotents $(\psi_{k\ell})_{ij}$ on $X$ satisfying
  various conditions. From these data, we obtain a new $I \times K$-object
  atlas on $X$ with
  $\gamma_{(i,k),(j, \ell)} = (\psi_{k\ell})_{ij}$, and obtain for each
  $\ell \in K$ a partial isomorphism
  $p_\ell \colon (X, \gamma) \rightarrow (X, \varphi)$ in $\cat{Gl}(\C)$
  with components $(p_\ell)_{(i,k),j} = (\psi_{k\ell})_{ij}$. Taking
  joins yields a local homeomorphism
  $p = \bigvee_\ell {p_\ell} \colon (X, \gamma) \rightarrow (X,
  \varphi)$ with associated local atlas $\psi$, as desired.

  This shows $\cat{Gl}(\C) \in \mathrm{jr}\cat{Cat}_{lg}$.
  There is  
  %
  a fully faithful join restriction functor
  $\eta_\C \colon \C \rightarrow \cat{Gl}(\C)$ with
  $\eta_\C(X) = (X, \{1_X\})$, which we claim exhibits
  $\mathrm{Gl}(\C)$ as a bireflection of $\C$ into
  $\mathrm{jr}\cat{Cat}_{lg}$. This means showing that, for each join
  restriction category $\D$ with local glueings, the functor
  \begin{equation}\label{eq:23}
    (\thg) \circ \eta_\C \colon \mathrm{jr}\cat{Cat}(\cat{Gl}(\C), \D) \rightarrow
    \mathrm{jr}\cat{Cat}(\C, \D)
  \end{equation}
  is an equivalence of categories. A suitable pseudoinverse functor
  takes a join restriction functor $F \colon \C \rightarrow \D$ to the
  join restriction functor
  $\tilde F \colon \cat{Gl}(\C) \rightarrow \D$ whose value at
  $(X, \varphi)$ is some chosen glueing of the atlas $F\varphi$ in
  $\D$.
  %
  Finally, if $\C$ itself has glueings, then the extension
  $\widetilde {\mathrm{id}_\C} \colon \cat{Gl}(\C) \rightarrow \C$
  provides the desired pseudo-inverse to the functor
  $\eta_\C \colon \C \rightarrow \cat{Gl}(\C)$.
\end{proof}

\subsection{Partial sections}
\label{sec:partial-sections}

Classically, a local homeomorphism $A \rightarrow X$ can be analysed
in terms of its ``partial sections'' $U \rightarrow A$ defined on some
open subset $U \subseteq X$. In this section, we reconstruct this
analysis in the join restriction context.

\begin{Defn}
  \label{def:2}
  A \emph{partial section} of a map $p \colon A \rightarrow X$ in a
  restriction category is a map $s \colon X \rightarrow A$ such that
  $ps = \overline{s}$.
\end{Defn}

Clearly, any partial section satisfies $ps \leqslant 1$; conversely,
if $p \colon A \rightarrow X$ is a total map, then any $s$ with $ps
\leqslant 1$ will satisfy $ps = \overline{ps} = \overline{s}$ by
totality of $p$.

\begin{Lemma}
  \label{lem:25}
  Let $p = \bigvee_i p_i \colon A \rightarrow X$ be a local
  homeomorphism, and let each $p_i$ have partial inverse $s_i$.
  \begin{enumerate}[(i)]
  \item Each $s_i \colon X \rightarrow A$ is a partial section of $p$.
  \item If $s$ is a partial section of $p$, then
    $\sheq s {s_i} = p_is$; in particular,
    $\sheq {s_i} {s_j} = p_j s_i$.
  \item Any partial section $s$ of $p$ can be
    written as a join
    \begin{equation}\label{eq:24}
    \textstyle  s = \bigvee_{i}s_i \sheq s {s_i}\rlap{ .}
    \end{equation}
  \item Each partial section $s$ of $p$ is a partial isomorphism with
    $s^\ast \leqslant p$.
  \end{enumerate}
\end{Lemma}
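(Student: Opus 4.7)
The plan is to prove the four parts in sequence: part (i) by a short calculation using Lemma~\ref{lem:24}, part (ii) as the key technical step, and parts (iii)--(iv) as formal consequences of (ii). The one real obstacle is spotting the correct restriction-axiom manipulation in (ii); after that everything is mechanical.

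For (i), Lemma~\ref{lem:24} identifies each $p_js_i$ with the local-atlas entry $\varphi_{ji}$. Setting $k=i$ in the atlas inequality $\varphi_{jk}\varphi_{ij}\leqslant\varphi_{ik}$ and using $\varphi_{ij}=\varphi_{ji}$ together with idempotence of $\varphi_{ji}$ gives $\varphi_{ji}\leqslant\varphi_{ii}=\overline{s_i}$; joining over $j$ yields $ps_i=\bigvee_jp_js_i\leqslant\overline{s_i}$, and the reverse inequality $\overline{s_i}=p_is_i\leqslant ps_i$ is immediate, so $ps_i=\overline{s_i}$.

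For (ii), I prove $p_is=\sheq{s}{s_i}$ by a pair of inequalities. First, $p_is\leqslant ps=\overline{s}\leqslant 1_X$, so $p_is$ is a restriction idempotent. This element is then admissible in the join defining $\sheq{s}{s_i}$: it lies below $\overline{s}$ by the above; it lies below $\overline{s_i}$ since $p_is=p_is_i\overline{p_is}\leqslant p_is_i=\overline{s_i}$; and it satisfies $s(p_is)=s_i(p_is)$ via the identity $s(p_is)=s\overline{p_is}=\overline{p_i}s=s_ip_is$, whose middle step is the restriction axiom $\overline{g}f=f\overline{gf}$ applied with $g=p_i$ and $f=s$. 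So $p_is\leqslant\sheq{s}{s_i}$. Conversely, any admissible $e$ satisfies $p_is\cdot e=p_is_ie=\overline{s_i}e=e$, so $e\leqslant p_is$. The subtle point is the axiom~(iv) application, which is the only nontrivial calculation in the whole lemma.

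Parts (iii) and (iv) then follow formally. Part (iii) is the chain $\bigvee_is_i\sheq{s}{s_i}=\bigvee_is_ip_is=\bigvee_i\overline{p_i}\,s=\bigl(\bigvee_i\overline{p_i}\bigr)s=\overline{p}\,s=s$, invoking~\eqref{eq:27}, Lemma~\ref{lem:18}(i), and totality of $p$. For (iv), set $e_i=\sheq{s}{s_i}$; by Lemma~\ref{lem:9}(iii)--(iv) each $s_ie_i$ is a partial isomorphism with partial inverse $e_ip_i$, and part (iii) writes $s=\bigvee_is_ie_i$. The family $\{s_ie_i\}$ is bounded above by $s$ (since $s\overline{s_ie_i}=se_i=s(p_is)=s_ip_is=s_ie_i$ by (ii)) and is therefore pairwise compatible, while $\{e_ip_i\}$ is bounded above by $p$ (since axiom~(iv) gives $e_ip_i=p_i\overline{e_ip_i}\leqslant p_i\leqslant p$) and is therefore also pairwise compatible. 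Thus $\{s_ie_i\}$ is bicompatible, and Lemma~\ref{lem:22} supplies that $s$ is a partial isomorphism with $s^\ast=\bigvee_ie_ip_i\leqslant p$, as required.
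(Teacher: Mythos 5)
Your proof is correct and follows essentially the same route as the paper's: the same identification $\sheq{s}{s_i}=p_is$ via the restriction axiom $\overline{g}f=f\overline{gf}$, the same deduction of \eqref{eq:24}, and the same appeal to bicompatibility and Lemma~\ref{lem:22} for part (iv). The only (harmless) variations are that your converse inequality in (ii) bounds each admissible idempotent directly instead of invoking Lemma~\ref{lem:30}(iv), and your part (iii) is a term-by-term computation $s_i\sheq{s}{s_i}=\overline{p_i}s$ rather than the paper's comparison of restrictions.
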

\begin{proof}
  For (i), we have
  $ps_i = \bigvee_j p_j s_i = p_i s_i \vee \bigvee_{j \neq i} p_j s_i
  = p_i s_i \vee \bigvee_{j \neq i} p_j s_i \varphi_{ii} = p_i s_i =
  \overline{s_i}$. For (ii), note that
  $p_is \leqslant ps = \overline{s}$ is a restriction idempotent; and
  as $\overline{s_i} p_is = p_i s_i p_i s = p_i s$, we have also
  $p_i s \leqslant \overline{s_i}$. Since
  $s_i p_i s = \overline{p_i}s = s\overline{p_i s} = s p_i s$, we thus
  have $p_i s \leqslant \sheq s {s_i}$. On the other hand, by
  Lemma~\ref{lem:30}(iv), we have
  $p_i s\sheq s {s_i} = p_i s_i \sheq s {s_i} = \overline{s_i}\sheq s
  {s_i} = \sheq s {s_i}$ so that $\sheq s {s_i} \leqslant p_i s$.
  
  For (iii) we have $\bigvee_i s_i \sheq s {s_i} \leqslant s$ again by
  Lemma~\ref{lem:30}(iv); but since by (ii)
  $\overline{\bigvee_i s_i \sheq s {s_i}} = \bigvee_i p_i s = ps =
  \bar s$ we have $s = \bigvee_i s_i \sheq s {s_i}$.
  Finally, for (iv), note that the compatible family of partial
  isomorphisms $\{s_i \sheq s {s_i}\}$ in~\eqref{eq:24} has family of
  partial inverses $\{\sheq s {s_i} p_i\}$---and this is another
  compatible family since $\{p_i\}$ is so. So $\{s_i \sheq s {s_i}\}$
  is bicompatible, whence by Lemma~\ref{lem:22} its join $s$ is a
  partial isomorphism with $s^\ast = \bigvee_i \sheq s {s_i} p_i
  \leqslant p$.
\end{proof}

The following results explore the property in part (iii) of this
lemma further.
\begin{Lemma}
  \label{lem:27}
  Let $p \colon A \rightarrow X$ be a local homeomorphism, and let
  $(s_i \colon X \rightarrow A)_{i \in I}$ be a family of partial
  sections of $p$. The following are equivalent:
  \begin{enumerate}[(i)]
  \item $p = \bigvee_i s_i^\ast$;
  \item Any partial section $s$ of $p$ has the form $s =
    \bigvee_i s_i \sheq s {s_i}$;
  \item Any partial section $s$ of $p$ has the form $s = \bigvee_i s_i \theta_i$ for suitable
    $\theta_i \in \O(A)$;
  \item The $s_i$'s are jointly epimorphic.
  \end{enumerate}
\end{Lemma}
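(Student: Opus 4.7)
The plan is to establish the cycle (i) $\Rightarrow$ (ii) $\Rightarrow$ (iii) $\Rightarrow$ (iv) $\Rightarrow$ (i). The first two steps are essentially repackagings of earlier results, while the two implications involving joint epimorphism require a short direct argument but nothing substantial.

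For (i) $\Rightarrow$ (ii), I note that by Lemma~\ref{lem:25}(iv) each partial section $s_i$ is itself a partial isomorphism with $s_i^\ast \leqslant p$. Hence if $p = \bigvee_i s_i^\ast$, then $p$ is exhibited as a join of partial isomorphisms whose partial inverses are precisely the $s_i$. Reapplying Lemma~\ref{lem:25}(iii) to \emph{this} presentation of $p$ immediately yields $s = \bigvee_i s_i \sheq s{s_i}$ for every partial section $s$ of $p$. The implication (ii) $\Rightarrow$ (iii) is then trivial on taking $\theta_i \defeq \sheq s{s_i}$.

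For (iii) $\Rightarrow$ (iv), suppose $f, g \colon A \to B$ satisfy $f s_i = g s_i$ for every $i$. I reduce equality of $f$ and $g$ to their agreement on the auxiliary partial sections $\tilde s_k$ arising from the original decomposition $p = \bigvee_k p_k$, via Lemma~\ref{lem:10}. But each such $\tilde s_k$ is itself a partial section of $p$, so by hypothesis $\tilde s_k = \bigvee_i s_i \theta_i^{(k)}$ for suitable restriction idempotents, whence Lemma~\ref{lem:18}(ii) gives
\[
f\tilde s_k = \textstyle\bigvee_i (fs_i)\theta_i^{(k)} = \bigvee_i (gs_i)\theta_i^{(k)} = g\tilde s_k,
\]
and so $f = g$ as required.

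Finally, for (iv) $\Rightarrow$ (i), set $q \defeq \bigvee_i s_i^\ast$; this join exists because $\{s_i^\ast\}$ is compatible (all its members lying under the common upper bound $p$, by Lemma~\ref{lem:25}(iv)), and $q \leqslant p$ automatically. By joint epimorphism of the $(s_i)$ it suffices to verify $qs_j = ps_j$ for every $j$. Since $s_j$ is a partial section, $ps_j = \overline{s_j}$; moreover each $s_i^\ast s_j \leqslant p s_j = \overline{s_j}$ by monotonicity of precomposition, while the specific term $s_j^\ast s_j = \overline{s_j}$ already attains this bound, so that $qs_j = \bigvee_i s_i^\ast s_j = \overline{s_j} = ps_j$. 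The only mild subtlety in the whole argument is the reduction in (iii) $\Rightarrow$ (iv) through the $\tilde s_k$ via Lemma~\ref{lem:10}; everything else follows by invoking the appropriate clause of Lemma~\ref{lem:25}.
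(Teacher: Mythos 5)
Your proof is correct, and it draws on exactly the same ingredients as the paper's proof---Lemma~\ref{lem:25}(iii)--(iv) for (i)$\Rightarrow$(ii), Lemma~\ref{lem:10} for joint epimorphism of the sections of a decomposition, and the computation $(\bigvee_i s_i^\ast)s_j = \overline{s_j} = ps_j$ for (iv)$\Rightarrow$(i), which is word-for-word the paper's argument---but you assemble them differently. The paper proves the cycle (i)$\Rightarrow$(ii)$\Rightarrow$(iii)$\Rightarrow$(i) and then handles (i)$\Leftrightarrow$(iv) separately; you instead close a single four-step cycle, and the genuinely new link is your (iii)$\Rightarrow$(iv). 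Interestingly, both your (iii)$\Rightarrow$(iv) and the paper's (iii)$\Rightarrow$(i) hinge on the same move: apply hypothesis (iii) to the canonical sections $\tilde s_k = p_k^\ast$ of a chosen decomposition $p = \bigvee_k p_k$. The paper then takes partial inverses of the resulting expressions, $p_k = \bigvee_i \theta_{ik}\, s_i^\ast$, to get $p \leqslant \bigvee_i s_i^\ast$ directly; this implicitly uses the fact that partial inversion commutes with bicompatible joins (Lemma~\ref{lem:22}). You avoid inverting joins altogether: you transfer the equations $fs_i = gs_i$ across the joins $\tilde s_k = \bigvee_i s_i\theta_i^{(k)}$ by postcomposition-stability (Lemma~\ref{lem:18}(ii)) and conclude from joint epimorphism of the $\tilde s_k$, at the cost of routing (iii)$\Rightarrow$(i) through (iv). The trade-off is small: the paper's organisation yields (i)$\Rightarrow$(iv) as an immediate consequence of injectivity in Lemma~\ref{lem:10}, while yours is slightly more economical (four implications rather than five) and stays entirely within order-theoretic manipulations of sections, never invoking the behaviour of partial inverses under joins.
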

\begin{proof}
  We have (i) $\Rightarrow$ (ii) by Lemma~\ref{lem:25}(iii) and
  clearly (ii) $\Rightarrow$ (iii). For (iii)~$\Rightarrow$~(i), note
  each $s_i^\ast \leqslant p$ by Lemma~\ref{lem:25}(iv), whence
  $\bigvee_i s_i^\ast$ exists and is $\leqslant p$; it remains to
  show $p \leqslant \bigvee_i s_i^\ast$. Write $p = \bigvee_j p_j$ as a join of
  partial isomorphisms. Using (ii), we can write each $p_j^\ast$ as a
  join $\bigvee_i s_i \theta_{ij}$ for suitable
  $\theta_{ij} \in \O(X)$; taking partial inverses, we have
  $p_j = \bigvee_i \theta_{ij} s_i^\ast$ and so
  $p = \bigvee_{i,j} \theta_{ij}s_i^\ast \leqslant \bigvee_i
  s_i^\ast$, as desired.

  Now (i) $\Rightarrow$ (iv) by Lemma~\ref{lem:10}, and it remains
  to show (iv) $\Rightarrow$ (i). For all $i,j \in I$ we have
  $s_j^\ast s_i \leqslant ps_i = \overline{s_i}$ and 
  $s_i^\ast s_i = \overline{s_i}$, and so 
  $(\bigvee_i s_i^\ast) s_j = \overline{s_j}$ for each $j$; since also
  $ps_j = \overline{s_j}$, we must have $\bigvee_i s_i^\ast = p$ since
  the $s_j$'s are jointly epimorphic.
\end{proof}

In the sequel, we will call any family of partial sections $\{s_i\}$
as in the preceding Lemma a \emph{basis} for the local homeomorphism
$p$. The following result gives a precise formulation of what we mean
by ``suitable'' $\theta_i$'s in (iii) above; in the
terminology of~\cite{Fourman1979Sheaves}, these families would be
called \emph{singletons}.

\begin{Lemma}
  \label{lem:1}
  Let $\{s_i\}$ be a basis for the local homeomorphism $p \colon A
  \rightarrow X$. The assignation $s \mapsto (\dbr{s = s_i} : i \in I)$ sets up a
  bijection between partial sections of $p$, and families of
  restriction idempotents $(\theta_i \in \O(X) : i \in I)$ satisfying
    \begin{equation}\label{eq:40}
      \theta_j \theta_i \leqslant \sheq {s_i} {s_j} \quad \text{and}
      \quad
      \theta_j \sheq {s_i} {s_j} \leqslant \theta_i \qquad \text{for
        all }i,j \in I\rlap{ .}
    \end{equation}
\end{Lemma}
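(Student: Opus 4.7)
The assignment $s \mapsto (\sheq{s}{s_i})_{i \in I}$ has a natural candidate inverse, namely $(\theta_i) \mapsto \bigvee_i s_i \theta_i$, suggested by the decomposition \eqref{eq:24} of Lemma~\ref{lem:25}. The proof will construct both maps, check that each lands in the correct set, and verify mutual inversion.

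First I would verify that the forward map lands in the target. Setting $\theta_i \defeq \sheq{s}{s_i}$, the first inequality of \eqref{eq:40} follows from Lemma~\ref{lem:30}(iii) applied with $f = s_i, g = s, h = s_j$ together with symmetry (Lemma~\ref{lem:30}(ii)): $\sheq{s}{s_j}\sheq{s}{s_i} = \sheq{s}{s_j}\sheq{s_i}{s} \leqslant \sheq{s_i}{s_j}$. The second inequality arises by applying the same transitivity instead with $f=s_i, g=s_j, h=s$, giving $\sheq{s}{s_j}\sheq{s_i}{s_j} \leqslant \sheq{s}{s_i}$.

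The main content is showing the candidate inverse is well-defined. Given $(\theta_i)$ satisfying \eqref{eq:40}, I must first show that the family $\{s_i \theta_i : i \in I\}$ is \emph{bicompatible}, so that by Lemma~\ref{lem:22} the join exists as a partial isomorphism (this matters because we need to be able to re-extract the components via Lemma~\ref{lem:25}(ii)). Compatibility reduces via Lemma~\ref{lem:9}(v) to checking that $(s_i\theta_i)(s_j\theta_j)^\ast = s_i \theta_i \theta_j p_j$ is a restriction idempotent; here the first condition of \eqref{eq:40} gives $\theta_i\theta_j \leqslant \sheq{s_i}{s_j}$, and then Lemma~\ref{lem:30}(iv) yields $s_i \theta_i\theta_j = s_j\theta_i\theta_j$, so that a routine application of restriction axiom (iv) to push the idempotents past $p_j$ presents the result as a composite of commuting restriction idempotents on $A$. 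Bicompatibility on the dual side is automatic, since $(s_i\theta_i)^\ast(s_j\theta_j) = \theta_i p_i s_j \theta_j = \theta_i \sheq{s_j}{s_i}\theta_j$ is already a product of commuting restriction idempotents. Hence $s \defeq \bigvee_i s_i \theta_i$ is a well-defined partial isomorphism, and a short calculation using Lemma~\ref{lem:18}(i) gives $ps = \bigvee_i \overline{s_i}\theta_i = \overline{s}$, confirming it is a partial section.

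It remains to verify mutual inversion. The round trip $s \mapsto (\sheq{s}{s_i}) \mapsto \bigvee_i s_i\sheq{s}{s_i}$ returns $s$ by Lemma~\ref{lem:25}(iii). For the other round trip, I would compute
\begin{equation*}
\textstyle  \sheq{s}{s_i} = p_i s = \bigvee_j p_i s_j \theta_j = \bigvee_j \sheq{s_j}{s_i}\theta_j
\end{equation*}
using Lemma~\ref{lem:25}(ii) twice. The first condition of \eqref{eq:40} at $j=i$ specialises to $\theta_i \leqslant \sheq{s_i}{s_i} = \overline{s_i}$ (the last equality by Lemma~\ref{lem:30}(i)), so the $j=i$ summand is $\overline{s_i}\theta_i = \theta_i$; while for $j \neq i$ the second condition of \eqref{eq:40} gives $\sheq{s_j}{s_i}\theta_j \leqslant \theta_i$. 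Hence the join equals $\theta_i$, as required. The only non-routine step is the bicompatibility verification, which is exactly where both clauses of \eqref{eq:40} pull their weight.
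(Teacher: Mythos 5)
Your proof is correct and takes essentially the same route as the paper's: the forward map $s \mapsto (\sheq{s}{s_i})_{i}$ verified via Lemma~\ref{lem:30}, the inverse $(\theta_i) \mapsto \bigvee_i s_i\theta_i$ with compatibility of $\{s_i\theta_i\}$ derived from~\eqref{eq:40}, and re-extraction of components via $\sheq{s}{s_i} = p_i s = \bigvee_j \sheq{s_j}{s_i}\theta_j = \theta_i$. The one (harmless) redundancy is your bicompatibility step: pairwise compatibility alone already guarantees the join exists in a join restriction category, and Lemma~\ref{lem:25}(ii) applies to any partial section of $p$ (which $s$ is, by your computation $ps = \overline{s}$), so the detour through Lemma~\ref{lem:9}(v) and Lemma~\ref{lem:22} can simply be dropped, as the paper does.
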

\begin{proof}
  If $s$ is a partial section of $p$, then by Lemma~\ref{lem:30} the
  family $\theta_i = \sheq s {s_i}$ satisfies~\eqref{eq:40}, and
  the assignation $s \mapsto (\sheq s {s_i} : i \in I)$ is injective
  by Lemma~\ref{lem:25}(iii). It remains to show surjectivity. Given a
  family $(\theta_i)$ satisfying~\eqref{eq:40}, the first condition
  implies that $\theta_i \leqslant \overline{s_i}$ and so that
  \begin{equation*}
    s_j \theta_j \overline{s_i \theta_i} = s_j \theta_j \theta_i
    \leqslant s_j \sheq {s_i} {s_j} \leqslant s_i\rlap{ .}
  \end{equation*}
  
  Thus the family $\{s_i \theta_i\}$ is compatible. Let $s$ be its
  join; by Lemma~\ref{lem:25}(ii) and the second
  condition in~\eqref{eq:40} we have
  $\sheq s {s_i} = p_i s = \textstyle \bigvee_j p_i s_j \theta_j =
    \bigvee_j
    \sheq {s_j} {s_i} \theta_j = \theta_i$, 
  so that $(\theta_i : i \in I)$ is in the image of $s \mapsto (\dbr{s
    = s_i} : i \in I)$ as required.
\end{proof}

\subsection{Composition and pullback}
\label{sec:pullb-comp-local}
We will later need some understanding of how to compose and pull back
local homeomorphisms in a join restriction category.  The astute reader will notice that here 
we should be talking about {\em restriction\/} pullbacks and limits, as described in  \cite{Cockett2007Restriction}.  
However, restriction limits behave on total maps exactly as ordinary limits behave and almost all the limits we 
will actually need use total maps, so we have elided this elaboration in order to simplify the exposition.

We first consider these operations on  partial sections.

\begin{Lemma}
  \label{lem:8}
  Let $\C$ be a restriction category.
  \begin{enumerate}[(i),itemsep=0.25\baselineskip]
  \item If $s \colon X \rightarrow A$ and $t \colon A \rightarrow A'$
    are partial sections of the total maps $p \colon A \rightarrow X$ and
    $q \colon A' \rightarrow A$ in $\C$, then
    $ts \colon X \rightarrow A'$ is a partial section of
    $pq \colon A' \rightarrow X$.
  \item Given a commutative triangle of total maps
      \begin{equation*}
    \cd[@-1em]{
      {A} \ar[rr]^-{f} \ar[dr]_-{p} & &
      {B} \ar[dl]^-{q} \\ &
      {X}
    }
  \end{equation*}
  and a partial section $s$ of $p$, the composite $fs$ is a partial
  section of $q$.
\item Given a pullback square in $\C$ of total maps, as in the solid
  part of:
  \begin{equation*}
    \cd[@R+0.3em]{
      {A \times_X Y} \ar[r]^-{\pi_1} \ar[d]^-{\pi_2}
      \ar@{<--}@/_0.7em/[d]_{f^\ast s} \pullbackcorner &
      {A} \ar[d]^{p} \ar@{<--}@/_0.7em/[d]_{s} \\
      {Y} \ar[r]_-{f} &
      {X}
    }
  \end{equation*}
  and a partial section $s$ of $p$, the map $f^\ast s =
  (sf, \overline{sf}) \colon Y \rightarrow A \times_X Y$ is
  is the unique partial section of $\pi_2$ making the upwards-pointing
  square commute.
\end{enumerate}
\end{Lemma}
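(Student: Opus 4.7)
All three parts reduce to careful bookkeeping with the restriction axioms, using that $\overline{f} = 1$ for any total $f$ appearing in the hypotheses. Throughout, the key algebraic identity is axiom~(iv), in the form $\overline{g}h = h\overline{gh}$, together with the standard consequence $\overline{gh} \leqslant \overline{h}$; this last follows from $h = h\bar h$, so that $\overline{gh} = \overline{gh\bar h} = \overline{gh}\,\bar h$ by axiom~(iii), giving $\overline{gh}\,\bar h = \overline{gh}$.

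For (i), the plan is to compute $(pq)(ts)$ directly:
\begin{equation*}
(pq)(ts) = p(qt)s = p\bar t s = ps\overline{ts} = \bar s\,\overline{ts} = \overline{ts}\rlap{ ,}
\end{equation*}
using in order the hypothesis $qt = \bar t$, axiom~(iv), the hypothesis $ps = \bar s$, and the inequality $\overline{ts} \leqslant \bar s$ just noted.

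For (ii), the triangle identity $qf = p$ immediately gives $q(fs) = (qf)s = ps = \bar s$. It remains only to identify $\bar s$ with $\overline{fs}$; but since $f$ is total, axiom~(iv) gives $s = \bar f s = s\overline{fs}$, and applying the restriction operator and axiom~(iii) yields $\bar s = \bar s\, \overline{fs}$, so $\bar s \leqslant \overline{fs}$. The reverse inequality $\overline{fs} \leqslant \bar s$ is the general fact above, so $\bar s = \overline{fs}$ as needed.

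For (iii), the plan has two parts. First, I must check that $(sf, \overline{sf})$ is a legitimate map into the pullback. Since $p, f, \pi_1, \pi_2$ are all total, the restriction pullback behaves like an ordinary pullback for total maps; a partial map $Y \rightharpoonup A \times_X Y$ is specified by a compatible pair of maps $h_1 \colon Y \to A$ and $h_2 \colon Y \to Y$ with $\overline{h_1} = \overline{h_2}$ and $ph_1 = fh_2$. For $h_1 = sf$ and $h_2 = \overline{sf}$, compatibility of restrictions is immediate, and the square condition is just axiom~(iv): $ph_1 = p(sf) = \bar s f = f\overline{sf} = fh_2$. Secondly, $\pi_2 \cdot f^\ast s = \overline{sf} = \overline{f^\ast s}$ shows $f^\ast s$ is a partial section of $\pi_2$, and by construction $\pi_1 \cdot f^\ast s = sf$, so the upward square commutes. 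For uniqueness, any partial section $t$ of $\pi_2$ with $\pi_1 t = sf$ satisfies $\overline{t} = \overline{\pi_1 t} = \overline{sf}$ (using totality of $\pi_1$, exactly as in part~(ii)), so $t$ is determined by the pair $(sf, \overline{sf})$ and thus coincides with $f^\ast s$.

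No step is really an obstacle; the only mild subtlety is (iii), where one must be comfortable with the convention that a pullback of total maps behaves like an ordinary pullback even when pairing partial maps into it, as flagged in the remark preceding the lemma.
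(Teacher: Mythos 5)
Your proposal is correct and follows essentially the same route as the paper: the same chain of identities $pqts = p\bar t s = ps\,\overline{ts} = \bar s\,\overline{ts} = \overline{ts}$ for (i), the same use of totality of $f$ to get $qfs = \bar s = \overline{fs}$ for (ii), and for (iii) the same argument that the projections $\pi_1 f^\ast s = sf$ and $\pi_2 f^\ast s = \overline{f^\ast s} = \overline{sf}$ force the definition, with well-definedness coming from $psf = \bar s f = f\overline{sf}$. The only difference is that you spell out the standard consequences of the axioms ($\overline{gh}\leqslant \bar h$, $\overline{fs}=\bar s$ for total $f$) that the paper leaves implicit.
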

\begin{proof}
  For (i), we have
  $pqts = p \overline{t} s = ps \overline{ts} = \overline{s}\,
  \overline{ts} = \overline{ts}$ as desired. For (ii), we have
  $qfs = ps = \overline{s} = \overline{fs}$ since $f$ is total. For
  (iii), if $f^\ast s$ is to be a partial section of $\pi_2$ making
  the upwards square commute, we must have
  \begin{equation*}
    \pi_1 \circ f^\ast s = sf \qquad \text{and} \qquad \pi_2 \circ f^\ast s =
    \overline{f^\ast s} = \overline{\pi_1
      \circ f^\ast s} = \overline{sf}
  \end{equation*}
  using that $\pi_1$ is total. So the given definition of $f^\ast$ is
  forced; note it is well-defined since
  $psf = \overline{s}f = f\overline{sf}$, and is a partial section of
  $\pi_2$  since 
  $\pi_2 \circ f^\ast s = \overline{sf} \leqslant 1_Y$.
\end{proof}

We now consider composition and pullback of local homeomorphisms.

\begin{Lemma}
  \label{lem:26}
  \begin{enumerate}[(i),itemsep=0.25\baselineskip]
  \item If $p \colon A \rightarrow X$ and $q \colon A' \rightarrow A$
    are local homeomorphisms, then so is $pq$.
    If $\{s_i\}_{i \in I}$ and $\{t_k\}_{k \in K}$ are bases of
    partial sections for $p$ and $q$, with corresponding local atlases
    $\varphi$ and $\psi$, then $\{t_k s_i\}_{(i,k) \in I \times K}$ is
    a basis of partial sections for $pq$ with corresponding local
    atlas
    \begin{equation}\label{eq:30}
      \theta_{(i,k), (j,\ell)} = \varphi_{ij} \circ
      \overline{\psi_{k\ell}s_i}\rlap{ .}
    \end{equation}
  \item In a join restriction category with local glueings, the
  pullback of a local homeomorphism
  $p \colon A \rightarrow X$ along any total 
  $f \colon X' \rightarrow X$ exists, and is a local homeomorphism
  $q \colon A' \rightarrow X'$. If $p$ has a basis of partial sections
  $\{s_i\}$ inducing the local atlas $\varphi$, then
  $q$ has basis $\{f^\ast(s_i)\}$ inducing the
  local atlas
  \begin{equation}\label{eq:29}
    \psi_{ij} = \overline{\varphi_{ij}f}\rlap{ .}
  \end{equation}
  \end{enumerate}
\end{Lemma}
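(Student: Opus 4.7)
I would handle the two parts separately. For part (i), the strategy is purely algebraic. Write $p=\bigvee_i p_i$ and $q=\bigvee_k q_k$ as joins of partial isomorphisms with partial inverses $s_i$ and $t_k$. By distributivity of composition over joins, $pq=\bigvee_{i,k} p_i q_k$, and by Lemma~\ref{lem:9}(iii) each composite $p_i q_k$ is a partial isomorphism with partial inverse $t_k s_i$; totality of $pq$ is inherited from $p$ and $q$, so $pq$ is a local homeomorphism with basis $\{t_k s_i\}_{(i,k)}$ by Lemma~\ref{lem:8}(i). The atlas formula~\eqref{eq:30} then drops out of Lemma~\ref{lem:25}(ii): the $((i,k),(j,\ell))$-entry equals $p_j q_\ell t_k s_i = p_j \psi_{k\ell} s_i$, which restriction axiom~(iv) rewrites as $\varphi_{ij}\cdot\overline{\psi_{k\ell} s_i}$.

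For part (ii), the strategy is constructive: I build the pullback explicitly via local glueings. First verify that $\psi_{ij}\defeq\overline{\varphi_{ij} f}$ defines a local atlas on $X'$. Symmetry is immediate; the triangle inequality $\psi_{jk}\psi_{ij}\leqslant\psi_{ik}$ reduces, via the identity $\overline{af}\cdot\overline{bf}=\overline{abf}$ for restriction idempotents $a,b$ (a short chase through axioms (iii) and (iv)), to $\varphi_{jk}\varphi_{ij}\leqslant\varphi_{ik}$. The local glueings hypothesis then yields a local homeomorphism $q\colon A'\to X'$ with basis $\{s'_i\}$ realizing $\psi$. Define the projection $\pi\colon A'\to A$ by applying Lemma~\ref{lem:10} to $q$ with the family $(s_i f)_{i\in I}\colon X'\to A$: the conditions~\eqref{eq:5} reduce, via axiom~(iv), to the atlas identities for $\{s_i\}$ precomposed with $f$, while totality of $\pi$ follows from $\overline{s_i f}=\overline{\overline{s_i} f}=\overline{\varphi_{ii} f}=\psi_{ii}$. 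Commutativity $p\pi=fq$ is a direct check on each $s'_i$ using $p s_i=\overline{s_i}$ and axiom~(iv).

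The main obstacle is establishing the universal property of the resulting (restriction) pullback square: Lemma~\ref{lem:10} controls maps \emph{out of} a glued object, but the universal property demands constructing maps \emph{into} one. Given a cone $(a\colon Z\to A,\,b\colon Z\to X')$ with $pa=fb$, the plan is to decompose $a=\bigvee_i s_i\cdot p_i a$ along the basis of $p$; since $p_i a\leqslant pa=fb$, each $p_i a$ rewrites as $f\cdot(b\,\overline{p_i a})$, and the mediator is assembled as $u=\bigvee_i s'_i\cdot(b\,\overline{p_i a})$. Verifying compatibility and well-definedness of this join proceeds via the partial-section/restriction-idempotent correspondence of Lemma~\ref{lem:1} applied to $q$, and uniqueness follows from joint epimorphism of $\{s'_i\}$ (Lemma~\ref{lem:27}). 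With the pullback verified, the identification $s'_i = f^\ast s_i$ of Lemma~\ref{lem:8}(iii) follows by the universal property, and the atlas formula~\eqref{eq:29} holds by construction of $q$.
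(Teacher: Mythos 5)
Your part (i) is correct and is exactly the paper's argument. Likewise, the constructive phase of your part (ii) --- the atlas $\psi_{ij} = \overline{\varphi_{ij}f}$, the glueing $q$ with basis $\{s'_i\}$, the projection $\pi$ built from the family $(s_i f)$ via Lemma~\ref{lem:10}, and even your mediator $\bigvee_i s'_i\,(b\,\overline{p_i a})$ --- coincides with the paper's proof. The problem is that the two lemmas you cite to discharge the universal property do not apply, and they are being asked to do precisely the technical work that constitutes the core of the pullback verification.

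First, compatibility of the family $\bigl\{s'_i\,(b\,\overline{p_i a})\bigr\}_i$ cannot come from Lemma~\ref{lem:1}: that lemma is a bijection between \emph{partial sections} of $q$ (maps $X' \rightarrow A'$) and families of restriction idempotents in $\O(X')$, whereas your family consists of maps $Z \rightarrow A'$ out of an arbitrary cone vertex, and the factors $b\,\overline{p_i a}$ are not restriction idempotents. The paper instead proves compatibility by a direct chain of restriction computations, of the shape
\begin{equation*}
  s'_j\,(b\,\overline{p_j a})\,\overline{s'_i\,(b\,\overline{p_i a})}
  \leqslant \cdots \leqslant s'_j\,\overline{\varphi_{ij}\,p_i\, a}\, b
  \leqslant \cdots = s'_j\,\psi_{ij}\,b \leqslant s'_i\, b \rlap{ ,}
\end{equation*}
using $p_j\overline{p_i} = \varphi_{ij}p_i$ and $\overline{\varphi_{ij}f b} = \overline{\psi_{ij}b}$; some such computation is unavoidable, and you would also need the analogous computations showing $\pi u = a$ and $q u = b$ for your mediator $u$. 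Second, uniqueness of the mediator does not follow from joint epimorphism of $\{s'_i\}$ (Lemma~\ref{lem:27}(iv)): joint epimorphism determines maps \emph{out of} $A'$ by precomposition with the sections, whereas a competing mediator $k$ is a map \emph{into} $A'$, to which precomposition says nothing. The correct device --- the one the paper uses --- is the decomposition $1_{A'} = \bigvee_i \overline{q_i} = \bigvee_i s'_i q_i$ (where $q_i = (s'_i)^\ast$), which recovers $k$ as $\bigvee_i s'_i\, q_i k$; one then computes, using $\overline{q_i} = \overline{f q_i}$ and the commuting triangles, that $q_i k = b\,\overline{p_i a}$ is forced, whence $k$ equals your mediator. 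Both gaps are fixable, but as written the proof of part (ii) is incomplete at exactly its hardest step.
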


\begin{proof}
  For (i), write $p_i = s_i^\ast$ and $q_i = t_i^\ast$ so that
  $p = \bigvee_i p_i$ and $q = \bigvee_i q_i$. Now by distributivity
  of joins over composition $pq = \bigvee_{i,k} p_iq_k$, where each
  $p_i q_k$ is a partial isomorphism by Lemma~\ref{lem:9}; now the
  corresponding partial inverses $t_k s_i$ are the desired basis for
  $pq$. Finally, we calculate the induced local atlas to be
  \begin{equation*}
 \theta_{(i,k), (j, \ell)} = p_j q_\ell t_k s_i = p_j \psi_{k\ell} s_i
 = p_j s_i \overline{\psi_{k\ell}s_i} = \varphi_{ij} \circ \overline{\psi_{k\ell}s_i}\rlap{ .}
 \end{equation*}
%

 We now turn to (ii). Like before, write $p_i = s_i^\ast$ so that
 $p = \bigvee_i p_i$. Now the idempotents in~\eqref{eq:29} clearly
 satisfy $\psi_{ij} = \psi_{ji}$, while
 $\psi_{jk}\psi_{ij} =
 \overline{\varphi_{jk}f}\,\overline{\varphi_{ij}f} =
 \overline{\varphi_{jk}f\overline{\varphi_{ij}f}} =
 \overline{\varphi_{jk}\varphi_{ij}f} \leqslant
 \overline{\varphi_{ik}f} = \psi_{ik}$, and so we have a local atlas
 on $X'$; we note for future use that
 $\varphi_{ij}f = f \overline{\varphi_{ij}f} = f \psi_{ij}$. Let
 $q = \bigvee_i q_i \colon A' \rightarrow X'$ be a glueing for this
 local atlas, with corresponding basis of partial sections $\{t_i\}$.
 Now observe that the family of maps $s_i f \colon X' \rightarrow A$
 satisfy
 \begin{equation*}
 s_i f \psi_{ij} = s_i
 \varphi_{ij} f \leqslant s_j f \qquad \text{and} \qquad \overline{s_i f}
 = \overline{\overline{s_i} f} = \overline{\varphi_{ii}f} =
 \psi_{ii}\rlap{ ,}
 \end{equation*}
 so that by Lemma~\ref{lem:10}, there is a unique total map
 $g \colon A' \rightarrow A$ with $gt_i = s_if$ for each $i$.
 Moreover, since
 $p_jgt_i = p_js_if = \varphi_{ij}f = f \psi_{ij} = fq_jt_i$ for each
 $i,j$, we conclude by Lemma~\ref{lem:10} that $p_jg = fq_j$ for each
 $j$. Taking joins over $j$, we have that $pg = fq$, yielding the
 commuting square bottom right in:
  %
 \begin{equation*}
 \cd[@-0.5em]{
 {Y} \ar@{-->}[dr]^-{h} \ar@/^1em/[drr]^-{u} \ar@/_1em/[ddr]_{v} \\
 & A' \ar[r]^-{g} \ar[d]_-{q} &
 {A} \ar[d]^{p} \\ &
 {X'} \ar[r]_-{f} &
 {X}\rlap{ .}
 }
 \end{equation*}
 We claim this square is a pullback in $\C$. Indeed, given maps $u,v$
 with $fv = pu$  as in the outside above, we may
 consider the maps $t_i v \overline{p_i u} \colon Y \rightarrow A'$.
 These are compatible as a consequence of the calculation
 \begin{align*}
 t_j v\, \overline{p_j u}\, \overline{t_i v \overline{p_i u}} &
 \leqslant t_j v\, \overline{p_j u}\,\overline{p_i u} =
 t_j v\, \overline{p_j u \overline{p_i u}} = 
 t_j v\, \overline{p_j \overline{p_i} u} = 
 t_j v\, \overline{p_j s_i p_i u} \\ &=
 t_j v\, \overline{\varphi_{ij} p_i u} \leqslant
 t_j v\, \overline{\varphi_{ij} p u} =
 t_j v\, \overline{\varphi_{ij} f v} =
 t_j v\, \overline{\psi_{ij} v} =
 t_j \psi_{ij} v \leqslant t_i v\rlap{ ,}
 \end{align*}
 and so we can form the join $h = \bigvee_i t_i v \overline{p_i u}
 \colon Y \rightarrow A'$. Noting that
 \begin{equation*}
 gt_i v \overline{p_iu} = s_i fv \overline{p_i u} = s_i p u
 \overline{p_i u} = s_i p_i u = \overline{p_i} u
 \end{equation*}
 we obtain on taking joins that $gh = \textstyle\bigvee_i
 gt_i v \overline{p_i u} = \bigvee_i \overline{p_i}u = u$. 
Moreover,
 we have
 \begin{equation*}
 qt_i v \overline{p_i u} = 
 \overline{t_i} v \overline{p_i u} =
 v \overline{t_i v} \,\overline{p_i u}
 = v \overline{gt_i v} \,\overline{p_i u}
 = v \overline{p_i u}
\end{equation*}
and so, on taking joins, $qh = \bigvee_i qt_i v \overline{p_i u} = v
\bigvee_i \overline{p_i u} = v \overline{pu} = v\overline{fv} =
\overline{f}v = v$.
So $h$ makes both triangles above commute; it remains to show that it
is unique such. But if $k$ is another such map, then we have, since
$fq = gp$ is total that
$\overline{q_i} = \overline{fq}\, \overline{q_i} =
 \overline{fq\overline{q_i}} = \overline{fq_i}$
and so
\begin{equation*}
 q_i k = qk \overline{q_ik} = v \overline{q_i k} = 
 v\overline{fq_ik} = v \overline{p_i gk} = v \overline{p_i u}\rlap{ .}
\end{equation*}
It follows that
$k = \bigvee_{i}\overline{q_i} k = \bigvee_i t_i q_i k = \bigvee_i t_i
v \overline{p_i u} = h$ as desired. Finally, note that since for each
$i$ we have $gt_i = s_i f$, we must have $t_i = f^\ast(s_i)$ by
unicity in Lemma~\ref{lem:8}(iii).
\end{proof}

\section{Local homeomorphisms and sheaves}
\label{sec:local-home-sheav}

In this section, as a warm-up for our main result, we explain how the
classical correspondence between local homeomorphisms over a space $X$
and sheaves on $X$ generalises to objects $X$ of a suitable join
restriction category~$\C$. Much as in the classical case, the
correspondence sought will arise as part of a larger adjunction.

\subsection{The categories of interest}
\label{sec:categories-interest}

One side of our correspondence will involve local homeomomorphisms
over an object $X \in \C$, or more generally, total maps over $X$.
\begin{Defn}
  \label{def:1}
  Let $\C$ be a join restriction category and $X \in \C$. We write
  $\tot$ for the category with total maps $A \rightarrow X$ as objects,
  and commuting triangles
  \begin{equation}\label{eq:26}
    \cd[@-1em]{
      {A} \ar[rr]^-{f} \ar[dr]_-{p} & &
      {B} \ar[dl]^-{q} \\ &
      {X}
    }
  \end{equation}
  as morphisms. We write $\lh$ for the full subcategory of
  $\tot$ whose objects are the local homeomorphisms.
\end{Defn}

It is perhaps worth noting:
\begin{Lemma}
  \label{lem:34}
  If~\eqref{eq:26} is a map in $\tot$ then $f$, as well as
  $p$ and $q$, is total; if it is a map in $\lh$ then $f$,
  as well as $p$ and $q$, is a local homeomorphism.
\end{Lemma}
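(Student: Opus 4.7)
The statement has two parts; I would prove them in order, using the second part of Lemma~\ref{lem:8} and Lemma~\ref{lem:25}(iv).

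For the totality claim, I would first establish the general inequality $\overline{qf} \leqslant \bar f$: since $f\bar f = f$ by axiom~(i), we have $qf = qf\bar f$, so $\overline{qf} = \overline{qf\bar f} = \overline{qf}\cdot \bar f$ by axiom~(iii); thus $\overline{qf} \leqslant \bar f$ in $\O(A)$. Since $p = qf$ is total, $\overline{qf} = 1_A$, hence $\bar f = 1_A$ as well.

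For the local homeomorphism claim, write $p = \bigvee_{i} p_i$ with each $p_i$ a partial isomorphism and partial inverse $s_i \colon X \to A$. Since $p$ is total, Lemma~\ref{lem:18}(i) gives $1_A = \bar p = \bigvee_{i} \overline{p_i} = \bigvee_{i} s_i p_i$. Now the first part of the lemma shows $f$ is total, and using Lemma~\ref{lem:18}(ii) I can write
\begin{equation*}
  \textstyle f = f \cdot 1_A = f \bigvee_{i} s_i p_i = \bigvee_{i} (fs_i) p_i\rlap{ .}
\end{equation*}
By Lemma~\ref{lem:8}(ii), each $fs_i \colon X \to B$ is a partial section of the local homeomorphism $q$; by Lemma~\ref{lem:25}(iv), it is therefore a partial isomorphism. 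Since partial isomorphisms are composition-closed (Lemma~\ref{lem:9}(iii)), each $(fs_i)p_i$ is a partial isomorphism, so $f$ is a join of partial isomorphisms: being also total, it is a local homeomorphism.

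The only step requiring any thought is the totality argument; once $\overline{qf} \leqslant \bar f$ is in hand, both parts follow directly from the preceding lemmas, so I do not expect any serious obstacle.
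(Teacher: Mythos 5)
Your proof is correct and follows essentially the same route as the paper's: totality of $f$ via $\overline{qf}$ and $\bar p = 1$, then the decomposition $f = \bigvee_i (fs_i)p_i$ with each $fs_i$ a partial section of $q$ (Lemma~\ref{lem:8}(ii)), hence a partial isomorphism (Lemma~\ref{lem:25}(iv)), so that $f$ is a total join of partial isomorphisms. The only cosmetic difference is that the paper gets totality from $\bar f = \overline{qf} = \bar p$ directly (using totality of $q$), whereas you use the inequality $\overline{qf} \leqslant \bar f$; both are fine.
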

\begin{proof}
  If $p$ and $q$ are total, then
  $\overline{f} = \overline{qf} = \overline{p} = 1$ so that $f$ is
  also total. Now let $p$ and $q$ be local homeomorphisms, and write
  $p = \bigvee_i p_i$ with corresponding partial sections $s_i$. By
  Lemma~\ref{lem:8}(ii), each $fs_i$ is a partial section of $q$,
  whence a partial isomorphism by Lemma~\ref{lem:25}(iv); and so each
  $fs_ip_i$ is a partial isomorphism. Now
  $f = f\overline{p} = \bigvee_i f\overline{p_i} = \bigvee_i fs_i p_i$
  so that $f$ is a local homeomorphism as desired.
\end{proof}

The other side of the correspondence involves \emph{presheaves} and
\emph{sheaves} on $X \in \C$; these are presheaves and sheaves in the
classical sense on the locale of restriction idempotents $\O(X)$. Our
preferred formulation will follow Fourman and
Scott~\cite{Fourman1979Sheaves}.

\begin{Defn}
  \label{def:26}
  A \emph{presheaf} on an object $X$ of a join restriction category is
  a set $A$ with an associative, unital right action
  $A \times \O(X) \rightarrow A$, written
  $a, e \mapsto ae$, and an \emph{extent} operation
  $A \rightarrow \O(X)$, written $a \mapsto \bar a$, obeying the axioms:
  \begin{enumerate}[(i)]
  \item $a \bar a = a$ for all $a \in A$;
  \item $\overline{a e} = \overline{a} e$ for all $a \in
    A$ and $e \in \O(A)$.
  \end{enumerate}
  Just as in a restriction category, we can define a partial order
  $\leqslant$ and compatibility relation $\smile$ on a presheaf $A$ by
  \begin{equation*}
    a \leqslant b \text{ iff } a = b\overline{a} \qquad \text{and}
    \qquad
    a \smile b \text{ iff } a\overline{b} = b \overline{a}\rlap{ ;}
  \end{equation*}
  and we call $A$ a \emph{sheaf} if every compatible family of elements
  has a join with respect to $\leqslant$. We write $\pshv$ for
  the category whose objects are presheaves, and whose maps $A
  \rightarrow B$ are functions which preserve the right action
  and the extent, and write $\shv$ for the full subcategory of sheaves.
\end{Defn}
\begin{Ex}
  \label{ex:4}
  If $\C$ is a join restriction category then each hom-set $\C(X,Y)$
  is an $\O(X)$-sheaf, with the right $\O(X)$-action given by
  composition and the extent operation given by restriction. More
  generally, if $F \colon \B \rightarrow \C$ is a hyperconnected join
  restriction functor, then each hom-set $\B(X,Y)$ becomes an
  $\O(FX)$-sheaf by transporting the $\O(X)$-sheaf structure along the
  isomorphism $\O(X) \rightarrow \O(FX)$.
\end{Ex}
By analogy with Lemma~\ref{lem:18}, we have the following standard
result~\cite{Fourman1979Sheaves}:
\begin{Lemma}
  \label{lem:6}
  If $X$ is an object of a join restriction category and $A \in
  \shv$, then:
  \begin{enumerate}[(i)]
  \item $\overline{\bigvee_{i} a_i} = \bigvee_{i}\overline{a_i}$ for
    all compatible families $\{a_i\} \subseteq A$;
  \item $(\bigvee_i a_i)e = \bigvee_i (a_ie)$ for all
    compatible families $\{a_i\} \subseteq A$ and all $e \in \O(X)$;
  \item $a(\bigvee_i e_i) = \bigvee_i (ae_i)$ for all $a \in
    A$ and all
    families $\{e_i\} \subseteq \O(X)$.
  \end{enumerate}
  Moreover, if $f \colon A \rightarrow B$ in $\shv$, then
    $f(\bigvee_i a_i) = \bigvee_i f(a_i)$ for all compatible families
    $\{a_i\} \subseteq A$.
\end{Lemma}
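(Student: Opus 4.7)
The plan is to prove the three clauses in the order (i), (iii), (ii), since the later parts rely on the first, and then to deduce the statement about morphisms from (i) together with the characterisation of the partial order via extents. Throughout, I will use freely the fact that $\O(X)$ is a locale, so that any family of restriction idempotents is compatible and admits an infinitary distributive join, and that $a \leqslant b$ in a presheaf $A$ implies $\overline a \leqslant \overline b$ (since $a = b \overline a$ gives $\overline a = \overline{b \overline a} = \overline b\, \overline a$ by axiom~(ii) of a presheaf).

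For (i), write $b = \bigvee_i a_i$; then each $\overline{a_i} \leqslant \overline b$, so $e \defeq \bigvee_i \overline{a_i}$ exists in $\O(X)$ with $e \leqslant \overline b$. The key calculation is that
\begin{equation*}
  b e\, \overline{a_i} \;=\; b(e\, \overline{a_i}) \;=\; b \overline{a_i} \;=\; a_i
\end{equation*}
since $\overline{a_i} \leqslant e$ in the meet-semilattice $\O(X)$ forces $e \overline{a_i} = \overline{a_i}$. Hence $a_i \leqslant be$ for each $i$, so $b \leqslant be$; the reverse $be \leqslant b$ is immediate from axiom~(ii). Thus $b = be$, and applying the extent yields $\overline b = \overline{be} = \overline b\, e = e$, using that $e \leqslant \overline b$. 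For (iii), given $\{e_i\} \subseteq \O(X)$, the family $\{ae_i\}$ is compatible because $\overline{ae_i} = \overline a\, e_i$ and the idempotents $\overline a, e_i, e_j$ commute, so $ae_i\, \overline{ae_j} = a e_i e_j = ae_j\, \overline{ae_i}$. The meet-semilattice inequality $e_i \leqslant \bigvee_j e_j$ yields $a e_i \leqslant a(\bigvee_j e_j)$ by the same trick as in (i), so the join $c = \bigvee_i a e_i$ satisfies $c \leqslant a(\bigvee_i e_i)$. Computing extents via (i) and the locale distributivity in $\O(X)$,
\begin{equation*}
  \overline c \;=\; \textstyle\bigvee_i \overline{ae_i} \;=\; \bigvee_i \overline a\, e_i \;=\; \overline a\, \bigvee_i e_i \;=\; \overline{a(\bigvee_i e_i)}\rlap{ ,}
\end{equation*}
whence $c = a(\bigvee_i e_i) \overline c = a(\bigvee_i e_i)$, giving (iii).

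Clause (ii) follows in the same spirit: with $b = \bigvee_i a_i$, each $a_i e \leqslant be$ by the $e$-translate of the argument in (i), so the join $c = \bigvee_i a_i e$ exists and is $\leqslant be$; but (i) and the presheaf axioms give $\overline c = \bigvee_i \overline{a_i} e = \overline b\, e = \overline{be}$, forcing equality. Finally, for a morphism $f \colon A \to B$ of presheaves, the preservation of extent and action means $f$ preserves the partial order, so $\bigvee_i f(a_i) \leqslant f(\bigvee_i a_i)$; and applying (i) in both $A$ and $B$ shows both sides have the same extent $\bigvee_i \overline{a_i}$, so they coincide. The only mild subtlety is to ensure that (i) is proved without invoking (ii) or (iii); the argument above does so, working only with the definitional manipulation $e \overline{a_i} = \overline{a_i}$ in $\O(X)$ and the presheaf axioms.
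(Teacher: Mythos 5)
Your proof is correct, and for clause (ii) and the morphism claim it is exactly the paper's technique: establish the easy inequality, then show both sides have the same extent and conclude from $c \leqslant d$, $\overline{c} = \overline{d}$ that $c = d$. Where you genuinely diverge is in (i) and (iii): the paper does not prove these at all, but defers (i) to Lemma~\ref{lem:18} (i.e.\ to the corresponding fact for join restriction categories, itself cited from Cockett--Gallagher--Guo) and (iii) to Lemma~3.1.8(iii) of Guo's thesis, deriving it "from (i) and (ii)". You instead give direct, self-contained arguments from the presheaf axioms, and your dependency structure differs: you prove (iii) before and independently of (ii), substituting locale distributivity in $\O(X)$ (which the paper has established holds for any object of a join restriction category) for the role that (ii) plays in the paper's chain of citations. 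What your route buys is a proof readable without the external references; what the paper's buys is brevity and reuse of already-recorded general facts. One step you leave implicit: in (ii) you pass from "$a_i e \leqslant be$ for all $i$" to "the join $\bigvee_i a_i e$ exists", which needs that upper-bounded families in a sheaf are pairwise compatible; this is the same standard computation the paper records after Definition~\ref{def:17} for restriction categories, and it transcribes verbatim to presheaves, so it is a harmless omission, but worth saying (or alternatively noting that compatibility of $\{a_i e\}$ follows directly from compatibility of $\{a_i\}$).
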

\begin{proof}
  (i) is as in Lemma~\ref{lem:18}. For (ii), we easily have
  $\bigvee_i (a_i e) \leqslant (\bigvee_i a_i)e$, but
  using (i) and~\eqref{eq:27} in $\C$, we also have
  $\overline{\bigvee_i (a_i e)} = \bigvee_i \overline{a_i
    e} = \bigvee_i (\overline{a_i}e) = (\bigvee_i
  \overline{a_i})e = (\overline{\bigvee_i a_i})e =
  \overline{(\bigvee_i a_i) e}$; whence equality. (iii) follows
  like before from (i) and (ii)
  using~\cite[Lemma~3.1.8(iii)]{Guo2012Products}. For the final claim,
  we easily have $\bigvee_i f(a_i) \leqslant f(\bigvee_i a_i)$, but
  also that
  $\overline{\bigvee_i f(a_i)}= \bigvee_i \overline{f(a_i)} =
  \bigvee_i \overline{a_i} = \overline{\bigvee_i a_i} =
  \overline{f(\bigvee_i a_i)}$ using (i) and the fact that $f$
  preserves restriction. Thus $f(\bigvee_i a_i) = \bigvee_i f(a_i)$ as desired.
\end{proof}

The $\O(X)$-valued equality of Definition~\ref{def:34}
also makes sense for presheaves:

\begin{Defn}
  \label{def:10}(\cite{Fourman1979Sheaves})
  For any $X \in \C$,  any $A \in \pshv$ and any $a,b \in A$,
  we define
  \begin{equation*}
    \sheq a b \defeq \bigvee \{e \in \O(X) : e \leqslant
    \overline{a}\overline{b} \text{ and } ae = be\}\rlap{ .}
  \end{equation*}
\end{Defn}
And we have the following standard result, which can be proved exactly
as in Lemma~\ref{lem:30} above, using the previous lemma for part
(iv).
\begin{Lemma}
\label{lem:14}(\cite{Fourman1979Sheaves})
  For any $X \in \C$, any $A \in \pshv$ and any $a,b,c \in A$
  we have
  \begin{enumerate}[(i)]
  \item If $a \leqslant b$ then $\sheq a b = \bar b$;
  \item $\sheq a b = \sheq b a$;
  \item $\sheq b c \sheq a b \leqslant \sheq a c$;
  \item If $A$ is a sheaf, then $a\sheq a b = b \sheq a b$.
  \end{enumerate}
\end{Lemma}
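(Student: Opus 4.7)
The plan is to imitate the proof of Lemma~\ref{lem:30} verbatim, using the presheaf analogues established in Lemma~\ref{lem:6}. All four parts proceed by pure manipulation of restriction idempotents and the action $a,e \mapsto ae$; no new ideas are required beyond distributivity of joins over the action, which is exactly what Lemma~\ref{lem:6}(ii)(iii) provides.

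First I would dispatch (i) and (ii) by direct inspection. For (ii), the set $\{e \in \O(X) : e \leqslant \overline{a}\overline{b}, ae = be\}$ is manifestly symmetric in $a$ and $b$. For (i), note that $a \leqslant b$ forces $\overline{a} \leqslant \overline{b}$, so $\overline{a}\overline{b} = \overline{a}$, and moreover $a\overline{a} = a = b\overline{a}$; hence $\overline{a}$ lies in the defining set, and any other element $e$ of that set is bounded above by $\overline{a}\overline{b} = \overline{a}$. (This gives $\sheq{a}{b} = \overline{a}$; I take the displayed ``$\overline{b}$'' in the statement to be a typo for $\overline{a}$, matching Lemma~\ref{lem:30}(i).)

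For (iii), I would expand the product using Lemma~\ref{lem:6}(iii):
\begin{equation*}
  \sheq{b}{c}\sheq{a}{b} = \bigvee\bigl\{ed : d \leqslant \overline{a}\overline{b},\ e \leqslant \overline{b}\overline{c},\ ad = bd,\ be = ce\bigr\}\rlap{ ,}
\end{equation*}
and then show each $ed$ belongs to the set defining $\sheq{a}{c}$. Indeed $ed \leqslant \overline{b}\overline{c}\,\overline{a}\overline{b} \leqslant \overline{a}\,\overline{c}$, and $aed = aed = bed = ced$ by commutativity of idempotents and the defining equations, so $aed = ced$ as required.

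For (iv) I would again use distributivity (Lemma~\ref{lem:6}(iii)) to compute
\begin{equation*}
  a\sheq{a}{b} = \bigvee\bigl\{ae : e \leqslant \overline{a}\overline{b},\ ae = be\bigr\} = \bigvee\bigl\{be : e \leqslant \overline{a}\overline{b},\ ae = be\bigr\} = b\sheq{a}{b}\rlap{ ,}
\end{equation*}
where the existence of the join uses that the set $\{ae\} \subseteq A$ is compatible (bounded by $a$), together with the sheaf hypothesis on $A$ invoked in Lemma~\ref{lem:14}(iv). There is no real obstacle anywhere; the only subtlety is remembering to invoke the sheaf condition in (iv), since only there do we need genuine joins in $A$ itself rather than joins of idempotents in $\O(X)$.
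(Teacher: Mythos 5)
Your proof is correct and takes exactly the approach the paper does: the paper's own proof of Lemma~\ref{lem:14} consists precisely of the remark that one transcribes the proof of Lemma~\ref{lem:30}, invoking Lemma~\ref{lem:6} for part (iv), which is what you carry out. Your reading of part (i) as a typo for $\overline{a}$ is also sound, since when $a \leqslant b$ the defining set of $\sheq a b$ is bounded above by $\overline{a}\,\overline{b} = \overline{a}$, matching Lemma~\ref{lem:30}(i).
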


\subsection{The adjunction}
\label{sec:adjunction}
Our objective in this section is to construct, for any object $X$ of a
join restriction category with local glueings, an adjunction
\begin{equation}\label{eq:25}
  \cd{
    {\tot} \ar@<-4.5pt>[r]_-{\Gamma} \ar@{<-}@<4.5pt>[r]^-{\Delta} \ar@{}[r]|-{\bot} &
    {\pshv} \rlap{ .}
  }
\end{equation}
We begin with the right adjoint $\Gamma$, which is given by taking
partial sections.

\begin{Defn}
  \label{def:20}
  Let $p \colon A \rightarrow X$ be a total map in a join restriction
  category. We write $\Gamma(p)$ for the $X$-presheaf of partial
  sections of $p$, with right $\O(X)$-action $s,e \mapsto s \circ e$
  and extent operation given by restriction $s \mapsto \bar s$.
\end{Defn}

\begin{Prop}
  \label{prop:17}
  Let $X$ be an object of a join restriction category $\C$. The
  assignation $p \mapsto \Gamma(p)$ underlies a functor
  $\Gamma \colon \tot \rightarrow \pshv$ whose image
  lands inside the subcategory $\shv$.
\end{Prop}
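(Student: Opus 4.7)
The plan is to verify this in four steps, following the order of the structure claimed: first that $\Gamma(p)$ is a presheaf for each total $p$, then that $\Gamma(f)$ is a presheaf morphism for each triangle $f$, then functoriality, and finally that each $\Gamma(p)$ is in fact a sheaf.

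For the first step I would unpack Definition~\ref{def:20}: a partial section is a map $s \colon X \rightarrow A$ with $ps = \bar s$, and I need to check that the assignments $s,e \mapsto se$ and $s \mapsto \bar s$ satisfy the presheaf axioms of Definition~\ref{def:26}. The right action lands in $\Gamma(p)$ because $p(se) = (ps)e = \bar s e = \overline{se}$, using restriction axiom (iii) for idempotent $e$ to get $\overline{se} = \bar s e$. Associativity and unitality follow from associativity in $\C$ and the fact that $s \cdot 1 = s$. The two presheaf axioms $s\bar s = s$ and $\overline{se} = \bar s e$ are then immediate from the restriction axioms.

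For the second step, given a morphism~\eqref{eq:26} in $\tot$, I would define $\Gamma(f)(s) = fs$ and check the three things: (a) $fs$ is again a partial section of $q$, since $q(fs) = (qf)s = ps = \bar s$, and $\bar s = \overline{fs}$ because $f$ is total (Lemma~\ref{lem:34} gives $\overline{fs} = \bar s$ directly); (b) $\Gamma(f)$ preserves the right action, i.e., $f(se) = (fs)e$, which is just associativity in $\C$; and (c) $\Gamma(f)$ preserves extent, i.e., $\overline{fs} = \bar s$, again by totality of $f$. Functoriality ($\Gamma(1) = 1$ and $\Gamma(gf) = \Gamma(g)\Gamma(f)$) is then trivial.

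For the final and main step, I would show that $\Gamma(p)$ is a sheaf. The key observation is that the compatibility relation and partial order defined on $\Gamma(p)$ coincide with those inherited from the hom-set $\C(X,A)$, since both are defined in terms of restriction and composition. Thus given a compatible family of partial sections $\{s_i\}$ of $p$, the join $s = \bigvee_i s_i$ exists in $\C(X,A)$ because $\C$ is a join restriction category. I then check it is still a partial section: using Lemma~\ref{lem:18}(ii) and (i) respectively,
\begin{equation*}
\textstyle ps = p(\bigvee_i s_i) = \bigvee_i ps_i = \bigvee_i \overline{s_i} = \overline{\bigvee_i s_i} = \bar s\rlap{ ,}
\end{equation*}
so $s \in \Gamma(p)$, and it is clearly the least upper bound of $\{s_i\}$ in $\Gamma(p)$ since the order agrees with that of $\C(X,A)$.

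None of this is hard; the only place to be careful is the identification of the compatibility and order structures on $\Gamma(p)$ with those on $\C(X,A)$, and the use of Lemma~\ref{lem:18} to pull the join through $p$ — which needs that precomposition preserves joins (that is, joins are computed on the same side as the map being postcomposed), and this is given by the left preservation part of Lemma~\ref{lem:18}(ii).
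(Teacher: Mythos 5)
Your proof is correct and takes essentially the same route as the paper's: well-definedness of $\Gamma(f)$ by (re-deriving inline) Lemma~\ref{lem:8}(ii), trivial functoriality, and the sheaf property by identifying $\leqslant$ and $\smile$ on $\Gamma(p)$ with those of $\C(X,A)$ and then computing $p\bigvee_i s_i = \bigvee_i ps_i = \bigvee_i \overline{s_i} = \overline{\bigvee_i s_i}$. One small wording slip at the end: that computation uses preservation of joins under \emph{post}composition, which is exactly Lemma~\ref{lem:18}(ii) (precomposition preservation is the axiom~\eqref{eq:27}), though you do cite the correct lemma.
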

\begin{proof}
  Each $f \colon p \rightarrow q$ of $\tot$, as
  in~\eqref{eq:26}, induces the map
  $\Gamma(f) \colon \Gamma(p) \rightarrow \Gamma(q)$ given by
  $s \mapsto fs$, which is well-defined by Lemma~\ref{lem:8}(ii). It
  is clear that $\Gamma(f)$ commutes with the right $\O(X)$-action and
  the extent operations, and that $f \mapsto \Gamma(f)$ is itself
  functorial. It remains to show that the image of $\Gamma$ lands
  inside $\shv$, i.e., that each $\Gamma(p)$ is a sheaf. The
  relations $\leqslant$ and $\smile$ on $\Gamma(p)$ are the same as
  those on $\C(X,A)$, so that any compatible family
  $\{s_i \colon X \rightarrow A\} \subseteq \Gamma(p)$ admits a join
  $\bigvee_i s_i \colon X \rightarrow A$ \emph{qua} family in $\C$.
  This join is again a partial section since
  $p\bigvee_i s_i = \bigvee_i ps_i = \bigvee_i \overline{s_i} =
  \overline{\bigvee_i s_i}$, and as such is the join
  in $\Gamma(p)$.
\end{proof}

We now show that, in the presence of local glueings, the
functor $\Gamma \colon \tot \rightarrow \pshv$ of
Proposition~\ref{prop:17} has a left adjoint $\Delta$, defined on
objects as follows.

\begin{Defn}
  \label{def:27}
  Let $\C$ be a join restriction category with local glueings. For any
  $X \in \C$ and $A \in \pshv$, we have by
  Lemma~\ref{lem:14}(ii) and (iii) an $A$-object local atlas on $X$
  with $\varphi_{ab} = \sheq a b$. We denote by
  $p_A \colon \Delta A \rightarrow X$ a glueing of this atlas, with
  basis of partial sections $\{s_a\}_{a \in A}$ and corresponding
  partial inverses $p_a$.
\end{Defn}

We now define the unit of the desired adjunction and verify the
adjointness property; functoriality of $\Delta$ will follow from this.
\begin{Lemma}
  \label{lem:21}
  The assignation $a \mapsto s_a$ gives a 
  presheaf map $\eta_A \colon A \rightarrow \Gamma \Delta A$.
\end{Lemma}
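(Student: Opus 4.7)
The plan is to verify the two presheaf-map axioms for $\eta_A \colon a \mapsto s_a$: that it preserves the extent operation and the right $\O(X)$-action.

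Preservation of extent is immediate from the construction of $\Delta A$. Since $\{s_a\}_{a \in A}$ is the basis of partial sections of the local homeomorphism $p_A$ corresponding to the atlas $\varphi_{ab} = \sheq a b$, we have $\overline{s_a} = p_A s_a = \varphi_{aa} = \sheq a a$, and then Lemma~\ref{lem:14}(i) gives $\sheq a a = \overline a$, as required.

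For preservation of the action we must check that $s_{ae} = s_a \circ e$ in $\Gamma \Delta A$ for all $a \in A$ and $e \in \O(X)$. First, $s_a \circ e$ is genuinely a partial section of $p_A$: since $e$ is a restriction idempotent, $p_A(s_a e) = \overline{s_a}\, e = \overline{s_a e}$. So both $s_{ae}$ and $s_a \circ e$ are partial sections of $p_A$. I will then invoke Lemma~\ref{lem:1}, which identifies partial sections of $p_A$ with families of restriction idempotents $(\theta_b)_{b \in A}$ satisfying~\eqref{eq:40}; by Lemma~\ref{lem:25}(ii), the family associated to a partial section $s$ is $\theta_b = \sheq s {s_b} = p_b s$. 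Applying this to each candidate, the family for $s_{ae}$ is $\theta_b = p_b s_{ae} = \varphi_{b, ae} = \sheq b {ae}$, while the family for $s_a \circ e$ is $\theta_b = p_b(s_a e) = (p_b s_a)\, e = \varphi_{b,a}\, e = \sheq b a \cdot e$.

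The proof therefore reduces to the pointwise identity $\sheq b {ae} = \sheq b a \cdot e$ in $\O(X)$, which is where the only real content lies. For $(\geqslant)$, if $f \leqslant \sheq b a$ and $f \leqslant e$ (so $f = fe$), then $f \leqslant \overline a e \cdot \overline b = \overline{ae}\, \overline b$ by presheaf axiom (ii), and $(ae)f = a(ef) = af = bf$ using associativity of the action together with $ef = f$; so $f \leqslant \sheq b {ae}$. Conversely, if $f \leqslant \sheq b {ae}$, then $f \leqslant \overline{ae} = \overline a e \leqslant e$, so $ef = f$; hence $af = a(ef) = (ae)f = bf$ and $f \leqslant \overline a\, \overline b$, giving $f \leqslant \sheq b a$, and combined with $f \leqslant e$ we conclude $f \leqslant \sheq b a \cdot e$. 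The main (very mild) obstacle is just keeping straight which identities use the presheaf axioms versus commutativity of restriction idempotents; no structural issue arises.
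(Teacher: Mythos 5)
Your overall route differs from the paper's and is viable in principle: the paper proves $s_a e = s_{ae}$ directly, computing $s_a e = s_a \overline{a}e = s_a \sheq{ae}{a} = s_a p_a s_{ae} \leqslant s_{ae}$ (via Lemma~\ref{lem:14}(i) applied to $ae \leqslant a$, which is valid for presheaves) and then comparing restrictions; you instead pass through the correspondence between partial sections and families of idempotents (Lemmas~\ref{lem:25} and~\ref{lem:1}), reducing the action axiom to the pointwise identity $\sheq{b}{ae} = \sheq{b}{a}\,e$ in $\O(X)$. The reduction itself is sound: $s_a e$ is indeed a partial section, and partial sections are determined by their families $(p_b s)_{b \in A}$ by Lemma~\ref{lem:25}(iii).

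The gap is in your proof of the pointwise identity. In both directions you infer from ``$f \leqslant \sheq{b}{a}$'' (respectively ``$f \leqslant \sheq{b}{ae}$'') that $af = bf$ (respectively $(ae)f = bf$). That inference is exactly Lemma~\ref{lem:14}(iv), i.e.\ $a\sheq{a}{b} = b\sheq{a}{b}$, which the paper proves only when $A$ is a \emph{sheaf}; in the present lemma $A$ is an arbitrary presheaf. For a mere presheaf the inference genuinely fails, since the action need not commute with joins of idempotents: a presheaf containing $a \neq b$ with $\overline{a} = \overline{b} = u \vee v$ and $au = bu$, $av = bv$ has $\sheq{a}{b} = \overline{a}\,\overline{b}$, yet $a\sheq{a}{b} = a \neq b = b\sheq{a}{b}$. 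The repair is routine but necessary: argue with the joinands in Definition~\ref{def:10} rather than with arbitrary idempotents below the join. For $\sheq{b}{a}\,e \leqslant \sheq{b}{ae}$, first distribute, $\sheq{b}{a}\,e = \bigvee\{\,de : d \leqslant \overline{a}\,\overline{b},\ ad = bd\,\}$ (each $\O(X)$ is a locale), and run your computation on $f = de$, where now $ad = bd$ holds by hypothesis rather than by the invalid inference; for the converse, run your computation on a joinand $f$ of $\sheq{b}{ae}$, where $(ae)f = bf$ holds by hypothesis, concluding that $f$ is a joinand of $\sheq{b}{a}$ with $f \leqslant e$. With that adjustment your argument is correct.
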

\begin{proof}
  $\eta_A$ preserves extents since
  $\overline{s_a} = p_a s_a = \sheq a a = \overline{a}$. To see it
  preserves the right $\O(X)$-action, note that for any $e \in \O(X)$,
  we have
  $s_a e = s_a \overline{a}e = s_a \sheq {ae} a = s_a p_a s_{ae}
  \leqslant s_{ae}$; but since
  $\overline{s_a e} = \overline{ae} = \overline{s_{ae}}$ we have
  $s_a e = s_{ae}$ as desired.
\end{proof}

\begin{Lemma}
  \label{lem:3}
  For each $A \in \pshv$, the morphism $\eta_A \colon A
  \rightarrow \Gamma\Delta A$ exhibits $p_A \colon \Delta A
  \rightarrow X$ as the value at $A$ of a left adjoint $\Delta$ to
  $\Gamma$.
\end{Lemma}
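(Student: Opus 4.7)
The plan is to verify the universal property directly: given any total map $q\colon B\to X$ and any presheaf morphism $f\colon A\to \Gamma q$, I will produce a unique morphism $g\colon p_A\to q$ in $\tot$ with $\Gamma(g)\circ\eta_A=f$. The essential tool is Lemma~\ref{lem:10}, which says that maps out of $\Delta A$ in $\C$ are determined by precomposition with the basis $\{s_a\}_{a\in A}$ of partial sections and are pinned down by a family $(g_a\colon X\to B)_{a\in A}$ subject to $g_a\varphi_{aa}=g_a$ and $g_b\varphi_{ab}\leqslant g_a$, where $\varphi_{ab}=\sheq a b$.

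Given $f$, I set $g_a\defeq f(a)$. The first clause $f(a)\overline{a}=f(a)$ is immediate since $f$ preserves extent, so $\overline{f(a)}=\overline{a}$. For the second clause, the key point is that $\Gamma q$ is a sheaf (Proposition~\ref{prop:17}), so by Lemma~\ref{lem:6}(iii) the right $\O(X)$-action distributes over joins; together with the fact that $f$ preserves the right action, this yields
\begin{equation*}
  f(b)\sheq{a}{b}=\textstyle\bigvee\{f(b)e : e\leqslant \bar a\bar b,\ ae=be\}=\bigvee\{f(a)e : e\leqslant \bar a\bar b,\ ae=be\}\leqslant f(a)\rlap{ .}
\end{equation*}
Lemma~\ref{lem:10} then produces a unique map $g\colon \Delta A\to B$ in $\C$ with $gs_a=f(a)$ for all $a$, and the totality criterion in the same lemma gives that $g$ is total, since $\overline{gs_a}=\overline{f(a)}=\overline{a}=\varphi_{aa}$.

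Next I check $g$ is a morphism over $X$, i.e., $qg=p_A$. For each $a$, on the one hand $qgs_a=qf(a)=\overline{f(a)}=\overline{a}$; on the other hand $p_As_a=\bigvee_b p_bs_a=\bigvee_b\varphi_{ab}=\bigvee_b\sheq{a}{b}=\overline{a}$, since $\sheq{a}{a}=\overline{a}$ and each $\sheq{a}{b}\leqslant\overline{a}$. So $qgs_a=p_As_a$ for all $a$, and the uniqueness clause of Lemma~\ref{lem:10} applied to the two maps $qg,p_A\colon \Delta A\to X$ gives $qg=p_A$. Thus $g\colon p_A\to q$ is a well-defined morphism in $\tot$.

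Finally, $\Gamma(g)(\eta_A(a))=\Gamma(g)(s_a)=gs_a=f(a)$, so $\Gamma(g)\circ\eta_A=f$; and uniqueness of $g$ in $\tot$ follows again from Lemma~\ref{lem:10}, since any $g'$ with $\Gamma(g')\eta_A=f$ satisfies $g's_a=f(a)=gs_a$ for every $a$. Functoriality of $\Delta$ is then automatic from the universal property. The only substantive step is the verification of the compatibility inequality $f(b)\sheq{a}{b}\leqslant f(a)$, and this is precisely where the sheaf condition on the codomain $\Gamma q$ (rather than on $A$) enters.
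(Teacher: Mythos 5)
Your proof is correct and follows essentially the same route as the paper's: both define $g_a = f(a)$, verify the two hypotheses of Lemma~\ref{lem:10} using that $f$ preserves extents and the $\O(X)$-action (the chain $f(b)e = f(be) = f(ae) = f(a)e$), and then obtain existence, totality, the identity $qg = p_A$, and uniqueness all from that lemma via the basis property of the $s_a$'s. The only cosmetic difference is that you justify distributing $f(b)$ over the join defining $\sheq{a}{b}$ by the sheaf property of $\Gamma q$ (Lemma~\ref{lem:6}(iii)), where the paper appeals directly to join-distributivity of composition in $\C$; both justifications are valid.
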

\begin{proof}
  Let $q \colon B \rightarrow X$ in $\tot$. We must show each
  presheaf map $f \colon A \rightarrow \Gamma q$ is a composite
  $\Gamma g \circ \eta_A \colon A \rightarrow \Gamma \Delta A
  \rightarrow \Gamma q$ for a unique $g$ in a commuting triangle
  \begin{equation*}
    \cd[@!C@C-1em]{
      {\Delta A} \ar[rr]^-{g} \ar[dr]_-{p_A} &  &
      {B} \ar[dl]^-{q} \\ &
      {X}\rlap{ .}
    }
  \end{equation*}
  Now if $\Gamma(g) \circ \eta_A = \Gamma(g') \circ \eta_A$, then
  $gs_a = g's_a$ for each $a \in A$, whence $g = g'$ since the $s_a$'s
  are a basis. For existence of $g$, consider the partial sections
  $f_a \colon X \rightarrow B$ giving the values of the presheaf map
  $f \colon A \rightarrow \Gamma q$. Since $f$ is a presheaf map, we
  have $\overline{f_a} = \overline{a}$ and $f_a e = f_{ae}$ for any
  $e \in \O(X)$; we thus have
  $\overline{f_a} = \overline{a} = \sheq a a$ and
  \begin{align*}
    f_b \sheq a b &= \bigvee \{f_b e : e \leqslant
    \overline{a}\overline{b}, ae = be\} =
    \bigvee \{f_{b e} : e \leqslant
    \overline{a}\overline{b}, ae = be\} \\ &=
    \bigvee \{f_{a e} : e \leqslant
    \overline{a}\overline{b}, ae = be\} =
    \bigvee \{f_{a}e : e \leqslant
    \overline{a}\overline{b}, ae = be\} \leqslant
     f_a\rlap{ .}
  \end{align*}
  So by Lemma~\ref{lem:10}, there is a unique total map $g \colon \Delta
  A \rightarrow B$ with $gs_a = f_a$ for each $a \in A$; this says
  exactly that $\Gamma(g) \circ \eta_A = f$. Finally,
  since
  $qgs_a = qf_a = \overline{f_a} = \overline{a} = p_A s_a$
  for each $a$, we have since the $s_a$'s are a basis that $qg =
  p_A$. 
\end{proof}
This completes the construction of the adjunction~\eqref{eq:25}.

\subsection{The fixpoints}
\label{sec:fixpoints}
By a \emph{right fixpoint} of an adjunction
$F \dashv G \colon \B \rightarrow \A$, we mean an object $A \in \A$ at
which the unit map $\eta_A \colon A \rightarrow GFA$ is invertible;
similarly, a \emph{left fixpoint} is an object $B \in \B$ with
$\varepsilon_B$ invertible. Each adjunction restricts to an
equivalence between the full subcategories on the left fixpoints, and
the right fixpoints. In this section, we show that, for the
adjunction~\eqref{eq:25}, this yields the desired equivalence between
local homeomorphisms and sheaves.



\begin{Prop}
  \label{prop:18}
  The component $\eta_A \colon A \rightarrow \Gamma \Delta A$ of the
  unit of~\eqref{eq:25} is invertible if and only if $A$ is a sheaf.
\end{Prop}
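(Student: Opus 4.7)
The plan is to use the \emph{key identity} $\sheq{s_a}{s_b} = \sheq{a}{b}$ for all $a,b \in A$, which follows from combining Lemma~\ref{lem:25}(ii) with the fact that the local atlas associated to the glueing $p_A$ is $\varphi_{ab} = \sheq{a}{b}$ by construction (Definition~\ref{def:27}).

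For the ``only if'' direction, if $\eta_A$ is invertible then $A \cong \Gamma\Delta A$, and $\Gamma \Delta A$ is a sheaf by Proposition~\ref{prop:17}; sheafhood transfers across presheaf isomorphisms, so $A$ is a sheaf.

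For the ``if'' direction, suppose $A$ is a sheaf. \emph{Injectivity}: if $s_a = s_b$, then from the key identity $\sheq{a}{b} = \sheq{s_a}{s_b} = \overline{s_a} = \overline{a}$ and symmetrically $= \overline{b}$. Since $A$ is a sheaf, Lemma~\ref{lem:14}(iv) gives $a\sheq{a}{b} = b\sheq{a}{b}$, i.e.\ $a\overline{a} = b\overline{a}$, so $a \leqslant b$; symmetrically $b \leqslant a$, hence $a = b$.

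\emph{Surjectivity}: let $t$ be a partial section of $p_A$. By Lemma~\ref{lem:25}(iii), $t = \bigvee_a s_a \sheq{t}{s_a}$, and the family $\{s_a \sheq{t}{s_a}\}$ is compatible in $\Gamma\Delta A$ (being bounded above by $t$). Since $\eta_A$ preserves the $\O(X)$-action, we have $s_a \sheq{t}{s_a} = \eta_A(a \cdot \sheq{t}{s_a})$. Because $\eta_A$ is a presheaf map and has just been shown injective, it reflects the compatibility relation (a presheaf map always preserves it, and injectivity combined with preservation of the $\O(X)$-action and extent lets one transport the equation $ae\cdot\overline{be'} = be'\cdot\overline{ae}$ back across $\eta_A$). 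Hence $\{a \cdot \sheq{t}{s_a}\}_{a \in A}$ is compatible in the sheaf $A$, so admits a join $a^\star$. Since $\eta_A$ is then a morphism in $\shv$, the last clause of Lemma~\ref{lem:6} says it preserves this join, giving $\eta_A(a^\star) = \bigvee_a s_a \sheq{t}{s_a} = t$. The main bookkeeping obstacle is exactly this compatibility-reflection step; everything else is a direct application of the key identity together with Lemmas~\ref{lem:25} and~\ref{lem:14}.
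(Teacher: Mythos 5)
Your proof is correct and follows essentially the same route as the paper: both directions match, including the use of Lemma~\ref{lem:25} and the atlas $\varphi_{ab} = \sheq{a}{b}$ for injectivity via Lemma~\ref{lem:14}(iv), and the reflection of compatibility along the injective presheaf map $\eta_A$ followed by join-preservation (Lemma~\ref{lem:6}) for surjectivity. The only difference is cosmetic bookkeeping in the injectivity step, where you isolate the identity $\sheq{s_a}{s_b} = \sheq{a}{b}$ up front rather than computing $a = ap_as_b$ directly.
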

\begin{proof}
  Each $\Gamma(p)$ is a sheaf, so if $\eta_A$ is invertible then $A
  \cong \Gamma\Delta A$ is a sheaf. Suppose conversely that $A$ is
  a sheaf. To see $\eta_A$ is injective, note that if $s_a = s_b$ then
  $a = a\sheq a a = ap_a s_a = a p_a s_b = a \sheq a b = b \sheq a b$,
  using Lemma~\ref{lem:14}(iv) at the last step. So $a \leqslant b$
  and by symmetry $b \leqslant a$, whence $a = b$.

  To show $\eta_A$ is surjective, we must show each partial section
  $s$ of $p_A \colon \Delta A \rightarrow X$ is equal to $s_a$ for
  some $a \in A$. By Lemma~\ref{lem:25}(ii), we can write
  $s = \bigvee_b s_b \sheq s {s_b}$. In particular, the family
  $\{s_{b}\sheq s {s_b}\}$ is compatible in $\Gamma \Delta A$. Because
  $\eta_A$ is an injective presheaf map, it follows that
  $\{b \sheq s {s_b}\}$ is a compatible family in $A$. Since $A$ is a
  sheaf, we can form $a = \bigvee_b b \sheq s {s_b}$, and because any
  presheaf map preserves joins by Lemma~\ref{lem:6}, we have $s_a =
  \bigvee_b s_b \sheq s {s_b} = s$
  as desired.
\end{proof}
\begin{Prop}
  \label{prop:19}
  The component
  \begin{equation*}
    \cd[@!C@C-1.5em@-0.5em]{
      {\Delta \Gamma(q)} \ar[rr]^{\varepsilon_q} \ar[dr]_-{p_{\Gamma q}} & &
      {B} \ar[dl]^-{q} \\ &
      {X}
    }
  \end{equation*}
  of the counit of~\eqref{eq:25}
  is invertible if and only if $q$ is a local homeomorphism.
\end{Prop}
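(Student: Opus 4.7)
The plan is to prove the two directions separately. The ``only if'' direction is immediate: since $\Delta\Gamma(q)$ is, by construction, a glueing of a local atlas on $X$, the map $p_{\Gamma q} \colon \Delta\Gamma(q) \to X$ is always a local homeomorphism; so if $\varepsilon_q$ is invertible, then $q \cong p_{\Gamma q}$ in $\tot$ is likewise a local homeomorphism.

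For the substantive ``if'' direction, assume $q = \bigvee_i p_i$ with basis of partial sections $\{s_i\}$ and set $t_i \defeq \eta_{\Gamma q}(s_i)$, which by Definition~\ref{def:27} is a partial section of $p_{\Gamma q}$ with $\overline{t_i} = \overline{s_i}$ and, by the construction of the counit in Lemma~\ref{lem:3}, satisfies $\varepsilon_q \circ t_i = s_i$. I plan to construct the two-sided inverse $g \colon B \to \Delta\Gamma(q)$ as the join $g = \bigvee_i t_i p_i$.

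The main obstacle is verifying that $\{t_i p_i\}$ is a compatible family in $\C(B, \Delta\Gamma q)$. After the preliminary computation $\overline{t_j p_j} = \overline{\overline{s_j}p_j} = \overline{p_j}$ and the use of compatibility of $\{p_i\}$ in the form $p_i \overline{p_j} = p_j \overline{p_i}$, the verification reduces to the identity $t_i p_j \overline{p_i} = t_j p_j \overline{p_i}$. Rewriting $\overline{p_i} = s_i p_i$ and invoking the local-atlas identity $p_j s_i = \sheq{s_i}{s_j}$ of Lemma~\ref{lem:25}(ii), this further reduces to $t_i \sheq{s_i}{s_j} = t_j \sheq{s_i}{s_j}$. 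Now the key observation is that the local atlas of the basis $\{t_s\}_{s \in \Gamma q}$ of $p_{\Gamma q}$ is precisely $\sheq{t_s}{t_{s'}} = \sheq{s}{s'}$ by Definition~\ref{def:27} and Lemma~\ref{lem:25}(ii), so the identity in question is the interior meet property Lemma~\ref{lem:14}(iv) applied to $t_i, t_j$ in the sheaf $\Gamma p_{\Gamma q}$.

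Having assembled $g = \bigvee_i t_i p_i$, checking $\varepsilon_q g = 1_B$ is immediate: $\varepsilon_q g = \bigvee_i \varepsilon_q t_i p_i = \bigvee_i s_i p_i = \bigvee_i \overline{p_i} = \overline{q} = 1_B$ since $q$ is total. For the other equation $g \varepsilon_q = 1_{\Delta\Gamma q}$, I would appeal to Lemma~\ref{lem:10}, which says maps out of $\Delta\Gamma q$ are determined by their precomposites with the basis $\{t_s\}$. Using Lemma~\ref{lem:25}(ii) once more (so $p_i s = \sheq{s}{s_i}$) and the same interior-meet argument, one computes
\begin{equation*}
  \textstyle g \varepsilon_q t_s = \bigvee_i t_i p_i s = \bigvee_i t_i \sheq{s}{s_i} = \bigvee_i t_s \sheq{s}{s_i} = t_s \bigvee_i \sheq{s}{s_i} = t_s \overline{s} = t_s\rlap{ ,}
\end{equation*}
where $\bigvee_i \sheq{s}{s_i} = \overline{s}$ follows from Lemma~\ref{lem:25}(iii) applied to $q$ together with Lemma~\ref{lem:18}(i). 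This yields $g \varepsilon_q = 1_{\Delta\Gamma q}$ and completes the proof.
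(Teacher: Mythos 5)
Your proof is correct, but it takes a genuinely different route from the paper's. The paper handles the ``if'' direction structurally: it first invokes Proposition~\ref{prop:18} (the unit $\eta_{\Gamma(q)}$ is invertible since $\Gamma(q)$ is a sheaf) together with Lemma~\ref{lem:27} to conclude that the sections $\eta_{\Gamma q}(s_i)$ form a basis for $p_{\Gamma q}$, and then observes that $p_{\Gamma q}$ and $q$ are both glueings of the \emph{same} $I$-object local atlas $\sheq{s_i}{s_j}$, so that by Lemma~\ref{lem:10} they are isomorphic over $X$ via the unique map matching the bases---which must be $\varepsilon_q$; the inverse is never written down. You, by contrast, never need Proposition~\ref{prop:18} or Lemma~\ref{lem:27}: you construct the inverse explicitly as $g = \bigvee_i \eta_{\Gamma q}(s_i)\,p_i$ and verify both composites by hand, using only the defining basis of $\Delta\Gamma(q)$, the atlas identity of Lemma~\ref{lem:25}(ii), the interior-meet property (Lemma~\ref{lem:30}(iv), equivalently Lemma~\ref{lem:14}(iv)), and the injectivity half of Lemma~\ref{lem:10}. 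What the paper's argument buys is brevity and conceptual clarity---it reuses the fixpoint result proved immediately beforehand and exhibits invertibility of $\varepsilon_q$ as an instance of uniqueness of glueings; what yours buys is a self-contained, more elementary argument and an explicit formula $\varepsilon_q^{-1} = \bigvee_i \eta_{\Gamma q}(s_i)\,p_i$, at the cost of the pairwise-compatibility verification for the family $\{\eta_{\Gamma q}(s_i)\,p_i\}$, a step the paper's route avoids entirely. (One small attribution point: the facts $\overline{\eta_{\Gamma q}(s_i)} = \overline{s_i}$ and $\varepsilon_q \circ \eta_{\Gamma q}(s_i) = s_i$ come from Lemma~\ref{lem:21} and Lemma~\ref{lem:3} respectively, rather than from Definition~\ref{def:27} alone; this does not affect the argument.)
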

\begin{proof}
  By construction each $\Delta A \rightarrow X$ is a local
  homeomorphism; so if $\varepsilon_q$ is invertible then $q$ is a
  local homeomorphism since $\Delta \Gamma(q)$ is. Conversely, let
  $q = \bigvee_i q_i$ be a local homeomorphism with corresponding
  basis of partial sections
  $\{t_i\}$. 
  Since by 
  Proposition~\ref{prop:18}
  $\eta_{\Gamma(q)} \colon \Gamma(q) \rightarrow
  \Gamma\Delta\Gamma(q)$ is invertible, the partial sections
  $\{s_{t_i}\}$ are a basis for $p_{\Gamma q} \colon \Delta \Gamma(q)
  \rightarrow X$, whence by Lemma~\ref{lem:27} we have $p_{\Gamma(q)}
  = \bigvee_i p_{t_i}$.
  It follows that $p_{\Gamma q}$ is a glueing of the $I$-object local
  atlas $p_{t_j}s_{t_i}$. But by definition of $p_{\Gamma q}$ and
  Lemma~\ref{lem:25}(ii) we have
  $p_{t_j}s_{t_i} = \sheq {t_i}{t_j} = q_jt_i$, so that $q$ is a
  glueing of this same atlas. Thus, by Lemma~\ref{lem:10}, we must have
  $\Delta \Gamma(q) \cong B$ over $X$, via the unique map
  $h \colon \Delta \Gamma(q) \rightarrow B$ with $hs_{t_i} = t_i$ for
  all $i$. Since $\varepsilon_q$ satisfies this property, we must have
  $\varepsilon_q = h$ and so $\varepsilon_q$ is invertible.
\end{proof}
We have thus proven:
\begin{Thm}
  \label{thm:1} 
  Restricting~\eqref{eq:25} to its 
  fixpoints yields an equivalence
  \begin{equation*}
    \cd{
      {\lh} \ar@<-4.5pt>[r]_-{\Gamma} \ar@{<-}@<4.5pt>[r]^-{\Delta} \ar@{}[r]|-{\sim} &
      {\shv} \rlap{ .}
    }
  \end{equation*}
\end{Thm}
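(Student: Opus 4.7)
The plan is to deduce this theorem as an immediate corollary of Propositions~\ref{prop:18} and~\ref{prop:19}, together with the standard categorical fact (recalled at the start of Section~\ref{sec:fixpoints}) that any adjunction $F \dashv G \colon \B \rightarrow \A$ restricts to an equivalence between the full subcategory of right fixpoints in $\A$ (where $\eta$ is invertible) and the full subcategory of left fixpoints in $\B$ (where $\varepsilon$ is invertible). So essentially all the work has already been done, and what remains is to identify these fixpoint subcategories.

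First I would invoke this standard fact for the adjunction $\Delta \dashv \Gamma$ constructed in Lemma~\ref{lem:3}, obtaining an equivalence between the full subcategory of $\pshv$ on those $A$ for which $\eta_A \colon A \rightarrow \Gamma\Delta A$ is invertible, and the full subcategory of $\tot$ on those $q$ for which $\varepsilon_q \colon \Delta\Gamma(q) \rightarrow q$ is invertible. Then Proposition~\ref{prop:18} identifies the right fixpoints in $\pshv$ as precisely the full subcategory $\shv$, while Proposition~\ref{prop:19} identifies the left fixpoints in $\tot$ as precisely the full subcategory $\lh$. Combining these identifications yields the claimed equivalence $\lh \simeq \shv$, with the functors induced being the evident restrictions of $\Gamma$ and $\Delta$.

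There is essentially no obstacle here; the only thing worth checking is that $\Delta$ and $\Gamma$ do land in the appropriate fixpoint subcategories (which they must, but it is worth noting explicitly). For $\Gamma$, this is Proposition~\ref{prop:17}, which already shows that $\Gamma$ factors through $\shv$. For $\Delta$, this follows by construction: every $p_A \colon \Delta A \rightarrow X$ is a glueing, hence a local homeomorphism by Definition~\ref{def:25}, so $\Delta$ factors through $\lh$. The counit and unit of the restricted adjunction are then invertible by construction, giving the desired equivalence.
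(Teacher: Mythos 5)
Your proposal is correct and follows exactly the paper's own argument: the theorem is stated there as an immediate consequence of Propositions~\ref{prop:18} and~\ref{prop:19} together with the general fact about adjunctions restricting to equivalences on their fixpoint subcategories, which is recalled at the start of Section~\ref{sec:fixpoints}. Your additional observation that $\Gamma$ and $\Delta$ land in $\shv$ and $\lh$ respectively (via Proposition~\ref{prop:17} and the construction of glueings) is a sensible explicit check, consistent with what the paper leaves implicit.
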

Just as in the classical case, we can say slightly more. We call an
adjunction $F \dashv G \colon \B \rightarrow \A$ 
\emph{Galois} if every $FA$ is a left fixpoint,
or equivalently, every $GB$ is a right fixpoint. The full
subcategory of left fixpoints is then coreflective in $\B$ via $FG$,
while the right fixpoints are reflective in $\A$
via $GF$. For the adjunction~\eqref{eq:25}, each $\Delta A$ is a
local homeomorphism, hence a left fixpoint, and so:

\begin{Cor}
  \label{cor:7}
  The adjunction~\eqref{eq:25} is Galois. In particular, $\shv$ is
  reflective in $\mathrm{Psh}(X)$ via $\Gamma \Delta$, while $\lh$ is
  coreflective in $\tot$ via $\Delta \Gamma$.
\end{Cor}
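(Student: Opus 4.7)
The proof is essentially immediate from the two preceding propositions together with the general theory of Galois adjunctions, so my plan is largely to assemble the pieces. The key observation is that Proposition~\ref{prop:18} identifies the right fixpoints of the adjunction with the sheaves, and Proposition~\ref{prop:19} identifies the left fixpoints with the local homeomorphisms; what remains is to notice that each of $\Delta$ and $\Gamma$ already lands in its respective fixpoint subcategory.

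My first step is to check directly that each $\Delta A$ is a left fixpoint. By Definition~\ref{def:27}, the map $p_A \colon \Delta A \to X$ is constructed as a glueing of a local atlas on $X$, hence is by definition a local homeomorphism. Proposition~\ref{prop:19} then applies to $q = p_A$ to give that $\varepsilon_{\Delta A}$ is invertible. This is exactly the statement that $\Delta \dashv \Gamma$ is Galois in the sense given just before the corollary. As a sanity check, one could instead verify the equivalent condition that every $\Gamma(q)$ is a right fixpoint: Proposition~\ref{prop:17} says $\Gamma(q)$ is a sheaf, and then Proposition~\ref{prop:18} delivers invertibility of $\eta_{\Gamma(q)}$.

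Having established Galois-ness, the two ``In particular'' statements follow from the standard fact about Galois adjunctions $F \dashv G$ that was recalled in the text: the full subcategory of left fixpoints is coreflective in $\B$ via $FG$, and the full subcategory of right fixpoints is reflective in $\A$ via $GF$. Instantiating this with $F = \Delta$, $G = \Gamma$, and using Propositions~\ref{prop:18} and~\ref{prop:19} to identify the fixpoint subcategories as $\shv \hookrightarrow \pshv$ and $\lh \hookrightarrow \tot$ respectively, gives exactly the stated reflection $\Gamma\Delta$ and coreflection $\Delta\Gamma$.

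There is no real obstacle here; the only thing to be careful about is that the general ``Galois adjunction'' fact is being used as stated in the paragraph immediately preceding the corollary. If more justification is desired, one can observe that for any $A \in \pshv$, applying $\Delta$ to the unit $\eta_A \colon A \to \Gamma\Delta A$ and using the triangle identity $\varepsilon_{\Delta A} \circ \Delta \eta_A = 1_{\Delta A}$ together with invertibility of $\varepsilon_{\Delta A}$ shows $\Delta \eta_A$ is invertible, from which one recovers the coreflection/reflection structure in the standard way.
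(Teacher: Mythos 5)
Your proposal is correct and follows exactly the paper's own argument: each $\Delta A$ is by construction a glueing of a local atlas, hence a local homeomorphism, so Proposition~\ref{prop:19} makes it a left fixpoint, and the reflection/coreflection statements then follow from the general fact about Galois adjunctions recalled just before the corollary. The extra remarks (the equivalent check via $\Gamma(q)$ and Proposition~\ref{prop:18}, and the triangle-identity justification) are sound but not needed beyond what the paper already records.
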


The reflector
$\Gamma \Delta \colon \pshv \rightarrow \shv$ performs
\emph{sheafification}, and we can read off an explicit description (in
fact that given in~\cite{Fourman1979Sheaves}). If
$A \in \pshv$ then the local homeomorphism
$p_A \colon \Delta A \rightarrow X$ has basis of local sections
$(s_a : a \in A)$ and associated local atlas $\sheq a b$. So by
Lemma~\ref{lem:1}, $\Gamma \Delta A$ is the set of all families
\begin{equation*}
  (\theta_a \in \O(X) : a \in A) \quad \text{such that } \theta_b
  \theta_a \leqslant \sheq a b, \theta_b \sheq a b \leqslant \theta_a
  \text{ for all $a,b \in A$}
\end{equation*}
made into a sheaf via the pointwise $\O(X)$-action and extent
operation $\overline{\theta}= \bigvee_i \theta_i$. The unit
$\eta \colon A \rightarrow \Gamma\Delta A$ sends $b$ to
$\bigl(\sheq a b : a \in A\bigr)$.

\begin{Rk}
  \label{rk:5}
  Note that the category $\cat{Sh}(X)$ to the right of our main
  equivalence does not really depend on $X \in \C$, but only on the
  locale of restriction idempotents $\O(X)$. In particular, this
  allows us to identify $\lh$ with
  $\D \mathord{\mkern1.5mu/_{\!\ell h}\mkern1.5mu}Y$ whenever we have
  an identification of $\O(X)$ with $\O(Y)$.
  
  A particularly natural example of this is as follows. Suppose that
  $F \colon \C \rightarrow \D$ is any join restriction functor between
  join restriction categories with local glueings. For any $X \in \C$,
  the action of $F$ induces a functor
  $\lh \rightarrow \D \mathord{\mkern1.5mu/_{\!\ell h}\mkern1.5mu}FX$.
  Now if $F$ is hyperconnected, then this functor is an equivalence,
  with an explicit pseudoinverse being given as follows. Given
  $p \colon A \rightarrow FX$ a local homeomorphism in $\D$, we first
  construct the associated local atlas $\varphi$ on $FX$, then use
  hyperconnectedness to lift this to a local atlas $\psi$ on $X$, and
  finally form the glueing $q \colon B \rightarrow X$ of $\psi$. Since
  $q$ has associated atlas $\psi$, $Fq$ will have associated atlas
  $\varphi$ and so $Fq \cong p$ as desired.
\end{Rk}

\section{The main theorem}
\label{sec:main-theorem}

In this section we prove the main result of the paper, establishing an
equivalence between source-\'etale partite internal categories in
$\C$, and join restriction categories hyperconnected over $\C$, for
any join restriction category $\C$ with local glueings.

\subsection{The categories of interest}
\label{sec:categories-interest-2}
As in the previous section, the desired equivalence will arise as the
fixpoints of a Galois adjunction between larger categories; we begin
by introducing these. On one side, we have the categories which will
play a role analogous to $\pshv$ and $\shv$ in the
previous section.

\begin{Defn}
  \label{def:28}
  The category $\rcat$ has as objects, pairs of a
  restriction category $\A$ and a restriction functor
  $P \colon \A \rightarrow \C$; and has maps
  $(F, \alpha) \colon (\A, P) \rightarrow (\B, Q)$ given by a
  restriction functor $F$ and total natural transformation $\alpha$
  as in:
  \begin{equation}\label{eq:28}
    \cd{
      {\A} \ar[rr]^-{F} \ar[dr]_-{P} & \ltwocello{d}{\alpha} &
      {\B\rlap{ .}} \ar[dl]^-{Q} \\ &
      {\C}
    }
  \end{equation}
  Composition is given by
  $(G, \beta) \circ (F, \alpha) = (GF, \beta F \circ \alpha)$
  while $1_{(\A, P)} = (1_\A, 1_P)$. We write $\hcon$ for the full
  subcategory of $\rcat$ whose objects are those
  $P \colon \A \rightarrow \C$ for which $\A$ is a \emph{join}
  restriction category, and $P$ is a \emph{hyperconnected} restriction
  functor, necessarily join-preserving by Lemma~\ref{lem:19}.
\end{Defn}
The sense in which $\hcon$ is analogous to $\shv$ is found in
Example~\ref{ex:4}: for any $P \colon \A \rightarrow \C$ in $\hcon$,
each hom-set $\A(x,y)$ is an $\O(Px)$-sheaf. Under this analogy, the
category corresponding to $\pshv$ would most rightly be
$\mathrm{r}\cat{Cat} \mathord{/\!/_h} \C$; but it costs us nothing to
consider the more general $\rcat$, and so we do so.

It is reasonable to ask why we do not restrict the
\emph{maps} in $\hcon$ to be those~\eqref{eq:28} for which
$F$ is a join restriction functor. In fact, this is unneccessary.

\begin{Lemma}
  \label{lem:33}
  If~\eqref{eq:28} is a map in $\hcon$, then $F$, as well as $P$ and
  $Q$, is join-preserving.
\end{Lemma}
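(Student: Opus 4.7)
My plan is to leverage Lemma~\ref{lem:19} to reduce the question to joins of restriction idempotents, and then transport the problem across the natural transformation $\alpha$ and the hyperconnected functors $P,Q$.

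\textbf{Reduction to restriction idempotents.} By Lemma~\ref{lem:19}, a restriction functor between join restriction categories preserves joins as soon as it preserves joins of restriction idempotents. So I fix a family $\{e_i\} \subseteq \O(x)$ with join $e = \bigvee_i e_i$, and aim to show $F(e) = \bigvee_i F(e_i)$ in $\O(Fx)$; call these two restriction idempotents $a$ and $b$, with $b \leqslant a$ automatic from monotonicity of $F$.

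\textbf{Transport via $Q$.} Since $Q$ is hyperconnected, its restriction $\O(Fx) \to \O(QFx)$ is an order-isomorphism, and $Q$ preserves the join $\bigvee_i F(e_i)$ by Lemma~\ref{lem:19}. Hence $a = b$ in $\O(Fx)$ iff $Qa = Qb$ in $\O(QFx)$, where $Qa = QF(e)$ and $Qb = \bigvee_i QF(e_i)$.

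\textbf{Extracting an equation in $\C$.} Naturality of $\alpha$ at each $e_i$ gives $\alpha_x \cdot Pe_i = QF(e_i) \cdot \alpha_x$. Joining these identities (the $Pe_i$'s are pairwise compatible as restriction idempotents, and both pre- and post-composition with a fixed map distribute over joins by~\eqref{eq:27} and Lemma~\ref{lem:18}(ii)), and using that $P$ preserves joins (hyperconnected, hence by Lemma~\ref{lem:19} join-preserving), I read off
\begin{equation*}
  QF(e) \cdot \alpha_x \;=\; \alpha_x \cdot Pe \;=\; \alpha_x \cdot \bigvee_i Pe_i \;=\; \bigvee_i QF(e_i) \cdot \alpha_x \;=\; \Bigl(\bigvee_i QF(e_i)\Bigr)\cdot \alpha_x\rlap{ ,}
\end{equation*}
which exhibits both $Qa$ and $Qb$ as restriction idempotents on $QFx$ having the same composition with $\alpha_x$.

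\textbf{From an equation modulo $\alpha_x$ to an equation in $\O(QFx)$.} The heart of the argument, and the step where I expect the real difficulty, is to pass from this identity to $Qa = Qb$. The tool I would use is totality of $\alpha_x$: axiom~(iii) applied to any restriction idempotent $v \in \O(Px)$ gives $\overline{\alpha_x \cdot v} = \overline{\alpha_x}\cdot v = v$, and this combined with axiom~(iv) makes the assignment $u \mapsto \overline{u \cdot \alpha_x}$ a frame map $\alpha_x^{-1}\colon \O(QFx) \to \O(Px)$ which, by the naturality identity, satisfies $\alpha_x^{-1}\!\circ QF_x = P_x$. Since $P_x$ is iso, $QF_x$ is injective and $\alpha_x^{-1}$ is injective on the image of $QF_x$. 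The equation displayed above rewrites (applying $\overline{({-})\alpha_x}$) as $\alpha_x^{-1}(Qa) = \alpha_x^{-1}(Qb) = Pe$, so both $Qa$ and $Qb$ lie in the same fibre of $\alpha_x^{-1}$. The obstacle is precisely that $\alpha_x^{-1}$ need not be injective on all of $\O(QFx)$, and $Qb = \bigvee_i QF(e_i)$ is not manifestly in the image $QF_x(\O(x))$ where injectivity is known.

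\textbf{Closing the loop.} I would resolve this by showing that within the image of $QF_x$ there is a canonical maximal element of each fibre of $\alpha_x^{-1}$ (namely $QF(P_x^{-1}(v))$ for $v \in \O(Px)$), and that $Qb$, being a join of elements of the image majorized by $Qa = QF(e)$ with $\alpha_x^{-1}(Qb) = Pe = \alpha_x^{-1}(Qa)$, must coincide with this canonical element. Concretely, since $Qb \leqslant Qa$ and both share a pullback, and the injectivity on the image pins $Qa$ uniquely to $P_x^{-1}(Pe) = e$, the only consistent possibility is $Qb = Qa$; in particular $Qb$ lies in the image of $QF_x$, which translates back through $Q^{-1}$ to $\bigvee_i F(e_i) = F(e)$ in $\O(Fx)$, completing the proof.
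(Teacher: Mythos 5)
Your first three steps (reduction to restriction idempotents via Lemma~\ref{lem:19}, transport along the isomorphism $\O(Fx) \rightarrow \O(QFx)$ given by hyperconnectedness of $Q$, and combining naturality of $\alpha$ with join-preservation of $P$) are exactly the paper's proof. The gap is in your last two steps, and its root cause is that you have the natural transformation pointing the wrong way: by Definition~\ref{def:28} and diagram~\eqref{eq:28}, the $2$-cell of a map of $\rcat$ is $\alpha \colon QF \Rightarrow P$, with \emph{total} components $\alpha_x \colon QFx \rightarrow Px$, whereas your formulas consistently use $\alpha_x \colon Px \rightarrow QFx$. With the paper's orientation, naturality reads $\alpha_x \circ QF(e_i) = Pe_i \circ \alpha_x$, and your displayed chain becomes
\begin{equation*}
  \alpha_x \circ QF(e) \;=\; Pe \circ \alpha_x \;=\; \textstyle\bigl(\bigvee_i Pe_i\bigr) \circ \alpha_x \;=\; \bigvee_i \bigl(\alpha_x \circ QFe_i\bigr) \;=\; \alpha_x \circ \bigl(\textstyle\bigvee_i QFe_i\bigr)\rlap{ ,}
\end{equation*}
using~\eqref{eq:27} and Lemma~\ref{lem:18}(ii): the two idempotents agree after \emph{post}-composition with the total map $\alpha_x$. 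Since post-composition with a total map leaves restrictions unchanged ($\overline{hg} = \overline{\overline{h}g} = \overline{g}$), taking restrictions erases $\alpha_x$ outright: $QF(e) = \overline{\alpha_x \circ QF(e)} = \overline{\alpha_x \circ \bigvee_i QFe_i} = \bigvee_i QFe_i$. No injectivity question ever arises; this is how the paper concludes.

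With your reversed orientation, the two idempotents agree only after \emph{pre}-composition with $\alpha_x$, and then the obstacle you name in your fourth step is genuine and your fifth step does not overcome it. The claim that $QF(P_x^{-1}(v))$ is the \emph{maximal} element of its $\alpha_x^{-1}$-fibre is unsupported; and even granting it, maximality points the wrong way, since it would only re-derive $Qb \leqslant Qa$, which you already know. What your argument would actually need is that $Qa$ is \emph{minimal} in its fibre, and there is no reason for that. Finally, the inference that ``the only consistent possibility is $Qb = Qa$'' silently assumes that $Qb$ lies in the image of $(QF)_x$, where injectivity of $\alpha_x^{-1}$ is available---but $Qb = \bigvee_i QFe_i$ lying in that image is essentially the statement being proved, so the argument is circular at exactly the point where it needs to do work. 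The repair is not a cleverer lifting argument but simply the correct orientation of $\alpha$: with it, everything you set up before your fourth step closes in one line.
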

\begin{proof}
  By Lemma~\ref{lem:19}, it suffices to show that $F$ preserves any
  join of restriction idempotents $\bigvee_i e_i \in \O(X)$.
  Since $Q$ is hyperconnected, it suffices for this to show that
  $QF(\bigvee_i e_i) = \bigvee_i QFe_i$. Clearly
  $\bigvee_i QFe_i \leqslant QF(\bigvee_i e_i)$, so it is enough to
  show both sides have the same restriction. But
  \begin{align*}
    \textstyle \overline{QF(\bigvee_i e_i)} &=
    \textstyle \overline{\alpha_X \circ QF(\bigvee_i e_i)} =
    \overline{P(\bigvee_i e_i) \circ \alpha_X} =
    \overline{(\bigvee_i Pe_i) \circ \alpha_X} =
    \overline{\bigvee_i (Pe_i \circ \alpha_X)} \\
    &= \textstyle\overline{\bigvee_i (\alpha_X \circ QFe_i)}
    = \overline{\alpha_X \circ \bigvee_i QFe_i} = \overline{\bigvee_i
      QFe_i}
  \end{align*}
  using totality of $\alpha_X$ and the fact that $P$ preserves joins.
\end{proof}

We now turn to the categories on the other side of our adjunction,
which will play the roles that $\tot$ and $\lh$ took in
the previous section. The objects of these categories will be what we call \emph{partite} internal
categories in $\C$.

\begin{Defn}
  \label{def:29}
  Let $I$ be a set (of ``components''). An \emph{$I$-partite internal
    category} $\mathbb{A}$ in $\C$ comprises:
  \begin{itemize}[itemsep=0.2\baselineskip]
  \item \textbf{Objects of objects} $A_i$ for each $i \in I$;
  \item \textbf{Objects of arrows} $A_{ij}$ for each $i,j \in I$, together with
    a source-target span
  \begin{equation*}
    A_i \xleftarrow{\sigma_{ij}} A_{ij} \xrightarrow{\tau_{ij}} A_j
  \end{equation*}
  of total maps for which $\sigma_{ij}$ admits a pullback along any
  total map in $\C$;
\item \textbf{Identities} maps $\eta_i \colon A_i \rightarrow A_{ii}$ for each
  $i \in I$, compatible with source and target in the sense that
  $\sigma_{ii}\eta_i = 1 = \tau_{ii}\eta_i$; and
\item \textbf{Composition} maps
  $\mu_{ijk} \colon A_{jk} \times_{A_j} A_{ij} \rightarrow A_{ik}$ for
  each $i,j,k \in I$, compatible with source and target in
  the sense that $\sigma_{ik} \mu_{ijk} = \sigma_{ij}\pi_2$ and
  $\tau_{ik}\mu_{ijk} = \tau_{jk}\pi_1$,
\end{itemize}
such that the following diagrams commute for all $i,j,k,\ell \in I$:
  \begin{equation*}
    \cd{
      A_{ij} \times_{A_i} A_{ii} \ar[dr]_-{\mu_{iij}} & A_{ij} \ar@{=}[d]_-{}
      \ar[l]_-{(1, \eta_i\sigma_{ij})} \ar[r]^-{(\eta_j\tau_{ij}, 1)} & A_{jj} \times_{A_j}
      A_{ij} \ar[dl]^-{\mu_{ijj}} \\ & A_{ij}
    } \     \cd{
      {A_{k\ell} \times_{A_k} A_{jk} \times_{A_j} A_{ij}} \ar[r]^-{1
        \times_{A_k} \mu_{ijk}} \ar[d]_-{\mu_{jk\ell} \times_{A_j} 1} &
      {A_{k\ell} \times_{A_k} A_{ik}} \ar[d]^-{\mu_{ik\ell}}\\
      {A_{j\ell} \times_{A_j} A_{ij}} \ar[r]_-{\mu_{ij\ell}} & A_{i\ell}\rlap{ .}
    }
  \end{equation*}
\end{Defn}

For example, a $1$-partite internal category in $\C$ is simply an
internal category. On the other hand, if $\C$ admits a
\emph{restriction terminal object} $\mathbf{1}$---that is, an object
to which each other object admits a unique total map---then a partite
internal category $\mathbb{A}$ in $\C$ with each $A_i = \mathbf{1}$ is
the same as a category \emph{enriched} in $\C$.

We now describe the relevant morphisms between partite internal
categories. As in the introduction, these will not be the obvious
internal functors, but rather internal \emph{cofunctors}. Cofunctors
first appeared in~\cite{Higgins1993Duality} under the name
``comorphism''; the alternative nomenclature we use follows
Aguiar~\cite{Aguiar1997Internal}. The notion is most likely not a
familiar one, so we take a moment to spell it out first in the case of
ordinary categories.

\begin{Defn}
  \label{def:3}
  A cofunctor between ordinary categories $\C \rightsquigarrow \D$
  comprises a mapping on objects
  $F \colon \mathrm{ob}(\D) \rightarrow \mathrm{ob}(\C)$, together
  with an operation assigning to each $d \in \mathrm{ob}(\D)$ and each
  map $f \colon Fd \rightarrow c$ in $\mathbb{C}$, an object
  $f_\ast(d)$ and map $F(d, f) \colon d \rightarrow f_\ast(d)$ in
  $\mathbb{D}$ satisfying $F(f_\ast d) = c$, and subject to the two
  functoriality conditions $F(d, 1_{Fd}) = 1_d$ and
  $F(d, gf) = F(f_\ast(d), g) \circ F(d,f)$.
\end{Defn}
For example, we can obtain a cofunctor from any discrete
  opfibration. A \emph{discrete opfibration} is a functor $F \colon \D
  \rightarrow \C$ with the property that, for any $d \in \D$ and map
  $f \colon Fd \rightarrow c$ in $\C$, there is a unique map $\tilde f
  \colon d \rightarrow d'$ of $\D$ with $F\tilde f = f$ (Lawson
  in~\cite{Lawson2013Pseudogroups} calls these \emph{covering
    functors}). We can define from this a
  cofunctor $\C \rightsquigarrow \D$ whose action on objects is that
  of $F$, and whose action on arrows takes $F(d,f)$ to be the unique
  map $\tilde f$ described above.

We now describe the appropriate adaption of the notion of cofunctor to
the case of partite internal categories.

\begin{Defn}
  \label{def:4}
  Let $\mathbb{A}$ be an $I$-partite and $\mathbb{B}$ a $J$-partite
  internal category in $\C$. A \emph{partite cofunctor} $F \colon \mathbb{A}
  \rightsquigarrow \mathbb{B}$ comprises:
  \begin{itemize}
  \item A mapping on component-sets $I \rightarrow J$ written $i \mapsto Fi$;
  \item An action on objects, given by total maps $F_i \colon B_{Fi} \rightarrow A_i$ for each $i \in
    I$; and
  \item An action on arrows, given by total maps
    $F_{ij} \colon A_{ij} \times_{A_i} B_{Fi} \rightarrow B_{Fi,Fj}$
    for each $i,j \in I$ which are compatible with source in the sense
    that $\sigma_{Fi,Fj} F_{ij} = \pi_2$,
  \end{itemize}
  all subject to the commutativity of the following diagrams (the
  \emph{target}, \emph{identity} and \emph{composition} axioms).
  \begin{equation*}
    \cd[@!C@C-3em@-0.5em]{
      {A_{ij} \times_{A_i} B_{Fi}} \ar[rr]^-{F_{ij}} \ar[d]_{\pi_1} & &
      {B_{Fi,Fj}} \ar[d]^{\tau_{Fi,Fj}} \\
      {A_{ij}} \ar[r]_-{\tau_{ij}} & A_j & B_{Fj} \ar[l]^-{F_{j}}
    } \qquad 
    \cd[@!C@C-3em]{
      B_{Fi} \ar[rr]^-{(\eta_i F_i,1)} \ar[dr]_-{\eta_{Fi}} & & A_{ii} \times_{A_i} B_{Fi}
      \ar[dl]^-{F_{ii}} \\ & B_{Fi,Fi}
 }
  \end{equation*}
  \begin{equation*}
    \cd{
      A_{jk} \times_{A_j} B_{Fi, Fj}
      \ar[r]^-{\cong} &
      \sh{l}{1em}{(A_{jk} \times_{A_j} B_{Fj}) \times_{B_{Fj}} B_{Fi, Fj}}
      \ar[r]^-{F_{jk} \times 1} &
      B_{Fj, Fk}\times_{B_{Fj}} B_{Fi, Fj}
      \ar[d]^-{\mu_{Fi,Fj,Fk}}\\
      A_{jk} \times_{A_j} A_{ij} \times_{A_i} B_{Fi}
      \ar[r]^-{\mu_{ijk} \times_{A_i} 1}
      \ar[u]_-{1 \times F_{ij}} &
      A_{ik} \times_{A_i} B_{Fi} \ar[r]^-{F_{ik}} & B_{Fi,Fk}\rlap{ .}
    }
  \end{equation*}
  The identity on $\mathbb{A}$ has all data given by
  identities, while the composition of 
  $F \colon \mathbb{A} \rightsquigarrow \mathbb{B}$ and
  $G \colon \mathbb{B} \rightsquigarrow \mathbb{C}$ has
  $(GF)(i) = G(F(i))$,
  $(GF)_i = F_i \circ G_{Fi} \colon C_{GFi} \rightarrow A_i$, and
  $(GF)_{ij}$ given by
  \begin{equation*}
    A_{ij} \times_{A_i} C_{GFi} \cong (A_{ij} \times_{A_i} B_{Fi})
    \times_{B_{Fi}} C_{GFi} \xrightarrow{\!F_{ij} \times 1\!} B_{Fi,Fj}
    \times_{B_{Fi}} C_{GFi} \xrightarrow{\!G_{Fi,Fj}\!} C_{GFi,
      GFj}\text{ .}
  \end{equation*}
  In this way, we obtain a category $\partc$ of partite
  internal categories and partite cofunctors in $\C$. We write
  $\parte$ for the full subcategory whose objects are the
  \emph{source-\'etale} partite internal categories---those whose
  source maps $\sigma_{ij}$ are all local homeomorphisms.
\end{Defn}


\subsection{The right adjoint}
\label{sec:right-adjoint}
We now start constructing the adjunction
\begin{equation*}
  \cd{
    {\partc} \ar@<-4.5pt>[r]_-{\Phi} \ar@{<-}@<4.5pt>[r]^-{\Psi} \ar@{}[r]|-{\bot} &
    {\rcat} 
  }
\end{equation*}
whose restriction to fixpoints will yield the desired equivalence
$\parte \simeq \hcon$. We begin by describing the
right adjoint $\Phi$; much like $\Gamma$ in the presheaf--total map
adjunction, this will be based on the idea of taking partial sections.
We first give its construction and then
verify well-definedness.

\begin{Defn}
  \label{def:5}
  Let $\mathbb{A}$ be an $I$-partite internal category in $\C$. Its
  \emph{externalisation} $\pi_\mathbb{A} \colon \Phi\mathbb{A}
  \rightarrow \C$ is the object of $\rcat$
  defined as follows:
  \begin{itemize}
  \item The object-set of $\Phi\mathbb{A}$ is $I$;
  \item $\Phi\mathbb{A}(i,j)$ is the set $\Gamma(\sigma_{ij})$ of
    partial sections of $\sigma_{ij} \colon A_{ij} \rightarrow A_i$;
  \item The identity map $1_i \in \Phi\mathbb{A}(i,i)$ is the (total)
    section $\eta_i \colon A_i \rightarrow A_{ii}$;
  \item The composite $t \ast s$ of $s \in \Phi\mathbb{A}(i,j)$ and
    $t \in \Phi\mathbb{A}(j,k)$ is the partial section
    \begin{equation*}
      A_{i} \xrightarrow{s} A_{ij} \xrightarrow{\tau_{ij}^\ast(t)} A_{jk}
      \times_{A_j} A_{ij} \xrightarrow{\mu_{ijk}} A_{ik}
    \end{equation*}
    of $\sigma_{ik}$ obtained by applying Lemma~\ref{lem:8} in the situation:
    \begin{equation}\label{eq:6}
      \cd[@!C@C-5em@-0.4em]{
        A_{jk} \ar[dr]^-{\sigma_{jk}} & & \ar[ll]_-{\pi_1} \pullbackcornerl
        A_{jk} \times_{A_j} A_{ij} \ar[dr]^-{\pi_2} \ar[rrrr]^-{\mu_{ijk}}
        & & & & A_{ik}\rlap{ ;} \ar[ddll]^-{\sigma_{ik}}
        \ar@/_1em/@{<--}[ddll]_-{t \ast s}\\
        & A_j \ar@/^1em/@{-->}[ul]^-{t} & & A_{ij}
        \ar@/^1em/@{-->}[ul]^(0.6){\tau_{ij}^\ast t} \ar[dr]^-{\sigma_{ij}}
        \ar[ll]^-{\tau_{ij}} \\ & &
        & & A_i \ar@/^1em/@{-->}[ul]^(0.6){s}
      }
    \end{equation}
  \item The restriction of $s \colon A_i \rightarrow A_{ij}$ in
    $\Phi\mathbb{A}(i,j)$ is
    $\eta_i\overline{s} \colon A_i \rightarrow A_{ii}$ in
    $\Phi\mathbb{A}(i,i)$;
  \item The functor
  $\pi_\mathbb{A} \colon \Phi \mathbb{A} \rightarrow \C$ is
  given on objects by $i \mapsto A_i$, and on maps by sending
  $s \in \Phi\mathbb{A}(i,j)$ to
  $\tau_{ij} \circ s \colon A_i \rightarrow A_j$.
  \end{itemize}
\end{Defn}

\begin{Defn}
  \label{def:35}
  Let $F \colon \mathbb{A} \rightsquigarrow \mathbb{B}$ be a partite
  cofunctor in $\C$. We define a map
  $(\Phi F, \varpi^F) \colon (\Phi \mathbb{A}, \pi_\mathbb{A})
  \rightarrow (\Phi \mathbb{B}, \pi_\mathbb{B})$ in $\rcat$ as
  follows:
  \begin{itemize}
  \item On objects, $\Phi F$ is defined by $i \mapsto Fi$;
  \item On morphisms, given $s \in \Phi \mathbb{A}(i,j)$ we define
    $\Phi F(s) \in \Phi \mathbb{B}(Fi,Fj)$ as the partial section
    \begin{equation*}
      B_{Fi} \xrightarrow{F_i^\ast s} A_{ij} \times_{A_i} B_{Fi}
      \xrightarrow{F_{ij}} B_{Fi,Fj}
    \end{equation*}
    of $\sigma_{Fi, Fj}$ obtained by applying Lemma~\ref{lem:8} in the
    situation
    \begin{equation}\label{eq:8}
      \cd[@!C@C-5em@R+0.3em]{
        A_{ij}\ar[dr]^-{s_{ij}} & & A_{ij} \times_{A_i} B_{Fi}
        \ar[dr]^-{\pi_2} \ar[ll]_-{\pi_1} \ar[rr]^-{F_{ij}} & & B_{Fi,Fj}\rlap{ ;}
        \ar[dl]^-{\sigma_{Fi,Fj}}\\
        & A_i \ar@/^1em/@{-->}[ul]|(0.6){s} & & B_{Fi} \ar[ll]^-{F_i}
        \ar@/^1em/@{-->}[ul]|(0.65){F_i^\ast s}
      }
    \end{equation}
  \item $\varpi^F \colon \pi_\mathbb{B}
    \circ \Phi F \Rightarrow \pi_\mathbb{A}$ has components
    $(\varpi^F)_i = F_i \colon B_{Fi} \rightarrow A_i$.
  \end{itemize}
\end{Defn}
 
\begin{Prop}
  \label{prop:2}
  The assignations of Definitions~\ref{def:5} and~\ref{def:35}
  underlie a well-defined functor $\Phi \colon \partc
  \rightarrow \rcat$ whose image lands inside $\hcon$.
\end{Prop}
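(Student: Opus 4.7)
The plan is to check, in sequence, that $\Phi\mathbb{A}$ is a restriction category and $\pi_\mathbb{A}$ a restriction functor; that $(\Phi F, \varpi^F)$ is a well-defined morphism in $\rcat$; that the assignment $F \mapsto (\Phi F, \varpi^F)$ is functorial; and finally that $\Phi\mathbb{A}$ is always a join restriction category with $\pi_\mathbb{A}$ always hyperconnected, placing the image inside $\hcon$.

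For the first step, the unit and associativity laws of $\Phi\mathbb{A}$ unwind to the unit and associativity axioms of the internal category $\mathbb{A}$, using the explicit formula $\tau_{ij}^\ast(t) = (t\tau_{ij}, \overline{t\tau_{ij}})$ from Lemma~\ref{lem:8}(iii) to manipulate the pullback in~\eqref{eq:6}; for the left unit, for instance, the computation $\mu_{ijj}(\eta_j\tau_{ij} s, s) = s$ is a direct instance of the left identity axiom of $\mathbb{A}$. The crucial observation for the four restriction axioms is that restriction in $\Phi\mathbb{A}(i,j)$ has the form $\bar s = \eta_i \overline{s}$, with $\overline{s} \in \O(A_i)$ the restriction in $\C$; axioms (i)--(iii) then reduce transparently to their counterparts in $\C$. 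Axiom (iv), $\bar t \ast s = s \ast \overline{t \ast s}$, is the substantive case: using axiom (iv) in $\C$ together with totality of $\sigma_{ij}$ and $\tau_{ij}$, one first identifies the restriction of $t \ast s$ in $\C$ with $\overline{t\tau_{ij} s}$; then, by appealing to the left identity axiom $\mu_{ijj}(\eta_j\tau_{ij}, 1) = 1$ in one direction and the right identity axiom $\mu_{iij}(1, \eta_i\sigma_{ij}) = 1$ in the other, both $\bar t \ast s$ and $s \ast \overline{t \ast s}$ are shown to equal $s \circ \overline{t\tau_{ij} s}$. For $\pi_\mathbb{A}$, preservation of identities and restrictions follows from $\tau_{ii}\eta_i = 1_{A_i}$, and preservation of composition from $\tau_{ik}\mu_{ijk} = \tau_{jk}\pi_1$ combined with $\pi_1\tau_{ij}^\ast(t) = t\tau_{ij}$.

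For the morphism $(\Phi F, \varpi^F)$, functoriality of $\Phi F$ uses the identity and composition axioms of the cofunctor, while restriction preservation again follows from the translation-of-restrictions observation above. Totality of $\varpi^F$ is built into the assumption that each $F_i$ is total, and its naturality $\pi_\mathbb{A}(s) \circ F_i = F_j \circ \pi_\mathbb{B}(\Phi F(s))$ reduces, via $\pi_1F_i^\ast(s) = sF_i$ from Lemma~\ref{lem:8}(iii), to the target axiom $\tau_{ij}\pi_1 = F_j\tau_{Fi,Fj}F_{ij}$ for $F$. Functoriality of $\Phi$ itself then unwinds directly from the composition laws for cofunctors and for $\rcat$.

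To see that the image lies in $\hcon$, note that by Proposition~\ref{prop:17} each hom-set $\Phi\mathbb{A}(i,j) = \Gamma(\sigma_{ij})$ is a sheaf on $A_i$, so compatible families admit joins computed as joins of partial sections in $\C$. Since $\tau_{ij}^\ast$ preserves joins (by inspection of its formula) and $\mu_{ijk}$ is postcomposition in $\C$, the composition in $\Phi\mathbb{A}$ preserves joins in each argument, making $\Phi\mathbb{A}$ a join restriction category. For hyperconnectedness, the restriction idempotents at $i$ are precisely the maps $\eta_i e$ for $e \in \O(A_i)$, and $\pi_\mathbb{A}$ sends $\eta_i e \mapsto e$, yielding the required bijection. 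The main obstacle throughout is axiom (iv) in the first step; the other verifications are tedious but structurally routine translations between the internal-category data of $\mathbb{A}$ and the partial-sections viewpoint of Lemma~\ref{lem:8}.
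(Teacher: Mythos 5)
Your proposal is correct and follows essentially the same staged verification as the paper: category axioms via the unit/associativity axioms of $\mathbb{A}$, restriction axioms reduced to those of $\C$ through $\tilde s = \eta_i\overline{s}$ with axiom (iv) handled by the identity $s \ast \tilde t = s\overline{t}$, joins inherited from the sheaves $\Gamma(\sigma_{ij})$ with precomposition-stability via join-preservation of $\tau_{ij}^\ast$, hyperconnectedness via $e \mapsto \eta_i e$, and naturality of $\varpi^F$ via the target axiom. The only difference is cosmetic: where you manipulate the explicit pairing formula $\tau_{ij}^\ast(t) = (t\tau_{ij}, \overline{t\tau_{ij}})$ for the associativity and cofunctor-composition steps, the paper organises the same computation through commuting pullback cubes and the unicity clause of Lemma~\ref{lem:8}(iii).
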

\begin{proof}
  To begin with, let
  $\mathbb{A} \in \partc$. We will proceed in stages to verify that
  $(\Phi_\mathbb{A}, \pi_\mathbb{A})$ is a well-defined object of
  $\rcat$, and that it in fact lives in $\hcon$.

  \textbf{$\Phi \mathbb{A}$ is a category}. For the left unit law, note that
  $\tau_{jj}^\ast(\eta_j) = (\eta_j \tau_{jj}, \overline{\eta_j
    \tau_{jj}}) = (\eta_j \tau_{jj}, 1)$, so that for any
  $s \in \Phi\mathbb{A}(i,j)$ we have
  \begin{equation}\label{eq:15}
    \eta_j \ast s  = \mu_{ijj}(\eta_j \tau_{jj}, 1)s
  \end{equation}
  which equals $s$ by the left unit law for $\mathbb{A}$. For the
  right unit law, for any $t \in \Phi\mathbb{A}(i,j)$ we have
  $\tau_{ii}^\ast(t) \eta_i = (t \tau_{ii}\eta_i, \overline{t
    \tau_{ii}}\eta_i) = (t, \eta_i \overline{t\tau_{ii}\eta_i}) = (t,
  \eta_i \overline{t})$, and so
  \begin{equation}\label{eq:31}
    t \ast \eta_i  = \mu_{iij}(t,
    \eta_i \overline{t}) = \mu_{iij} (t, \eta_i \sigma_{ij}t)= \mu_{iij}(1,
    \eta_i \sigma_{ij})t
  \end{equation}
  which equals $t$ by the right unit law for $\mathbb{A}$. Finally,
  for associativity, given $s \in \Phi\mathbb{A}(i,j)$ and
  $t \in \Phi\mathbb{A}(j,k)$ and $u \in \Phi\mathbb{A}(k,\ell)$, we
  first calculate that
  \begin{align*}
    u \ast (t \ast s) &= \mu_{ik\ell} \circ \tau_{ik}^\ast u \circ (t \ast s) =
    \mu_{ik\ell} \circ \tau_{ik}^\ast u \circ \mu_{ijk} \circ
    \tau_{ij}^\ast t
    \circ s \\
    \text{and }(u \ast t) \ast s &= \mu_{ij\ell} \circ \tau_{ij}^\ast(u \ast t) \circ s\rlap{ .}
  \end{align*}
  Now consider the following diagrams; here, and subsequently, we may omit
  fibre product symbols for brevity.
\begin{equation*}
    \cd[@!C@C-5em@+1.5em]{
      A_{k\ell}A_{jk}A_{ij}
       \ar[rr]^-{\mu_{ijk} 1}
       \ar[d]|-{\pi_{23}}
       \ar@{<--}@/_1em/[d]_{\mu_{ijk}^\ast \tau_{ik}^\ast u} & & 
      A_{k\ell}A_{ik}
       \ar[d]|-{\pi_1}
       \ar@{<--}@/_1em/[d]_-{\tau_{ik}^\ast u} \\
      A_{jk}A_{ij}
      \ar[rr]^-{\mu_{ijk}}
      & &
      A_{ik}
     } \ \ \ 
    \cd[@C-0.3em@-0.6em]{
      A_{k\ell}A_{jk}A_{ij}
      \ar[rr]^-{\pi_{12}}
      \ar[dr]^-{1\mu_{jk\ell}}
      \ar[dd]^{\pi_{23}}
      \ar@{<--}@/_1em/[dd]_(0.4){\pi_1^\ast \tau_{jk}^\ast u} & &
      A_{k\ell}A_{jk}
      \ar[dr]^-{\mu_{jk\ell}}
      \ar[dd]^(0.68)*-<0.3em>{^{\pi_2}}|-\hole
      \ar@{<--}@/_1em/[dd]_(0.35){\tau_{jk}^\ast u}|-\hole\\ &
      A_{j\ell}A_{ij}
      \ar[rr]^(0.65)*-<0.7em>{^{\pi_1}}
      \ar[dd]^(0.65){\pi_2}
      \ar@{<--}@/_1em/[dd]|(0.3){\tau_{ij}^\ast(u \ast t)\ \ \ } & &
      A_{j\ell}
      \ar[dd]^-{\sigma_{j\ell}}
      \ar@{<--}@/_1em/[dd]_(0.4){u \ast t}\\
      A_{jk}A_{ij}
      \ar[rr]^(0.65){\pi_1}|-\hole
      \ar[dr]|-{\pi_2}
      \ar@{<--}@/_1em/[dr]_(0.35){\tau_{ij}^\ast t} & &
      A_{jk}
      \ar@{<--}@/_1em/[dr]_(0.35){t}
      \ar[dr]|-{\sigma_{jk}} \\ &
      A_{ij}
      \ar[rr]_-{\tau_{ij}} & &
      A_j\rlap{ .}
    }
  \end{equation*}

  The solid part of the left square is a pullback, and the section to
  its left obtained by pulling back the one to the right via
  Lemma~\ref{lem:8}(iii). In particular, the upwards-pointing square
  commutes, and so we can rewrite $u \ast (t \ast s)$ as
  \begin{equation}\label{eq:1}
    u \ast (t \ast s) = \mu_{ik\ell} \circ (\mu_{ijk} \times 1) \circ
    \mu_{ijk}^\ast \tau_{ik}^\ast u \circ \tau_{ij}^\ast t
    \circ s\rlap{ .}
  \end{equation}
  
  On the other hand, the front, back, top and bottom faces of the
  cube right above are all pullbacks, and so we can pull back the commuting
  diagram of sections to the right, which defines $u \ast t$, to obtain
  a commuting diagram of sections giving an expression for
  $\tau_{ij}^\ast(u \ast t)$. Using this, we can rewrite
  $(u \ast t) \ast s$ as
  \begin{equation}\label{eq:11}
    \begin{aligned}
    (u \ast t) \ast s &= \mu_{ij\ell} \circ (1 \times \mu_{jk\ell}) \circ
    \pi_1^\ast \tau_{jk}^\ast u \circ \tau_{ij}^\ast t \circ s \\
    &= \mu_{ij\ell} \circ (1 \times \mu_{jk\ell}) \circ
    \mu_{ijk}^\ast \tau_{ik}^\ast u \circ \tau_{ij}^\ast t \circ s\rlap{ ,}
    \end{aligned}
  \end{equation}
  where from the first to the second line we have 
  $\pi_1^\ast \tau_{jk}^\ast u = \mu_{ijk}^\ast \tau_{ik}^\ast u$ by
  unicity in Lemma~\ref{lem:8}(iii), as both are pullbacks of the
  partial section $u$ along $\tau_{ik}\mu_{ijk} = \tau_{jk}\pi_1$.
  Comparing~\eqref{eq:1} and~\eqref{eq:11}, we conclude using
  associativity for $\mathbb{A}$.
  
  \textbf{$\Phi \mathbb{A}$ is a restriction category}. To avoid
  confusion we will in this proof write $(\tilde{\ \ })$ for the restriction in
  $\Phi \mathbb{A}$ and $(\bar{\ \ })$ for that in $\C$. Note first
  that, for any $s \in \Phi\mathbb{A}(i,j)$ and
  $t \in \Phi \mathbb{A}(i,k)$, our calculations for the right unit
  law show that
  \begin{equation}\label{eq:44}
    s \ast \tilde{t} = s\overline{t} \colon A_i \rightarrow A_{ij}
  \end{equation}
  Given this, the first three restriction axioms
  $s \ast \tilde{s} = s$,
  $\tilde{s} \ast \tilde{t} = \tilde{t} \ast \tilde{s}$ and
  $\tilde{t \ast \tilde{s}} = \tilde{t}\ast\tilde{s}$ each follow
  immediately from the corresponding axiom in $\C$. It remains only to
  verify that $\tilde{t} \ast s = s \ast \widetilde{t \ast s}$ for all
  $s \in \Phi \mathbb{A}(i,j)$ and $t \in \Phi \mathbb{A}(j,k)$. Now
  observe that
  $\tau_{ij}^\ast(\eta_j \overline{t}) = (\eta_j
  \overline{t}\tau_{ij}, \overline{\eta_j \overline{t} \tau_{ij}}) =
  (\eta_j \tau_{ij}\overline{t \tau_{ij}}, \overline{\eta_j
    \tau_{ij}\overline{t \tau_{ij}}}) = (\eta_{j}\tau_{ij},
  1)\overline{t \tau_{ij}}$, so that
  \begin{equation}\label{eq:45}
    \tilde{t} \ast s =
    \mu_{ijj}(\eta_j \tau_{ij}, 1)\overline{t \tau_{ij}}s =
    \overline{t \tau_{ij}}s = s \overline{t \tau_{ij}s}\rlap{ .}
  \end{equation}
  On the other hand, we have by totality of $\mu_{ijk}$ and $\pi_1$ that
  \begin{equation*}
    s \ast \widetilde{t \ast s} =
    s\,\overline{\mu_{ijk} \tau_{ij}^\ast (t) s} = s\,\overline{\tau_{ij}^\ast
      (t) s} = s\,\overline{\pi_1  \tau_{ij}^\ast(t)
       s} = s\, \overline{t \tau_{ij} s} \rlap{ .}
  \end{equation*}

  \textbf{$\Phi \mathbb{A}$ is a join restriction category}. Note
that since $t \ast \tilde s = t \bar s \colon A_i \rightarrow A_{ij}$,
the compatibility and order relations on
$\Phi \mathbb{A}(i,j)$ coincide with those of the sheaf
$\Gamma(\sigma_{ij})$; since the latter admits joins, so too does the
former. To see that these joins are preserved by precomposition in
$\Phi \mathbb{A}$, suppose that
$\{t_i\} \subseteq \Phi \mathbb{A}(j,k)$ is a compatible family and
$s \in \Phi\mathbb{A}(i,j)$. The unicity in Lemma~\ref{lem:8}(iii)
implies that the family $\tau_{ij}^\ast(t_i)$ is compatible and that
$\tau_{ij}^\ast(\bigvee_i t_i) = \bigvee_x \tau_{ij}^\ast(t_i)$. We
therefore conclude that
  \begin{equation*}
\textstyle    (\bigvee_i t_i) \ast s = \mu_{ijk}\tau_{ij}^\ast(\bigvee_i t_i)s =
    \mu_{ijk}\bigvee_i \tau_{ij}^\ast(t_i)s = \bigvee_i
    \mu_{ijk}\tau_{ij}^\ast(t_i)s = \bigvee_i (t_i \ast s)\text{ .}
  \end{equation*}
  
  \textbf{$\pi_\mathbb{A} \colon \Phi\mathbb{A} \rightarrow \C$ is a
  hyperconnected restriction functor}. For functoriality, note that
  $\pi_\mathbb{A}(1_i) = \tau_{ii} \eta_i = 1_{A_i}$, and that for
  $s \in \Phi\mathbb{A}(i,j)$ and $t \in \Phi\mathbb{A}(j,k)$ we have
\begin{align*}
  \pi_\mathbb{A}(t \ast s) = \tau_{ik} \mu_{ijk} \tau_{ij}^\ast(t) s
  = \tau_{jk}  \pi_1 \tau_{ij}^\ast (t)  s
  = \tau_{jk}  t  \tau_{ij} s = \pi_\mathbb{A}(t)
  \circ \pi_\mathbb{A}(s)\rlap{ .}
\end{align*}
Now $\pi_\mathbb{A}$ preserves restrictions as
$\pi_\mathbb{A}(\tilde{s}) = \pi_\mathbb{A}(\eta_i \overline{s}) =
\tau_{ii} \eta_i \overline{s} = \overline{s} = \overline{\tau_{ij} s}
= \overline{\pi_\mathbb{A}(s)}$; and is hyperconnected as the
mapping $\O_{\Phi \mathbb{A}}(i) \rightarrow \O_\C(A_i)$ has inverse
 $e \mapsto \eta_i e$.

This proves that $(\Phi_\mathbb{A}, \pi_\mathbb{A})$ is a
well-defined object of $\rcat$, and indeed of $\hcon$.
Consider now $F \colon \mathbb{A} \rightsquigarrow \mathbb{B}$ 
a partite cofunctor; we will show 
$(\Phi F, \varpi^F) \colon (\Phi \mathbb{A}, \pi_\mathbb{A})
\rightarrow (\Phi \mathbb{B}, \pi_\mathbb{B})$ is a well-defined map of $\hcon$. Again we
proceed by stages.

\textbf{$\Phi F$ is functorial}. For $i \in \Phi \mathbb{A}$,
  the identity $\eta_i \in \Phi \mathbb{A}(i,i)$ maps to
  \begin{equation*}
    (\Phi F)(\eta_i) = F_{ii}F_i^\ast(\eta_i) = F_{ii}(\eta_i F_i,
    \overline{\eta_i F_i}) = F_{ii}(\eta_i F_i, 1) = \eta_{Fi}
  \end{equation*}
  as required, using at the last step the unit axiom for $F$.
  For binary functoriality, let 
  $s \in \Phi \mathbb{A}(i,j)$ and $t \in \Phi\mathbb{A}(j,k)$.
  Writing $i',j',k'$ for $Fi,Fj,Fk$, we
  have
  \begin{equation*}
    \Phi F(t) \ast \Phi F(s) = \mu_{i',j',k'} \circ
    \tau_{i',j'}^\ast\bigl(\Phi F(t)\bigr) \circ \Phi F(s) = \mu_{i',j',k'} \circ
    \tau_{i',j'}^\ast\bigl(\Phi F(t)\bigr) \circ F_{ij}\circ F_i^\ast s\text{ .}
  \end{equation*}
  To calculate $\tau_{i',j'}^\ast\bigl(\Phi F(t)\bigr)$ we 
  consider the diagram left below, wherein $F_{jk}1$ to the left denotes the
  top side of the composition axiom in Definition~\ref{def:3}:
\begin{equation*}
    \cd[@C-0.5em]{
      & B_{j',k'}B_{i',j'}
      \ar[rr]^(0.68)*-<0.7em>{^{\pi_1}}
      \ar[dd]^(0.65){\pi_2}
      \ar@{<--}@/_1em/[dd]|(0.4){\tau_{i',j'}^\ast(\Phi F(t))\ \ \ \ \
        \ \ \ } & &
      B_{j',k'}
      \ar[dd]^-{\sigma_{j',k'}}
      \ar@{<--}@/_1em/[dd]|(0.4){\Phi F(t)\,}\\
      A_{jk}B_{i',j'}
      \ar[rr]^(0.65){1 \tau_{i',j'}}|-\hole
      \ar[dr]|-{\pi_2}
      \ar@{<--}@/_1em/[dr]_(0.35){\tau_{i',j'}^\ast F_j^\ast t}
      \ar[ur]^-{F_{jk}1} & &
      A_{jk}B_{j'}
      \ar@{<--}@/_1em/[dr]_(0.35){F_j^\ast t}
      \ar[dr]|-{\pi_2}
      \ar[ur]^-{F_{jk}} \\ &
      B_{i',j'}
      \ar[rr]_-{\tau_{i',j'}} & &
      B_{j'}
    } \ \ 
    \cd[@R+1em]{
      {A_{jk}A_{ij}B_{i'}}
      \ar[r]^-{1F_{ij}}
      \ar[d]^{\pi_{23}}
      \ar@{<--}@/_1em/[d]|-{F_{ij}^\ast\tau_{i',j'}^\ast F_j^\ast t\ \
      \ \ \ \ \ \ } &
      {A_{jk}B_{i',j'}} \ar[d]^{\pi_2}
      \ar@{<--}@/_1em/[d]_-{\tau_{i',j'}^\ast F_j^\ast t}\\
      {A_{ij}B_{i'}} \ar[r]_-{F_{ij}} &
      {B_{i',j'}}\rlap{ .}
    }
  \end{equation*}
  All three horizontal faces are pullbacks, and so we can pull back
  the commuting diagram of sections to the right to obtain a commuting
  diagram of sections giving an expression for
  $\tau_{i',j'}^\ast\bigl(\Phi F(t)\bigr)$. This gives the first
  equality in:
  \begin{equation}\label{eq:14}
    \begin{aligned}
      \Phi F(t) \ast \Phi F(s) &= \mu_{i',j',k'} \circ (F_{jk} \times
      1) \circ \tau_{i',j'}^\ast F_j^\ast t \circ
      F_{ij}\circ F_i^\ast s\\
      &= \mu_{i',j',k'} \circ (F_{jk} \times 1) \circ (1 \times
      F_{ij}) \circ F_{ij}^\ast\tau_{i',j'}^\ast F_j^\ast t \circ F_i^\ast s \\
      &= \mu_{i',j',k'} \circ (F_{jk} \times 1) \circ (1 \times
      F_{ij}) \circ \pi_1^\ast \tau_{ij}^\ast t \circ F_i^\ast s\rlap{
        .}
    \end{aligned}
  \end{equation}
  The second equality comes from pulling back the section
  $\tau^\ast_{i',j'}F_j^\ast t$ as in the square right above, while
  the third follows from the target axiom
  $F_j \tau_{i',j'}F_{ij} = \tau_{ij}\pi_1$.
  
  On the other hand $\Phi F(t \ast s) = F_{ik} \circ
  F_{i}^\ast(t \ast s)$, and by considering the diagram
  \begin{equation*}
    \cd[@C-0.3em@-0.6em]{
      A_{jk}A_{ij}B_{i'}
      \ar[rr]^-{\pi_{12}}
      \ar[dr]^-{1\mu_{jk\ell}}
      \ar[dd]^{\pi_{23}}
      \ar@{<--}@/_1em/[dd]_(0.4){\pi_1^\ast \tau_{ij}^\ast t} & &
      A_{ijk}
      \ar[dr]^-{\mu_{ijk}}
      \ar[dd]^(0.68)*-<0.3em>{^{\pi_2}}|-\hole
      \ar@{<--}@/_1em/[dd]_(0.35){\tau_{ij}^\ast t}|-\hole\\ &
      A_{ik}B_{i'}
      \ar[rr]^(0.68)*-<0.7em>{^{\pi_1}}
      \ar[dd]^(0.65){\pi_2}
      \ar@{<--}@/_1em/[dd]|(0.3){F_i^\ast(t \ast s)\ \ \ } & &
      A_{ik}
      \ar[dd]^-{\sigma_{ik}}
      \ar@{<--}@/_1em/[dd]_(0.4){t \ast s}\\
      A_{ij}B_{i'}
      \ar[rr]^(0.65){\pi_1}|-\hole
      \ar[dr]|-{\pi_2}
      \ar@{<--}@/_1em/[dr]_(0.35){F_i^\ast s} & &
      A_{ij}
      \ar@{<--}@/_1em/[dr]_(0.35){s}
      \ar[dr]|-{\sigma_{ij}} \\ &
      B_{i'}
      \ar[rr]_-{F_i} & &
      A_i\rlap{ .}
    }
  \end{equation*}
 we can expand out the term $F_i^\ast(t
  \ast s)$ and conclude that
  \begin{equation}
    \label{eq:13}
    \Phi F(t \ast s) = F_{ik} \circ (1 \times \mu_{jk\ell}) \circ
    \pi_1^\ast \tau_{ij}^\ast t \circ F_i^\ast s\rlap{ .}
  \end{equation}
  Comparing~\eqref{eq:14} and~\eqref{eq:13} yields the desired
  equality by rewriting using the composition axiom for the cofunctor
  $F$.
  
  \textbf{$\Phi F$ is a restriction functor}. We
  calculate for any $s \in \Phi \mathbb{A}(i,j)$ that
  \begin{align*}
    \Phi F(\tilde{s}) &= F_{ii} F_i^\ast(\eta_i \overline{s}) =
    F_{ii}(\eta_i \overline{s}F_i, \overline{\eta_i \overline{s}F_i})
    = F_{ii}(\eta_i F_i \overline{sF_i}, \overline{\eta_i F_i}\,
    \overline{sF_i}) = F_{ii}(\eta_i F_i, 1)\overline{sF_i} \\ &=
    \eta_{Fi} \overline{s F_i} = \eta_{Fi}\overline{F_i^\ast s} =
    \eta_{Fi}\overline{F_{ii} F_i^\ast s} = \widetilde{\Phi F(s)}\rlap{ ,}
  \end{align*}
  where from the first to the second line we use the unit axiom for $F$.

  \textbf{$\varpi^F$ is a total natural transformation $\pi_\mathbb{B}
    \circ \Phi F \Rightarrow \pi_\mathbb{A}$}. We compute that
  that
  \begin{align*}
    \varpi^F_j \circ \pi_\mathbb{B}(\Phi F(s)) = 
    F_j \tau_{Fi,Fj} F_{ij} F_i^\ast(s) = \tau_{ij}
    \pi_1  
    F_i^\ast(s) = \tau_{ij} s F_i = \pi_\mathbb{A}(s) \circ
    \varpi^F_i
  \end{align*}
  using the target axiom for $F$ at the second equality.

  We have thus verified well-definedness of $\Phi$ on morphisms, and
  it remains only to prove functoriality of $\Phi$ itself.
  Preservation of identities is easy. As for binary composition, given
  partite cofunctors $F \colon \mathbb{A} \rightsquigarrow \mathbb{B}$
  and $G \colon \mathbb{B} \rightsquigarrow \mathbb{C}$ it is clear
  that $\Phi G \circ \Phi F$ and $\Phi(GF)$ agree on objects. On maps,
  given $s \in \Phi \mathbb{A}(i,j)$, we have $\Phi G(\Phi F(s))$ and
  $\Phi(GF)(s)$ given by the upper and lower composites in:
  \begin{equation*}
    \cd{
      C_{GFi} \ar[r]^-{G_{Fi}^\ast(F_{ij} \circ F_i^\ast(s))}
      \ar[d]_-{{(F_i G_{Fi})^\ast(s)}} &
      B_{Fi,Fj} \times_{B_{Fi}} C_{GFi} \ar[r]^-{G_{Fi,Fj}} &
      C_{GFi,GFj} \\
      A_{ij} \times_{A_i} C_{GFi} \ar[r]_-{\cong} &
      (A_{ij} \times_{A_i} B_{Fi}) \times_{B_{Fi}} C_{GFi}
      \ar[u]_-{F_{ij} \times 1}\rlap{ .}
    }
  \end{equation*}
  So it suffices to show the left region commutes, which follows much
  as previously using Lemma~\ref{lem:8}. So
  $\Phi(GF) = \Phi(G)\circ \Phi(F)$; and finally, it is clear that the
  $2$-cell components $\varpi^F$ and $\varpi^G$ paste together to
  yield $\varpi^{GF}$.
\end{proof}



\subsection{The left adjoint}
\label{sec:left-adjoint}

In this section, we show that
$\Phi \colon \partc \rightarrow \rcat$ has a left adjoint
$\Psi$. Much like $\Delta$ in the presheaf--total map adjunction,
$\Phi$ will be constructed by glueing suitable local atlases built
from $\O(X)$-valued equality of maps. The equality required is a
slight variant of the one in Definition~\ref{def:34}.

\begin{Defn}
  \label{def:9}
  Let $P \colon \A \rightarrow \C$ in $\rcat$. Given maps
  $f,g \colon i \rightrightarrows j$ in $\A$ we define the restriction
  idempotent $\fneq f g \in \O(Pi)$ by
  \begin{equation*}
    \fneq f g \defeq \bigvee \{Pe : e \in \O(i), e \leqslant \bar f
    \bar g,  fe = ge\}\rlap{ .}
  \end{equation*}
\end{Defn}
\begin{Rk}
  \label{rk:2}
  When $\A$ has joins of restriction idempotents which are preserved
  by $P$, we can form $\sheq f g$ in $\A$, and by join-preservation
  have $\fneq f g = P\sheq f g$.
\end{Rk}
The following lemma is now a slight variant of Lemma~\ref{lem:30}.
\begin{Lemma}
  \label{lem:15}
  Given $P \colon \A \rightarrow \C$ in $\rcat$ and $f,g,h \colon
  i \rightarrow j$ in $\A$, we have that
  \begin{enumerate}[(i)]
  \item $\fneq f f = P\overline{f}$;
  \item $\fneq f g = \fneq g f$;
  \item $\fneq g h \circ \fneq f g \leqslant \fneq f h$;
  \item $Pf \circ \fneq f g = Pg \circ \fneq f g$.
  \end{enumerate}
  If $f,g \colon i \rightarrow j$ and $u,v \colon j \rightarrow k$
  then $\fneq f g\overline{\fneq u v \circ Pf} \leqslant \fneq {uf}{vg}$.
\end{Lemma}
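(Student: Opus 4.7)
The plan is to use distributivity in the locale $\O(Pi)$: since both sides of the claimed inequality lie in $\O(Pi)$, it will suffice to rewrite them as suitable joins and compare them termwise.

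First I would rewrite the second factor on the left-hand side. Since precomposition preserves joins in $\C$ by~\eqref{eq:27}, while restriction preserves joins by Lemma~\ref{lem:18}(i), and $P$ is a restriction functor, we compute
\[
\overline{\fneq u v \circ Pf} = \overline{\textstyle\bigvee_d P(df)} = \bigvee_d P\overline{df}\rlap{ ,}
\]
where $d$ ranges over restriction idempotents on $j$ with $d \leqslant \bar u \bar v$ and $ud = vd$. Invoking distributivity of meet over joins in the locale $\O(Pi)$---and recalling that composition of restriction idempotents \emph{is} their meet---this gives
\[
\fneq f g \circ \overline{\fneq u v \circ Pf} = \bigvee \bigl\{P(e\, \overline{df}) \bigr\}\rlap{ ,}
\]
the join now ranging over pairs $(e,d)$ with $e \leqslant \bar f \bar g$, $fe = ge$, $d \leqslant \bar u \bar v$ and $ud = vd$.

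It then suffices to check that each $e' \defeq e\, \overline{df}$ is a valid index for the join defining $\fneq{uf}{vg}$: that is, $e' \leqslant \overline{uf}\,\overline{vg}$ and $ufe' = vge'$. For the first, since $d \leqslant \bar u$ we have $df = d \bar u f = d f \overline{uf} = df\, \overline{uf}$ by axiom~(iv) of Definition~\ref{def:15}, so $\overline{df} \leqslant \overline{uf}$ on taking restrictions. For the other half, $fe = ge$ together with axiom~(iii) yields the crucial symmetry $e\, \overline{df} = \overline{dfe} = \overline{dge} = e\, \overline{dg}$, and the same estimate applied to $d \leqslant \bar v$ shows $\overline{dg} \leqslant \overline{vg}$, hence $e' = e\, \overline{dg} \leqslant \overline{vg}$. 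The equation $ufe' = vge'$ then reduces, using $g\, \overline{dg} = dg$ (axiom~(iv)) and the identity $e\, \overline{df} = e\, \overline{dg}$ once more, to $(ud)(ge) = (vd)(ge)$, which holds since $ud = vd$.

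The only real obstacle is careful bookkeeping with axioms (i)--(iv) of Definition~\ref{def:15}; the overall strategy is completely formal once one notices both that the distributive structure of $\O(Pi)$ allows the two defining joins to be combined into a single join of ``cross terms'' $P(e\,\overline{df})$, and that the condition $fe = ge$ makes these cross terms symmetric in $f$ and $g$.
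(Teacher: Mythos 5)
Your proof of the final claim is correct and takes essentially the same route as the paper's: both reduce by distributivity to the cross-terms $P(e\,\overline{df})$ and then verify that each such idempotent indexes the join defining $\fneq{uf}{vg}$, and your three computations (that $\overline{df}\leqslant\overline{uf}$, the symmetry $e\,\overline{df}=\overline{dfe}=\overline{dge}=e\,\overline{dg}\leqslant\overline{vg}$, and the reduction of $ufe'=vge'$ to $ud=vd$ via $fe=ge$) coincide with the paper's up to swapping the roles of the letters $d$ and $e$.

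One caveat: the statement also asserts parts (i)--(iv), which your proposal does not address at all. The paper disposes of these in one line by transcribing the proof of Lemma~\ref{lem:30}; this is routine (the same arguments go through with $P$ applied throughout, using that $P$ is a restriction functor and that joins in $\O(Pi)$ distribute over composition), but a complete proof should at least record it.
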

\begin{proof}
  For (i)--(iv) we transcribe the proof of Lemma~\ref{lem:30}.
  For the final claim, it suffices by distributivity to prove
  that, if $d \leqslant \bar f \bar g$ and $e \leqslant \bar u \bar v$
  are such that $fd = gd$ and $ue=ve$, then $Pd \circ \overline{Pe
    \circ Pf} \leqslant \fneq {uf}{vg}$; and for this, it suffices to
  show $d\overline{ef} \leqslant \overline{uf}\overline{vg}$ and that
  $ufd\overline{ef} = vgd\overline{ef}$. Since $e \leqslant
  \overline{u}$ we have $d\overline{ef} \leqslant \overline{ef}
  \leqslant \overline{uf}$; moreover, we have
  $d \overline{ef} = \overline{ef}d = \overline{efd} = \overline{egd}
  \leqslant \overline{vg}$. Finally, we have $ufd\overline{ef} =
  uf\overline{ef}d = uefd = vefd = vf\overline{ef}d = vfd\overline{ef}
  = vgd\overline{ef}$, as desired.
\end{proof}
We are now ready to construct the left adjoint $\Psi$. We will first
define the action of $\Psi$ on objects, and the unit maps for the
desired adjunction. We then check the definitions are well-posed, and
finally verify adjointness.
\begin{Defn}
  \label{def:7}
  Let $P \colon \A \rightarrow \C$ be an object of $\rcat$. Writing
  $I = \mathrm{ob}(\A)$, the \emph{internalisation} of $P$
  is the $I$-partite internal category $\Psi P$ in $\C$ for which:
  \begin{itemize}[itemsep=0.20\baselineskip]
  \item The object of objects $(\Psi P)_i$ is $Pi$;
  \item The object of arrows with its source map
    $\sigma_{ij} \colon \Psi P_{ij} \rightarrow Pi$ is obtained
    as a glueing of the $\A(i,j)$-object local atlas on $Pi$ with
    components $\fneq f g$; this is a well-defined local atlas by
    Lemma~\ref{lem:15}. We write
    $\sigma_{ij} = \bigvee_{f \in \A(i,j)} p_f$ and write $s_f$ for
    the partial inverse of $p_f$.
  \item $\tau_{ij} \colon \Psi P_{ij} \rightarrow \Psi P_j$ is the
    unique map such that
    \begin{equation}\label{eq:9}
      \tau_{ij} \circ s_f = Pf \colon Pi \rightarrow Pj \qquad
      \text{for all $i,j \in \A(i,j)$.}
    \end{equation}
  \item $\eta_i \colon \Psi P_i \rightarrow \Psi P_{ii}$ is the
    partial section 
    $s_{1_i} \colon Pi \rightarrow \Psi P_{ii}$.
  \item $\mu_{ijk} \colon \Psi P_{jk} \times_{Pj} \Psi P_{ij}
    \rightarrow \Psi P_{ik}$ is the unique map such that
    \begin{equation}\label{eq:12}
      \mu_{ijk} \circ \tau_{ij}^\ast(s_g) \circ s_f = s_{gf} \qquad
      \text{for all $f \in \A(i,j), g \in \A(j,k)$.}
    \end{equation}
  \end{itemize}
  We also define a restriction functor $\eta_P \colon \A \rightarrow
  \Phi(\Psi P)$ where:
  \begin{itemize}
  \item On objects, $\eta_P$ is the identity;
\item On maps, $\eta_P(f \colon i \rightarrow j)$ is the partial
  section $s_f \colon Pi \rightarrow \Psi P_{ij}$ of $\sigma_{ij}$.
  \end{itemize}
  Finally, we define the \emph{unit map at $P$} to be the
  following map of $\rcat$:
  \begin{equation}\label{eq:16}
    \cd[@!C@C-2em]{
      {\A} \ar[rr]^-{\eta_P} \ar[dr]_-{P} & \twoeq{d}&
      {\Phi(\Psi P)}\rlap{ .} \ar[dl]^-{\pi_{\Psi P}} \\ &
      {\C}
    }
  \end{equation}
\end{Defn}

\begin{Prop}
  \label{prop:4}
  For any $P \colon \A \rightarrow \C$ in $\rcat$, $\Psi P$ is a
  well-defined object of $\parte$, and~\eqref{eq:16} is a
  well-defined map of $\rcat$.
\end{Prop}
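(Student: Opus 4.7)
The plan is to exploit the universal property of glueings (Lemma~\ref{lem:10}) systematically: every map \emph{out} of the object of arrows $\Psi P_{ij}$, and every map out of the iterated pullback $\Psi P_{jk} \times_{Pj} \Psi P_{ij}$, will be constructed and checked by giving its precomposition with the basis sections. Concretely, the check that $(\fneq f g)_{f,g\in \A(i,j)}$ really forms a local atlas on $Pi$ is immediate from parts (i)--(iii) of Lemma~\ref{lem:15}, so $\sigma_{ij}$ is a well-defined local homeomorphism with basis of partial sections $\{s_f\}_{f\in \A(i,j)}$; this simultaneously gives the source-\'etale condition, and (via Lemma~\ref{lem:26}(ii)) the existence of pullbacks of $\sigma_{ij}$ along total maps.

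The next step is to define $\tau_{ij}$ and $\mu_{ijk}$ by Lemma~\ref{lem:10}. For $\tau_{ij}$, the family $\{Pf \colon Pi \rightarrow Pj\}_{f\in \A(i,j)}$ must satisfy~\eqref{eq:5} against the atlas $\fneq f g$. The first condition reads $Pf\cdot \fneq f f = Pf$, which follows from $\fneq f f = P\overline f$, and the second is $Pg\cdot\fneq f g\leqslant Pf$, which follows from Lemma~\ref{lem:15}(iv). Totality of $\tau_{ij}$ drops out since $\overline{Pf} = P\overline{f} = \fneq f f$. For $\mu_{ijk}$, I first apply Lemma~\ref{lem:26}(ii) then (i) to see that $\sigma_{ij}\pi_2 \colon \Psi P_{jk} \times_{Pj} \Psi P_{ij} \to Pi$ is a local homeomorphism with basis $\{\tau_{ij}^\ast(s_g)\circ s_f\}_{(f,g)}$ and corresponding local atlas
\begin{equation*}
  \theta_{(f,g),(f',g')} \;=\; \fneq f {f'}\cdot\overline{\fneq g {g'}\cdot Pf}\rlap{ .}
\end{equation*}
The candidate family $\{s_{gf}\}_{(f,g)}$ satisfies~\eqref{eq:5} against $\theta$: one checks by a short calculation that $\theta_{(f,g),(f,g)} = P\overline{gf} = \overline{s_{gf}}$, and the inequality $\theta_{(f,g),(f',g')}\leqslant \fneq{gf}{g'f'}$ is exactly the final clause of Lemma~\ref{lem:15}; hence Lemma~\ref{lem:10} yields a unique total map $\mu_{ijk}$ satisfying~\eqref{eq:12}.

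It remains to verify (a) source/target compatibility of $\eta_i = s_{1_i}$ and $\mu_{ijk}$, and (b) the unit and associativity axioms of Definition~\ref{def:29}. Both reduce, via the uniqueness clause of Lemma~\ref{lem:10}, to straightforward identities between partial sections evaluated on the basis; for example, $\sigma_{ik}\mu_{ijk}=\sigma_{ij}\pi_2$ is checked by composing both sides with $\tau_{ij}^\ast(s_g)\circ s_f$ and reducing both to $P\overline{gf}$, while the right unit law $\mu_{iij}(1,\eta_i\sigma_{ij})=1$ is checked by recognising $(s_f,\eta_i\overline{s_f}) = \tau_{ii}^\ast(s_{1_i})\circ s_f$ and invoking~\eqref{eq:12}. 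Associativity is the most laborious step and will be the main obstacle: one pulls back the defining equation~\eqref{eq:12} of $\mu_{ijk}$ along $\tau_{jk}$ (respectively, along $\tau_{ij}\pi_2$) just as in the analogous calculations already performed in Proposition~\ref{prop:2}, using uniqueness in Lemma~\ref{lem:8}(iii) to identify the two resulting pulled-back sections, and then comparing via associativity in $\A$.

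Finally, $\eta_P \colon f\mapsto s_f$ is functorial: identity preservation is by definition, while $\eta_P(gf)=s_{gf} = \mu_{ijk}\circ\tau_{ij}^\ast(s_g)\circ s_f = \eta_P(g)\ast \eta_P(f)$ by~\eqref{eq:12}. Preservation of restriction amounts to the identity $s_{\overline f} = s_{1_i}\cdot P\overline f$ in $\Phi(\Psi P)$; both sides are partial sections of $\sigma_{ii}$, so by Lemma~\ref{lem:10} it suffices to check that $p_g \circ s_{\overline f} = p_g\circ s_{1_i}\cdot P\overline f$ for every $g\in \A(i,i)$, i.e.\ that $\fneq{\overline f}{g} = \fneq{1_i}{g}\cdot P\overline f$, which is a direct manipulation of the joins defining the two sides. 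Commutativity of the triangle~\eqref{eq:16} is then immediate: $\pi_{\Psi P}\eta_P(f) = \tau_{ij}\circ s_f = Pf$ by~\eqref{eq:9}, completing the verification that $(\eta_P,\mathrm{id})$ is a morphism of $\rcat$.
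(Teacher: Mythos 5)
Your proposal is correct and follows essentially the same route as the paper's own proof: establish that $\bigl(\fneq f g\bigr)$ is a local atlas via Lemma~\ref{lem:15}, construct $\tau_{ij}$ and $\mu_{ijk}$ through the universal property of glueings (Lemma~\ref{lem:10} combined with Lemma~\ref{lem:26}), verify source/target compatibility and the category axioms by evaluating on basis sections while reusing the associativity computations~\eqref{eq:1} and~\eqref{eq:11} from Proposition~\ref{prop:2}, and deduce functoriality of $\eta_P$ and commutativity of~\eqref{eq:16} from~\eqref{eq:12} and~\eqref{eq:9}. One pedantic point: to conclude that two partial sections of $\sigma_{ii}$ agreeing after composition with every $p_g$ are equal, the relevant fact is Lemma~\ref{lem:25}(iii) (or Lemma~\ref{lem:1}), not Lemma~\ref{lem:10}, which governs maps \emph{out of}, rather than \emph{into}, the glueing.
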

\begin{proof}
  Like before, we check well-definedness  in stages.

  \textbf{The data of $\Psi P$ are well-defined}.
  For $\Psi P_i$, $\Psi P_{ij}$ and $\sigma_{ij}$ there is nothing to do.
  For $\tau_{ij}$, we observe that the family
  of maps $(Pf \colon Pi \rightarrow Pj)_{f \in \A(i,j)}$ satisfies
  \begin{equation*}
 Pf \circ \fneq f f = Pf \circ P\overline f = Pf \quad \text{and} \quad 
 Pg \circ \fneq f g = Pf \circ \fneq f g \leqslant Pf\rlap{ ;}
 \end{equation*}
 so by Lemma~\ref{lem:10}, there is a unique total map $\tau_{ij}$
 satisfying~\eqref{eq:9}. Next, the map $\eta_i = s_{1_i}$ satisfies
 $\overline{\eta_i} = \overline{s_{1_i}} = \fneq {1_i} {1_i} = 1_{Pi}$
 and so is total as desired. It remains to show well-definedness of
 $\mu_{ijk}$.
 By Lemma~\ref{lem:26}, the following pullback exists:
 \begin{equation*}
 \cd[@!C@C-5.6em@-0.6em]{
 & & \Psi P_{jk} \times_{Pj} \Psi P_{ij} \ar[dl]_-{\pi_2} \ar[dr]^-{\pi_1}
 {\save*!/d-1.2pc/d:(-1,1)@^{|-}\restore}\\
 & \Psi P_{ij} \ar[dl]_-{\sigma_{ij}} \ar[dr]^-{\tau_{ij}} & & \Psi P_{jk}
 \ar[dl]_-{\sigma_{jk}} \ar[dr]^-{\tau_{jk}} \\
 Pi & & Pj & & Pk\rlap{ ,}
 }
 \end{equation*}
 and the composite $s_{ij} \pi_2$ down the left is a local
 homeomorphism, with basis of partial sections
 $\{\tau_{ij}^\ast(s_g) s_f : f \in \A(i,j), g \in \A(j,k)\}$, and
 induced local atlas
 \begin{equation}
 \theta_{(f,g),(h,k)} = \fneq f h \overline{\fneq g k
 \tau_{ij}s_f} 
 = \fneq f h \overline{\fneq g k Pf}\rlap{ .}
\label{eq:10}
\end{equation}
Now consider the
family of maps $s_{gf} \colon Pi \rightarrow \Psi P_{ik}$ for
$f \in \A(i,j)$ and $g \in \A(j,k)$. By Lemma~\ref{lem:15}, these
satisfy
\begin{gather*}
 \theta_{(f,g),(f,g)} = \fneq f f \overline{\fneq g g 
 Pf} = P\overline{f} \,\overline{P\overline{g} Pf} =
 P\overline{gf} = \fneq {gf} {gf} = \overline{s_{gf}}\\
 \text{and } s_{kh} \theta_{(f,g),(h,k)} = s_{kh} \fneq f h \overline{\fneq g k
 Pf} \leqslant s_{kh} \fneq {gf}{kh} \leqslant s_{gf}\rlap{ ;}
 \end{gather*}
 so by Lemma~\ref{lem:10} there is a 
 total map $\mu_{ijk}$ uniquely determined by~\eqref{eq:12}.

 \textbf{$\eta$ and $\mu$ are compatible with source and target}. For
 compatibility of $\eta$, we note that
 $\sigma_{ii} \eta_i = \sigma_{ii}s_{1_i} = \overline{s_{1_i}} =
 1_{Pi}$ since $s_{1_i}$ is a partial section of $\sigma_{ii}$, and
 that $\tau_{ii}\eta_i = \tau_{ii} s_{1_i} = P(1_i) = 1_{Pi}$
 by~\eqref{eq:9}. For compatibility of $\mu$, we calculate that
 \begin{equation*}
 \sigma_{ij} \pi_2 \tau_{ij}^\ast(s_g) s_f = \theta_{(f,g),(f,g)}
 = \overline{s_{gf}} = \sigma_{ik} s_{gf} = \sigma_{ik} \mu_{ijk} \tau_{ij}^\ast(s_{g}) s_f 
 \end{equation*}
 for all $f \in \A(i,j)$ and $g \in \A(j,k)$. Since
 $\{\tau_{ij}^\ast(s_g)s_f\}$ is a basis for $\sigma_{ij}\pi_2$, we have
 $\sigma_{ij} \pi_2 = \sigma_{ik} \mu_{ijk}$ by Lemma~\ref{lem:27}.
 Similarly, $\tau_{ik}\mu_{ijk} = \tau_{jk}\pi_1$ follows since
 \begin{equation*}
 \tau_{jk}\pi_1 \tau_{ij}^\ast(s_g) s_f = \tau_{jk}s_g
 \tau_{ij}s_f = Pg \circ Pf = P(gf) = \tau_{ik}s_{gf} =
 \tau_{ik}\mu_{ijk}\tau_{ij}^\ast(s_g)s_f\rlap{ .}
 \end{equation*}

 \textbf{$\Phi P$ verifies the category axioms}. For this, it will be
 convenient to borrow the notation of Definition~\ref{def:5} and write
 $t \ast s$ for a composite of the form
 $\mu_{ijk} \circ \tau_{ij}^\ast(t) \circ s$. In these
 terms,~\eqref{eq:12} states that $s_g \ast s_f = s_{gf}$. Now, to
 verify the left unit law for $\Psi P$, we calculate that, for all
 $f \in \A(i,j)$ we have
 \begin{equation*}
 s_f = s_{1_j f} = s_{1_j} \ast s_f = \eta_j \ast s_f = \mu_{ijj}(\eta_j \tau_{ij},1)s_f 
 \end{equation*}
 where the last equality is~\eqref{eq:15}. It follows that
 $\mu_{ijj}(\eta_j \tau_{ij},1) = 1$ by Lemma~\ref{lem:10}. For the
 right unit law, we calculate similarly that, for all $f \in \A(i,j)$,
 we have
 \begin{equation*}
   s_f = s_{f1_i} = s_f \ast s_{1_i} = s_f \ast \eta_i = \mu_{iij}(1, \eta_i \sigma_{ij})s_f
 \end{equation*}
 where the last equality is~\eqref{eq:31}. Finally, for associativity,
 given $f \in \A(i,j)$, $g \in \A(j,k)$ and $h \in \A(k,\ell)$ we have
 that
 \begin{align*}
   s_{hgf} & = s_h \ast (s_g \ast s_f) = \mu_{ik\ell} \circ (\mu_{ijk} \times 1) \circ
    \mu_{ijk}^\ast \tau_{ik}^\ast(s_h) \circ \tau_{ij}^\ast(s_g)
    \circ s_f\\
    \text{and } s_{hgf} & = (s_h \ast s_g) \ast s_f = \mu_{ij\ell} \circ (1 \times \mu_{jk\ell}) \circ
    \mu_{ijk}^\ast \tau_{ik}^\ast(s_h) \circ \tau_{ij}^\ast(s_g) \circ
    s_f\rlap{ ,}
 \end{align*}
 where the last step in each line comes from~\eqref{eq:1},
 respectively~\eqref{eq:11}. The composition axiom follows since, by
 Lemma~\ref{lem:26}, the family of partial sections
 $\{\mu_{ijk}^\ast \tau_{ik}^\ast(s_h) \circ \tau_{ij}^\ast(s_g) \circ
 s_f\}$ constitute a basis for the local homeomorphism
 \begin{equation*}
 \Psi P_{k\ell}
 \times_{Pk} \Psi P_{jk} \times_{Pj} \Psi P_{ij} \xrightarrow{\pi_{23}}
 \Psi P_{jk} \times_{Pj} \Psi P_{ij} \xrightarrow{\pi_2}
 \Psi P_{ij} \xrightarrow{\sigma_{ij}} Pi\text{ .} 
\end{equation*}

\textbf{$\eta_P \colon \A \rightarrow \Phi(\Psi P)$ is a restriction
  functor, and~\eqref{eq:16} commutes}. Unitality of $\eta_P$ is the
fact that $s_{1_i} = \eta_i$; binary functoriality is precisely the
fact~\eqref{eq:12} that $s_g \ast s_f = s_{gf}$. Preservation of
restriction follows as
$s_{\bar f} = s_{1_i \bar f} = s_{1} \bar f = \eta_{1_i} \bar f =
\widetilde{s_f}$. Finally,~\eqref{eq:16} clearly commutes on
objects and commutes on maps by~\eqref{eq:9}.
\end{proof}

We are now in a position to prove:

\begin{Thm}
\label{thm:4}
  For each $P \colon \A \rightarrow \C$ in $\rcat$, the
  map~\eqref{eq:16} exhibits $\Psi P$ as the value at $P$ of a left
  adjoint to $\Phi \colon \partc \rightarrow \rcat$, yielding an
  adjunction
    \begin{equation}\label{eq:7}
  \cd{
    {\partc} \ar@<-4.5pt>[r]_-{\Phi} \ar@{<-}@<4.5pt>[r]^-{\Psi} \ar@{}[r]|-{\bot} &
    {\rcat}\rlap{ .} 
  }
\end{equation}
\end{Thm}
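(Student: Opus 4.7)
The plan is to verify the universal property of the unit~\eqref{eq:16}: for any $\mathbb{B} \in \partc$ and any map $(F, \alpha) \colon (\A, P) \rightarrow (\Phi \mathbb{B}, \pi_\mathbb{B})$ in $\rcat$, I will construct a unique partite cofunctor $G \colon \Psi P \rightsquigarrow \mathbb{B}$ such that $(\Phi G, \varpi^G) \circ (\eta_P, 1_P) = (F, \alpha)$. Functoriality of $\Psi$ on morphisms, and naturality of the unit, then follow formally from uniqueness.

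Unpacking the desired equation pins down the components of $G$ uniquely: on components, $G(i) = F(i)$; on objects, matching the $2$-cell $\varpi^G$ with $\alpha$ forces $G_i = \alpha_i$; and on arrows, the total map $G_{ij}$ must satisfy
\[
  G_{ij} \circ \alpha_i^*(s_f) = F(f) \colon B_{Fi} \rightarrow B_{Fi, Fj} \qquad \text{for all } f \in \A(i,j)\rlap{ .}
\]
By Lemma~\ref{lem:26}(ii), the pullback $\pi_2 \colon (\Psi P)_{ij} \times_{Pi} B_{Fi} \rightarrow B_{Fi}$ is a local homeomorphism with basis $\{\alpha_i^*(s_f)\}$ and associated atlas $\psi_{fg} = \overline{\fneq f g \circ \alpha_i}$. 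By Lemma~\ref{lem:10}, existence of such a total $G_{ij}$ amounts to verifying $\overline{F(f)} = \psi_{ff}$ and $F(g) \circ \psi_{fg} \leq F(f)$ for all $f, g$. The former follows by applying naturality of $\alpha$ at $\bar f$, giving $\alpha_i \circ \overline{F(f)} = P\bar f \circ \alpha_i$, and then taking restrictions to obtain $\overline{F(f)} = \overline{P\bar f \circ \alpha_i} = \psi_{ff}$ using Lemma~\ref{lem:15}(i). The latter follows by writing $\psi_{fg} = \bigvee_e \overline{F(e)}$ as $e$ ranges over restriction idempotents below $\bar f \bar g$ with $fe = ge$, then using $F(f) \cdot \overline{F(e)} = F(g) \cdot \overline{F(e)}$, which comes from applying $F$ to the equality $fe = ge$.

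It remains to verify the cofunctor axioms of Definition~\ref{def:4} for $G$. The source axiom holds by construction. The remaining three can, via Lemma~\ref{lem:10}, each be reduced to an equality that need only be checked after precomposition with the relevant basis sections. The target axiom then reduces, using~\eqref{eq:9}, to the identity $\alpha_j \circ \pi_\mathbb{B}(F(f)) = P(f) \circ \alpha_i$, which is exactly naturality of $\alpha$. The identity axiom reduces, using totality of $\eta^{\Psi P}_i = s_{1_i}$ and $\alpha_i$ to identify the relevant pullback section, to $F(1_i) = \eta^\mathbb{B}_{Fi}$, which is unitality of $F$. The composition axiom is the principal technical obstacle: one unfolds both sides along the basis of partial sections of the triple fibre product $(\Psi P)_{jk} \times_{Pj} (\Psi P)_{ij} \times_{Pi} B_{Fi}$, uses~\eqref{eq:12} to identify one side with $s_{gf}$, uses the analogous formula in $\Phi\mathbb{B}$ to identify the other with $F(g) \ast F(f)$, and then appeals to binary functoriality of $F$. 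This calculation parallels the associativity argument for $\mu^{\Psi P}$ already carried out in the proof of Proposition~\ref{prop:4}, and is routine modulo careful bookkeeping of iterated pullback squares analogous to those in~\eqref{eq:1} and~\eqref{eq:11}.
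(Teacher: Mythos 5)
Your proposal is correct and follows essentially the same route as the paper's proof: force the cofunctor's data on components, objects and arrows from the factorisation equation, use the basis $\{\alpha_i^\ast(s_f)\}$ of Lemma~\ref{lem:26} together with Lemma~\ref{lem:10} for uniqueness and well-definedness (via naturality of $\alpha$ and the definition of $\fneq f g$), and verify the cofunctor axioms by precomposition with basis sections, the composition axiom reducing to binary functoriality exactly as in the paper's calculations~\eqref{eq:14} and~\eqref{eq:13}. The only cosmetic difference is that the paper checks the source axiom by the same basis argument rather than treating it as automatic, but that check is precisely your identity $\overline{F(f)} = \psi_{ff}$, so nothing is missing.
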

\begin{proof}
  Let $\mathbb{B}$ be a $J$-partite internal category in $\C$. We must
  show that every lax-commuting triangle as on the left in
  \begin{equation*}
    \cd[@!C@C-0.2em]{
      {\A} \ar[rr]^-{G} \ar[dr]_-{P}
      & \ltwocello{d}{\gamma} &
      {\Phi \mathbb{B}} \ar[dl]^-{\pi_\mathbb{B}} & \A
      \ar[r]^-{\eta_P} \ar[dr]_-{P} &
      {\Phi\Psi P} \ar[r]^-{\Phi F} \ar[d]_(0.4){\pi_{\Psi P}}
      \ltwocello[0.35]{dr}{\varpi^{F}} &
      {\Phi \mathbb{B}} \ar[dl]^-{\pi_\mathbb{B}} \\ &
      {\C} & & & \C & {}
    }
  \end{equation*}
  factors as to the right for a \emph{unique} partite cofunctor
  $F \colon \Psi P \rightsquigarrow \mathbb{B}$. We first show that the
  desired factorisation forces the definition of $F$. 
  
  \textbf{$F$ must act on component-sets by $i \mapsto Gi$}. This is
  as $\Phi F \circ \eta_P = G$ on objects.
  
  \textbf{$F$ must have action on objects given by
      $F_i = \gamma_i \colon B_{Gi} \rightarrow Pi$}. This follows
    since $\gamma = \varpi^{F} \circ \eta_P$ and $\eta_P$ is the
    identity on objects, so that $\gamma_i = \varpi^{F}_{i} = F_i$.

    \textbf{$F$'s action on arrows
      $F_{ij} \colon \Psi P_{ij} \times_{Pi} B_{Gi} \rightarrow
      B_{Gi,Gj}$ is uniquely determined by}
    \begin{equation}\label{eq:17}
      Gf = B_{Gi} \xrightarrow{\gamma_i^\ast(s_{f})} \Psi P_{ij} \times_{Pi} B_{Gi} \xrightarrow{F_{ij}} B_{Gi,Gj} \qquad \text{for all $f \in \A(i,j)$.}
    \end{equation}
    Indeed, the displayed equalities are forced since
    $\Phi F \circ \eta_P = G$ on morphisms; but by Lemma~\ref{lem:26},
    $\{\gamma_i^\ast(s_f) : f \in \A(i,j)\}$ is a basis of partial
    sections for the local homeomorphism
    $\pi_2 \colon \Psi P_{ij} \times_{Pi} B_{Gi} \rightarrow B_{Gi}$, so
    that there is at most one map $F_{ij}$ verifying these equalities.
    So the data of $F$ are uniquely determined; we
    now check that these data underlie a well-defined cofunctor.

  \textbf{The $F_{ij}$'s are well-defined}. 
  The local atlas $\psi$ for
  $\pi_2 \colon \Psi P_{ij} \times_{Pi} B_{Gi} \rightarrow B_{Gi}$
  associated to the basis of local sections $\{\gamma_i^\ast(s_f)\}$
  is, by Lemma~\ref{lem:26}, given by
  \begin{equation*}
    \psi_{f,g} = \overline{\fneq f g \gamma_i} \qquad \text{for
      $f,g \in \A(i,j)$.}
  \end{equation*}
  We must verify that the family of maps $(Gf : f \in \A(i,j))$
  satisfy the conditions of Lemma~\ref{lem:10} with respect to this
  local atlas. First, by naturality and totality of $\gamma$, and totality
  of $\tau_{ij}$, we have for any $f \in \A(i,j)$ that
  \begin{equation*}
    \overline{Pf \circ
      \gamma_i} = \overline{\gamma_j \circ \pi_\mathbb{B} (G(f))} =
    \overline{\gamma_j \circ \tau_{ij} \circ Gf} = \overline{Gf}
  \end{equation*}
  It follows by definition of $\fneq f g$ and distributivity of joins that
  \begin{align*}
    \psi_{fg} = \overline{\fneq f g\gamma_i} = \textstyle\bigvee \{\overline{Pe \circ \gamma_i}
    : e \leqslant \bar f \bar g, fe = ge\} = \bigvee \{\overline{Ge} : e \leqslant \bar f \bar g, fe = ge\}
  \end{align*}
  whence $\psi_{ff} = \overline{Gf}$ and
  \begin{align*}
    Gg\circ \psi_{fg} &= 
    \textstyle \bigvee \{Gg \circ \overline{Ge} : e \leqslant \bar f \bar g,
    fe = ge\}
    = \textstyle \bigvee \{Gg \ast Ge : e \leqslant \bar f \bar g,
    fe = ge\} \\ &= \textstyle\bigvee\{G(ge) : e \leqslant \bar f \bar g,
    fe = ge \} = \bigvee\{G(fe) : e \leqslant \bar f \bar g,
    fe = ge \} \leqslant Gf
  \end{align*}
  where on the first line, we have
  $Gg \circ \overline{Ge} = Gg \ast \tilde{Ge} = Gg \ast Ge$
  (composition in $\C$) just as in the proof of
  Proposition~\ref{prop:2}.

  \textbf{The maps $F_{ij}$ are compatible with source and target}.
  For the source axiom $\sigma_{Gi,Gj} F_{ij} = \pi_2$, we
  calculate that
  \begin{equation*}
    \sigma_{Gi,Gj} F_{ij} \gamma_i^\ast(s_f) =
    \sigma_{Gi,Gj} 
    Gf = \overline{Gf} = \psi_{ff} = \pi_2 \gamma_i^\ast(s_f)
  \end{equation*}
  and apply Lemma~\ref{lem:10}. For the target axiom
  $\tau_{ij}\pi_1 = \gamma_j \tau_{Gi,Gj} F_{ij}$, we have
  \begin{equation*}
    \tau_{ij}\pi_1\gamma_{i}^\ast(s_f) = \tau_{ij}s_f\gamma_i =
    Pf \, \gamma_i = \gamma_j \tau_{Gi,Gj}Gf = \gamma_j
    \tau_{Gi,Gj} \tilde
    G_{ij} \gamma_i^\ast(s_f)
  \end{equation*}
  whence the result, again by Lemma~\ref{lem:10}.

  \textbf{$F$ satisfies the unit and multiplication axioms}. For the unit axiom, we have
  \begin{equation*}
    \eta_{Gi} = G(1_i) = {F}_{ii} \gamma_i^\ast(s_{1_i}) =
    {F}_{ii}(\eta_i G_i, 1)
  \end{equation*}
  as required. Finally, for the composition axiom, let $f \in \A(i,j)$
  and $g \in \A(j,k)$. By exactly the same calculations as
  in~\eqref{eq:14} and~\eqref{eq:13}, we have that
  \begin{align*}
    Gg \ast Gf &= \mu_{Gi,Gj,Gk} \circ ({F}_{jk} \times 1) \circ (1 \times
    {F}_{ij}) \circ \pi_1^\ast\tau_{ij}^\ast (s_g) \circ
    \gamma_i^\ast (s_f)\\
    G(gf) &= {F}_{ik} \circ (1 \times \mu_{jk\ell}) \circ
    \pi_1^\ast \tau_{ij}^\ast(s_g) \circ \gamma_i^\ast(s_f)\rlap{ ,}
  \end{align*}
  and since $Gg \ast Gf = G(gf)$, the composites to the right are
  equal. But by Lemma~\ref{lem:26}, the partial sections
  $\{\pi_1^\ast \tau_{ij}^\ast(s_g) \gamma_i^\ast(s_f)\}$ are a basis
  for the local homeomorphism
  $\pi_3 \colon \Psi P_{jk} \times_{Pj} \Psi P_{ij} \times_{Pi} B_{Pi}
  \rightarrow B_{Pi}$, we conclude by Lemma~\ref{lem:10} that 
  $\mu_{Gi,Gj,Gk} \circ ({F}_{jk} \times 1) \circ (1 \times {F}_{ij})
  = {F}_{ik} \circ (1 \times \mu_{jk\ell})$ as required.
\end{proof}

\subsection{The fixpoints}
\label{sec:fixpoints-1}
We have thus succeeded in constructing the adjunction~\eqref{eq:7}; we
now identify its left and right fixpoints.

\begin{Prop}
  \label{prop:8}
  The unit~\eqref{eq:16} at $P \colon \A \rightarrow \C$ of the
  adjunction $\Psi \dashv \Phi$ is invertible if and only if $P$ lies
  in $\hcon$.
\end{Prop}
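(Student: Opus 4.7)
The plan is to prove the two directions separately. The forward direction is essentially immediate: if $\eta_P$ is an isomorphism, then composing with the triangle identity $\pi_{\Psi P} \circ \eta_P = P$ exhibits $P$ as isomorphic to $\pi_{\Psi P}$, which lives in $\hcon$ by Proposition~\ref{prop:2}. So the content lies entirely in the converse.

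Suppose $P \in \hcon$. Since $\eta_P$ is the identity on objects, it suffices to show it is bijective on each hom-set $\A(i,j) \to \Phi(\Psi P)(i,j) = \Gamma(\sigma_{ij})$. For injectivity, assume $s_f = s_g$. Then, using Lemma~\ref{lem:25}(ii) and the defining atlas for $\Psi P_{ij}$, we have $\fneq f g = \sheq{s_f}{s_g} = \overline{s_f} = P\bar f$, and similarly $P\bar f = P\bar g$. By hyperconnectedness of $P$, and since $\fneq f g = P\sheq f g$ (Remark~\ref{rk:2}), we get $\sheq f g = \bar f = \bar g$ in $\A$, whence $f = f \sheq f g = g \sheq f g = g$ by Lemma~\ref{lem:30}(iv).

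For surjectivity, let $s \in \Gamma(\sigma_{ij})$. By Lemma~\ref{lem:1}, $s$ corresponds to a family $(\theta_f \in \O(Pi) : f \in \A(i,j))$ with $\theta_g \theta_f \leqslant \fneq f g$ and $\theta_g \fneq f g \leqslant \theta_f$. Hyperconnectedness provides a unique lift $\theta_f = Pe_f$ with $e_f \in \O(i)$, and since $\fneq f g = P\sheq f g$ the conditions become
\begin{equation*}
  e_g e_f \leqslant \sheq f g \qquad \text{and} \qquad e_g \sheq f g \leqslant e_f
\end{equation*}
in $\A$. Taking $g = f$ in the first yields $e_f \leqslant \bar f$. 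Using Lemma~\ref{lem:30}(iv), one checks that $fe_f \cdot \overline{ge_g} = fe_f e_g = f \sheq f g \cdot e_f e_g = g\sheq f g \cdot e_f e_g = ge_g \cdot \overline{fe_f}$, so that $\{fe_f : f \in \A(i,j)\}$ is compatible. Set $h = \bigvee_f fe_f \in \A(i,j)$, using that $\A$ is a join restriction category. The claim is that $s_h = s$, which by Lemma~\ref{lem:1} and hyperconnectedness reduces to showing $\sheq f h = e_f$ for every $f \in \A(i,j)$.

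The main obstacle is this last equation. For $e_f \leqslant \sheq f h$, one has $e_f \leqslant \bar f \bar h$ (since $e_f \leqslant \bar f$ and $e_f$ is a summand of $\bar h = \bigvee_g e_g$) and computes $he_f = \bigvee_g ge_g e_f = \bigvee_g fe_g e_f = f \bar h e_f = fe_f$, where the middle equality uses $ge_g e_f = f\sheq f g \cdot e_g e_f = fe_g e_f$. For the reverse $\sheq f h \leqslant e_f$, I will first establish the general sheaf identity $\sheq a{\bigvee_g b_g} = \bigvee_g \sheq a{b_g}$ for any compatible family $\{b_g\}$ in a sheaf (by decomposing any witness $e$ as $\bigvee_g e\overline{b_g}$ and using that $b_g d = bd$ whenever $d \leqslant \overline{b_g}$). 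Applying this with $a = f$ and $b_g = ge_g$, and observing that any $e \leqslant \bar f \cdot e_g$ with $fe = ge_g e = ge$ satisfies $e \leqslant \sheq f g \cdot e_g$, gives $\sheq f{ge_g} \leqslant e_g \sheq f g \leqslant e_f$ by the second transferred condition, and taking the join over $g$ yields $\sheq f h \leqslant e_f$ as required.
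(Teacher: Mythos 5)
Your proof is correct, but it takes a genuinely different route from the paper's. The paper disposes of the converse direction in two sentences by recognising the situation as an instance of the sheaf--local homeomorphism correspondence of Section~\ref{sec:local-home-sheav}: since $P$ is hyperconnected, each hom-set $\A(i,j)$ is an $\O(Pi)$-sheaf (Example~\ref{ex:4}) whose sheaf equality coincides with $\fneq f g$ (Remark~\ref{rk:2}), so that $\sigma_{ij} \colon \Psi P_{ij} \rightarrow Pi$ is \emph{exactly} the glueing $\Delta(\A(i,j)) \rightarrow Pi$; full fidelity of $\eta_P$ is then precisely the statement of Proposition~\ref{prop:18}, that the unit of $\Delta \dashv \Gamma$ is invertible at sheaves. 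What you do instead is re-prove that fixpoint property by hand in the present setting: Lemma~\ref{lem:1} turns a section into a family $(\theta_f)$, hyperconnectedness lifts it to $(e_f)$, and your glued map $h = \bigvee_f fe_f$ is the same element the paper constructs inside the proof of Proposition~\ref{prop:18} (there written $a = \bigvee_b b \sheq s {s_b}$). Where the paper then concludes by citing that sheaf morphisms preserve joins (Lemma~\ref{lem:6}), you verify $\sheq f h = e_f$ directly, which forces you to establish the distributivity $\sheq a {\bigvee_g b_g} = \bigvee_g \sheq a {b_g}$ for compatible families --- a clean identity the paper never isolates, and your argument for it (decomposing witnesses as $\bigvee_g e\overline{b_g}$) is sound. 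The trade-off: the paper's argument is shorter and makes the conceptual point that $\Psi \dashv \Phi$ restricts hom-wise to $\Delta \dashv \Gamma$, so no new computation is needed; yours is self-contained at the level of this proposition, needing only Lemmas~\ref{lem:1}, \ref{lem:25}, \ref{lem:30} and Remark~\ref{rk:2}, at the cost of redoing in explicit coordinates the work that Section~\ref{sec:local-home-sheav} already packaged.
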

\begin{proof}
  If $\eta_P$ is invertible then $\A$ is in $\hcon$ since the
  isomorphic $\Phi\Psi P$ is so. Suppose conversely that $P$ is in
  $\hcon$; we must show that $\eta_P$ is invertible. Since it is
  already the identity on objects, we need only prove full fidelity.
  Looking at the action on maps of $\eta_P$, this means
  showing that each local section of
  $\sigma_{ij} \colon \Psi P_{ij} \rightarrow Pi$ is of the form $s_f$
  for a unique $f \in \A(i,j)$.

  As in Example~\ref{ex:4}, since $P$ is hyperconnected, the hom-set
  $\A(i,j)$ is a sheaf on $Pi \in \C$, and by Remark~\ref{rk:2}
  the equality $\sheq f g$ of this sheaf
  is the same as the $P$-valued equality $\fneq f g$. Thus, the local
  homemorphism $\sigma_{ij} \colon \Psi P_{ij} \rightarrow Pi$ is
  \emph{exactly} $\Delta(\A(i,j)) \rightarrow Pi$. So by
  Lemma~\ref{lem:6}, each partial section of $\sigma_{ij}$ is
  of the form $s_f$ for a unique $f \in \A(i,j)$ as desired.
\end{proof}

\begin{Prop}
  \label{prop:9}
  The counit
  $\varepsilon_\mathbb{A} \colon \Psi\Phi\mathbb{A} \rightarrow
  \mathbb{A}$ of the adjunction $\Psi \dashv \Phi$ is invertible at a
  partite internal category $\mathbb{A}$ if and only if $\mathbb{A}$
  is source-\'etale.
\end{Prop}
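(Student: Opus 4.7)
The ``only if'' direction is immediate: by construction (Definition~\ref{def:7}) the source maps of $\Psi P$ are glueings of local atlases, hence local homeomorphisms, so $\Psi P$ is source-\'etale for every $P \in \rcat$. Thus if $\varepsilon_\mathbb{A}$ is invertible, then $\mathbb{A} \cong \Psi \Phi \mathbb{A}$ is source-\'etale.

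For the ``if'' direction, I would first unpack the counit using the universal property of Theorem~\ref{thm:4} applied to the identity $G = 1_{\Phi \mathbb{A}}$ over $\pi_\mathbb{A}$ (with the 2-cell $\gamma$ the identity). This shows $\varepsilon_\mathbb{A}$ acts as the identity on components and on objects-of-objects, while its arrow component is the unique total map $(\varepsilon_\mathbb{A})_{ij} \colon (\Psi\Phi\mathbb{A})_{ij} \times_{A_i} A_i \rightarrow A_{ij}$ satisfying $(\varepsilon_\mathbb{A})_{ij} \circ \gamma_i^\ast(s_f) = f$ for every partial section $f \in \Gamma(\sigma_{ij})$; since $\gamma_i = 1_{A_i}$ the pullback along the identity lets us regard this as a map $(\Psi\Phi\mathbb{A})_{ij} \rightarrow A_{ij}$ with $(\varepsilon_\mathbb{A})_{ij} \circ s_f = f$.

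Next, I would identify $(\Psi\Phi\mathbb{A})_{ij}$ with $\Delta(\Gamma(\sigma_{ij}))$. By Proposition~\ref{prop:2}, $\pi_\mathbb{A}$ is hyperconnected, so by Example~\ref{ex:4} the hom-set $\Phi\mathbb{A}(i,j) = \Gamma(\sigma_{ij})$ carries its natural $\O(A_i)$-sheaf structure, and by Remark~\ref{rk:2} the $\pi_\mathbb{A}$-valued equality $\fneq f g$ appearing in the construction of $\Psi$ coincides with the sheaf equality $\sheq f g$ used in the construction of $\Delta$. Hence the source map of $(\Psi\Phi\mathbb{A})_{ij}$ and $p_{\Gamma(\sigma_{ij})} \colon \Delta(\Gamma(\sigma_{ij})) \rightarrow A_i$ are glueings of the \emph{same} local atlas on $A_i$, with the same basis $\{s_f\}_{f \in \Gamma(\sigma_{ij})}$ of partial sections. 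Under the canonical isomorphism they induce, $(\varepsilon_\mathbb{A})_{ij}$ is literally the counit component $\varepsilon_{\sigma_{ij}}$ of the $\Delta \dashv \Gamma$ adjunction, since both maps are characterized by $s_f \mapsto f$ via Lemma~\ref{lem:10}.

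When $\mathbb{A}$ is source-\'etale, each $\sigma_{ij}$ is a local homeomorphism, and Proposition~\ref{prop:19} then yields that each $\varepsilon_{\sigma_{ij}}$, hence each $(\varepsilon_\mathbb{A})_{ij}$, is invertible in $\C$. Since the component map and object-of-objects parts of $\varepsilon_\mathbb{A}$ are already identities, taking the componentwise inverses of the $(\varepsilon_\mathbb{A})_{ij}$'s yields the data of a partite cofunctor $\mathbb{A} \rightsquigarrow \Psi\Phi\mathbb{A}$ inverse to $\varepsilon_\mathbb{A}$; its target, identity and composition axioms are obtained by conjugating those of $\varepsilon_\mathbb{A}$ by the invertible $(\varepsilon_\mathbb{A})_{ij}$'s. \textbf{The main obstacle} is purely bookkeeping: carefully matching $(\Psi\Phi\mathbb{A})_{ij}$ with $\Delta(\Gamma(\sigma_{ij}))$ and $(\varepsilon_\mathbb{A})_{ij}$ with $\varepsilon_{\sigma_{ij}}$, so that the heavy lifting of Section~\ref{sec:local-home-sheav} can be reused hom-by-hom.
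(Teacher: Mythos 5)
Your proposal is correct and takes essentially the same approach as the paper's proof: both identify each source projection $(\Psi\Phi\mathbb{A})_{ij} \rightarrow A_i$ with $p_{\Gamma(\sigma_{ij})} \colon \Delta\Gamma(\sigma_{ij}) \rightarrow A_i$ (via hyperconnectedness of $\pi_\mathbb{A}$ and Remark~\ref{rk:2}), recognise the counit's action on arrows as the counit maps $\varepsilon_{\sigma_{ij}}$ of the adjunction $\Delta \dashv \Gamma$, and conclude by Proposition~\ref{prop:19}. The only cosmetic difference is that the paper deduces both directions at once from the ``if and only if'' in Proposition~\ref{prop:19}, whereas you treat the ``only if'' direction separately by observing that $\Psi P$ is always source-\'etale, and you spell out (what the paper calls ``clear'') that componentwise inverses of the arrow maps assemble into an inverse cofunctor.
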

\begin{proof}
  The partite category $\Psi\Phi\mathbb{A}$ has the same indexing
  set $I$ as $\mathbb{A}$ and the same objects of objects
  $(A_i : i \in I)$. Since $\pi_\mathbb{A} \colon \Phi\mathbb{A} \rightarrow \C$ is
  hyperconnected, the discussion of the preceding proposition entails
  that the source projections
  $\Psi\Phi\mathbb{A}_{ij} \rightarrow A_i$ of
  $\Psi\Phi\mathbb{A}$ are obtained as the glueing of the sheaf
  $\Phi \mathbb{A}(i,j)$ on $A_i \in \C$. By inspection, this is
  precisely the sheaf $\Gamma(\sigma_{ij})$ of sections of
  $\sigma_{ij} \colon A_{ij} \rightarrow A_i$, so that
  $\Psi\Phi\mathbb{A}_{ij} \rightarrow A_i$ is \emph{exactly}
  $p_{\Gamma(\sigma_{ij})} \colon \Delta\Gamma(\sigma_{ij})
  \rightarrow A_i$.

  In these terms, the counit $\varepsilon_\mathbb{A} \colon
  \Psi\Phi\mathbb{A} \rightarrow \mathbb{A}$ is the partite
  cofunctor which is the identity on components, the identity on
  objects, and has action on arrows $\Psi\Phi\mathbb{A}_{ij}
  \rightarrow A_{ij}$ given by the counit maps
  \begin{equation*}
    \cd[@!C@C-2em@-1em]{
      {\Delta\Gamma(\sigma_{ij})} \ar[rr]^-{\varepsilon_{\sigma_{ij}}}
      \ar[dr]_-{p_{\Gamma(\sigma_{ij})}} & &
      {A_{ij}} \ar[dl]^-{\sigma_{ij}} \\ &
      {A_{i}}
    }
  \end{equation*}
  of the adjunction $\Delta \dashv \Gamma$ of~\eqref{eq:25}. Clearly
  $\varepsilon_\mathbb{A}$ is invertible if and only if each
  $\varepsilon_{\sigma_{ij}}$ as displayed is so; which, by
  Proposition~\ref{prop:19}, happens just when each $\sigma_{ij}$ is a
  local homeomorphism, as desired.
\end{proof}
From this our main theorem immediately follows:
\begin{Thm}
\label{thm:2}
Restricting~\eqref{eq:7} to its fixpoints yields an
equivalence
  \begin{equation}\label{eq:33}
    \cd{
      {\parte} \ar@<-4.5pt>[r]_-{\Phi} \ar@{<-}@<4.5pt>[r]^-{\Psi} \ar@{}[r]|-{\sim} &
      {\hcon} 
    }
  \end{equation}
  between the category of source-\'etale partite internal categories
  in $\C$, and the category of join restriction categories
  hyperconnected over $\C$.
\end{Thm}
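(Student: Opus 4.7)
The plan is to recognise this theorem as a direct consequence of the abstract fact that every adjunction restricts to an equivalence between its fixpoint subcategories, once the fixpoints themselves have been identified. Concretely, for an adjunction $F \dashv G \colon \B \to \A$ with unit $\eta$ and counit $\varepsilon$, writing $\A_{\mathrm{fix}}$ for the full subcategory of objects $A$ with $\eta_A$ invertible, and $\B_{\mathrm{fix}}$ for the full subcategory of objects $B$ with $\varepsilon_B$ invertible, the triangle identities guarantee that $F$ restricts to $\A_{\mathrm{fix}} \to \B_{\mathrm{fix}}$ and $G$ to $\B_{\mathrm{fix}} \to \A_{\mathrm{fix}}$: indeed, if $\eta_A$ is invertible then so is $\Psi \eta_A$, and then $\varepsilon_{FA} \circ F\eta_A = 1_{FA}$ forces $\varepsilon_{FA}$ to be invertible, putting $FA$ in $\B_{\mathrm{fix}}$; symmetrically for $G$. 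On these restricted subcategories the (co)unit becomes a natural isomorphism, so the restricted adjunction is an equivalence.

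Applying this general principle to the adjunction $\Psi \dashv \Phi$ constructed in Theorem~\ref{thm:4}, I would invoke Proposition~\ref{prop:8} to identify the right fixpoints with the objects of $\hcon \hookrightarrow \rcat$, and Proposition~\ref{prop:9} to identify the left fixpoints with the objects of $\parte \hookrightarrow \partc$. The restricted adjunction is then exactly the equivalence~\eqref{eq:33}.

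No significant obstacle remains at this point: all the substantive work has been carried out in the construction of the adjunction (Theorem~\ref{thm:4}) and in the two fixpoint identifications (Propositions~\ref{prop:8} and~\ref{prop:9}, which rely crucially on the local-homeomorphism--sheaf correspondence of Theorem~\ref{thm:1}). The theorem itself is essentially a bookkeeping step, and its proof can be stated in a few lines.
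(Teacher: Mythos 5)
Your proposal is correct and follows exactly the paper's route: the paper also derives Theorem~\ref{thm:2} immediately from Propositions~\ref{prop:8} and~\ref{prop:9}, invoking the general fact (stated in Section~\ref{sec:fixpoints}) that any adjunction restricts to an equivalence between its left and right fixpoint subcategories. Your triangle-identity argument correctly fills in why that general fact holds, which the paper leaves implicit.
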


Since each $\Psi P$ is source-\'etale, hence a left fixpoint, we also
obtain:

\begin{Cor}
  \label{cor:1}
  The adjunction~\eqref{eq:7} is Galois. In particular,
  $\hcon$ is reflective in $\rcat$ via $\Phi \Psi$, while
  $\parte$ is coreflective in $\partc$ via $\Psi \Phi$.
\end{Cor}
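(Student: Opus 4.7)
The plan is to verify the Galois property by combining the fixpoint characterisations of Propositions~\ref{prop:8} and~\ref{prop:9} with what we already know about the images of $\Phi$ and $\Psi$, and then to invoke the general yoga of Galois adjunctions.

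First I would observe that by Proposition~\ref{prop:2}, the functor $\Phi \colon \partc \rightarrow \rcat$ factors through the full subcategory $\hcon$; thus for every partite internal category $\mathbb{A}$ in $\C$, the externalisation $\pi_\mathbb{A} \colon \Phi\mathbb{A} \rightarrow \C$ is a hyperconnected join restriction functor. Applying Proposition~\ref{prop:8} to $P = \pi_\mathbb{A}$ immediately shows that the unit component $\eta_{\Phi \mathbb{A}}$ is invertible, so every object in the image of $\Phi$ is a right fixpoint. Dually (and this is what the paragraph preceding the corollary flags), the source maps $\sigma_{ij}$ of $\Psi P$ are by Definition~\ref{def:7} obtained as glueings of the local atlas $\fneq f g$, hence are local homeomorphisms; so each $\Psi P$ is source-\'etale, and Proposition~\ref{prop:9} gives that $\varepsilon_{\Psi P}$ is invertible, i.e.\ every object in the image of $\Psi$ is a left fixpoint. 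Either statement alone suffices to conclude that the adjunction is Galois.

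It then remains to deduce the reflection/coreflection statements. For this I would appeal to the standard fact that in any Galois adjunction $F \dashv G \colon \B \rightarrow \A$, the full subcategory of right fixpoints is reflective in $\A$ with reflector $GF$ and unit~$\eta$: indeed, since $FA$ is always a left fixpoint, $GFA$ is a right fixpoint, and $\eta_A \colon A \rightarrow GFA$ has the universal property of the reflection because $\hom(GFA, B) \cong \hom(A, B)$ for any right fixpoint $B$ by adjointness together with the invertibility of $\varepsilon_{FB\phantom{A}\!\!\!\!}$ (after identifying $B$ with a right fixpoint via some argument)---more efficiently, one notes that the $(\eta, \varepsilon)$-triangle identities, together with $\varepsilon_{FA}$ being invertible, force $\eta_{GFA}$ to be invertible, so $GFA$ is a right fixpoint, and the universal property of the unit then gives the reflection. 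Dually, $FG$ exhibits $\parte$ as coreflective in $\partc$ via the counit~$\varepsilon$.

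There is no real obstacle here: all the content is already contained in Propositions~\ref{prop:2}, \ref{prop:8} and~\ref{prop:9}, and the remainder is formal category theory about Galois adjunctions. The only minor point worth double-checking is the identification $\Phi\Psi\eta = (\Phi\Psi \text{ applied to } \eta)$ and the fact that the reflector $\Phi\Psi$ really lands inside $\hcon$, but this is immediate since $\Phi$ factors through $\hcon$.
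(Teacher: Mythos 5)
Your proof is correct and takes essentially the same route as the paper: the paper simply observes that each $\Psi P$ is source-\'etale (Proposition~\ref{prop:4}), hence a left fixpoint by Proposition~\ref{prop:9}, and then invokes the general facts about Galois adjunctions recorded in Section~\ref{sec:fixpoints}, with the identification of the fixpoint subcategories coming from Propositions~\ref{prop:8} and~\ref{prop:9}. Your extra check that every $\Phi\mathbb{A}$ is a right fixpoint (via Propositions~\ref{prop:2} and~\ref{prop:8}) and your re-derivation of the standard reflection/coreflection argument from the triangle identities are redundant but harmless.
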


By analogy with Section~\ref{sec:fixpoints}, the reflector
$\Phi \Psi \colon \rcat \rightarrow \hcon$ can be seen as
``sheafification'', and much as there, we can describe it explicitly.

\begin{Prop}
\label{prop:26}
  For any $P \colon \A \rightarrow \C$ in $\rcat$, its reflection
  $\pi_{\Psi P} \colon \Phi \Psi P  \rightarrow \C$ into $\hcon$ can be
  described as follows.
  \begin{itemize}
  \item \textbf{Objects} of $\Phi\Psi P$ are those of $\A$;
  \item \textbf{Morphisms} $\theta \in \Phi\Psi P(i,j)$ are
    families $\bigl(\theta_{f} \in \O(Pi) : f \in \A(i,j)\bigr)$ such
    that
    \begin{equation}\label{eq:42}
      \theta_g \theta_f \leqslant \fneq f g \quad \text{and} \quad
      \theta_g \fneq f g \leqslant \theta_f
      \qquad \text{for all }f,g \in \A(i,j)\rlap{ ;}
    \end{equation}
  \item The \textbf{identity map} $1_{i} \in \Phi\Psi P(i,i)$ is given by the family
    $(1_i)_f = \fneq {1_i} f$;
  \item \textbf{Composition} of $\theta \in \Phi\Psi P(i,j)$ and
    $\psi \in \Phi\Psi P(j,k)$ is given by the family
    \begin{equation*}
      (\psi \circ \theta)_{h} = \bigvee_{\substack{f \in \A(i,j)\\ g \in \A(j,k)}} \fneq {gf}
      h \,\overline{\psi_g Pf} \,\theta_f\rlap{ ;}
    \end{equation*}
  \item \textbf{Restriction} of $\theta \in \Phi\Psi P(i,j)$ is the family
    $\overline{\theta}_f = \fneq {1_i} f \bigvee_g \theta_g$;
  \item The \textbf{projection} functor
    $\pi_{\Psi P} \colon \Phi\Psi P \rightarrow \C$ acts as $P$ does
    on objects, and on morphisms by sending $\theta \in \Phi\Psi P(i,j)$ to
    $\bigvee_{f \in \A(i,j)} Pf \circ \theta_f$;
  \item The \textbf{unit} functor $\eta_P \colon \A \rightarrow \Phi\Psi P$ is
    the identity on objects, and on morphisms sends $g \in \A(i,j)$ to
    $\bigl(\fneq f g : f \in \A(i,j)\bigr)$.
  \end{itemize}
\end{Prop}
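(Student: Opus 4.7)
The plan is to chase each piece of structure of $\Phi \Psi P$ through the bijection between partial sections and singletons families provided by Lemma~\ref{lem:1}. Since, by the very definition of $\Psi P$, the source map $\sigma_{ij} \colon \Psi P_{ij} \to Pi$ has canonical basis $\{s_f\}_{f \in \A(i,j)}$ with induced local atlas $\fneq f g$, Lemma~\ref{lem:1} identifies $\Phi\Psi P(i,j) = \Gamma(\sigma_{ij})$ with the set of families $(\theta_f)_{f \in \A(i,j)}$ satisfying~\eqref{eq:42}; the bijection sends a partial section $s$ to $(\sheq{s}{s_f})_f$, with inverse $(\theta_f)_f \mapsto \bigvee_f s_f \theta_f$. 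This gives the stated description of the hom-sets.

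Translating the identity, projection and unit maps is then direct, using the formula $\sheq{s_g}{s_f} = p_f s_g = \fneq g f$ from Lemma~\ref{lem:25}(ii): the identity $\eta_i = s_{1_i}$ corresponds to $(\fneq{1_i}{f})_f$, the unit $\eta_P(g) = s_g$ corresponds to $(\fneq g f)_f$, and $\pi_{\Psi P}(s) = \tau_{ij}\, s = \bigvee_f \tau_{ij}\,s_f\, \theta_f = \bigvee_f Pf \cdot \theta_f$ by~\eqref{eq:9} together with Lemma~\ref{lem:18}(ii). For the restriction formula: since $\theta_f \leqslant \fneq f f = \overline{s_f}$ by~\eqref{eq:42} one has $\overline{s_f \theta_f} = \theta_f$, so Lemma~\ref{lem:18}(i) gives $\overline{s} = \bigvee_f \theta_f$; hence $\widetilde{s} = \eta_i\,\overline{s} = s_{1_i}\cdot \bigvee_f \theta_f$ has $g$-component $\fneq{1_i}{g}\bigvee_f \theta_f$, as claimed.

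The principal obstacle is the composition formula. Writing $s = \bigvee_f s_f \theta_f$ and $t = \bigvee_g s_g \psi_g$, distributivity of $\ast$ over joins (established inside the proof of Proposition~\ref{prop:2} via the unicity clause of Lemma~\ref{lem:8}(iii) applied to $\tau_{ij}^\ast$) gives $t \ast s = \bigvee_{f,g}(s_g \psi_g) \ast (s_f \theta_f)$. The key identity is
\begin{equation*}
    (s_g \psi_g) \ast (s_f \theta_f) \ =\  s_{gf}\cdot \overline{\psi_g\, Pf}\,\theta_f\rlap{ ,}
\end{equation*}
which follows by applying restriction axiom (iv) with $g\mapsto \psi_g$ and $f\mapsto Pf$ to rewrite $\psi_g\,\tau_{ij}s_f = \psi_g\, Pf = Pf\cdot\overline{\psi_g Pf} = \tau_{ij}s_f\cdot\overline{\psi_g Pf}$, so that $\mu_{ijk}\,\tau_{ij}^\ast(s_g\psi_g)\circ (s_f\theta_f) = \mu_{ijk}\,\tau_{ij}^\ast(s_g)\circ s_f\cdot \overline{\psi_g Pf}\,\theta_f$, and then invoking $s_g \ast s_f = s_{gf}$ from~\eqref{eq:12}. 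Taking $h$-components via $p_h$ (using $p_h s_{gf} = \fneq{gf}{h}$) and joining over $f,g$ produces the displayed composition formula.
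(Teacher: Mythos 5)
Your proposal is correct and follows essentially the same route as the paper's proof: both identify $\Phi\Psi P(i,j)$ with singleton families via Lemma~\ref{lem:1} applied to the basis $\{s_f\}$ and atlas $\fneq f g$, and then translate the identity, composition, restriction, projection and unit through this bijection. The only (minor) divergence is in establishing the key identity $(s_g \psi_g) \ast (s_f \theta_f) = s_{gf}\,\overline{\psi_g\, Pf}\,\theta_f$: the paper observes the left side is $\leqslant s_g \ast s_f = s_{gf}$ and then compares restrictions by applying the restriction-preserving functor $\pi_{\Psi P}$, whereas you compute it directly from restriction axiom (iv) and the pullback description of $\tau_{ij}^\ast$; both arguments are valid.
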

\begin{proof}
  Objects of $\Phi \Psi P$ are clearly those of $\A$, while morphisms
  $i \rightarrow j$ are partial sections of
  $\sigma_{ij} \colon \Psi P_{ij} \rightarrow Pi$. This map has basis
  of local sections $\bigl(s_f : f \in \A(i,j)\bigr)$ and associated
  local atlas $\bigl(\fneq f g : f,g \in \A(i,j)\bigr)$; so by
  Lemma~\ref{lem:1}, its partial sections $s$ correspond to families
  $(\theta_f)$ as in~\eqref{eq:42} via the assignations
  \begin{equation*}
    s \mapsto \bigl(p_f s : f \in \A(i,j)\bigr) \qquad \text{and}
    \qquad
    \bigl(\theta_f : f \in \A(i,j)\bigr) \mapsto \textstyle\bigvee_f
    s_f \theta_f\rlap{ .}
  \end{equation*}

  Under this bijection, the identity section
  $s_{1_i} \in \Phi\Psi P(i,i)$ corresponds to the family with
  components $p_f s_{1_i} = \fneq {1_i} f$, as desired.

  For composition, if $\theta \in \Phi\Psi P(i,j)$ and
  $\psi \in \Phi\Psi P(j,k)$, then their composite is the family with
  component at $h \in \A(i,k)$ given by
  \begin{equation*}
    p_h \Bigl((\textstyle \bigvee_g s_g \psi_g) \ast (\bigvee_f
    s_f \theta_f)\Bigr) =
    \textstyle \bigvee_{f,g} p_h \bigl((s_g \psi_g) \ast (s_f \theta_f)\bigr)\rlap{ .} 
  \end{equation*}
  Now clearly
  $(s_g \psi_g) \ast (s_f \theta_f) \leqslant s_g \ast s_f = s_{gf}$,
  and its restriction satisfies
  \begin{align*}
    \pi_{\Psi P}\bigl(\overline{(s_g \psi_g) \ast (s_f \theta_f)}\bigr) =
    \overline{\pi_{\Psi P}(s_g \psi_g)\pi_{\Psi P}(s_f \theta_f)}
    =
    \overline{Pg \psi_g Pf \theta_f} = \overline{\psi_g
      Pf}\theta_f\end{align*}
  so that we have $(s_g \psi_g) \ast (s_f \theta_f) = s_{gf}\overline{\psi_g
      Pf}\theta_f$ and thus $(\psi \circ \theta)_h$ given by
  \begin{equation*}
    \textstyle \bigvee_{f,g} p_h \bigl((s_g
    \psi_g) \ast (s_f \theta_f)\bigr) = \textstyle \bigvee_{f,g} p_h
    s_{gf} \overline{\psi_g
      Pf}\theta_f = \textstyle \bigvee_{f,g} \fneq{gf} h \overline{\psi_g
      Pf}\theta_f
  \end{equation*}
  as required. Further, the restriction of
  $\theta \in \Phi\Psi P(i,j)$ is the family
  \begin{equation*}
    \overline{\theta}_f = \textstyle p_f \eta_i \overline{ \bigvee_g s_g \theta_g} = p_f
    s_{1_i} \bigvee_g \theta_g = \fneq {1_i} f \bigvee_g \theta_g\rlap{ .}
  \end{equation*}

  This proves that $\Phi \Psi P$ is as described. Next consider
  $\pi_{\Psi P} \colon \Phi \Psi P \rightarrow \C$. This acts as $P$
  does on objects, while on maps it sends 
  $\theta \in \Phi \Psi P(i,j)$, corresponding to the partial section
  $\bigvee_f s_f \theta_f$, to $\tau_{ij}\bigvee_f s_f \theta_f =
  \bigvee_f Pf \circ \theta_f$ as desired.

  Finally, consider $\eta_P \colon \A \rightarrow \Phi \Psi P$; this
  is the identity on objects, and sends $g \in \A(i,j)$ to the partial
  section $s_g$, corresponding to the family of idempotents $\bigl(p_f s_g : f\in
  \A(i,j)\bigr) = \bigl(\fneq f g : f\in
  \A(i,j)\bigr)$ as claimed.
\end{proof}

\section{The groupoid case}
\label{sec:groupoid-case}
As explained in the introduction, our main result generalises along
four different axes the correspondence between \'etale topological
groupoids and pseudogroups. We now begin to examine the effect of
rolling back these generalisations. Once again, $\C$ will be any join
restriction category with local glueings.

It is trivial to see that~\eqref{eq:33} restricts back to an
equivalence between source-\'etale ($1$-partite) internal categories
in $\C$ and join restriction monoids hyperconnected over $\C$. More
interesting are the adaptations required for the groupoidal case; the
goal of this section is to describe these.

\subsection{Groupoids and \'etale join restriction categories}
\label{sec:groupoids-etale-join}
To one side of our restricted adjunction and equivalence will be the
partite internal groupoids:
\begin{Defn}
  \label{def:31}
  An \emph{$I$-partite internal groupoid} in the join restriction
  category $\C$ is an $I$-partite internal category $\mathbb{A}$
  endowed with total maps
  $\iota_{ij} \colon A_{ij} \rightarrow A_{ji}$ which are involutions
  in the sense that $\iota_{ij}\iota_{ji} = 1$, are compatible with source and
  target in the sense that $\sigma_{ji} \iota_{ij} = \tau_{ij}$ (and
  so also $\tau_{ji}\iota_{ij} = \sigma_{ij}$), and which render
  commutative each square to the left in:
  \begin{equation*}
    \cd{
      A_{ij} \ar[d]_-{\sigma_{ij}} \ar[r]^-{(\iota_{ij}, 1)} & A_{ji}
      \times_{A_j} A_{ij} \ar[d]^-{\mu_{iji}} &
            A_{ij} \ar[d]_-{\tau_{ij}} \ar[r]^-{(1,\iota_{ij})} & A_{ij}
      \times_{A_i} A_{ji} \ar[d]^-{\mu_{jij}} \\
      A_i \ar[r]^-{\eta_i} & A_{ii} &
      A_j \ar[r]^-{\eta_j} & A_{jj} 
    }
  \end{equation*}
  (and so also each square to the right). The usual argument adapts to
  show that the inverse maps $\iota_{ij}$ are unique, if they exist;
  and this justifies us in writing $\mathrm{p}\cat{Gpd}_c(\C)$ and
  $\mathrm{pe}\cat{Gpd}_c(\C)$ for the full subcategories of $\partc$
  and $\parte$ on the partite internal groupoids.
\end{Defn}

\begin{Rk}
\label{rk:7}
  Since a source-\'etale partite internal groupoid satisfies
  $\sigma_{ij}\iota_{ij} = \tau_{ij}$, its target maps, as well as its
  source maps, are local homeomorphisms; and in fact, the groupoid
  axioms together with Lemma~\ref{lem:34} ensure that \emph{all} of
  the groupoid structure maps are local homeomorphisms. Thus, we may
  say ``\'etale'' rather than ``source-\'etale'' when dealing with
  partite internal groupoids.
\end{Rk}

The entities to the other side of the equivalence can described in two
different ways: either as the join inverse categories hyperconnected
over $\C$, or as the \'etale join restriction categories
hyperconnected over $\C$ (recall from Definition~\ref{def:24} that a
join restriction category is \emph{\'etale} if every map therein is a join of
partial isomorphisms). While the two formulations are equivalent by
Corollary~\ref{cor:6}, each has its own advantages, and so we will
discuss both. We first consider the one involving \'etale join
restriction categories; this has the advantage of being a genuine
special case of~\eqref{eq:33}.
\begin{Thm}
\label{thm:3}
The equivalence~\eqref{eq:33} restricts back to an equivalence
  \begin{equation}\label{eq:19}
    \cd{
      {\mathrm{pe}\cat{Gpd}_c(\C)} \ar@<-4.5pt>[r]_-{\Phi} \ar@{<-}@<4.5pt>[r]^-{\Psi} \ar@{}[r]|-{\sim} &
      {\hcon[ej]}
    }
  \end{equation}
  between the category of \'etale partite internal groupoids in
  $\C$ and the category of \'etale join restriction categories
  hyperconnected over $\C$.
\end{Thm}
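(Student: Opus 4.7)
The strategy is to verify that the equivalence~\eqref{eq:33} of Theorem~\ref{thm:2} restricts back to~\eqref{eq:19}. It suffices to check that $\Phi$ and $\Psi$ exchange the two sub-classes; that is: (i) if $\mathbb{A} \in \mathrm{pe}\cat{Gpd}_c(\C)$, then $\Phi\mathbb{A}$ is an \'etale join restriction category; and (ii) if $P \colon \A \rightarrow \C$ is hyperconnected with $\A$ \'etale, then $\Psi P$ admits the structure of a partite internal groupoid.

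For (i), by Remark~\ref{rk:7} the target map $\tau_{ij}$ is also a local homeomorphism. Given any partial section $s \in \Phi\mathbb{A}(i,j)$, pick a basis $\{t_\ell\}$ of partial sections for $\tau_{ij}$ with partial inverses $\{q_\ell\}$. Since each $\overline{t_\ell}$ is a restriction idempotent on $A_{ij}$ and $\bigvee_\ell \overline{t_\ell} = 1_{A_{ij}}$, we may decompose $s = \bigvee_\ell \overline{t_\ell} s$, and each summand is a \emph{bisection}: $\tau_{ij}(\overline{t_\ell}s) = \overline{t_\ell} \cdot q_\ell s$ is a partial isomorphism in $\C$, being a restriction of the composite of partial isomorphisms $q_\ell$ and $s$ (both being partial sections of local homeomorphisms, by Lemma~\ref{lem:25}(iv)). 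Thus it suffices to show every bisection is a partial isomorphism in $\Phi(\mathbb{A})$. Given such $s$, set $t := (\tau_{ij}s)^*$ and define $s^\dag := \iota_{ij} \circ s \circ t \colon A_j \rightarrow A_{ji}$. Using the identities $\bar s = t \tau_{ij}s$ and $\bar t = \tau_{ij}st$, one finds $\sigma_{ji} s^\dag = \bar t = \overline{s^\dag}$, so $s^\dag \in \Phi\mathbb{A}(j,i)$. The groupoid axioms $\mu_{iji}(\iota_{ij}, 1) = \eta_i \sigma_{ij}$ and $\mu_{jij}(1, \iota_{ij}) = \eta_j \tau_{ij}$ then yield, by direct computation in $\mathbb{A}$, that $s^\dag \ast s = \eta_i \bar s = \tilde s$ and $s \ast s^\dag = \eta_j \bar t = \widetilde{s^\dag}$.

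For (ii), since $P$ is hyperconnected and $\A$ is \'etale, the unit $\eta_P \colon \A \rightarrow \Phi\Psi P$ is an isomorphism (Proposition~\ref{prop:8}) and is join-preserving by Lemma~\ref{lem:33}; hence for any $g \in \A(i,j)$, written as $g = \bigvee_k g_k$ with each $g_k \in \mathrm{PIso}(\A)(i,j)$, we obtain $s_g = \bigvee_k s_{g_k}$. Therefore $\{s_f : f \in \mathrm{PIso}(\A)(i,j)\}$ is a basis for $\sigma_{ij}$ in the sense of Lemma~\ref{lem:27}. We apply Lemma~\ref{lem:10} to construct $\iota_{ij} \colon \Psi P_{ij} \rightarrow \Psi P_{ji}$ by prescribing its value on this basis as $h_f := s_{f^*} \circ Pf$. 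The totality condition $\overline{h_f} = \fneq f f = P\bar f$ is a direct calculation using $\overline{s_{f^*}} = P(ff^*)$ and $ff^*f = f$. The matching condition $h_g \cdot \fneq f g \leqslant h_f$ reduces, by distributivity of joins, to showing $s_{g^*} \cdot Pg \cdot Pe \leqslant s_{f^*} \cdot Pf$ whenever $e \leqslant \bar f \bar g$ with $fe = ge$; this follows using $Pg \cdot Pe = P(ge) = P(fe)$ together with the identity $g^*fe = e$ to compute both sides in terms of the basis of $\Psi P_{ji}$. The involution $\iota_{ji}\iota_{ij} = 1$ follows from $(f^*)^* = f$, and the source--target compatibility $\sigma_{ji}\iota_{ij} = \tau_{ij}$ together with the two inverse squares of Definition~\ref{def:31} all reduce, via Lemma~\ref{lem:10}, to checking identities on the basis $\{s_f\}$, which in turn unpack to the equations $f^*f = \bar f$ and $ff^* = \overline{f^*}$ holding in $\A$.

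The main obstacle is the coherence argument in step (ii): ensuring that the locally-prescribed values $h_f = s_{f^*} Pf$ genuinely satisfy the hypotheses of Lemma~\ref{lem:10} uniformly in $f \in \mathrm{PIso}(\A)(i,j)$. Once the key compatibility identity $s_{g^*}\cdot P(fe) = s_{f^*}\cdot P(fe)$ (for $fe = ge$ and $e \leqslant \bar f \bar g$) is established via the basis description of $\Psi P_{ji}$, the remaining verifications that $\iota$ satisfies the groupoid axioms are a routine combination of Lemma~\ref{lem:10} with equational manipulation in $\A$.
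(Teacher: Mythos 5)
Your proposal is correct and takes essentially the same approach as the paper: your inversion formula $s^\dag = \iota_{ij}\circ s\circ(\tau_{ij}s)^\ast$ for bisections is precisely the construction in the paper's Lemma~\ref{lem:35} (which the paper packages as ``$\pi_{\mathbb{A}}$ reflects partial isomorphisms and \'etale maps'', decomposing a general section via hyperconnectedness rather than via a basis for $\tau_{ij}$), and your definition of $\iota_{ij}$ on the basis $\{s_f : f \text{ a partial isomorphism}\}$ by $s_f \mapsto s_{f^\ast}\circ Pf$ is literally the paper's, with your element-wise unfolding of $\fneq f g$ doing the work of the paper's Lemma~\ref{lem:36}. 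The only flaw is notational: in (i), $\overline{t_\ell}$ is an idempotent on $A_j$, not on $A_{ij}$, so your decomposition of $s$ must instead use $\overline{t_\ell^{\,\ast}} = t_\ell q_\ell \in \O(A_{ij})$, for which indeed $\bigvee_\ell \overline{t_\ell^{\,\ast}} = \overline{\tau_{ij}} = 1_{A_{ij}}$ and $\tau_{ij}(\overline{t_\ell^{\,\ast}}\,s) = q_\ell s$; with that substitution your argument goes through unchanged.
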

Before proving this, we need a couple of preliminary lemmas. The first
is a purely technical result about equality of partial isomorphisms;
the second is rightly a part of the theorem, but we separate it out
for later re-use.
\begin{Lemma}
  \label{lem:36}
  If $f, g \colon A \rightarrow B$ are partial isomorphisms in a join
  restriction category then $f\sheq f g = 
  \sheq {f^\ast}{g^\ast} f$.
\end{Lemma}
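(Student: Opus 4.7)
I will deduce the identity from a duality between the downsets $\mathord\downarrow f \subseteq \C(A,B)$ and $\mathord\downarrow f^\ast \subseteq \C(B,A)$ induced by the partial-inverse operation.

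\textbf{Step 1 (the duality).} I claim that, for any partial isomorphism $f \colon A \to B$, the operation $(\thg)^\ast$ restricts to a bijection $\mathord\downarrow f \to \mathord\downarrow f^\ast$. If $h \leqslant f$, then $h = f\bar{h}$ is a composite of a partial isomorphism with an idempotent, hence a partial isomorphism by Lemma~\ref{lem:9}(iii),(iv); its partial inverse is $h^\ast = \bar{h}f^\ast$, and restriction axiom~(iv) applied to the idempotent $\bar{h}$ gives $\bar{h}f^\ast = f^\ast\overline{\bar{h}f^\ast}$, showing $h^\ast \leqslant f^\ast$. The converse map $k \mapsto k^\ast$ from $\mathord\downarrow f^\ast$ to $\mathord\downarrow f$ is symmetric, and the two are mutually inverse by Lemma~\ref{lem:9}(i).

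\textbf{Step 2 (inverses preserve meets).} Intersecting the duality for $f$ with the duality for $g$, I obtain a bijection between lower bounds of $\{f,g\}$ in $\C(A,B)$ and lower bounds of $\{f^\ast, g^\ast\}$ in $\C(B,A)$; since both bijections are order-preserving (by the same axiom~(iv) calculation as above), this carries greatest lower bounds to greatest lower bounds. By Lemma~\ref{lem:30}(iv) both meets exist and are given explicitly by $f\sheq{f}{g}$ and $f^\ast\sheq{f^\ast}{g^\ast}$; computing the partial inverse of the first via Lemma~\ref{lem:9}(iii), with $\sheq{f}{g}$ self-inverse as an idempotent, yields the intermediate identity
\[\sheq{f}{g}\cdot f^\ast = f^\ast\sheq{f^\ast}{g^\ast}\text{ .}\]

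\textbf{Step 3 (conclusion).} Both sides of this equation are partial isomorphisms (as composites of partial isomorphisms with idempotents), and taking partial inverses of each side via Lemma~\ref{lem:9}(iii) produces the desired identity $f\sheq{f}{g} = \sheq{f^\ast}{g^\ast}f$. The only substantive content of the proof lies in Step~1, which is a short calculation using axiom~(iv); everything else follows formally from the general fact that an order-isomorphism between posets with meets must preserve those meets.
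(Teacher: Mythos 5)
Your proof is correct and follows essentially the same route as the paper's: both arguments reduce the identity to the fact that each side is the meet $f \wedge g$ supplied by Lemma~\ref{lem:30}(iv), using that partial inversion respects the natural partial order. The paper verifies directly that $\sheq{f^\ast}{g^\ast}f$ satisfies the universal property of the meet of $f$ and $g$, whereas you package the same computations as an order-isomorphism $\mathord\downarrow f \cap \mathord\downarrow g \cong \mathord\downarrow f^\ast \cap \mathord\downarrow g^\ast$ and then transpose via $(st)^\ast = t^\ast s^\ast$ --- a difference of organisation rather than of substance.
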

\begin{proof}
  Recall from Lemma~\ref{lem:30} that $f \sheq f g$ is the meet
  $f \wedge g$ in $\C(A,B)$, so it suffices to prove
  $\sheq {f^\ast}{g^\ast} f$ is too. Clearly it is below $f$, while
  from $g^\ast \sheq {f^\ast} {g^\ast} = f^\ast \sheq{f^\ast}{g^\ast}$
  we conclude by taking partial inverses that
  $\sheq {f^\ast} {g^\ast} f = \sheq {f^\ast} {g^\ast} g \leqslant g$.

  Suppose now that $h \in \C(A,B)$ is below $f$ and $g$. Such an $h$
  is itself a partial isomorphism with $h^\ast = \overline{h}f^\ast =
  \overline{h}g^\ast$. It follows that $h^\ast \leqslant f^\ast \wedge
  g^\ast$, i.e., $\overline{h^\ast} \leqslant \sheq {f^\ast}
  {g^\ast}$, and so that $h = h \overline{f} = hf^\ast f = h
  \overline{h}f^\ast f = hh^\ast f = \overline{h^\ast}f \leqslant
  \sheq{f^\ast}{g^\ast}f$ as desired.
\end{proof}

\begin{Lemma}
  \label{lem:35}
  If $\mathbb{A}$ is a partite internal groupoid in $\C$, then
  $\pi_\mathbb{A} \colon \Phi \mathbb{A} \rightarrow \C$ reflects
  partial isomorphisms and \'etale maps.
\end{Lemma}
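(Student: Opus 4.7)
The plan is to reduce both statements to explicit constructions on partial sections, exploiting the inverse maps $\iota_{ij}$ of the partite groupoid. Throughout I must carefully distinguish between restriction in $\C$, written $\overline{(\thg)}$, and restriction in $\Phi\mathbb{A}$, written $\widetilde{(\thg)}$, where by Definition~\ref{def:5} one has $\widetilde{s} = \eta_i \overline{s}$ for $s \in \Phi\mathbb{A}(i,j)$.

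For the partial-isomorphism case, given $s \in \Phi\mathbb{A}(i,j)$ such that $\pi_\mathbb{A}(s) = \tau_{ij}s$ admits a partial inverse $u \colon A_j \rightarrow A_i$ in $\C$, I would propose the candidate $s^{\ast} \defeq \iota_{ij} \circ s \circ u \colon A_j \rightarrow A_{ji}$. Using $\sigma_{ji}\iota_{ij} = \tau_{ij}$ together with $u\tau_{ij}s = \overline{\tau_{ij}s} = \overline{s}$ (by totality of $\tau_{ij}$), one checks $\sigma_{ji}s^{\ast} = \overline{u} = \overline{s^{\ast}}$, so $s^{\ast}$ is a bona fide partial section and hence a morphism of $\Phi\mathbb{A}(j,i)$. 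To verify it is a partial inverse of $s$ in $\Phi\mathbb{A}$, I would unfold $s^{\ast} \ast s$ as in Definition~\ref{def:5}: by Lemma~\ref{lem:8}(iii), $\tau_{ij}^{\ast}(s^{\ast}) \circ s = (s^{\ast}\tau_{ij}s, s\,\overline{s^{\ast}\tau_{ij}s})$, which collapses to $(\iota_{ij}s, s)$ since $s^{\ast}\tau_{ij}s = \iota_{ij}s \,\overline{s} = \iota_{ij}s$. The groupoid axiom $\mu_{iji}(\iota_{ij},1) = \eta_i\sigma_{ij}$ then yields $s^{\ast} \ast s = \eta_i\overline{s} = \widetilde{s}$. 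The dual computation for $s \ast s^{\ast}$ uses the companion axiom $\mu_{jij}(1,\iota_{ij}) = \eta_j\tau_{ij}$; after the analogous unpacking one arrives at $s \ast s^{\ast} = \eta_j\overline{u} = \widetilde{s^{\ast}}$, where $\overline{s^{\ast}} = \overline{\tau_{ij}su} = \overline{u}$ again by totality of $\tau_{ij}$ (and of $\iota_{ij}$).

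For the \'etale case, suppose $\pi_\mathbb{A}(s) = \bigvee_k p_k$ is a join of partial isomorphisms in $\C$. Setting $s_k \defeq s \cdot \overline{p_k}$, formula~\eqref{eq:44} identifies $s_k$ with the composite $s \ast \eta_i\overline{p_k}$ in $\Phi\mathbb{A}$, so it is a morphism of $\Phi\mathbb{A}(i,j)$. A short compatibility argument using $p_j\overline{p_k} = p_k\overline{p_j} \leqslant p_k$ gives $\pi_\mathbb{A}(s_k) = (\bigvee_j p_j)\overline{p_k} = p_k$, so the already-proven first part produces partial inverses for each $s_k$ in $\Phi\mathbb{A}$. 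Since joins in $\Phi\mathbb{A}(i,j)$ are computed pointwise as in $\Gamma(\sigma_{ij})$, and $\bigvee_k \overline{p_k} = \overline{\bigvee_k p_k} = \overline{s}$, we obtain $\bigvee_k s_k = s\overline{s} = s$, exhibiting $s$ as a join of partial isomorphisms and so as an \'etale map of $\Phi\mathbb{A}$. The main obstacle will be the bookkeeping in the partial-isomorphism case: keeping the two layers of restriction straight, and correctly tracking the pullback components $\tau_{ij}^{\ast}(s^{\ast})$ and $\tau_{ji}^{\ast}(s)$ so that each computation terminates in a pair to which the corresponding groupoid inverse axiom can be applied.
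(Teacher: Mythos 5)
Your proposal is correct and follows essentially the same route as the paper's proof: the candidate partial inverse $\iota_{ij}\circ s\circ(\tau_{ij}s)^\ast$ is exactly the paper's, verified via the same unit-law squares $\mu_{iji}(\iota_{ij},1)=\eta_i\sigma_{ij}$ and $\mu_{jij}(1,\iota_{ij})=\eta_j\tau_{ij}$ after pulling back along $\tau$. Your \'etale argument is also the paper's in disguise, since your sections $s\overline{p_k}$ coincide with the paper's $f e_k$ (the $e_k=\eta_i\overline{p_k}$ being the restriction idempotents of $\Phi\mathbb{A}$ supplied by hyperconnectedness).
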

\begin{proof}
  Let $f \in \Phi \mathbb{A}(i,j)$ be such that
  $\pi_\mathbb{A}(f) = \tau_{ij}f \colon A_i \rightarrow A_j$ is a
  partial isomorphism in $\C$. Consider the composite
  \begin{equation*}
    g \defeq A_j \xrightarrow{(\tau_{ij}f)^\ast} X_i \xrightarrow{f} A_{ij}
    \xrightarrow{\iota_{ij}} A_{ji}
  \end{equation*}
  in $\C$. Note that
  $\sigma_{ji} g = \sigma_{ji} \iota_{ij} f(\tau_{ij}f)^\ast =
  \tau_{ij}f(\tau_{ij}f)^\ast = \overline{(\tau_{ij}f)^\ast}\leqslant
  1$ so that $g$ is a partial section of $\sigma_{ji}$ with
  $\overline{g} = \overline{(\tau_{ij}f)^\ast}$. We claim that
  $g \in \Phi \mathbb{A}(j,i)$ is the desired partial inverse of
  $f \in \Phi\mathbb{A}(i,j)$. First we calculate that
  \begin{align*}
    g\tau_{ij}f &= \iota_{ij} f(\tau_{ij}f)^\ast\tau_{ij}f = \iota_{ij} f
    \overline{\tau_{ij}f} = \iota_{ij} f
    \overline{f} = \iota_{ij} f\\
    \text{and } f \tau_{ji}g &= f \tau_{ji} \iota_{ij} f(\tau_{ij}f)^\ast = f
    \sigma_{ij} f(\tau_{ij}f)^\ast = f \overline{f} (\tau_{ij}f)^\ast = f(\tau_{ij}f)^\ast
  \end{align*}
  with respective restrictions
  \begin{align*}
    \overline{g\tau_{ij}f} = \overline{\iota_{ij}f} = \overline{f} \quad 
    \text{and}\quad \overline{f\tau_{ji}g} = \overline{f(\tau_{ij}f)^\ast}
    = \overline{\tau_{ij}f(\tau_{ij}f)^\ast} =
    \overline{(\tau_{ij}f)^\ast} = \overline{g}
  \end{align*}
  from which we obtain the desired equalities
  \begin{align*}
    f \ast g &= \mu_{jij}\tau_{ji}^\ast(f)g = \mu_{jij}(f\tau_{ji}g,
    g\overline{f\tau_{ji}g}) = \mu_{jij}(f(\tau_{ij}f)^\ast,
    g) \\ &= \mu_{jij}(1, \iota_{ij})f(\tau_{ij}f)^\ast = \eta_j
    \tau_{ij}f(\tau_{ij}f)^\ast = \eta_j \overline{g} = \tilde g\\
    \text{and } g \ast f &= \mu_{iji}\tau_{ij}^\ast(g)f = \mu_{iji}(g\tau_{ij}f,
    f\overline{g\tau_{ij}f}) = \mu_{iji}(\iota_{ij}f, f) \\&=
    \mu_{iji}(\iota_{ij},1)f = \eta_i \sigma_{ij}f = \eta_i \overline{f}
    = \tilde f\text{ .}
  \end{align*}

  So $\pi_\mathbb{A}$ reflects partial isomorphisms as claimed. Since it
  is also hyperconnected, it follows that it reflects \'etale maps.
  Indeed, if $\pi_\mathbb{A}(f \colon i \rightarrow j)$ is a join of
  partial isomorphisms $\bigvee_k p_k$, then taking $e_k \in \O(i)$ to
  be unique such that $\pi_\mathbb{A}(e_k) = \overline{p_k}$, we have
  $\pi_\mathbb{A}(fe_k) = p_k$ for each $i$. Since $\pi_\mathbb{A}$
  reflects partial isomorphisms, each $fe_k$ is a partial isomorphism,
  and so $f = \bigvee_k fe_k$ is \'etale.
\end{proof}

We are now ready to give:
\begin{proof}[Proof of Theorem~\ref{thm:3}]
  Suppose first that $\mathbb{A}$ is an \'etale partite internal
  groupoid in $\C$; we will show that every map in $\Phi \mathbb{A}$
  is \'etale. Since each $\iota_{ij}$ is invertible and each
  $\sigma_{ij}$ is a local homeomorphism, each
  $\tau_{ij} = \sigma_{ji}\iota_{ij}$ is a local homeomorphism. Now
  any $s \in \Phi\mathbb{A}(i,j)$ is a partial section of
  $\sigma_{ij}$, and hence a partial isomorphism, whence for each
  $s \in \Phi\mathbb{A}(i,j)$, the map
  $\pi_{\mathbb{A}}(s) = \tau_{ij}s$ is \'etale. Since by
  Lemma~\ref{lem:35}, $\pi_{\mathbb{A}}$ reflects \'etale maps, $s$ is
  itself \'etale, as desired.

  Suppose conversely that $P \colon \A \rightarrow \C$ is an \'etale
  join restriction category hyperconnected over $\C$; we show that
  $\Psi P$ is a partite internal groupoid. As in
  Proposition~\ref{prop:8}, each partial section of the source map
  $\sigma_{ij} \colon \Psi P_{ij} \rightarrow Pi$ is of the form $s_f$
  for some $f \in \A(i,j)$; but since $\A$ is \'etale, we can write
  each such $s_f$ as $\bigvee_i s_{f_i}$ where each $f_i$ is a partial
  isomorphism. Thus by Lemma~\ref{lem:27} the family of sections
  $\{s_f : f \in \A(i,j)\text{ a partial isomorphism}\}$ is a basis
  for $\sigma_{ij}$. We can thus determine $\iota_{ij}$ by asking it
  to be the unique total map $\Psi P_{ij} \rightarrow \Psi P_{ji}$
  such that
  \begin{equation*}
    \iota_{ij} \circ s_f = s_{f^\ast} \circ f \qquad \text{for all $f
      \in \A(i,j)$ a partial isomorphism.}
  \end{equation*}
  For this to be well-defined, we must check compatibility of the
  family of maps $s_{f^\ast} f \colon Pi \rightarrow \Psi P_{ji}$ with
  the associated local atlas for $\sigma_{ij}$; by
  Proposition~\ref{prop:8} again, this atlas is given by
  $\varphi_{fg} = \sheq f g$, and so we have
  \begin{align*}
    s_{g^\ast}g \sheq f g = s_{g^\ast}f \sheq f g &= s_{g^\ast} \sheq
    {f^\ast} {g^\ast} f = s_{g^\ast \sheq {f^\ast} {g^\ast}} f \leqslant s_{f^\ast} f \\
    \text{and } \overline{s_{f^\ast} f} &= \overline{\overline{s_{f^\ast}} f} =
    \overline{\overline{f^\ast}f} = \overline{f} = \sheq f f
  \end{align*}
  as required. So $\iota_{ij}$ is well-defined; it remains to check the
  groupoid axioms.

  First, 
  $\iota_{ji}\iota_{ij} s_f = \iota_{ji}s_{f^\ast}f = s_{f}f^\ast f =
  s_f \overline{f} = s_f$ for each partial isomorphism $f$, so that
  $\iota_{ji} \iota_{ij} = 1$ by Lemma~\ref{lem:27}. Similarly, we
  have that
  $\sigma_{ij}\iota_{ij}s_f = \sigma_{ij}s_{f^\ast}f =
  \overline{f^\ast}f = ff^\ast f = f = \tau_{ij}s_f$, so that
  $\sigma_{ij}\iota_{ij} = \tau_{ij}$. Finally, we have
  \begin{align*}
    \mu_{iji}(\iota_{ij},1)s_f &= \mu_{iji}(s_{f^\ast}f,s_f\overline{f}) =
    \mu_{iji}(s_{f^\ast}\tau_{ij}s_f,s_f\overline{s_{f^\ast}\tau_{ij}s_f})
    =
    \mu_{iji}(s_{f^\ast}\tau_{ij}, \overline{s_{f^\ast}\tau_{ij}})s_f
    \\&= \mu_{iji}\tau_{ij}^\ast(s_{f^\ast})s_f = s_{f^\ast} \ast s_f =
    s_{f^\ast f} = s_{\overline{f}} = \eta_i \overline{f} = \eta_i \sigma_{ij} s_f
  \end{align*}
  for each partial isomorphism $f$, so that
  $\mu_{iji}(\iota_{ij},1) = \eta_i \sigma_{ij}$, as required.
\end{proof}


\subsection{Groupoids and join inverse categories}
\label{sec:group-join-inverse}
We now turn to the formulation of the equivalence~\eqref{eq:19}
involving inverse categories. This account has two advantages: it
matches up more closely with the classical \'etale
groupoid--pseudogroup correspondence, and it allows us to give a
groupoid version not only of the equivalence~\eqref{eq:33}, but also
of the adjunction~\eqref{eq:7}. The disadvantage is that we have to
alter the adjunction slightly.

\begin{Defn}
  \label{def:36}
  Let $\mathbb{A}$ be a partite internal groupoid in $\C$. A 
  \emph{partial bisection} of the source map
  $\sigma_{ij} \colon A_{ij} \rightarrow A_i$ is a partial section
  $s \colon A_i \rightarrow A_{ij}$ for which the composite
  $\tau_{ij}s$ is a partial isomorphism. We write $\Phi_g \mathbb{A}$
  for the subcategory of $\Phi \mathbb{A}$ with the same objects, but
  only those maps $s \in \Phi\mathbb{A}(i,j)$ which are partial bisections,
  and reuse the notation
  $\pi_\mathbb{A} \colon \Phi_g \mathbb{A} \rightarrow \C$ for the
  functor sending $s$ to $\tau_{ij}s$.
\end{Defn}

\begin{Prop}
  \label{prop:6}
  The assignation
  $\mathbb{A} \mapsto (\pi_\mathbb{A} \colon \Phi_g \mathbb{A}
  \rightarrow \C)$ is the action on objects of a well-defined functor
  $\Phi_g \colon \mathrm{p}\cat{Gpd}_c(\C) \rightarrow \rcat[i]$ taking
  values in $\hcon[ji]$.
\end{Prop}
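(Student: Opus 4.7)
The key claim driving the argument is that $\Phi_g \mathbb{A}$ coincides with $\mathrm{PIso}(\Phi \mathbb{A})$, the subcategory of partial isomorphisms of the join restriction category $\Phi \mathbb{A}$. One direction is immediate: a restriction functor sends partial isomorphisms to partial isomorphisms, so $\pi_\mathbb{A}$ sends any partial isomorphism $s \in \Phi\mathbb{A}(i,j)$ to a partial isomorphism $\tau_{ij}s$ of $\C$, making $s$ a partial bisection. The converse is precisely the content of Lemma~\ref{lem:35}, which depends crucially on $\mathbb{A}$ being a partite internal groupoid.

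Given this identification, the bulk of the proposition follows from the corresponding facts about $\Phi \mathbb{A}$ established in Proposition~\ref{prop:2}. Since $\Phi\mathbb{A}$ is a join restriction category, Proposition~\ref{prop:16} yields that $\Phi_g \mathbb{A} = \mathrm{PIso}(\Phi \mathbb{A})$ is a join inverse category, with joins given by those bicompatible joins of partial isomorphisms already present in $\Phi \mathbb{A}$. Since every restriction idempotent of $\Phi \mathbb{A}$ is itself a partial isomorphism and thus lies in $\Phi_g \mathbb{A}$, the restriction functor $\pi_\mathbb{A} \colon \Phi \mathbb{A} \to \C$ restricts to a restriction functor $\pi_\mathbb{A} \colon \Phi_g \mathbb{A} \to \C$, and this inherits hyperconnectedness directly since the $\O$-lattices at each object are unchanged by passage to $\Phi_g \mathbb{A}$. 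Join-preservation of $\pi_\mathbb{A}$ on $\Phi_g \mathbb{A}$ follows from Lemma~\ref{lem:19}.

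For the action on morphisms, given a cofunctor $F \colon \mathbb{A} \rightsquigarrow \mathbb{B}$ between partite internal groupoids, Proposition~\ref{prop:2} produces a map $(\Phi F, \varpi^F)$ in $\hcon$. Since restriction functors preserve partial isomorphisms, $\Phi F$ restricts to a functor $\Phi_g F \colon \Phi_g \mathbb{A} \to \Phi_g \mathbb{B}$, and the $2$-cell $\varpi^F$ with totally-valued components $F_i$ transfers unchanged, giving a morphism into $\hcon[ji]$. Functoriality of the assignment $F \mapsto \Phi_g F$ on identities and composition is inherited immediately from that of $\Phi$.

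The only substantive obstacle is Lemma~\ref{lem:35}, which has already been established; beyond invoking it to pin down the morphisms of $\Phi_g \mathbb{A}$ as the partial isomorphisms of $\Phi \mathbb{A}$, the argument is essentially bookkeeping on top of Propositions~\ref{prop:2},~\ref{prop:16}, and Corollary~\ref{cor:4}.
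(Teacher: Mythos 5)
Your proof is correct and follows essentially the same route as the paper: both arguments rest on the identification $\Phi_g\mathbb{A} = \mathrm{PIso}(\Phi\mathbb{A})$ obtained from Lemma~\ref{lem:35} together with preservation of partial isomorphisms by restriction functors, and then transfer the structure established in Proposition~\ref{prop:2} via Proposition~\ref{prop:16}. The paper merely packages your bookkeeping more compactly, exhibiting $\Phi_g$ as the composite of $\Phi$ with the functor $\mathrm{PIso}/\!/\C \colon \rcat \rightarrow \rcat[i]$ and noting that the inclusion $\mathrm{PIso}(\A) \subseteq \A$ is hyperconnected.
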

\begin{proof}
  Note that $\Phi_g \mathbb{A}$ comprises just those maps of
  $\Phi \mathbb{A}$ which
  $\pi_\mathbb{A} \colon \Phi \mathbb{A} \rightarrow \C$ sends to
  partial isomorphisms. Since by by Lemma~\ref{lem:35},
  $\pi_\mathbb{A}$ reflects partial isomorphisms, we conclude that
  $\Phi_g\mathbb{A} = \mathrm{PIso}(\Phi\mathbb{A})$. We can thus
  obtain $\Phi_g$ as
  \begin{equation}\label{eq:2}
    \Phi_g = \mathrm{p}\cat{Gpd}_c(\C) \xrightarrow{\Phi} \rcat
    \xrightarrow{\mathrm{PIso}/\!/\C} \rcat[i]
  \end{equation}
  where the second component is the functor which acts on objects by
  restricting back $P \colon \A \rightarrow \C$ along the inclusion
  $\mathrm{PIso}(\A) \subseteq \A$. By Proposition~\ref{prop:16} and
  the fact that $\mathrm{PIso}(\A) \subseteq \A$ is a hyperconnected
  inclusion, $\mathrm{PIso}/\!/\C$ maps $\hcon$ into $\hcon[ji]$; it
  follows that the image of $\Phi_g$ must land in $\hcon[ji]$.
\end{proof}

We now use this result to exhibit an adjunction
whose fixpoints will provide the desired equivalence
$\mathrm{pe}\cat{Gpd}_c(\C) \simeq \hcon[ji]$.

\begin{Thm}
  \label{thm:6}
  The functor $\Psi \colon \rcat \rightarrow \partc$
  maps $\rcat[i]$ into $\mathrm{p}\cat{Gpd}_c(\C)$, and the restricted
  functor induced in this way provides a left adjoint to $\Phi_g$ in
  \begin{equation}\label{eq:34}
    \cd{
      {\mathrm{p}\cat{Gpd}_c(\C)} \ar@<-4.5pt>[r]_-{\Phi_g} \ar@{<-}@<4.5pt>[r]^-{\Psi} \ar@{}[r]|-{\bot} &
      {\rcat[i]} \rlap{ .}
    }
  \end{equation}
\end{Thm}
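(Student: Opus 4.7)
The plan is to verify the theorem in two parts: first, that $\Psi$ sends each object of $\rcat[i]$ to a partite internal groupoid; second, that the resulting restricted functor is left adjoint to $\Phi_g$. The second part will follow formally from Theorem~\ref{thm:4} combined with the adjunction between $\rcat[i]$ and $\rcat$ inherited from Corollary~\ref{cor:4}.

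For the first part, let $P \colon \A \to \C$ with $\A$ an inverse category. The inversion maps $\iota_{ij} \colon \Psi P_{ij} \to \Psi P_{ji}$ are to be defined via Lemma~\ref{lem:10}, applied to the basis $\{s_f\}_{f \in \A(i,j)}$ of $\sigma_{ij}$, by prescribing $\iota_{ij} \circ s_f \defeq s_{f^*} \circ Pf$. Totality is immediate since $\overline{s_{f^*}Pf} = P\overline{\overline{f^*}f} = P\overline{ff^*f} = P\overline f = \fneq f f$. The substantive requirement is the compatibility inequality $s_{g^*}Pg\fneq f g \leqslant s_{f^*}Pf$, whose proof hinges on the identity
\begin{equation*}
Pf \circ \fneq f g = \fneq{f^*}{g^*} \circ Pf\rlap{ ,}
\end{equation*}
obtained by pairing the defining joins through the assignment $e \leftrightarrow e' \defeq fef^* = geg^*$, using that $fe = ge$ implies $ef^* = eg^*$ on taking partial inverses in $\A$, together with the identities $e'f = fe$ and $f^*e'f = e$. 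Combining this with Lemma~\ref{lem:15}(iv), compatibility reduces to the claim that $s_{g^*}\circ \fneq{f^*}{g^*} = s_{f^*} \circ \fneq{f^*}{g^*}$ in $\C$; post-composing both sides with each $p_h$ ($h \in \A(j, i)$)---which suffices as $\bigvee_h s_h p_h = 1_{\Psi P_{ji}}$ by totality of $\sigma_{ji}$---this reduces further to the identity $\fneq{g^*}h \cdot \fneq{f^*}{g^*} = \fneq{f^*}h \cdot \fneq{f^*}{g^*}$, itself provable by an analogous matching of indexing pairs $(d, e') \leftrightarrow (de', e')$.

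The groupoid axioms for $\Psi P$ now follow on the basis and propagate via Lemma~\ref{lem:10}: the source axiom from $\sigma_{ji}\iota_{ij}s_f = \overline{s_{f^*}}\cdot Pf = P(\overline{f^*}f) = Pf = \tau_{ij}s_f$; involutivity from $\iota_{ji}\iota_{ij}s_f = s_f\cdot Pf^*\cdot Pf = s_f \cdot P\overline f = s_f$; and, using from Lemma~\ref{lem:8}(iii) that $(\iota_{ij},1)\circ s_f = \tau_{ij}^\ast(s_{f^*}) \circ s_f$, the identity axiom from $\mu_{iji}(\iota_{ij},1) s_f = s_{f^*}\ast s_f = s_{\overline f} = \eta_i\, P\overline f = \eta_i \sigma_{ij} s_f$ (applying \eqref{eq:12} for the central equality). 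Hence $\Psi P \in \mathrm{p}\cat{Gpd}_c(\C)$, and the remaining groupoid axioms are derivable formally as in Definition~\ref{def:31}.

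For the adjointness, Proposition~\ref{prop:6} identifies $\Phi_g$ with $\mathrm{PIso}/\!/\C \circ \Phi$ restricted to $\mathrm{p}\cat{Gpd}_c(\C)$; and Corollary~\ref{cor:4}, lifted to the relevant comma-categories over $\C$, makes the inclusion $\rcat[i] \hookrightarrow \rcat$ left adjoint to $\mathrm{PIso}/\!/\C$. Composing these adjunctions with the main adjunction $\Psi \dashv \Phi$ yields the chain of natural bijections
\begin{equation*}
\mathrm{p}\cat{Gpd}_c(\C)(\Psi P, \mathbb{A}) = \partc(\Psi P, \mathbb{A}) \cong \rcat(P, \Phi\mathbb{A}) \cong \rcat[i](P, \Phi_g \mathbb{A})\rlap{ ,}
\end{equation*}
natural in $P \in \rcat[i]$ and $\mathbb{A} \in \mathrm{p}\cat{Gpd}_c(\C)$, which is the desired adjunction. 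The main obstacle is the compatibility check underpinning the construction of $\iota_{ij}$, which demands a delicate interplay between $\fneq f g$ on $\A(i,j)$ and $\fneq{f^*}{g^*}$ on $\A(j,i)$; once this is handled, the remainder is routine.
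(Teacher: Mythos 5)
Your proposal is correct and follows essentially the same route as the paper: the paper likewise defines $\iota_{ij}$ on the basis $\{s_f\}$ by $\iota_{ij}\circ s_f = s_{f^\ast}\circ Pf$ (delegating the verification to the proof of Theorem~\ref{thm:3}) and then obtains the adjunction by composing $\Psi \dashv \Phi$ with the adjunction $\text{include} \dashv \mathrm{PIso}/\!/\C$ and invoking~\eqref{eq:2}, exactly as in your chain of natural bijections. The only divergence is one of detail rather than of method: where the paper simply cites ``the same proof as in Theorem~\ref{thm:3}'', you carry out the transcription into the $\fneq{\thg}{\thg}$-setting explicitly---in particular proving the analogue $Pf\circ\fneq{f}{g} = \fneq{f^\ast}{g^\ast}\circ Pf$ of Lemma~\ref{lem:36} directly, and replacing the sheaf-action step of that proof by a joint-epimorphy argument with the $p_h$---which is indeed what the cross-reference tacitly requires, since here $\A$ is a bare inverse category without joins.
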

\begin{proof}
  Let $P \colon \A \rightarrow \C$ be an object of $\rcat[i]$ and
  consider $\Psi P \in \mathrm{r}\cat{Cat}(\C)$. Each
  $\sigma_{ij} \colon \Psi P_{ij} \rightarrow Pi$ has a basis
  $\{s_f : f \in \A(i,j) \text{ a partial isomorphism}\}$ since
  \emph{every} map in $\A$ is a partial isomorphism. So the same proof
  as in Theorem~\ref{thm:3} shows that $\Psi P$ is a partite internal
  groupoid. For the final claim, consider the adjunction
  \begin{equation*}
    \cd{
      {\partc} \ar@<-4.5pt>[r]_-{\Phi} \ar@{<-}@<4.5pt>[r]^-{\Psi} \ar@{}[r]|-{\bot} &
      {\rcat} \ar@<-4.5pt>[r]_-{\mathrm{PIso}/\!/\C} \ar@{<-}@<4.5pt>[r]^-{\text{include}} \ar@{}[r]|-{\bot} &
      {\rcat[i]\rlap{ .}}
    }
  \end{equation*}
  We just showed that the left adjoint lands in
  $\mathrm{p}\cat{Gpd}_c(\C)$, and by~\eqref{eq:2}, the right adjoint
  agrees with $\Phi_g$ on $\mathrm{p}\cat{Gpd}_c(\C)$.
  This yields the desired adjunction~\eqref{eq:34}.
\end{proof}
And we now deduce:
\begin{Thm}
  \label{thm:5}
  Restricting~\eqref{eq:34} to its fixpoints yields an equivalence
  \begin{equation}\label{eq:35}
    \cd{
      {\mathrm{pe}\cat{Gpd}_c(\C)} \ar@<-4.5pt>[r]_-{\Phi_g} \ar@{<-}@<4.5pt>[r]^-{\Psi} \ar@{}[r]|-{\sim} &
      {\hcon[ji]} 
    }
  \end{equation}
  between the category of \'etale partite internal groupoids in
  $\C$, and the category of join inverse categories hyperconnected
  over $\C$.
\end{Thm}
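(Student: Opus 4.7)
The strategy is to identify the left and right fixpoints of the adjunction~\eqref{eq:34} by reducing to the already-established Proposition~\ref{prop:8}, Proposition~\ref{prop:9}, and the coreflection of Corollary~\ref{cor:6}. Recall first from the proof of Proposition~\ref{prop:6} that for any partite internal groupoid $\mathbb{A}$, we have an equality $\Phi_g \mathbb{A} = \mathrm{PIso}(\Phi\mathbb{A})$ of subcategories of $\Phi\mathbb{A}$, since by Lemma~\ref{lem:35}, $\pi_\mathbb{A}$ reflects partial isomorphisms. Consequently, the adjunction~\eqref{eq:34} is (the restriction to $\mathrm{p}\cat{Gpd}_c(\C)$ of) the composite of the main adjunction $\Psi \dashv \Phi$ of~\eqref{eq:7} with the slice-lifted coreflection $\text{include} \dashv \mathrm{PIso}$ from Corollary~\ref{cor:4}.

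Next, I will establish a key auxiliary fact: for any object $(\A, P)$ of $\rcat[i]$ with $P$ hyperconnected and $\A$ a join inverse category, the value $\Psi P$ agrees up to canonical isomorphism with $\Psi \hat P$, where $\hat P \colon \mathrm{jr}(\A) \to \C$ is the canonical extension along the unit $\eta_\A \colon \A \to \mathrm{PIso}(\mathrm{jr}(\A))$ given by Proposition~\ref{prop:12} (using that $P$ factors through $\mathrm{PIso}(\C) \hookrightarrow \C$ by preservation of partial isomorphisms, and preserves joins by Lemma~\ref{lem:19}). Indeed, both $(\Psi P)_{ij}$ and $(\Psi \hat P)_{ij}$ are obtained by glueing local atlases of the form $(\fneq f g)$; the atlas indexed by $\mathrm{jr}(\A)(i,j)$ refines the one indexed by $\A(i,j)$, but since each additional section $s_{\bigvee_k f_k}$ is the join $\bigvee_k s_{f_k}$ of old ones, the basis from Lemma~\ref{lem:27} is unchanged and the glueings coincide. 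The same argument, read in reverse, shows that whenever $\mathbb{A}$ is étale we have $\Psi \Phi_g \mathbb{A} \cong \Psi \Phi \mathbb{A}$, using the isomorphism $\Phi \mathbb{A} \cong \mathrm{jr}(\Phi_g \mathbb{A})$ supplied by combining Theorem~\ref{thm:3} with Corollary~\ref{cor:6}.

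Given these observations, identifying the fixpoints is straightforward. For the unit at $P \in \rcat[i]$: if $\A \to \Phi_g \Psi P$ is invertible, then $\A$ inherits the structure of a join inverse category hyperconnected over $\C$ from the codomain, placing $P$ in $\hcon[ji]$. Conversely, if $P \in \hcon[ji]$, then $\hat P \in \hcon$, so Proposition~\ref{prop:8} gives $\mathrm{jr}(\A) \cong \Phi \Psi \hat P$, which combined with the preceding observation yields $\mathrm{jr}(\A) \cong \Phi \Psi P$; applying $\mathrm{PIso}$ together with the unit isomorphism of Proposition~\ref{prop:14} then produces $\A \cong \mathrm{PIso}(\mathrm{jr}(\A)) \cong \mathrm{PIso}(\Phi \Psi P) = \Phi_g \Psi P$, and one checks this agrees with $\eta_P$. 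For the counit at $\mathbb{A} \in \mathrm{p}\cat{Gpd}_c(\C)$: if $\Psi \Phi_g \mathbb{A} \to \mathbb{A}$ is invertible then $\mathbb{A}$ is source-étale, since every object in the image of $\Psi$ is so, and by Remark~\ref{rk:7} source-étale coincides with étale for partite groupoids. Conversely, if $\mathbb{A}$ is étale then $\Psi \Phi_g \mathbb{A} \cong \Psi \Phi \mathbb{A} \cong \mathbb{A}$ by the preceding observation and Proposition~\ref{prop:9}, with the composite matching the counit via triangle identities.

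The main obstacle is the compatibility claim that $\Psi P \cong \Psi \hat P$ canonically and naturally. While intuitively clear---enlarging an atlas by joins of existing patches does not change the glued object---the rigorous verification requires careful unfolding of the definitions of $\fneq f g$ in both $\A$ and $\mathrm{jr}(\A)$, together with the explicit identification of bases of partial sections afforded by Lemma~\ref{lem:27}, and a check that the resulting comparison is natural in $P$ and compatible with both the unit and counit of~\eqref{eq:34}. Once this is in place, the fixpoint identification and the equivalence~\eqref{eq:35} follow at once by restricting~\eqref{eq:34} to its Galois fixpoints.
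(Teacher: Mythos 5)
Your route is genuinely different from the paper's, and it could be made to work, but as written it has a real gap: everything is made to rest on the auxiliary claim $\Psi P \cong \Psi \hat P$ (and its mirror image $\Psi\Phi_g\mathbb{A}\cong\Psi\Phi\mathbb{A}$ for \'etale $\mathbb{A}$), and you do not prove it --- you explicitly defer it as ``the main obstacle''. The justification you sketch (``each additional section $s_{\bigvee_k f_k}$ is the join $\bigvee_k s_{f_k}$, so the basis from Lemma~\ref{lem:27} is unchanged'') only shows that the enlarged family $\bigl\{u_S \defeq \bigvee_{f\in S}s_f : S \in \mathrm{jr}(\A)(i,j)\bigr\}$ is again jointly epimorphic, hence a basis, for $\sigma_{ij}\colon \Psi P_{ij}\to Pi$. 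That is not enough: to conclude that $\Psi P_{ij}$ is a glueing of the \emph{larger} atlas you must also check that this family induces precisely the atlas of $\mathrm{jr}(\A)$-valued equalities, i.e.\ that $\sheq{u_S}{u_T} = \fneq[\hat P]{S}{T}$ for all $S,T \in \mathrm{jr}(\A)(i,j)$. The inequality $\geqslant$ here needs the identities $s_f \circ Pe = s_{fe}$ and $s_{\bigvee_k h_k} = \bigvee_k s_{h_k}$ for bicompatible joins (the latter using hyperconnectedness of $P$), while the converse needs injectivity of $S \mapsto u_S$, which in turn uses down-closure and bicompatible-join-closure of the sets $S$. None of this appears in your proposal, and it is work comparable to the paper's own technical propositions; on top of that you still owe naturality of the comparison and the verification that your composite isomorphism $\A \cong \mathrm{PIso}(\mathrm{jr}(\A)) \cong \Phi_g\Psi P$ really is the unit $\eta_P$.

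The irony is that you already cite the two results from which the paper's proof follows in three lines, namely Theorem~\ref{thm:3} and Corollary~\ref{cor:6} (you invoke both to get $\Phi\mathbb{A}\cong\mathrm{jr}(\Phi_g\mathbb{A})$ in your counit step). The paper's argument is: the left and right fixpoints of~\eqref{eq:34} lie in the essential images of $\Psi$ and $\Phi_g$, hence (by Theorem~\ref{thm:6} and Proposition~\ref{prop:6}) in $\mathrm{pe}\cat{Gpd}_c(\C)$ and $\hcon[ji]$ respectively, so the adjunction restricts to these full subcategories; and on $\mathrm{pe}\cat{Gpd}_c(\C)$ the right adjoint $\Phi_g$ equals the composite of the equivalence $\Phi \colon \mathrm{pe}\cat{Gpd}_c(\C) \to \hcon[ej]$ of Theorem~\ref{thm:3} with the equivalence $\mathrm{PIso}/\!/_h\C \colon \hcon[ej] \to \hcon[ji]$ obtained from Corollary~\ref{cor:6}. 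An adjunction whose right adjoint is an equivalence is an adjoint equivalence, so the restricted adjunction is the desired equivalence~\eqref{eq:35}, and its objects are exactly the fixpoints. This sidesteps your auxiliary lemma entirely: no comparison of $\Psi$ across the $\mathrm{jr}\dashv\mathrm{PIso}$ completion is ever needed. If you want to salvage your direct fixpoint computation, you must supply the full proof of $\Psi P \cong \Psi\hat P$; otherwise, replace it with the two-equivalence argument above.
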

\begin{proof}
  The left and right fixpoints are contained in the respective
  essential images of $\Psi$ and $\Phi_g$, which are in turn contained
  in $\mathrm{pe}\cat{Gpd}_c$ and $\hcon[ji]$ respectively. So we have
  a restricted adjunction as in~\eqref{eq:35}, and it suffices to show
  this is an equivalence. But the right adjoint $\Phi_g$ is equally
  the composite of the equivalence
  $\Phi \colon \mathrm{pe}\cat{Gpd}_c(\C) \rightarrow \hcon[ej]$ of
  Theorem~\ref{thm:3} with the equivalence
  $\mathrm{PIso}/\!/_h \C \colon \hcon[ej] \rightarrow \hcon[ji]$
  obtained from Corollary~\ref{cor:6}.
\end{proof}


\begin{Cor}
\label{cor:3}
The adjunction~\eqref{eq:34} is Galois. In particular, $\hcon[ji]$ is
reflective in $\rcat[i]$ via $\Phi_g \Psi$, while
$\mathrm{pe}\cat{Gpd}_c(\C)$ is coreflective in
$\mathrm{p}\cat{Gpd}_c(\C)$ via $\Psi \Phi_g$.
\end{Cor}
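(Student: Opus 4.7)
The plan is to deduce this corollary directly from Theorem \ref{thm:5} by invoking the general characterization of Galois adjunctions recalled in Section \ref{sec:fixpoints}: an adjunction is Galois if and only if every object in the essential image of the left adjoint is a left fixpoint (equivalently, every object in the essential image of the right adjoint is a right fixpoint). So the task reduces to checking that for every $P \in \rcat[i]$, the counit $\varepsilon_{\Psi P} \colon \Psi \Phi_g \Psi P \to \Psi P$ of \eqref{eq:34} is invertible.

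The key observation is that $\Psi P$ always lies inside $\mathrm{pe}\cat{Gpd}_c(\C)$. On the one hand, $\Psi P$ is a partite internal groupoid by the first half of the proof of Theorem \ref{thm:6} (which only used that every map of $\A$ is a partial isomorphism). On the other hand, the source maps $\sigma_{ij} \colon \Psi P_{ij} \to Pi$ are local homeomorphisms by their very construction in Definition \ref{def:7} as glueings of local atlases—indeed, Proposition \ref{prop:4} records that $\Psi$ always lands in $\parte$. Combining these, $\Psi P \in \mathrm{pe}\cat{Gpd}_c(\C)$.

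Now Theorem \ref{thm:5} tells us that the restriction of \eqref{eq:34} to $\mathrm{pe}\cat{Gpd}_c(\C) \leftrightarrows \hcon[ji]$ is an equivalence; in particular the counit components on objects of $\mathrm{pe}\cat{Gpd}_c(\C)$ are invertible. Applying this at $\Psi P$ gives invertibility of $\varepsilon_{\Psi P}$, which is exactly what we need. The dual verification, that $\Phi_g \mathbb{A}$ is always a right fixpoint, is identical: by Proposition \ref{prop:6} it lies in $\hcon[ji]$, where the unit is invertible by the same equivalence.

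The reflectivity and coreflectivity statements are then a purely formal consequence: for any Galois adjunction $F \dashv G$, the right adjoint $G$ exhibits the right fixpoints as a reflective subcategory (with reflector $GF$) and the left adjoint exhibits the left fixpoints as a coreflective subcategory (with coreflector $FG$), as recalled immediately before Corollary \ref{cor:7}. There is no real obstacle here—the only thing one has to be slightly careful about is distinguishing the adjunction \eqref{eq:34} (whose right adjoint is $\Phi_g$) from the ambient adjunction \eqref{eq:7}, but this distinction is harmless since both fit into the same factorisation $\Phi_g = (\mathrm{PIso}/\!/\C) \circ \Phi$ exploited in Theorem \ref{thm:6}.
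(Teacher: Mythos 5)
Your proof is correct and matches the paper's own (implicit) argument: the paper states this corollary without proof, relying on exactly the reasoning you spell out—namely that each $\Psi P$ for $P \in \rcat[i]$ is an \'etale partite internal groupoid (by Theorem~\ref{thm:6} together with Proposition~\ref{prop:4}), hence a left fixpoint by Theorem~\ref{thm:5}, so the adjunction~\eqref{eq:34} is Galois and the (co)reflectivity statements follow formally, just as in Corollaries~\ref{cor:7} and~\ref{cor:1}.
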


Like in Section~\ref{sec:fixpoints-1}, we can describe
``sheafification'' $\Phi_g \Psi \colon \rcat[i] \rightarrow \hcon[ji]$
explicitly. As in Proposition~\ref{prop:6}, we have
$\Phi_g \Psi P = \mathrm{PIso}(\Phi \Psi P)$, and so our description
of the sheafification will follow from:
\begin{Lemma}
  \label{lem:7}
  Let $P \colon \A \rightarrow \C$ in $\rcat[i]$. A map
  $\theta \in \Phi \Psi P(i,j)$, given by a family
  $\bigl(\theta_f \in \O(Pi) : f \in \A(i,j)\bigr)$ satisfying~\eqref{eq:42}, is
  a partial isomorphism just when
  \begin{equation}\label{eq:46}
    (Pf \circ \theta_f \circ Pf^\ast) (Pg \circ \theta_g \circ
    Pg^\ast) \leqslant \fneq{f^\ast}{g^\ast} \qquad \text{for all $f,g
      \in \A(i,j)$.}
  \end{equation}
\end{Lemma}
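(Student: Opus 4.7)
The plan is to use Lemma~\ref{lem:35} to transfer the partial-isomorphism question from $\Phi\Psi P$ into the base $\C$, apply Lemma~\ref{lem:22} to reduce to a bicompatibility condition, and then translate this directly into~\eqref{eq:46}. Since $\A$ is an inverse category, Theorem~\ref{thm:6} yields that $\Psi P$ is a partite internal \emph{groupoid}, so Lemma~\ref{lem:35} applies: $\pi_{\Psi P}$ reflects partial isomorphisms. Under the description of Proposition~\ref{prop:26}, $\theta$ corresponds to the partial section $s = \bigvee_f s_f \theta_f$ of $\sigma_{ij}$, whose image is $\pi_{\Psi P}(\theta) = \tau_{ij}\,s = \bigvee_f Pf \circ \theta_f$. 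Hence $\theta$ is a partial isomorphism in $\Phi\Psi P$ if and only if $\bigvee_f Pf\theta_f$ is a partial isomorphism in $\C$.

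Each $Pf$ is a partial isomorphism---since $\A$ is inverse and restriction functors preserve partial inverses---so each $Pf\theta_f$ is too, and by Lemma~\ref{lem:22} the join is a partial isomorphism just when the family $\{Pf \theta_f\}_{f \in \A(i,j)}$ is bicompatible. Compatibility is automatic: the section $s$ exists in $\C$, and $\tau_{ij}$ preserves joins, so its image family $\{Pf\theta_f\}$ inherits compatibility. It remains therefore to check when the family of partial inverses $\{\theta_f Pf^*\}$ is compatible, i.e., when $u \smile v$ for all $f, g \in \A(i,j)$, writing $u = \theta_f Pf^*$ and $v = \theta_g Pg^*$.

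One computes that $\bar u = Pf \theta_f Pf^*$ and $\bar v = Pg \theta_g Pg^*$, both in $\O(Pj)$. The critical simplification is that $\theta_f \leqslant \fneq f f = \overline{Pf}$, obtained from~\eqref{eq:42} with $f = g$, whence $Pf^* \bar u = Pf^* Pf \theta_f Pf^* = \overline{Pf}\,\theta_f Pf^* = \theta_f Pf^* = u$, and symmetrically $Pg^* \bar v = v$. Consequently $Pf^*(\bar u \bar v) = u \bar v$ and $Pg^*(\bar u \bar v) = v \bar u$, so the compatibility condition $u \bar v = v \bar u$ is equivalent to $Pf^*(\bar u \bar v) = Pg^*(\bar u \bar v)$. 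Since $\bar u \bar v \leqslant \overline{Pf^*}\,\overline{Pg^*}$ automatically, this last equation is in turn equivalent, by the defining property of $\fneq{f^*}{g^*}$, to $\bar u \bar v \leqslant \fneq{f^*}{g^*}$---which is precisely~\eqref{eq:46}.

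The main obstacle is spotting the identity $Pf^* \bar u = u$; without it, the relation $u \bar v = v \bar u$ is an opaque equation in $\C(Pj, Pi)$ whose equivalence to an inequality in $\O(Pj)$ is not at all transparent. Once one extracts this identity from the bound $\theta_f \leqslant \overline{Pf}$, the remainder of the equivalence is a short formal manipulation.
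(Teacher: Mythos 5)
Your proposal is correct and follows essentially the same route as the paper's own proof: reflect the question along $\pi_{\Psi P}$ via Theorem~\ref{thm:6} and Lemma~\ref{lem:35}, reduce to bicompatibility of the family $\{Pf \circ \theta_f\}$ (Lemma~\ref{lem:22}), and then use the identity $\theta_f \circ Pf^\ast = Pf^\ast \circ Pf \circ \theta_f \circ Pf^\ast$ (your $Pf^\ast\bar u = u$, the paper's ``critical simplification'' too) to rewrite compatibility of the partial inverses as the condition~\eqref{eq:46}. Even your final step---passing between the equation $Pf^\ast(\bar u\bar v) = Pg^\ast(\bar u\bar v)$ and the inequality $\bar u \bar v \leqslant \fneq{f^\ast}{g^\ast}$ with only the easy direction (via Lemma~\ref{lem:15}(iv)) made explicit---is elided at exactly the same level of detail as in the paper.
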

\begin{proof}
  Since $\Psi P$ is an internal groupoid by Theorem~\ref{thm:6},
  $\pi_{\Psi P} \colon \Phi \Psi P \rightarrow \C$ reflects partial
  isomorphisms by Lemma~\ref{lem:35}. So $\theta \in \Phi \Psi P(i,j)$
  is a partial isomorphism just when
  $\pi_{\Psi P}(\theta) = \bigvee_f Pf \circ \theta_f$ is so. In turn,
  this will be so just when the compatible family of partial
  isomorphisms $\bigl(Pf \circ \theta_f : f \in \A(i,j)\bigr)$ is
  bicompatible. This is true just when, for all $f,g \in \A(i,j)$, we
  have
  $\theta_f \circ Pf^\ast \circ \overline{\theta_g \circ Pg^\ast}
  \leqslant Pg^\ast$, or, equivalently, when
  \begin{equation*}
    \theta_f \circ Pf^\ast \circ Pg \circ \theta_g \circ Pg^\ast \leqslant Pg^\ast\rlap{ .}
  \end{equation*}
  But
  $\theta_f \circ Pf^\ast = Pf^\ast \circ \overline{\theta_f \circ
    Pf^\ast} = Pf^\ast \circ Pf \circ \theta_f \circ Pf^\ast$, so this
  happens just when
  \begin{equation*}
    Pf^\ast (Pf \circ \theta_f \circ Pf^\ast) (Pg \circ \theta_g \circ
    Pg^\ast) \leqslant Pg^\ast\rlap{ ,}
  \end{equation*}
  i.e., precisely when the condition~\eqref{eq:46} holds, as claimed.
\end{proof}

It follows that the reflector
$\Phi_g \Psi \colon \rcat[i] \rightarrow \hcon[ji]$ has exactly the
same description as the reflector
$\Phi \Psi \colon \rcat \rightarrow \hcon$ in
Proposition~\ref{prop:26}, except that maps of $\Phi_g \Psi P$ are
families satisfying not only~\eqref{eq:42}, but also~\eqref{eq:46}.

\subsection{The one-object case}
\label{sec:one-object-case}

By combining the groupoidal and the one-object specialisations of our
main result, we obtain an equivalence between \'etale internal
groupoids in $\C$, and join restriction monoids hyperconnected over
$\C$. Since this is the form of our theorem which most resembles the
classical situation, we spell it out in detail. In the following
definition, as in the introduction, we write $\I(X)$ for the join
inverse monoid of partial automorphisms of some $X \in \C$.

\begin{Defn}
  \label{def:37}
  The category $\cat{Ps\G rp}(\C)$ has:
  \begin{itemize}[itemsep=0.2\baselineskip]
  \item As objects, \emph{complete $\C$-pseudogroups} $(S,X,\theta)$,
    comprising a join inverse monoid $S$, an object $X \in \C$ and a
    monoid homomorphism $\theta \colon S \rightarrow \I(X)$ which
    restricts to an isomorphism between the idempotents of $S$ and the
    idempotents of $\I(X)$.
\item As maps
  $(S,X,\theta) \rightarrow (T,Y,\gamma)$, pairs $(f,g)$ of a monoid
  homomorphism $f \colon S \rightarrow T$ and a total map
  $g \colon Y \rightarrow X$ in $\C$ such that, for all $s \in S$ with
  $f(s) = t$, we have $\theta(s)g = g \gamma(t)$.
\end{itemize}
\end{Defn}
\begin{Thm}
  \label{thm:7}
  The equivalence~\eqref{eq:35} restricts to an equivalence
  \begin{equation}\label{eq:36}
    \cd{
      {\mathrm{e}\cat{Gpd}_c(\C)} \ar@<-4.5pt>[r]_-{\Phi_g} \ar@{<-}@<4.5pt>[r]^-{\Psi} \ar@{}[r]|-{\sim} &
      {\cat{Ps\G rp}(\C)} 
    }
  \end{equation}
  between the category of \'etale internal groupoids and
  internal cofunctors in $\C$, and the category of complete
  $\C$-pseudogroups and their homomorphisms.
\end{Thm}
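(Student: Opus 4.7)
The approach is to obtain the equivalence~\eqref{eq:36} as the restriction of the equivalence~\eqref{eq:35} of Theorem~\ref{thm:5} to the full subcategories picked out on each side by requiring the index/object sets to be singletons. By definition a $1$-partite internal groupoid in $\C$ is simply an internal groupoid, so $\mathrm{e}\cat{Gpd}_c(\C)$ is the full subcategory of $\mathrm{pe}\cat{Gpd}_c(\C)$ on $1$-partite objects; thus my task reduces to identifying $\cat{Ps\G rp}(\C)$ with the corresponding full subcategory of $\hcon[ji]$ whose objects $P \colon \A \to \C$ have $\A$ a one-object category.

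To set up this identification I will invoke the standard delooping correspondence. A one-object join inverse category is the same data as a join inverse monoid $S$, realised as $\A(\ast, \ast)$ for $\A = BS$. A functor $P \colon BS \to \C$ is determined by an object $X = P(\ast) \in \C$ together with a monoid homomorphism $\theta \colon S \to \C(X,X)$; since $S$ is inverse, $\theta$ necessarily factors through $\I(X) \subseteq \C(X,X)$ and automatically preserves restriction (inverses being preserved by any functor between inverse categories). Hyperconnectedness of $P$ then amounts exactly to $\theta$ restricting to an isomorphism between the idempotents of $S$ and those of $\I(X)$, matching Definition~\ref{def:37}. For morphisms, a map $(F, \alpha) \colon (BS, \theta) \to (BT, \gamma)$ in $\hcon[ji]$ comprises a functor $F \colon BS \to BT$, equivalently a monoid homomorphism $f \colon S \to T$, together with a total natural transformation $\alpha \colon QF \Rightarrow P$ whose unique component is a total map $g \colon Y \to X$ in $\C$; the naturality square at $s \in S$ reads $\theta(s) \circ g = g \circ \gamma(f(s))$, which is precisely the condition in Definition~\ref{def:37}.

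It remains to verify that $\Psi$ and $\Phi_g$ restrict between the two full subcategories. By construction $\Psi P$ has component set $\mathrm{ob}(\A)$, so applying $\Psi$ to a one-object $P$ produces a $1$-partite object, and this is an internal groupoid by Theorem~\ref{thm:6}. Dually, $\Phi_g \mathbb{A}$ has object set equal to the component set of $\mathbb{A}$, so $\Phi_g$ sends a $1$-partite (internal) groupoid to a one-object join inverse category. Both restricted functors therefore land in the subcategories in question, and since they are restrictions of mutually quasi-inverse functors, the induced pair is itself an equivalence, yielding~\eqref{eq:36}.

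The main thing to be careful about is more bookkeeping than mathematical difficulty: namely, verifying that the direction of the $2$-cell $\alpha$ in Definition~\ref{def:28} really lines up with the composition condition $\theta(s) g = g \gamma(t)$ in Definition~\ref{def:37} (the check needed is that the naturality component $g \colon Y \to X$ corresponds to $\alpha \colon QF \Rightarrow P$ rather than the other direction), and that the automatic join preservation of hyperconnected morphisms from Lemma~\ref{lem:19} is what licenses treating monoid homomorphisms as join restriction functors. Once these bookkeeping checks are in place, the theorem follows formally by restricting~\eqref{eq:35}.
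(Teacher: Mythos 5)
Your strategy---restricting the equivalence~\eqref{eq:35} of Theorem~\ref{thm:5} to the full subcategories of $1$-partite objects on one side and one-object domains on the other, and identifying the latter with $\cat{Ps\G rp}(\C)$ by unfolding definitions---is the same as the paper's, which offers no separate proof of Theorem~\ref{thm:7} beyond exactly this unfolding; your treatment of morphisms (the direction of $\alpha \colon QF \Rightarrow P$, and its naturality square giving $\theta(s)g = g\gamma(f(s))$) is also correct. However, your object-level identification rests on a false claim: that an arbitrary functor $P \colon BS \rightarrow \C$, i.e.\ an arbitrary monoid homomorphism $\theta \colon S \rightarrow \C(X,X)$, ``necessarily factors through $\I(X)$ and automatically preserves restriction'', justified by preservation of inverses under functors between inverse categories. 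That justification does not apply---$\C$ is not an inverse category---and the claim itself fails. Take $S = \{1,e\}$ the two-element meet-semilattice regarded as an inverse monoid (so $e^\ast = e$), take $X = \{0,1\}$ in $\cat{Set}_p$, and let $\theta(e)$ be the total constant function at $0$. Since the constant function is idempotent, $\theta$ is a monoid homomorphism into $\cat{Set}_p(X,X)$; yet $\theta(e)$ is not a partial isomorphism, and $\theta(\overline{e}) = \theta(e) \neq 1_X = \overline{\theta(e)}$, so $\theta$ neither lands in $\I(X)$ nor preserves restriction.

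What rescues the argument is that the objects of $\hcon[ji]$ are not arbitrary functors: $\hcon[ji]$ is a full subcategory of $\rcat[i]$, which in turn includes into $\rcat$, so its objects are hyperconnected \emph{restriction} functors $P \colon BS \rightarrow \C$. For such a $P$ the factorisation you want is a one-line computation: $P(s^\ast)P(s) = P(s^\ast s) = P(\overline{s}) = \overline{P(s)}$ and dually $P(s)P(s^\ast) = \overline{P(s^\ast)}$, so each $P(s)$ is a partial isomorphism with partial inverse $P(s^\ast)$, and $P$ corestricts to a homomorphism $\theta \colon S \rightarrow \I(X)$. Conversely, a homomorphism $\theta \colon S \rightarrow \I(X)$ is a functor between inverse categories $BS \rightarrow \mathrm{PIso}(\C)$, so it does preserve partial inverses (by their uniqueness, Lemma~\ref{lem:9}(ii)) and hence restrictions, and composing with the inclusion $\mathrm{PIso}(\C) \subseteq \C$ gives back a restriction functor. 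Finally, Lemma~\ref{lem:9}(iv) identifies the idempotents of $S$ and of $\I(X)$ with the respective restriction idempotents, so hyperconnectedness of $P$ coincides with the idempotent-isomorphism clause of Definition~\ref{def:37}. With this local repair your proof is complete; the remaining steps (that $\Psi$ and $\Phi_g$ preserve the one-object and one-component conditions, and that an equivalence restricts to an equivalence between corresponding full subcategories) stand as written.
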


Even in the case where $\C = \cat{Top}_p$, this result goes beyond the
ones in the literature due to the extended functoriality of our
correspondence. We now explain how this narrower functoriality may be
re-found.

\section{Localic and hyperconnected maps}
\label{sec:local-hyperc-maps}

\subsection{Covering functors and localic morphisms}
\label{sec:cover-funct-local}

In~\cite{Lawson2013Pseudogroups}, the morphisms considered between
\'etale topological groupoids are not cofunctors, but the
so-called \emph{covering functors}. We now describe the analogue of
this notion in our context.

\begin{Defn}
  \label{def:43}
  Let $\mathbb{B}$ be an $I$-partite internal category and
  $\mathbb{A}$ a $J$-partite internal category in $\C$. A
  \emph{partite internal functor}
  $G \colon \mathbb{B} \rightarrow \mathbb{A}$ comprises an
  assignation on components $i \mapsto Gi$, assignations on objects
  $G_i \colon B_i \rightarrow A_{Gi}$, and assignations on morphisms
  $G_{ij} \colon B_{ij} \rightarrow A_{Gi,Gj}$, subject to the
  following partite
  analogues of the usual functor axioms (for all $i,j,k \in I$):
  \begin{align*}
    G_i \sigma_{ij} &= \sigma_{Gi,Gj} G_{ij} & G_{ii}\eta_i &= \eta_{Gi}G_i
    \\  G_j \tau_{ij} &= \tau_{Gi,Gj} G_{ij} &
     G_{ik}\mu_{ijk} &= \mu_{Gi,Gj,Gk}(G_{jk} \times_{G_j} G_{ij})\rlap{ .}
  \end{align*}
  A partite internal functor is a \emph{covering functor} if the
  assignation on components $i \mapsto Gi$ is invertible, and each
  square as below is a pullback in $\C$. We write
  $\mathrm{p}\cat{Cat}_\mathrm{cov}(\C)$ for the category of partite internal
  categories and partite covering functors in $\C$.
  \begin{equation}\label{eq:47}
    \cd{
      {B_{ij}} \ar[r]^-{G_{ij}} \ar[d]_{\sigma_{ij}} &
      {A_{Gi,Gj}} \ar[d]^{\sigma_{Gi,Gj}} \\
      {B_i} \ar[r]^-{G_i } &
      {A_{Gi}}\rlap{ .}
    }
  \end{equation}
\end{Defn}
This is an appropriate generalisation of the notion
in~\cite{Lawson2013Pseudogroups}: indeed, when $\C = \cat{Top}_p$, the
covering functors between $1$-partite internal groupoids are precisely
the covering functors of \emph{ibid}., p.~126. We now show that
partite covering functors can be identified with a special class of
partite internal cofunctors.
\begin{Defn}
  \label{def:38}
  Let $\mathbb{A}$ and $\mathbb{B}$ be internal partite categories in
  $\C$, and $F \colon \mathbb{A} \rightsquigarrow \mathbb{B}$ an
  internal cofunctor. We say that $F$ is:
  \begin{itemize}
  \item \emph{Bijective on components} if the function $i \mapsto Fi$
    is invertible;
  \item \emph{Bijective on objects} if each map $F_i \colon B_{Fi}
    \rightarrow A_i$ is invertible;
  \item \emph{Bijective on arrows} if each map
    $F_{ij} \colon A_{ij} \times_{A_i} B_{Fi} \rightarrow B_{Fi,Fj}$
    is invertible.
  \end{itemize}
\end{Defn}
\begin{Prop}
  \label{prop:22}
  The category $\mathrm{p}\cat{Cat}_\mathrm{cov}(\C)$ is contravariantly isomorphic
  to the subcategory of $\partc$ comprising all objects together with
  the maps which are bijective on components and arrows.
\end{Prop}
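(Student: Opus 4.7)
The plan is to construct mutually inverse bijections on objects and morphisms and verify they assemble into contravariant functors.

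Given a partite covering functor $G \colon \mathbb{B} \to \mathbb{A}$, where $\mathbb{B}$ is $J$-partite and $\mathbb{A}$ is $I$-partite, I will produce a partite cofunctor $\tilde G \colon \mathbb{A} \rightsquigarrow \mathbb{B}$ as follows. On components, set $\tilde G = G^{-1}$, possible because $G$ is bijective on components. On objects, put $\tilde G_i \defeq G_{\tilde G i} \colon B_{\tilde G i} \to A_i$. On arrows, the pullback square~\eqref{eq:47} at $(\tilde Gi, \tilde Gj)$ induces a canonical isomorphism $\phi_{ij} \colon B_{\tilde Gi, \tilde Gj} \to A_{ij} \times_{A_i} B_{\tilde Gi}$ with $\pi_1 \phi_{ij} = G_{\tilde Gi, \tilde Gj}$ and $\pi_2 \phi_{ij} = \sigma_{\tilde Gi, \tilde Gj}$; I then set $\tilde G_{ij} \defeq \phi_{ij}^{-1}$, which is invertible by construction, so $\tilde G$ is bijective on components and arrows.

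Conversely, given a partite cofunctor $F \colon \mathbb{A} \rightsquigarrow \mathbb{B}$ bijective on components and arrows, I will produce a partite internal functor $\bar F \colon \mathbb{B} \to \mathbb{A}$ by setting $\bar F = F^{-1}$ on components, $\bar F_j \defeq F_{\bar F j}$ on objects, and $\bar F_{jk} \defeq \pi_1 \circ F_{\bar Fj, \bar Fk}^{-1}$ on arrows, well-defined by bijectivity on arrows. The square~\eqref{eq:47} for $\bar F$ commutes and is a pullback, because $F_{\bar Fj, \bar Fk}^{-1}$ endows $B_{jk}$ with projections $\pi_1 \circ F_{\bar Fj, \bar Fk}^{-1} = \bar F_{jk}$ and $\pi_2 \circ F_{\bar Fj, \bar Fk}^{-1} = \sigma_{jk}$ (the second by the cofunctor source axiom), exhibiting $B_{jk}$ as the pullback $A_{\bar Fj, \bar Fk} \times_{A_{\bar Fj}} B_j$.

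These two assignments are mutually inverse by direct unwinding of the definitions. It remains to check that each carries the relevant axioms to the appropriate counterpart: for $G \mapsto \tilde G$, the target, unit and composition axioms of the cofunctor translate to the functoriality conditions for $G$ via the universal property of the pullback together with the source axiom $\pi_2 \phi_{ij} = \sigma_{\tilde Gi, \tilde Gj}$; and conversely for $F \mapsto \bar F$. The main technical obstacle will be matching up the cofunctor composition axiom of Definition~\ref{def:4}---a commutative square involving an iterated pullback $A_{jk} \times_{A_j} A_{ij} \times_{A_i} B_{Fi}$ and the associativity isomorphism---with the direct equation $G_{ik} \mu_{ijk} = \mu_{Gi,Gj,Gk}(G_{jk} \times_{A_j} G_{ij})$ for the internal functor; this translation uses the invertibility of each $F_{ij}$ essentially.

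Finally, contravariance on morphisms amounts to showing that for composable covering functors $G \colon \mathbb{B} \to \mathbb{A}$ and $H \colon \mathbb{A} \to \mathbb{D}$, the cofunctor associated to $H \circ G$ agrees with $\tilde G \circ \tilde H$, i.e., cofunctor composition in $\partc$. This follows by comparing the formula $(H G)_j = H_{Gj} \circ G_j$ with the cofunctor composition rule $(\tilde G \circ \tilde H)_d = \tilde H_d \circ \tilde G_{\tilde H d}$ from Definition~\ref{def:4}, and similarly comparing the arrow components after inverting the relevant pullback isomorphisms.
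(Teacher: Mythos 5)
Your proposal is correct and follows essentially the same route as the paper: both directions use exactly the same formulas (arrows of the covering functor given by $\pi_1 \circ F_{ij}^{-1}$, and arrows of the cofunctor given by the inverse of the canonical comparison map into the pullback~\eqref{eq:47}), with the axiom verifications and mutual inverseness left as routine unwinding. The paper's proof is just a terser version of what you wrote.
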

\begin{proof}
  Given a partite cofunctor
  $F \colon \mathbb{A} \rightsquigarrow \mathbb{B}$ which is bijective
  on components and arrows, the corresponding covering functor
  $G \colon \mathbb{B} \rightarrow \mathbb{A}$ has action on
  components and objects determined by the formulae $G(Fi) = i$
  and $G_{Fi} = F_i \colon B_{Fi} \rightarrow A_{i}$, while its
  action on arrows is determined by
  \begin{equation*}
    G_{Fi,Fj} = B_{Fi,Fj}
    \xrightarrow{F_{ij}^{-1}} A_{ij} \times_{A_i} B_{Fi}
    \xrightarrow{\pi_1} A_{ij}\rlap{ .}
  \end{equation*}
  Conversely, if $G \colon \mathbb{B} \rightarrow \mathbb{A}$ is a
  partite covering functor, the corresponding partite cofunctor
  $F \colon \mathbb{A} \rightsquigarrow \mathbb{B}$ has action on
  components and objects determined by $F(Gi) = i$ and
  $F_{Gi} = G_i \colon B_{i} \rightarrow A_{Gi}$, and
  action on arrows determined by
  \begin{equation*}
    F_{Gi,Gj} = (\pi_1, \pi_2) \colon A_{Gi,Gj} \times_{A_{Gi}} B_{i}
    \xrightarrow{\cong} B_{ij} 
  \end{equation*}
  the unique isomorphism induced by~\eqref{eq:42}'s being a pullback.
  It is easy to check that $F \mapsto G$ and $G
  \mapsto F$ are well-defined, mutually inverse assignations.
\end{proof}

Now, under our main equivalence $\parte \simeq \hcon$, the subcategory
of $\parte$ on the bijective-on-components-and-arrows cofunctors
corresponds to some subcategory of $\hcon$---one which, by
the preceding result, is contravariantly equivalent to
$\mathrm{pe}\cat{Cat}_{\mathrm{cov}}(\C)$. We now identify the maps in this subcategory.

\begin{Defn}
  \label{def:41}
  A join restriction functor $F \colon \A \rightarrow \B$ between join
  restriction categories is said to be \emph{localic} if it is
  bijective on objects, and moreover:
  \begin{enumerate}[(i)]
  \item For all $f, g \in \A(i,j)$, we have $Ff \wedge Fg = F(f \wedge
    g)$;
 \item For all $g \in \B(Fi,Fj)$, we have
    $g = \bigvee_{f \in \A(i,j)} g \wedge Ff$.
  \end{enumerate}
\end{Defn}
Here, $\wedge$ denotes meet with respect to the natural partial order
on each hom; these meets always exist by Lemma~\ref{lem:30}. Note that
these morphisms are the natural generalisation of the
\emph{hypercallitic} morphisms
of~\cite[p.~128]{Lawson2013Pseudogroups}.

We now show that, under the equivalence $\parte
\simeq \hcon$, the bijective-on-components-and-arrows partite
cofunctors correspond to the maps
\begin{equation}\label{eq:41}
  \cd[@-0.6em]{
    {\A} \ar[rr]^-{F} \ar[dr]_-{P} & \ltwocello{d}{\alpha} &
    {\B} \ar[dl]^-{Q} \\ &
    {\C}
  }
\end{equation} of $\hcon$ for which $F$ is localic. In fact, it will be
convenient for later use to prove something slightly more general.
\begin{Prop}
  \label{prop:21}
  Consider a map~\eqref{eq:41} of $\rcat[jr]$. We have that:
  \begin{itemize}
  \item $\Psi(F, \alpha)$ is bijective on components precisely when
    $F$ is bijective on objects;
  \item $\Psi(F, \alpha)$ is bijective on objects precisely when
    $\alpha$ is invertible;
  \item $\Psi(F, \alpha)$ is bijective on arrows precisely when:
    \begin{enumerate}[(i)]
    \item For all $f, g \in \A(i,j)$, we have
      $Q(Ff \wedge Fg) = QF(f \wedge g)$;
    \item For all $g \in \B(Fi,Fj)$, we have
      $Qg = \bigvee_{f \in \A(i,j)} Q(g \wedge Ff)$.
    \end{enumerate}
  \end{itemize}
\end{Prop}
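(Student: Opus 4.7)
The plan is to unpack the construction of the cofunctor $\Psi(F,\alpha) \colon \Psi P \rightsquigarrow \Psi Q$ using Theorem~\ref{thm:4}, and then translate each form of bijectivity into a condition on $F$ and $\alpha$.

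First, I would apply the proof of Theorem~\ref{thm:4} to the transposed map $(\eta_Q F, \alpha) \colon (\A, P) \to (\Phi\Psi Q, \pi_{\Psi Q})$ to describe $\Psi(F, \alpha)$ explicitly: it acts on components by $i \mapsto Fi$, on objects by $F_i = \alpha_i \colon QFi \to Pi$, and its arrow-action is the unique total map $F_{ij} \colon \Psi P_{ij} \times_{Pi} QFi \to \Psi Q_{Fi,Fj}$ over $QFi$ satisfying $F_{ij} \circ \alpha_i^\ast(s_f) = s'_{Ff}$ for every $f \in \A(i,j)$; here $s'_g$ denotes the basic partial section of $\sigma_{Fi,Fj}$ corresponding to $g \in \B(Fi,Fj)$. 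The first two bullets follow immediately from this description.

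For the third bullet, by Lemma~\ref{lem:26} the source of $F_{ij}$ is the local homeomorphism over $QFi$ with basis $\{\alpha_i^\ast(s_f) : f \in \A(i,j)\}$ and associated atlas $\overline{\fneq{f}{g}\alpha_i}$, while the target, re-indexed along $F$, admits the family $\{s'_{Ff} : f \in \A(i,j)\}$ as a candidate basis with candidate atlas $\fneq{Ff}{Fg}$. Using naturality of $\alpha$, totality of $\alpha_i$, and join-preservation of $F$ and $Q$, a short computation yields the identities
\begin{equation*}
  \overline{\fneq{f}{g}\alpha_i} = QF\sheq{f}{g} \quad \text{and} \quad \fneq{Ff}{Fg} = Q\sheq{Ff}{Fg}.
\end{equation*}
I would then argue that $F_{ij}$ is invertible if and only if (a) $\{s'_{Ff} : f \in \A(i,j)\}$ really is a basis for $\sigma_{Fi,Fj}$, and (b) the two atlases match: $QF\sheq{f}{g} = Q\sheq{Ff}{Fg}$ for all $f, g \in \A(i,j)$. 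The forward implication holds because an isomorphism of local homeomorphisms over $QFi$ takes bases to bases and preserves associated $\sheq{}{}$-atlases. The reverse follows from the uniqueness-up-to-iso of glueings (Lemma~\ref{lem:10}): under (a) and (b) both the source and the target are $\A(i,j)$-indexed glueings of the same atlas, hence uniquely isomorphic via a basis-preserving map, which must be $F_{ij}$.

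Finally I would translate (a) and (b) into (ii) and (i). Lemma~\ref{lem:27} combined with Lemma~\ref{lem:25}(ii) rewrites (a) as the condition that $s'_g = \bigvee_f s'_{Ff} \cdot \fneq{g}{Ff}$ for each $g \in \B(Fi,Fj)$. Applying $\tau_{Fi,Fj}$ and using the elementary identities $QFf \cdot \fneq{g}{Ff} = Qg \cdot \fneq{g}{Ff}$ (immediate from the definition of $\fneq{g}{Ff}$) and $Q(g \wedge Ff) = Qg \cdot \fneq{g}{Ff}$ (from Lemma~\ref{lem:30}(iv) and preservation properties of $Q$), this rewrites as $Qg = \bigvee_f Q(g \wedge Ff)$, which is condition (ii). For (b), multiplying on the left by $QFf$ and using $f \wedge g = f\sheq{f}{g}$ converts $QF\sheq{f}{g} = Q\sheq{Ff}{Fg}$ into $QF(f \wedge g) = Q(Ff \wedge Fg)$, i.e., (i); conversely, taking the restriction of each side of (i) gives back (b), since each side of (i) has restriction equal to the corresponding equality idempotent. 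The main obstacle is the reverse direction of the basis-and-atlas characterisation, but this is a direct application of Lemma~\ref{lem:10}.
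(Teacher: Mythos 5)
Your proposal is correct and follows essentially the same route as the paper's proof: unpack $\Psi(F,\alpha)$ from the proof of Theorem~\ref{thm:4}, dispose of the first two bullets immediately, use Lemma~\ref{lem:26} to identify the source of $F_{ij}$ as a glueing with basis $\{\alpha_i^\ast(s_f)\}$ and atlas $QF\sheq{f}{g}$, characterise invertibility of $F_{ij}$ via Lemma~\ref{lem:10} as your conditions (a) and (b), and translate these into (ii) and (i) using Lemmas~\ref{lem:27} and~\ref{lem:30}(iv). The only cosmetic difference is that you derive (ii) by post-composing the section identity with $\tau_{Fi,Fj}$ (a step that is indeed reversible, since the inequality $\geqslant$ always holds and $\tau_{Fi,Fj}$ is total, so it suffices to compare restrictions), whereas the paper compares restrictions of the two sides directly.
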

\begin{proof}
  From the proof of Theorem~\ref{thm:4}, we see that $\Psi(F,\alpha)$
  is the partite cofunctor which acts:
  \begin{itemize}
  \item On component-sets via the action of $F$ on objects;
  \item On objects via the maps $F_i = \alpha_i \colon QFi \rightarrow
    Pi$;
  \item On arrows via the unique maps $F_{ij} \colon \Psi P_{ij}
    \times_{Pi} QFi \rightarrow \Psi Q_{Fi,Fj}$ for which
    \begin{equation}\label{eq:38}
      s_{Ff} = QFi \xrightarrow{\alpha_{i}^\ast(s_f)} \Psi P_{ij}
      \times_{Pi} QFi \xrightarrow{F_{ij}} \Psi Q_{Fi,Fj} \ \text{for
        all $f \in \A(i,j)$.}
    \end{equation}
  \end{itemize}
  The first two claims of the proposition are therefore immediate. For
  the third, recall that by Lemma~\ref{lem:26}, the family of sections
  $\{\alpha_i^\ast(s_f)\}$ constitute a basis for the local
  homeomorphism
  $\pi_2 \colon \Psi P_{ij} \times_{Pi} QFi \rightarrow QFi$ with
  induced local atlas
  \begin{equation*}
    \overline{\fneq f g \alpha_i} = \overline{P\sheq f g \alpha_i} =
    \overline{\alpha_i QF \sheq f g} = QF\sheq f g\rlap{ ,}
  \end{equation*}
  here using Remark~\ref{rk:2}, naturality of $\alpha$, and totality
  of $\alpha$. Thus, by Lemma~\ref{lem:10}, $F_{ij}$ is invertible
  precisely when the family of sections~\eqref{eq:38} are a basis for
  $\sigma_{Fi,Fj} \colon \Psi Q_{Fi,Fj} \rightarrow QFi$, and the
  induced local atlas is $QF\sheq f g$.

  Now, by construction, the family~\eqref{eq:38} induces the local
  atlas $\fneq[Q]{Ff}{Fg} = Q\sheq {Ff}{Fg}$; so the latter
  requirement says that $Q\sheq {Ff}{Fg} = QF\sheq f g$, or
  equivalently, by Lemma~\ref{lem:30}(iv), that
  $Q(Ff \wedge Fg) = QF(f \wedge g)$. On the other hand, given that
  the sections $\bigl(s_g : g \in \B(Fi,Fj)\bigr)$ are a basis for
  $\sigma_{Fi,Fj}$, to ask that $\bigl(s_{Ff} : f \in \A(i,j)\bigr)$ is also a basis is
  equally, by Lemma~\ref{lem:27}, to ask that
  \begin{equation*}
    s_g = \textstyle\bigvee_{f \in \A(i,j)} s_{Ff} \sheq {s_g} {s_{Ff}} \colon
    QFi \rightarrow \Psi Q_{Fi,Fj} \quad \text{for all $g \in \B(Fi,Fj)$.}
  \end{equation*}
  Now, by Lemma~\ref{lem:30}(iv) we always have the inequality
  $\geqslant$ above, and so have equality just when both sides have
  the same restriction. Since the sections $\bigl(s_g : g \in
  \B(Fi,Fj)\bigr)$ generate the local atlas $Q\sheq f g$, this is
  equally to ask that
  $Q \overline{g} = \bigvee_{f \in \A(i,j)} Q\sheq g {Ff}$ which, by
  Lemma~\ref{lem:30}(iv), is equivalent to asking that $Qg =
  \bigvee_{f \in \A(i,j)} Q(g \wedge Ff)$.
\end{proof}
\begin{Cor}
  \label{cor:5}
  Consider a morphism~\eqref{eq:41} of $\rcat$. If $F$ is localic then
  $\Psi(F, \alpha)$ is bijective on components and arrows, and the
  converse is true whenever $Q$ is hyperconnected. In particular,
  under the equivalence $\parte \simeq \hcon$, the cofunctors which
  are bijective on components and arrows correspond to the
  maps~\eqref{eq:41} of $\hcon$ in which $F$ is localic.
\end{Cor}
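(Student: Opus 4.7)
The statement is a direct unpacking of Proposition~\ref{prop:21}, so the plan is to read off the two implications from its three clauses, together with a short argument in the converse direction that exploits hyperconnectedness of $Q$.

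First, I would note that the definition of ``localic'' for $F$ requires bijectivity on objects together with the two identities $Ff \wedge Fg = F(f \wedge g)$ (condition~(i)) and $g = \bigvee_{f \in \A(i,j)}(g \wedge Ff)$ (condition~(ii)) in $\B$. Proposition~\ref{prop:21} characterises $\Psi(F,\alpha)$'s being bijective on components and on arrows in terms of three conditions: $F$ bijective on objects, together with analogues (i$'$)~$Q(Ff \wedge Fg) = QF(f \wedge g)$ and (ii$'$)~$Qg = \bigvee_{f}Q(g \wedge Ff)$ living in $\C$. The forward direction is then immediate: applying the join restriction functor $Q$ to the localic identities (i) and (ii) in $\B$ yields (i$'$) and (ii$'$), since $Q$ preserves joins.

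For the converse I would assume $Q$ is hyperconnected and deduce the unquantified localic identities from (i$'$) and (ii$'$). For (i$'$), observe that $F(f\wedge g) \leqslant Ff \wedge Fg$ (as $F$ is order-preserving), so both maps may be written as $Ff \cdot e'$ and $Ff \cdot e$ respectively for restriction idempotents $e' \leqslant e$ in $\O(Fi)$ with $e, e' \leqslant \overline{Ff}$. The hypothesis (i$'$) forces $Q(Ff\cdot e) = Q(Ff\cdot e')$; taking restrictions and using $e, e' \leqslant \overline{Ff}$ yields $Qe = Qe'$, whence $e = e'$ by the isomorphism $\O(Fi) \cong \O(QFi)$, and therefore $Ff \wedge Fg = F(f \wedge g)$. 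For (ii$'$), let $h \defeq \bigvee_f (g \wedge Ff) \leqslant g$; this is an equality iff $\overline{h} = \overline{g}$. Applying $Q$ and using that it preserves joins and restrictions, hypothesis (ii$'$) gives $Q\overline{h} = \overline{Qh} = \overline{Qg} = Q\overline{g}$, and hyperconnectedness of $Q$ gives $\overline{h} = \overline{g}$ as required.

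The ``In particular'' clause then follows immediately: in $\hcon$ the functor $Q$ is by definition hyperconnected, so both implications of the corollary apply and exhibit localic maps as exactly those~\eqref{eq:41} whose image under $\Psi$ is bijective on components and arrows; combining with the contravariant isomorphism of Proposition~\ref{prop:22} identifies these with partite covering functors under the equivalence $\parte \simeq \hcon$. The only step that requires any real thought is the converse of (i$'$)---specifically, the observation that comparing $Ff \wedge Fg$ with $F(f\wedge g)$ reduces, after cancelling the common ``prefix'' $Ff$, to an equality of restriction idempotents, to which hyperconnectedness can then be applied; but this is routine once one has the key inequality $F(f\wedge g) \leqslant Ff \wedge Fg$ in hand.
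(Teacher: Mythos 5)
Your proof is correct and follows essentially the same route as the paper: the forward direction by applying the join-preserving $Q$ to the localic identities, and the converse by reducing each equation to an equality of restriction idempotents (your $e=\overline{Ff\wedge Fg}$ and $e'=\overline{F(f\wedge g)}$ are exactly the paper's $\sheq{Ff}{Fg}$ and $F\sheq f g$), cancelling via the isomorphism $\O(Fi)\cong\O(QFi)$, and recovering the equality of maps by composing with $Ff$. The only cosmetic difference is that the paper phrases the reduction through the $\sheq{\thg}{\thg}$ notation and Lemma~\ref{lem:30}(iv), whereas you work directly with restrictions of meets; these coincide, so the arguments are the same.
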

\begin{proof}
  Clearly, if $F$ is localic, then it satisfies conditions (i) and
  (ii) of Proposition~\ref{prop:21}, so that $\Psi(F, \alpha)$ is
  bijective on components and arrows. Suppose now that $Q$ is
  hyperconnected, and that $\Psi(F, \alpha)$ is bijective on
  components and arrows. Thus $Q(Ff \wedge Fg) = QF(f \wedge g)$, so
  that $Q\sheq {Ff}{Fg} = QF\sheq f g$; but now
  $\sheq {Ff}{Fg} = F\sheq f g$ by hyperconnectedness of $Q$, and so
  $Ff \wedge Fg = F f \wedge g$. So $F$ satisfies condition (i) to be
  localic, and a similar argument verifies (ii).
\end{proof}

Putting together Proposition~\ref{prop:22} and Corollary~\ref{cor:5},
we obtain the desired:
\begin{Thm}
  \label{thm:9}
  The main equivalence~\eqref{eq:33} restricts back to an equivalence
  \begin{equation*}
    \cd{
      {\mathrm{pe}\cat{Cat}_\mathrm{cov}(\C)^\mathrm{op}} \ar@<-4.5pt>[r]_-{\Phi} \ar@{<-}@<4.5pt>[r]^-{\Psi} \ar@{}[r]|-{\sim} &
      {\mathrm{jr}\cat{Cat}\mathord{\mkern1mu/\!/_{\!h\ell}\mkern1mu} \C} 
    }
  \end{equation*}
  between the opposite of the category of partite internal categories
  and internal covering functors, and the subcategory of $\hcon$
  comprising all the objects and the localic morphisms between them.
\end{Thm}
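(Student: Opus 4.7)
The plan is to assemble this theorem directly from Proposition~\ref{prop:22} and Corollary~\ref{cor:5}, which have already done the substantive work; what remains is to identify the two sides of the desired restricted equivalence with wide subcategories to which the main equivalence of Theorem~\ref{thm:2} restricts.

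First I would apply Proposition~\ref{prop:22}, restricted to the source-\'etale partite internal categories, to obtain a contravariant isomorphism between $\mathrm{pe}\cat{Cat}_\mathrm{cov}(\C)$ and the wide subcategory of $\parte$ whose morphisms are those partite cofunctors which are bijective on components and on arrows. This converts the left-hand side of the claimed equivalence into a subcategory of $\parte$.

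Next I would invoke Corollary~\ref{cor:5}, which asserts that, under the equivalence $\Phi \colon \parte \simeq \hcon \colon \Psi$ of Theorem~\ref{thm:2}, a cofunctor is bijective on components and arrows if and only if the corresponding morphism of $\hcon$ has its functorial component localic. Since $\Psi$ sends every localic morphism to a bijective-on-components-and-arrows cofunctor and the subcategories are wide on both sides, the equivalence of Theorem~\ref{thm:2} restricts to an equivalence between the wide subcategory of $\parte$ on bijective-on-components-and-arrows cofunctors and the wide subcategory $\mathrm{jr}\cat{Cat}\mathord{\mkern1mu/\!/_{\!h\ell}\mkern1mu}\C$ of $\hcon$ on localic morphisms.

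Composing these two identifications yields the claimed equivalence. There is no real obstacle: the only thing to double-check is that $\Phi$ indeed carries bijective-on-components-and-arrows cofunctors back to localic morphisms, but this is automatic from Corollary~\ref{cor:5} together with the fact that $\Phi$ and $\Psi$ constitute an equivalence, so that corresponding wide subcategories must be matched in both directions.
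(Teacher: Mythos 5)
Your proposal is correct and matches the paper's own argument exactly: the paper derives this theorem precisely by combining Proposition~\ref{prop:22} (the contravariant isomorphism with the bijective-on-components-and-arrows cofunctors) with Corollary~\ref{cor:5} (their correspondence under $\Phi \dashv \Psi$ with the localic morphisms of $\hcon$). Nothing further is needed.
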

This result transcribes perfectly to the groupoidal situation. Any
join inverse category has joins of restriction idempotents, and so
admits equalities $\sheq f g$ and meets $f \wedge g$, so that
Definition~\ref{def:41} makes perfect sense for join-preserving
functors between join inverse categories. Tracing through the rest of
the arguments, \emph{mutatis mutandis}, now gives:
\begin{Thm}
  \label{thm:9}
  The equivalence~\eqref{eq:35} restricts back to an
  equivalence
  \begin{equation*}
    \cd{
      {\mathrm{pe}\cat{Gpd}_\mathrm{cov}(\C)^\mathrm{op}} \ar@<-4.5pt>[r]_-{\Phi_g} \ar@{<-}@<4.5pt>[r]^-{\Psi} \ar@{}[r]|-{\sim} &
      {\mathrm{ji}\cat{Cat}\mathord{\mkern1mu/\!/_{\!h\ell}\mkern1mu} \C} 
    }
  \end{equation*}
  between the opposite of the category of partite internal groupoids
  and internal covering functors, and the subcategory of $\hcon[ji]$
  comprising all the objects and the localic morphisms between them.
\end{Thm}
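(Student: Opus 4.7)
The plan is to adapt the proof of the preceding Theorem to the groupoid setting. Three ingredients are needed: a groupoidal analogue of Proposition~\ref{prop:22}, a groupoidal analogue of Proposition~\ref{prop:21} together with Corollary~\ref{cor:5}, and the base equivalence~\eqref{eq:35} itself. The definition of \emph{localic} (Definition~\ref{def:41}) transcribes to join-preserving functors between join inverse categories without modification: the meets $f \wedge g = f\sheq f g$ on each hom-set are available in any join inverse category, being constructed purely from joins of restriction idempotents as per Lemma~\ref{lem:30}(iv).

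For the groupoid analogue of Proposition~\ref{prop:22}, observe that the inverse operation $\iota_{ij}$ on a partite internal groupoid is uniquely determined whenever it exists, so a partite internal functor between partite internal groupoids is the same thing as a partite internal functor of the underlying partite internal categories, and likewise for cofunctors. Hence $\mathrm{pe}\cat{Gpd}_\mathrm{cov}(\C)$ embeds as a full subcategory of $\mathrm{pe}\cat{Cat}_\mathrm{cov}(\C)$, and the contravariant isomorphism of Proposition~\ref{prop:22} restricts to a contravariant isomorphism between $\mathrm{pe}\cat{Gpd}_\mathrm{cov}(\C)$ and the subcategory of $\mathrm{pe}\cat{Gpd}_c(\C)$ whose morphisms are the partite cofunctors bijective on components and arrows.

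For the analogue of Proposition~\ref{prop:21} and Corollary~\ref{cor:5}, consider a morphism $(F, \alpha)$ of $\rcat[i]$; by Theorem~\ref{thm:6}, $\Psi(F, \alpha)$ lies in $\mathrm{p}\cat{Gpd}_c(\C)$ and is constructed by exactly the same formulae as in the general restriction case. Consequently the analysis of Proposition~\ref{prop:21} applies verbatim to yield identical criteria for $\Psi(F, \alpha)$ to be bijective on components, on objects, and on arrows; and invoking Corollary~\ref{cor:5}, together with the fact that the codomain functor $Q$ is hyperconnected whenever $(\B, Q) \in \hcon[ji]$, we conclude that $\Psi(F, \alpha)$ is bijective on components and arrows precisely when $F$ is localic. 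Combining with the previous paragraph, the equivalence~\eqref{eq:35} restricts on the one side to $\mathrm{pe}\cat{Gpd}_\mathrm{cov}(\C)^\mathrm{op}$ and on the other to the localic subcategory of $\hcon[ji]$, as required. The only genuine obstacle is to confirm that the proofs of Propositions~\ref{prop:21} and~\ref{prop:22} rely on no restriction-categorical feature absent in the inverse context; but they depend only on the sheaf-theoretic structure of $\Psi P$, which is identical in both cases.
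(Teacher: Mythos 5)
Your proposal is correct and takes essentially the same route as the paper: the paper's own proof simply observes that Definition~\ref{def:41} makes sense for join inverse categories (since meets $f \wedge g$ exist via joins of restriction idempotents) and then declares that the arguments of Proposition~\ref{prop:22}, Proposition~\ref{prop:21} and Corollary~\ref{cor:5} transcribe \emph{mutatis mutandis}. What you have written is precisely that transcription spelled out — including the correct observations that inverse maps are unique (so functors and cofunctors of partite groupoids coincide with those of the underlying partite categories) and that $\Psi$ acts on $\rcat[i]$ by the same formulae as on $\rcat$ — so there is nothing to correct.
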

Finally, in the one-object case, we have:
\begin{Thm}
  \label{thm:10}
  The equivalence~\eqref{eq:36} restricts back to an
  equivalence
  \begin{equation*}
    \cd{
      {\cat{Gpd\C ov}_e(\C)^\mathrm{op}} \ar@<-4.5pt>[r]_-{\Phi_g} \ar@{<-}@<4.5pt>[r]^-{\Psi} \ar@{}[r]|-{\sim} &
      {\cat{Ps\G rp}(\C)_\ell} 
    }
  \end{equation*}
  between the opposite of the category of internal groupoids in $\C$
  and covering functors, and the category of complete $\C$-pseudogroups
  and localic homomorphisms.
\end{Thm}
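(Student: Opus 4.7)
The plan is to obtain this result as the one-object specialisation of the preceding Theorem~\ref{thm:9} (the groupoidal localic correspondence), exactly as Theorem~\ref{thm:7} was obtained from Theorem~\ref{thm:5}. Both the left and right adjoints of~\eqref{eq:34} respect the property of having a single component/object: by inspection of Definition~\ref{def:5}, $\Phi_g \mathbb{A}$ has object-set equal to the indexing set of $\mathbb{A}$, so a $1$-partite internal groupoid is sent to a one-object join inverse category, i.e. a join inverse monoid; dually, $\Psi P$ is $\mathrm{ob}(\A)$-partite, so a one-object input yields a $1$-partite output. Consequently the equivalence~\eqref{eq:35} restricts to the equivalence~\eqref{eq:36} of Theorem~\ref{thm:7}.

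What remains is to check that, under this restricted equivalence, the covering functors and the localic pseudogroup homomorphisms correspond. First, observe that in the one-component case a partite internal covering functor $\mathbb{B} \to \mathbb{A}$ is just an internal covering functor in the usual sense, so that the specialisation of Proposition~\ref{prop:22} identifies $\cat{Gpd\C ov}_e(\C)^\mathrm{op}$ with the subcategory of $\mathrm{e}\cat{Gpd}_c(\C)$ on those cofunctors that are bijective on arrows (bijectivity on components is automatic with only one component).

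Second, apply Corollary~\ref{cor:5} to the restricted adjunction of Theorem~\ref{thm:6} in the one-object case: for a morphism $(f, g) \colon (S, X, \theta) \to (T, Y, \gamma)$ of $\cat{Ps\G rp}(\C)$, the image $\Psi(f, g)$ is bijective on components and arrows precisely when $f$ is localic, using that $\gamma \colon T \to \I(Y)$ is hyperconnected so that the hyperconnectedness hypothesis of Corollary~\ref{cor:5} is satisfied. Combining these two identifications with the equivalence~\eqref{eq:36} yields the claimed equivalence.

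The only real point to verify is that Corollary~\ref{cor:5}, which was proven for join \emph{restriction} categories over $\C$, transfers to the join inverse setting of $\rcat[i]$. This is immediate: the proofs of Propositions~\ref{prop:21} and~\ref{cor:5} only use the existence of the equalities $\sheq f g$ and meets $f \wedge g$ of parallel maps, both of which are available in any join inverse category since it has joins of restriction idempotents; no further adaptation is required. I do not anticipate this step being a real obstacle---the bulk of the work has been done in establishing the underlying adjunction~\eqref{eq:34} and the general localic/hyperconnected analysis of Section~\ref{sec:cover-funct-local}.
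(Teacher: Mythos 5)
Your proposal is correct and takes essentially the same route as the paper, which states Theorem~\ref{thm:10} as the immediate one-object specialisation of the groupoidal localic correspondence (itself obtained from Proposition~\ref{prop:22} and Corollary~\ref{cor:5}, transcribed \emph{mutatis mutandis} to the inverse setting using that join inverse categories admit the equalities $\sheq f g$ and meets $f \wedge g$). The details you supply---stability of the one-object case under both adjoints of~\eqref{eq:34}, automatic bijectivity on components, and hyperconnectedness of $\gamma$ making the converse direction of Corollary~\ref{cor:5} available---are exactly the checks the paper leaves implicit.
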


\subsection{The (localic, hyperconnected) factorisation system on $\mathrm{jr}\cat{Cat}$}
\label{sec:local-hyperc-fact}
As an application of the concepts developed in the previous section,
we show that there is a \emph{factorisation system} on the category of
join restriction categories $\mathrm{jr}\cat{Cat}$ whose two classes
are the localic and the hyperconnected morphisms. 
%
We first recall:

\begin{Defn}
  \label{def:39}
  A \emph{factorisation system} $(\E, \M)$ on a category $\D$
  comprises two classes $\E$ and $\M$ of maps, each of which
  is closed under composition with isomorphisms, and which satisfy the
  following two axioms.
  \begin{enumerate}[(i),itemsep=0.25\baselineskip]
  \item \emph{Factorisation}: each $f \colon A \rightarrow B$ can be
    written as $f = me \colon A \rightarrow C \rightarrow B$ for some
    $e \in \E$ and $m \in \M$;
  \item \emph{Orthogonality}: each $e \in \E$ is orthogonal to each
    $m \in \M$; this is to say that, for any commuting square as in the
    solid part of
    \begin{equation}\label{eq:37}
      \cd{
        {A} \ar[r]^-{f} \ar[d]_{e} &
        {C} \ar[d]^{m} \\
        {B} \ar[r]_-{g} \ar@{-->}[ur]_-{j}&
        {D}
      }
    \end{equation}
    there is a unique map $j
    \colon B \rightarrow C$ as displayed making both triangles commute.
  \end{enumerate}
\end{Defn}

The two classes of a factorisation system $(\E, \M)$ determine each
other: a map lies in $\E$ \emph{precisely} when it is orthogonal to
each map in $\M$, and vice versa. Thus, given any class of maps $\M$
in a category $\D$, there is at most one factorisation system on $\D$
whose right class is given by $\M$.
In~\cite[Section~2]{Cockett2014Restriction}, the present authors
showed that when $\M$ is the class of hyperconnected maps in
$\mathrm{r}\cat{Cat}$, such a factorisation system exists. We will now
show that the same is true when $\M$ is the class of hyperconnected
maps in $\mathrm{jr}\cat{Cat}$.

\begin{Thm}
  \label{thm:11}
  (Localic,
  hyperconnected) is a factorisation system on $\mathrm{jr}\cat{Cat}$.
\end{Thm}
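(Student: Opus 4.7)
The plan is to derive both components of the factorisation system directly from the Galois adjunction~\eqref{eq:7}. For the \emph{factorisation axiom}, given $F \colon \A \to \B$ in $\mathrm{jr}\cat{Cat}$, we view it as an object of $\rcat$ over~$\B$ and apply Corollary~\ref{cor:1} to obtain a decomposition $F = \pi_{\Psi F} \circ \eta_F$ with $\pi_{\Psi F} \colon \Phi \Psi F \to \B$ hyperconnected. That the reflection unit $\eta_F$ is itself localic follows from Corollary~\ref{cor:5}: by the triangle identity for $\Psi \dashv \Phi$, the map $\Psi(\eta_F)$ is a section of $\varepsilon_{\Psi F}$, and since $\Psi F$ is source-\'etale by construction, the Galois property (via Proposition~\ref{prop:9}) forces $\varepsilon_{\Psi F}$ to be invertible---hence $\Psi(\eta_F)$ is invertible, in particular bijective on components and arrows, which by Corollary~\ref{cor:5} identifies $\eta_F$ as localic. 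Lemma~\ref{lem:33} then ensures $\eta_F$ is join-preserving.

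For \emph{orthogonality}, we argue uniqueness and existence of the diagonal separately. Given a square $GH = KF$ with $F \colon \A \to \B$ localic and $G \colon \C \to \D$ hyperconnected, uniqueness of a diagonal $J \colon \B \to \C$ is straightforward: on objects, bijectivity of $F$ forces $J(Fa) = Ha$; on morphisms $g \in \B(Fi, Fj)$, localic property~(ii) gives $g = \bigvee_f g \wedge Ff = \bigvee_f Ff \circ \sheq{g}{Ff}$, so any join-preserving $J$ with $JF = H$ must satisfy $Jg = \bigvee_f Hf \circ J\sheq{g}{Ff}$, where each idempotent $J\sheq{g}{Ff} \in \O(Hi)$ is uniquely determined by the constraint $GJ\sheq{g}{Ff} = K\sheq{g}{Ff}$ via the hyperconnected isomorphism $\O(Hi) \to \O(GHi)$. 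Existence proceeds by defining $J$ through these same formulas and verifying the restriction category axioms.

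The \emph{main obstacle} is verifying well-definedness of this $J$---particularly, that the family $\{J(g \wedge Ff) : f \in \A(i,j)\}$ is compatible in $\C$ for each $g$. Writing $J(g \wedge Ff) = Hf \cdot \tilde e_{f,g}$ with $\tilde e_{f,g} \in \O(Hi)$ the unique lift of $K\sheq{g}{Ff}$ along $G$, both $J(g \wedge Ff)\,\overline{J(g \wedge Ff')}$ and $J(g \wedge Ff')\,\overline{J(g \wedge Ff)}$ reduce to expressions of the form $Hf \cdot \tilde a$ and $Hf' \cdot \tilde a$ for the common idempotent $\tilde a = \tilde e_{f,g}\tilde e_{f',g}\overline{Hf}\overline{Hf'} \in \O(Hi)$. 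A calculation using Lemma~\ref{lem:30}(iii, iv) applied to $g$, $Ff$, $Ff'$ in~$\B$, together with localic property~(i) of $F$ to transport meets through $F$, shows $G\tilde a \leqslant \sheq{GHf}{GHf'}$; hyperconnectedness of $G$ then lifts this to $\tilde a \leqslant \sheq{Hf}{Hf'}$, whence $Hf \cdot \tilde a = Hf' \cdot \tilde a$ by Lemma~\ref{lem:30}(iv). The remaining verifications---composition and restriction-preservation of $J$---follow the same template: calculations in $\C$ reduce via hyperconnectedness of $G$ to calculations in $\D$ involving $K$-images, where the localic properties of $F$ render them tractable.
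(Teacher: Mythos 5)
The factorisation half of your proposal has a genuine gap. You ``view $F \colon \A \to \B$ as an object of the category of restriction categories over $\B$ and apply Corollary~\ref{cor:1}''---but the adjunction $\Psi \dashv \Phi$ of~\eqref{eq:7}, and hence Corollary~\ref{cor:1}, Proposition~\ref{prop:9} and Corollary~\ref{cor:5}, exist only when the base is a join restriction category \emph{with local glueings}: the construction of $\Psi$ in Definition~\ref{def:7} produces the objects of arrows by glueing the local atlases $\fneq f g$, and these glueings need not exist in a general $\B$. Theorem~\ref{thm:11} is a statement about all of $\mathrm{jr}\cat{Cat}$, so your construction of the factorisation simply does not apply when $\B$ lacks local glueings. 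The paper repairs exactly this: it embeds $\B$ into its glueing completion via the fully faithful $\iota \colon \B \to \mathrm{Gl}(\B)$ of Proposition~\ref{prop:15}, forms the (localic, hyperconnected) factorisation $\iota F = HL$ over $\mathrm{Gl}(\B)$, and then---since $\iota$ is fully faithful, hence hyperconnected---uses \emph{orthogonality} to lift $H$ against the localic $L$ and land the middle category back over $\B$. Note that this repair makes the factorisation axiom depend on the orthogonality axiom, so a self-contained proof of orthogonality (which you do supply) is essential to the overall strategy.

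Your orthogonality argument, by contrast, is essentially correct and takes a genuinely different route from the paper's. You build the diagonal $J$ by hand, with the formula $Jg = \bigvee_f Hf \circ J\sheq{g}{Ff}$ (the same one the paper records after the fact in Proposition~\ref{prop:25}), and your treatment of the key obstacle---reducing compatibility of the family to $\tilde a \leqslant \sheq{Hf}{Hf'}$, established in $\D$ and lifted through the order-isomorphism on restriction idempotents given by hyperconnectedness of $G$---is sound. The paper instead encodes the square as a span in $\mathrm{r}\cat{Cat}\mathord{\mkern1mu/\!/\mkern1mu}\D$ and observes that fillers correspond to factorisations of $(F,1)$ through $(L,1)$, which exist uniquely because the codomain lies in the reflective subcategory $\mathrm{jr}\cat{Cat}\mathord{\mkern1mu/\!/_{\!h}\mkern1mu}\D$ while $\Psi(L,1)$ is invertible by Proposition~\ref{prop:21}; this is slicker but again needs the $\mathrm{Gl}(\thg)$ detour when $\D$ lacks glueings, whereas your elementary argument applies verbatim to any codomain. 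One small repair to your write-up: Lemma~\ref{lem:33} is stated for triangles in $\hcon$, and the domain $(\A, F)$ of the unit need not lie there since $F$ is not assumed hyperconnected; to see that $\eta_F$ preserves joins, argue instead that it preserves joins of restriction idempotents (because $\pi_{\Psi F}\eta_F = F$ does and $\pi_{\Psi F}$ is hyperconnected) and then invoke Lemma~\ref{lem:19}.
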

\begin{proof}
  First, given a commuting square in $\mathrm{jr}\cat{Cat}$, as
  to the left in:
  \begin{equation*}
    \cd{
      {\A} \ar[r]^-{F} \ar[d]_{L} &
      {\C} \ar[d]^{H} & &
            (\A, GL) \ar[r]^-{(F,1)} \ar[d]_-{(L,1)} &
      (\C, H) \\
      {\B} \ar[r]_-{G} \ar@{-->}[ur]_-{J} &
      {\D} & & 
      (\B, G) \ar@{-->}[ur]_-{(J,1)}
    }
  \end{equation*}
  with $L$ localic and $H$ 
  hyperconnected, we will exhibit a unique $J$ as shown.
  
  Suppose first that $\D$ admits local glueings. We can encode the
  data of the given commuting square as a span in
  $\mathrm{r}\cat{Cat}\mathord{\mkern1mu/\!/\mkern1mu} \D$, as right
  above, and a map $J \colon \B \rightarrow \C$ is a filler for the
  square as to the left just when it fits into a commuting triangle as
  to the right. But as $L$ is bijective on objects, \emph{any}
  $(J, \alpha)$ fitting into such a commuting triangle must have
  $\alpha = 1$. So we are left with proving that there is a unique
  factorisation of $(F,1)$ through $(L,1)$. Since $(\C, H)$ lies in
  the reflective subcategory
  $\mathrm{jr}\cat{Cat}\mathord{\mkern1mu/\!/_{\!h}\mkern1mu} \D$ of
  $\mathrm{r}\cat{Cat}\mathord{\mkern1mu/\!/\mkern1mu} \D$, it
  suffices to show that $(L,1)$ is inverted by the reflector
  $\Phi \Psi$ into this subcategory. In fact, by
  Proposition~\ref{prop:21}, $\Psi(L,1)$ is already invertible, since
  $L$ is localic and $1$ is invertible.

  If $\D$ does not admit local glueings, then we embed it into its
  local glueing completion via
  $\iota \colon \D \rightarrow \mathrm{Gl}(\D)$ as in
  Proposition~\ref{prop:15}, and apply the preceding argument to
  obtain a unique diagonal filler in
  \begin{equation*}
    \cd{
      {\A} \ar[r]^-{F} \ar[d]_{L} &
      {\C} \ar[d]^{\iota H} \\
      {\B} \ar[r]_-{\iota G} \ar@{-->}[ur]_-{J} &
      {\mathrm{Gl}(\D)}\rlap{ .}
    }
  \end{equation*}
  Now as $\iota \colon \D \rightarrow \mathrm{Gl}(\D)$ is monic, this
  $J$ is also a unique diagonal filler for the original square. This
  proves the orthogonality of localic and hyperconnected maps; it
  remains to show factorisation.

  Consider a map $F \colon \C \rightarrow \D$ in
  $\mathrm{jr}\cat{Cat}$; we again start by assuming that $\D$ admits
  local glueings. In this case, we can form the reflection
  \begin{equation*}
    \cd[@!C@C-2em]{
      {\C} \ar[rr]^-{\eta_F} \ar[dr]_-{F} & \twoeq{d} &
      {\Phi \Psi F} \ar[dl]^-{\pi_{\Psi F}} \\ &
      {\D}
    }
  \end{equation*}
  of $F \colon \C \rightarrow \D$ in $\rcat$ into $\hcon$. By
  construction, $\pi_{\Psi F}$ is hyperconnected; further, since
  $\Psi \dashv \Phi$ is a Galois adjunction, the unit
  $\eta_\C \colon (\C, F) \rightarrow (\Phi \Psi F, \pi_{\Psi F})$ is
  inverted by $\Psi$, whence, by Corollary~\ref{cor:5}, $\eta_F$ is
  localic. So we have the desired (localic, hyperconnected)
  factorisation of $F$.
  
  If $\D$ does not admit local glueings, we can like before consider
  the embedding $\iota \colon \D \rightarrow \mathrm{Gl}(\D)$ into the
  local glueing completion. By the preceding argument we have a
  (localic, hyperconnected) factorisation of $\iota F$, as in the
  solid part of:
  \begin{equation*}
    \cd{
      {\C} \ar[r]^-{F} \ar[d]_{L} &
      {\D} \ar[d]^{\iota} \\
      {\E} \ar[r]_-{H} \ar@{-->}[ur]_-{K}&
      {\mathrm{Gl}(\D)}\rlap{ .}
    }
  \end{equation*}
  Since $L$ is localic, and $\iota$ is (fully faithful and hence)
  hyperconnected, there is a unique diagonal filler as displayed.
  Since $\iota$ and $\iota K = H$ is hyperconnected, so is $K$, and so
  $KL$ is the required (localic, hyperconnected) factorisation of $F$.
\end{proof}

Note that, in the case where $\D$ has local glueings,
Proposition~\ref{prop:26} provides an explicit description of the
(localic, hyperconnected) factorisation of
$F \colon \C \rightarrow \D$ in $\mathrm{jr}\cat{Cat}$, and it is easy
to see that this same description is also valid for a general $\D$.
The following result gives a corresponding explicit description of the
orthogonal liftings of localic maps against hyperconnected ones.

\begin{Prop}
  \label{prop:25}
  Given a commuting square in $\mathrm{jr}\cat{Cat}$
  \begin{equation*}
        \cd{
      {\A} \ar[r]^-{F} \ar[d]_{L} &
      {\C} \ar[d]^{H} \\
      {\B} \ar@{-->}[ur]_-{J} \ar[r]_-{G} &
      {\D}
    }
  \end{equation*}
  with $L$ localic and $H$ hyperconnected, the unique diagonal filler
  $J \colon \B \rightarrow \C$ is determined as follows.
  \begin{itemize}
  \item  On \textbf{objects} by requiring that $J(Li) = Fi$;
  \item On \textbf{morphisms} by requiring that
    $J(g \colon Li \rightarrow Lj) = \bigvee_{f \in \A(i,j)} Ff \circ
    \varphi_{fg}$, where $\varphi_{fg} \in \O(Fi)$ is unique such that
    $H\varphi_{fg} = G\sheq g {Lf}$.
  \end{itemize}
\end{Prop}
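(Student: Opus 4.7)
The plan is to exploit the fact that existence and uniqueness of the diagonal filler $J \colon \B \rightarrow \C$ have already been established in Theorem~\ref{thm:11}, so we need only identify what $J$ must be on objects and morphisms by unwinding the constraints $JL = F$, $HJ = G$, together with the fact that $J$ is automatically a join restriction functor (as a morphism of $\hcon$ sitting between two join restriction categories, by Lemma~\ref{lem:33}).

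First I would dispatch the action on objects: because $L$ is localic, hence bijective on objects by Definition~\ref{def:41}, the equation $JL = F$ forces $J(Li) = Fi$ for each $i \in \A$. Next, for a morphism $g \colon Li \rightarrow Lj$ of $\B$, condition (ii) of Definition~\ref{def:41} gives the decomposition $g = \bigvee_{f \in \A(i,j)} g \wedge Lf$; and by Lemma~\ref{lem:30}(iv) we can rewrite each wedge as $g \wedge Lf = Lf \circ \sheq g {Lf}$. Since $J$ preserves joins, restrictions and composition, we obtain
\begin{equation*}
J(g) = \textstyle\bigvee_{f \in \A(i,j)} J(Lf) \circ J\sheq g {Lf} = \bigvee_{f \in \A(i,j)} Ff \circ J\sheq g {Lf}\rlap{ .}
\end{equation*}

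The final step is to identify $J\sheq g {Lf}$ with the claimed $\varphi_{fg} \in \O(Fi)$. Since $\sheq g {Lf}$ is a restriction idempotent on $Li$, $J\sheq g {Lf}$ is a restriction idempotent on $J(Li) = Fi$; and $HJ = G$ together with the fact that $J$ (being a restriction functor) preserves restriction give $HJ\sheq g {Lf} = G\sheq g {Lf}$. As $H$ is hyperconnected, the map $\O(Fi) \rightarrow \O(HFi) = \O(GLi)$ is an isomorphism, so there is a \emph{unique} restriction idempotent $\varphi_{fg} \in \O(Fi)$ with $H\varphi_{fg} = G\sheq g {Lf}$, and we must have $\varphi_{fg} = J\sheq g {Lf}$. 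Substituting this into the formula above gives the claim. No step here poses a serious obstacle; the only thing to be careful about is citing Lemma~\ref{lem:33} to justify that any filler is join-preserving, so that the rewriting via joins is legitimate, and noting that the compatibility of the family $\{Ff \circ \varphi_{fg}\}_{f}$ (required for the join to make sense) is inherited from the compatibility of $\{g \wedge Lf\}_{f}$ in $\B$ via the functor $J$.
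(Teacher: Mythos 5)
Your proof is correct and matches the paper's own argument essentially step for step: bijectivity on objects forces the object assignment, the localic decomposition $g = \bigvee_f g \wedge Lf = \bigvee_f Lf \circ \sheq g {Lf}$ combined with $J$'s preservation of joins, composition and restriction gives the formula, and hyperconnectedness of $H$ pins down $\varphi_{fg} = J\sheq g {Lf}$. The only cosmetic difference is your appeal to Lemma~\ref{lem:33}, which is unnecessary, since the filler produced by Theorem~\ref{thm:11} is a morphism of $\mathrm{jr}\cat{Cat}$ and hence join-preserving by definition.
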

\begin{proof}
  The given conditions completely specify $J$'s action since $L$ is
  bijective on objects. The condition on objects is clearly necessary.
  As for the condition on morphisms,  given
  $g \colon Li \rightarrow Lj$ in $\B$, we have since $L$ is localic that
  \begin{align*}
     g &= \textstyle\bigvee_{f \in \A(i,j)} g \wedge Lf = \bigvee_{f
      \in \A(x,y)} Lf \sheq g {Lf}\\
\text{and so }\ \      Jg &=\textstyle \bigvee_{f \in \A(x,y)} JLf \circ J\sheq g {Lf} = \bigvee_{f
      \in \A(x,y)} Ff \circ J\sheq g {Lf}\rlap{ .}
  \end{align*}
  Writing $\varphi_{fg} \defeq J\sheq g {Lf}$, we have that
  $H\varphi_{fg} = HJ\sheq g {Lf} = G\sheq g {Lf}$. Since $H$ is
  hyperconnected, this property uniquely determines $\varphi_{fg} \in \O(Fi)$.
\end{proof}

\subsection{The (localic, hyperconnected) factorisation system on
  $\hcon$}
\label{sec:local-hyperc-fact-1}

We now show that the factorisation system of the previous section
lifts to a factorisation system on $\hcon$ whenever $\C$ is a join
restriction category with local glueings. The classes of maps are
given by:

\begin{Defn}
  \label{def:40}
  A map $(F, \alpha) \colon (\A, P) \rightarrow (\B, Q)$ of $\hcon$ is
  said to be \emph{hyperconnected} if $\alpha$ is invertible, and
  \emph{localic} if $F$ is localic in $\mathrm{jr}\cat{Cat}$.
\end{Defn}
Note that, if $(F, \alpha)$ is hyperconnected, then $QF \cong P$ is
hyperconnected; since $Q$ is hyperconnected, it follows that $F$ is
also hyperconnected. However $F$ could be hyperconnected without
$\alpha$ being invertible, so that hyperconnectedness of $(F, \alpha)$
is \emph{strictly} stronger than hyperconnectedness of $F$.
\begin{Prop}
  \label{prop:24}
  Let $\C$ be a join restriction category with local glueings.
  (Localic, hyperconnected) is a factorisation system on $\hcon$.
\end{Prop}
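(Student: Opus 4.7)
The plan is to transfer the factorisation system on $\mathrm{jr}\cat{Cat}$ established in Theorem~7.4 across the forgetful functor $\hcon \to \mathrm{jr}\cat{Cat}$, using the extra $2$-cell data as essentially along for the ride.

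For \textbf{factorisation} of $(F, \alpha) \colon (\A, P) \to (\B, Q)$ in $\hcon$, I would first apply Theorem~7.4 to factor $F = HL$ in $\mathrm{jr}\cat{Cat}$ with $L$ localic and $H$ hyperconnected. Since $Q$ is hyperconnected and $H$ is hyperconnected, the composite $QH \colon \E \to \C$ is hyperconnected, so $(\E, QH) \in \hcon$. The desired factorisation is then
\[
(\A, P) \xrightarrow{(L, \alpha)} (\E, QH) \xrightarrow{(H, 1)} (\B, Q),
\]
whose composite is $(HL, 1 \cdot L \circ \alpha) = (F, \alpha)$, with the left map localic (as $L$ is) and the right map hyperconnected (as its $2$-cell is the identity).

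For \textbf{orthogonality}, suppose given a commuting square in $\hcon$ whose left side $(L, \gamma) \colon (\A, P) \to (\B, Q)$ is localic and whose right side $(H, \delta) \colon (\R, R) \to (\S, S)$ is hyperconnected, with top $(F, \beta)$ and bottom $(G, \zeta)$. Unpacking, this means $HF = GL$ and $\delta F \circ \beta = \zeta L \circ \gamma$ in $\mathrm{jr}\cat{Cat}$, with $L$ localic and $\delta$ invertible. Since $\delta \colon SH \Rightarrow R$ is invertible and both $R$ and $S$ are hyperconnected over $\C$, a short diagram-chase shows $H$ itself is hyperconnected in $\mathrm{jr}\cat{Cat}$; Theorem~7.4 then supplies a unique $J \colon \B \to \R$ with $JL = F$ and $HJ = G$. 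The $2$-cell $\epsilon \colon RJ \Rightarrow Q$ of a diagonal filler is forced to be $(\delta^{-1} \cdot J) \circ \zeta$ by the required identity $(\delta \cdot J) \circ \epsilon = \zeta$ and the invertibility of $\delta \cdot J$; this $\epsilon$ is total as a composite of total transformations, and the remaining identity $(\epsilon \cdot L) \circ \gamma = \beta$ follows by a direct calculation that uses the original $2$-cell commutativity $\delta F \circ \beta = \zeta L \circ \gamma$. This produces the unique diagonal filler $(J, \epsilon)$.

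Finally, \textbf{closure of each class under composition with isomorphisms} in $\hcon$ is automatic: an isomorphism in $\hcon$ has its underlying functor invertible (hence both localic and hyperconnected) and its $2$-cell invertible, so composition with it preserves both the localicness of the underlying functor and the invertibility of the $2$-cell. The main obstacle in the plan is verifying the implication $\delta$ invertible $\Rightarrow H$ hyperconnected; but this follows because hyperconnectedness of $R$ and of $S$ identifies $R \colon \O(X) \to \O(RX)$ and $S \colon \O(HX) \to \O(SHX)$ as isomorphisms, forcing $H \colon \O(X) \to \O(HX)$ also to be an isomorphism via the invertible total $\delta_X$.
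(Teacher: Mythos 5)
Your proposal is correct and follows essentially the same route as the paper's proof: factor the underlying functor $F = HL$ via the $\mathrm{jr}\cat{Cat}$ factorisation system, take the middle object $(\D, QH)$ with maps $(L,\alpha)$ and $(H,1)$, and for orthogonality observe that invertibility of $\delta$ together with hyperconnectedness of $R$ and $S$ forces $H$ to be hyperconnected, so the unique filler $J$ exists in $\mathrm{jr}\cat{Cat}$ and its $2$-cell is forced to be $\zeta$ composed with $(\delta J)^{-1}$. The only difference is that you additionally spell out closure of the two classes under composition with isomorphisms, which the paper leaves implicit.
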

\begin{proof}
  Given a map $(F, \alpha) \colon (\A, P) \rightarrow (\B, Q)$ of
  $\hcon$, we may form a (localic, hyperconnected) factorisation $F =
  HL \colon \A \rightarrow \D \rightarrow \B$
  of $F \colon \A \rightarrow \B$ in $\mathrm{jr}\cat{Cat}$; the
  desired factorisation in $\hcon$ is now given by
  \begin{equation*}
    (\A, P) \xrightarrow{(L, \alpha)} (\D, QH) \xrightarrow{(H, 1)}
    (\D, Q)\rlap{ .}
  \end{equation*}
  For orthogonality, consider a square in $\hcon$ as to the left in
  \begin{equation*}
    \cd{
      {(\A, P)} \ar[r]^-{(F, \alpha)} \ar[d]_{(L, \gamma)} &
      {(\D, R)} \ar[d]^{(H, \delta)} & &
      {\A} \ar[r]^-{F} \ar[d]_{L} &
      {\D} \ar[d]^{H} \\
      {(\B, Q)} \ar[r]_-{(G, \beta)} \ar@{-->}[ur]_-{(J, \theta)}&
      {(\E, S)} & & 
      {\B} \ar[r]_-{G} \ar@{-->}[ur]_-{J} &
      {\E}
    } 
  \end{equation*}
  where $L$ is localic and $\delta$ is invertible. As argued above, it
  follows that $H$ is hyperconnected, and so using orthogonality in
  $\mathrm{jr}\cat{Cat}$, we obtain a unique diagonal filler as to the
  right above. We wish to lift to this a filler as to the left. Since
  $\delta$ is invertible, the \emph{unique} possible $\theta$
  satisfying $\theta \circ \delta J  = \beta$ is
  $\theta = \beta \circ \delta^{-1} J$, and this also satisfies
  $\gamma \circ \theta L = \gamma \circ \beta L \circ \delta^{-1} JL =
  \alpha \circ \delta F \circ \delta^{-1} F = \alpha$. So $(J,
  \theta)$ is the desired unique filler for the square to the left.
\end{proof}

By transporting this factorisation system across the equivalence
$\hcon \simeq \parte$, we obtain a factorisation system on $\parte$.
In fact, this is easy to describe.

\begin{Prop}
  \label{prop:27}
  Under the equivalence $\hcon \simeq \parte$, the (localic,
  hyperconnected) factorisation system on $\hcon$ corresponds to the
  factorisation system (bijective on components and arrows, bijective
  on objects) on $\parte$.
\end{Prop}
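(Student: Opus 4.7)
The plan is to transport the (localic, hyperconnected) factorisation system on $\hcon$ provided by Proposition~\ref{prop:24} across the equivalence $\Phi \colon \parte \simeq \hcon \colon \Psi$ of Theorem~\ref{thm:2}, and then identify the two resulting classes of cofunctors in $\parte$ intrinsically as those bijective on components and arrows, respectively on objects.

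First I would note the general principle that any factorisation system on a category transfers across an equivalence to yield a factorisation system on the equivalent category: both the existence of factorisations and the orthogonality property are preserved and reflected by equivalences, and the closure-under-isomorphism axiom for the two classes transfers as well. Applied to Proposition~\ref{prop:24}, this produces a factorisation system on $\parte$ whose left (resp.\ right) class consists of those cofunctors which, up to isomorphism in the arrow category, are of the form $\Psi(F, \alpha)$ for $(F, \alpha)$ a localic (resp.\ hyperconnected) morphism of $\hcon$.

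It remains to characterise these two classes in intrinsic terms, and this is where the earlier results do the work. For the left class, Corollary~\ref{cor:5} asserts that for a morphism $(F,\alpha)$ of $\hcon$, the cofunctor $\Psi(F, \alpha)$ is bijective on components and arrows if and only if $F$ is localic---the converse direction applies because $Q$ is always hyperconnected in $\hcon$. Since $\Psi$ and $\Phi$ are mutually quasi-inverse, this exactly identifies the left class on $\parte$ with the cofunctors bijective on components and arrows. For the right class, the second bullet of Proposition~\ref{prop:21} gives that $\Psi(F,\alpha)$ is bijective on objects precisely when $\alpha$ is invertible, which by Definition~\ref{def:40} is exactly the condition for $(F,\alpha)$ to be hyperconnected in $\hcon$; this identifies the right class with the cofunctors bijective on objects.

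Since every ingredient has already been established, no real obstacle arises; the only thing worth double-checking is that the two identified classes on $\parte$ really are closed under isomorphism in the arrow category, but this is immediate because bijectivity of the component maps $i \mapsto Fi$, $F_i$ and $F_{ij}$ is preserved under isomorphism of cofunctors. This completes the verification that the transported factorisation system is precisely (bijective on components and arrows, bijective on objects).
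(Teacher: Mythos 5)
Your proof is correct and follows essentially the same route as the paper, which simply notes that the result is immediate from Proposition~\ref{prop:21} and Corollary~\ref{cor:5}. Your additional remarks on transporting factorisation systems across equivalences and on closure under isomorphism make explicit what the paper leaves implicit, but the substance is identical.
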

\begin{proof}
  This is immediate from Proposition~\ref{prop:21} and
  Corollary~\ref{cor:5}.
\end{proof}

We leave it as an instructive exercise to the reader to give an
explicit description of factorisations and orthogonal liftings for
this factorisation system on $\parte$.

%
%
%
%
%

\section{Applications}
\label{sec:applications}

In this section, we instantiate our main result and its variants at
particular choices of join restriction category $\C$. This will allow
us to recapture and extend existing correspondences in the literature,
and also to construct various \emph{completions}. 

\subsection{Haefliger groupoids}
\label{sec:germ-groupoids}
In~\cite{Ehresmann1954Structures}, Ehresmann describes for a pair of
spaces $E$ and $E'$, the space $\Pi(E, E')$ of germs of partial
homeomorphisms $E \rightarrow E'$, and explains that, when $E = E'$,
we have a groupoid $\Pi(E)$. 
Later, Haefliger~\cite{Haefliger1971Homotopy} focussed on the
\emph{smooth} variant of the germ groupoid---that is, the groupoid of
all local diffeomorphisms of a smooth manifold $M$---and this has
subsequently come to be known as the \emph{Haefliger groupoid}. In our
language, the Haefliger groupoid is the internal \'etale groupoid in
$\cat{Smooth}_p$ associated to the complete
$\mathrm{Smooth}_p$-pseudogroup $\I(M) \rightarrow \I(M)$. We can
adapt this construction to the more general setting of our main
theorem as follows.

\begin{Defn}
  \label{def:55}
  Let $\C$ be a join restriction category with local glueings. The
  \emph{Haefliger category} of $\C$ is the partite source-\'etale
  internal category $\H(\C)$ in $\C$ associated via $\Psi$ to the
  join restriction functor $1_\C \colon \C \rightarrow \C$. 
\end{Defn}

The components of $\H(\C)$ are indexed by objects of $\C$; the
object of objects associated to $A \in \mathrm{ob} \C$ is $A$ itself; and
the object of morphisms $\H(\C)_{AB}$ is the appropriate analogue of
Ehresmann's $\Pi(A,B)$. 
\begin{Ex}
  \label{ex:13} ~
  \begin{itemize}[itemsep=0.25\baselineskip]
  \item When $\C = \cat{Set}_p$, elements of $\H(\C)_{AB}$ with source
    $a \in A$ and target $b \in B$ are germs of partial functions
    $A \rightarrow B$ sending $a$ to $b$. There is exactly \emph{one}
    such germ---represented by the partial function with graph
    $\{(a,b)\}$---so that $\H(\C)_{AB} = A \times B$. (Alternatively,
    since every map in $\cat{Set}_p$ is \'etale, we can derive this
    conclusion from Proposition~\ref{prop:34} below).
    
  \item When $\C = \cat{Top}_p$, elements of $\H(\C)_{AB}$ with source
    $a$ and target $b$ are
germs of partial continuous maps $A \rightarrow B$ sending $a$ to $b$.
\item When $\C = \mathrm{Smooth}_\mathbb{R}$,  $\H(\C)_{AB}$
  is the space of germs of partial smooth maps. In particular,
  $\H(\C)_{M \mathbb{R}}$ incarnates the ring of smooth functions on
  $M$; more specifically, elements of $\H(\C)_{M \mathbb{R}}$ with
  source and target projections $m \in M$ and $r \in \mathbb{R}$ are
  the germs at $m$ of elements $f \in C_{\infty}(U)$ where
  $U \subseteq M$ is open, $m \in U$ and $f(m) = r$.
    
\item In a similar spirit, when $\C = \mathrm{Sch}_p$ and
    $\mathbf{A}^1 =
    (\mathrm{Spec}\,\mathbb{Z}[x],\O_{\mathrm{Spec}\,\mathbb{Z}[x]})$
    we have that
    $\H(\C)_{X, \mathbf{A}^1}$ is $(\O_X, \O_X^\ast \O_X)$, i.e., the
    structure sheaf of $X$ seen as a local isomorphism over $X$.
  \item Let $\C = \cat{Bun}(\cat{Set}_p)$, the join restriction
    category of discrete bundles. The source-target span of its
    Haefliger category at objects $\xi \colon X' \rightarrow X$ and
    $\gamma \colon Y' \rightarrow Y$ is a diagram of sets and (total)
    functions of the form
  \begin{equation}\label{eq:62}
    \cd{
      X' \ar[d]_-{\xi} & \H(\C)_{\xi \gamma} \times_{X} X' \pullbackcorner[dl] \ar[d]^-{\pi_1}
      \ar[l]_-{\pi_2} \ar[r]^-{\tau'_{\xi \gamma}} & Y' \ar[d]_-{\gamma} \\
      X & \H(\C)_{\xi \gamma} \ar[l]_-{\sigma_{\xi \gamma}} \ar[r]^-{\tau_{\xi
          \gamma}} & Y\rlap{ ,}
    }
  \end{equation}
  and we can calculate that elements of $\H(\C)_{\xi \gamma}$ with
  source $x \in X$ and target $y \in Y$ are functions
  $f \colon \xi^{-1}(x) \rightarrow \gamma^{-1}(y)$ between the
  fibres. The map $\tau'_{\xi \gamma}$ sends such an element $f$ and
  an element $x' \in \xi^{-1}(x)$ to $f(x') \in \gamma^{-1}(y)$.
  In particular, for a single object $\xi \colon X \rightarrow X'$,
  the internal category $\H(\C)_{\xi\xi} \rightrightarrows X$
  in $\cat{Set}$ is the so-called \emph{internal full subcategory}
  associated to the map $\xi$, and $\tau'_{\xi\xi}$ exhibits $\xi$ as
  an internal presheaf over this internal full subcategory.

\item Let $\C = \cat{Bun}(\cat{Top}_p)$, the join restriction category
  of topological bundles. Again, the Haefliger category involves
  diagram of the form~\eqref{eq:62}, where this time elements of
  $\H(\C)_{\xi \gamma}$ over $x \in X$ and $y \in Y$ are germs of
  partial continuous maps sending $x$ to $y$ in the base, and lifting
  to a partial continuous map on the fibres. The maps
  $\tau'_{\xi \gamma}$ encode an action by $\H(\C)$ on the family of
  all bundles $\xi \colon X' \rightarrow X$. This
  generalises~\cite{Haefliger1958Structures}'s construction of the
  ``groupoid of germs of local automorphisms of a fibred space
  $p \colon E \rightarrow B$'' (\sec I.4) and the action of this
  groupoid on $p$ (\sec I.5).
\end{itemize}
\end{Ex}

\begin{Rk}
  \label{rk:6}
  The Haefliger category of $\C$ is ``generic'' among partite internal
  categories built out of $\C$, in the following sense. Suppose that
  $F \colon \C \rightarrow \D$ is a hyperconnected join restriction
  functor. On the one hand, we can view $F$ as an object of
  ${\mathrm{jr}\cat{Cat}\mathord{\mkern1mu/\!/_{\!h}\mkern1mu} \D}$,
  and construct a source-\'etale partite internal category $\Phi(F)$
  in $\D$. On the other hand, we can form the source-\'etale partite
  internal category $\H(\C)$ in $\C$. Now $F$ preserves total maps,
  local homeomorphisms, and pullbacks along local homeomorphisms, and
  so also preserves source-\'etale partite internal categories. It is
  now easy to see that applying $F$ to the source-\'etale internal
  category $\H(\C)$ in $\C$ yields, to within isomorphism, $\Phi(F)$
  in $\D$.
\end{Rk}

Of course, we can also associate a \emph{Haefliger groupoid} $\Pi(\C)$
to any join restriction category with local glueings, taking it to be
the partite \'etale internal groupoid $\Pi(\C)$ in $\C$ associated via
$\Psi$ to the join inverse category $\mathrm{PIso}(\C)$ over $\C$.
So, for example, the Haefliger groupoid $\Pi(\cat{Top}_p)$ has spaces
of morphisms $\Pi(\cat{Top}_p)_{AB}$ given \emph{exactly} by
Ehresmann's spaces $\Pi(A,B)$.

In fact, the Haefliger groupoid of a join restriction category $\C$ is
a special case of a Haefliger category. Indeed, by Theorem~\ref{thm:3}
we can describe $\Pi(\C)$ as the partite \'etale groupoid obtained by
applying $\Psi$ to the \'etale join restriction category
$\mathrm{Et}(\C) \rightarrow \C$ over $\C$ where, as in
Corollary~\ref{cor:6}, we write $\mathrm{Et}(\C)$ for the category of
\'etale maps in $\C$. Since $\mathrm{Et}(\C)$ is closed in $\C$ under
local glueings, this is equally the result of applying $\Psi$ to
$1 \colon \mathrm{Et}(\C) \rightarrow \mathrm{Et}(\C)$; thus we have
$\Pi(\C) = \H(\mathrm{Et}(\C))$. We can use this observation to obtain
an alternative understanding of the ``objects of arrows''
$\Pi(\C)_{AB}$ in the Haefliger groupoid.

\begin{Prop}
  \label{prop:34}
  Let $\C$ be a join restriction category with glueings. The
  source-target span $\sigma_{AB} \colon A \leftarrow \Pi(\C)_{AB}
  \rightarrow B \colon \tau_{AB}$ exhibits $\Pi(\C)_{AB}$ as a product
  of $A$ and $B$ in the category $\mathrm{Lh}(\C)$ of local
  homeomorphisms between objects of $\C$.
\end{Prop}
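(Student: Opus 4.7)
The plan is to exploit the explicit description of $\Pi(\C)_{AB}$ as the glueing of the $\mathrm{PIso}(\C)(A,B)$-indexed local atlas on $A$ built from the equalities $\fneq f g$. By Proposition~\ref{prop:8}, applied to the hyperconnected functor $\mathrm{PIso}(\C) \to \C$, partial sections of $\sigma_{AB}$ are exactly the maps $\{s_f : f \in \mathrm{PIso}(\C)(A,B)\}$, and $\tau_{AB} \circ s_f = f$ by~\eqref{eq:9}. Now fix a cone of local homeomorphisms $p \colon C \to A$ and $q \colon C \to B$; we must build a unique local homeomorphism $h \colon C \to \Pi(\C)_{AB}$ with $\sigma_{AB} h = p$ and $\tau_{AB} h = q$.

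First I would refine a basis of partial sections for $p$. Pick any basis $\{s_i\}$ for $p$ and any expansion $q = \bigvee_j q_j$ of $q$ as a join of partial isomorphisms. For each pair $(i,j)$, the composite $q_j s_i \colon A \to B$ is a partial isomorphism; letting $e_{ij} = \overline{q_j s_i} \in \O(A)$, the map $s_{ij} \defeq s_i e_{ij}$ is still a partial section of $p$. Using compatibility of the $q_j$'s and the fact that $q$ is total, one checks $q s_{ij} = q_j s_i$ (a partial isomorphism) and $\bigvee_j e_{ij} = \overline{s_i}$, so that $\{s_{ij}\}_{i,j}$ is again a basis for $p$. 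Relabelling this as $\{s_\alpha\}$, each $q s_\alpha$ is a partial isomorphism. Now define $h$ by requiring $h s_\alpha = s_{q s_\alpha}$; the hypotheses of Lemma~\ref{lem:10} hold because $\overline{s_{qs_\alpha}} = \overline{qs_\alpha} = \overline{s_\alpha}$, and the order relation follows from the computation $s_{qs_\beta} \sheq {s_\alpha}{s_\beta} = s_{qs_\beta \sheq{s_\alpha}{s_\beta}} = s_{qs_\alpha \sheq{s_\alpha}{s_\beta}} \leqslant s_{qs_\alpha}$, so that $h$ is a well-defined total map.

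It remains to check the four properties. Since $\sigma_{AB} h s_\alpha = \overline{qs_\alpha} = \overline{s_\alpha} = p s_\alpha$ and $\tau_{AB} h s_\alpha = \tau_{AB} s_{qs_\alpha} = qs_\alpha$ for each $\alpha$, joint epimorphicity of the basis (Lemma~\ref{lem:27}) yields $\sigma_{AB} h = p$ and $\tau_{AB} h = q$. Using the expansion $h = \bigvee_\alpha h s_\alpha\, p_\alpha = \bigvee_\alpha s_{qs_\alpha}\, p_\alpha$ from the proof of Lemma~\ref{lem:10}, where $p_\alpha = s_\alpha^\ast$, exhibits $h$ as a join of partial isomorphisms, so $h$ is a local homeomorphism (being total and \'etale). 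Finally, for uniqueness, any total $h'$ satisfying the cone conditions has $h' s_\alpha$ a partial section of $\sigma_{AB}$ (since $\sigma_{AB} h' s_\alpha = ps_\alpha = \overline{s_\alpha} = \overline{h's_\alpha}$), so $h' s_\alpha = s_g$ for some $g \in \mathrm{PIso}(\C)(A,B)$, and $g = \tau_{AB} s_g = \tau_{AB} h' s_\alpha = q s_\alpha$; thus $h' s_\alpha = h s_\alpha$ for all $\alpha$, whence $h' = h$ by joint epimorphicity. The main delicate point is the basis refinement step: one must use essentially that $q$ itself is a local homeomorphism (not merely total) in order to decompose $qs_i$ into partial isomorphisms, which is why the proposition concerns $\mathrm{Lh}(\C)$ rather than $\tot$.
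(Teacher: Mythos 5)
Your overall strategy---a direct construction of the mediating map via the glueing universal property (Lemma~\ref{lem:10}), after refining a basis for $p$ so that each $qs_\alpha$ is a partial isomorphism---is sound, and it is genuinely different from the paper's proof, which instead packages the span $(p,q)$ into an auxiliary two-component partite internal category $\mathbb{X}$ (using the initial object obtained by glueing the empty atlas) and deduces existence and uniqueness of $h$ in one stroke from the main equivalence $\parte \simeq \hcon$. However, there is a genuine gap in your uniqueness step. You assert, citing Proposition~\ref{prop:8}, that every partial section of $\sigma_{AB}$ is of the form $s_f$ for some $f \in \mathrm{PIso}(\C)(A,B)$. Proposition~\ref{prop:8} does not apply to the inclusion $P \colon \mathrm{PIso}(\C) \rightarrow \C$: although $P$ is hyperconnected, $\mathrm{PIso}(\C)$ is only a join \emph{inverse} category, not a join restriction category (merely compatible families of partial isomorphisms need admit no join in it, only bicompatible ones), so $P$ does not lie in $\hcon$---and by the ``only if'' direction of Proposition~\ref{prop:8}, the unit at $P$ is genuinely not invertible in general. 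Indeed, partial sections of $\sigma_{AB}$ correspond to arbitrary \emph{\'etale} maps $A \rightarrow B$, not just partial isomorphisms; this is precisely why the paper identifies $\Pi(\C)$ with $\H(\mathrm{Et}(\C))$. For a concrete failure, take $\C = \cat{Set}_p$, $A$ a two-element set and $B$ a singleton: then $\Pi(\C)_{AB} \cong A$, and the total section of $\sigma_{AB}$ corresponds to the unique total map $A \rightarrow B$, which is \'etale but not a partial isomorphism, hence is not any $s_f$.

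The gap is local and repairable. In your uniqueness argument you know more than that $h's_\alpha$ is a partial section: its composite $\tau_{AB}h's_\alpha = qs_\alpha$ is a partial isomorphism, so $h's_\alpha$ is a partial \emph{bisection} in the sense of Definition~\ref{def:36}. The correct statement---that partial bisections of $\sigma_{AB}$ are exactly the $s_g$ with $g \in \mathrm{PIso}(\C)(A,B)$---is the invertibility (in particular fullness) of the unit of $\Psi \dashv \Phi_g$ at the object $\mathrm{PIso}(\C) \rightarrow \C$ of $\hcon[ji]$, i.e.\ the fixpoint property underlying Theorem~\ref{thm:5}. (Equivalently: use $\Pi(\C) = \Psi(1_{\mathrm{Et}(\C)})$ and apply Proposition~\ref{prop:8} to $1_{\mathrm{Et}(\C)}$, so that $t \mapsto \tau_{AB}t$ is a bijection between partial sections of $\sigma_{AB}$ and \'etale maps $A \rightarrow B$; then $\tau_{AB}h's_\alpha = \tau_{AB}hs_\alpha$ already forces $h's_\alpha = hs_\alpha$.) With this substitution, the remainder of your proof---the basis refinement, the verification of the hypotheses of Lemma~\ref{lem:10}, the cone equations by joint epimorphicity, and the \'etaleness of $h$ via $h = \bigvee_\alpha s_{qs_\alpha}p_\alpha$---is correct, and in fact establishes uniqueness of $h$ among all total maps over the cone, not merely among local homeomorphisms.
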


\begin{proof}
  Let $u \colon A \leftarrow X \rightarrow B \colon v$ be a span in
  $\mathrm{Lh}(\C)$. We must show there is a \emph{unique} local
  homeomorphism $h$ rendering commutative the diagram:
  \begin{equation}\label{eq:59}
    \cd[@-1em]{
    & X \ar[d]_-{h} \ar@/_1em/[ddl]_-{u} \ar@/^1em/[ddr]^-{v} \\
    & \Pi(\C)_{AB} \ar[dl]|-{\sigma_{AB}} \ar[dr]|-{\tau_{AB}} \\
    A & & B\rlap{ .}
    }
  \end{equation}

  Now, since $\C$ admits local glueings, the category
  $\mathrm{Lh}(\C)$ has a pullback-stable initial object $0$, obtained
  by glueing the empty atlas on \emph{any} object of $\C$. We can thus
  construct a two-component source-\'etale partite internal category
  $\mathbb{X}$ in $\mathrm{Et}(\C)$, where $\mathbb{X}_0 = A$,
  $\mathbb{X}_1 = B$, $\mathbb{X}_{01} = X$, $\mathbb{X}_{10} = 0$,
  $\mathbb{X}_{00} = A$, $\mathbb{X}_{11} = B$, and with remaining
  data obtained in the obvious manner. This $\mathbb{X}$ is such that
  maps $h$ rendering~\eqref{eq:59} commutative correspond precisely to
  \emph{identity-on-objects} partite cofunctors
  $\mathbb{X} \rightsquigarrow \Pi(\C)$ in $\mathrm{Et}(\C)$. Since
  $\Pi(\C) = \Phi(1_\mathrm{Et}(\C))$, such maps correspond in turn to
  maps $(\Phi \mathbb{X}, \pi_{\mathbb{X}}) \rightarrow
  (\mathrm{Et}(\C), 1_\mathrm{Et}(\C))$ of
  ${\mathrm{jr}\cat{Cat}\mathord{\mkern1mu/\!/_{\!h}\mkern1mu}
    \mathrm{Et}(\C)}$ whose $2$-cell component is the \emph{identity}.
  Clearly $(\pi_\mathbb{X}, 1)$ is the unique such map.
\end{proof}

\begin{Exs}
  \label{ex:6}~
  \begin{itemize}[itemsep=0.25\baselineskip]
    \item When $\C = \cat{Top}_p$, this recaptures a result of
      Selinger~\cite{Selinger1994LH-has-nonempty}, proving the existence
      of binary products in the category of local homeomorphisms between
      topological spaces.
    \item When $\C = \cat{Set}_p$ every map is already \'etale, so
      that $\Pi(\C) = \H(\C)$; likewise every total map is already a
      local homeomorphisms, so that $\Pi(\C)_{AB} = \H(\C)_{AB} = A
      \times B$, the cartesian product of sets.
    \item When $\C = \mathrm{Bun}(\cat{Set}_p)$, the category
      $\mathrm{Lh}(\C)$ is the category $\cat{Cart}$ whose objects are
      functions between sets, and whose morphisms are pullback
      squares. It follows that $\cat{Cart}$ has binary products.
      Explicitly, if $\xi \colon X' \rightarrow X$ and
      $\gamma \colon Y' \rightarrow Y$ in $\cat{Cart}$, then their
      product is $\zeta \colon Z' \rightarrow Z$ where
      \begin{equation*}
        Z = \{(x,y,\theta) : x \in X, y \in Y, \theta \colon
        \xi^{-1}(x) \cong \gamma^{-1}(y)\}
      \end{equation*}
      and an element of $\zeta^{-1}(x,y,\theta)$ is a pair $\bigl(x' \in
      \xi^{-1}(x), y' \in \gamma^{-1}(y)\bigr)$ with $\theta(x') = y'$.
    \end{itemize}
\end{Exs}

\subsection{The Resende correspondence}
\label{sec:resende-corr}

In~\cite{Resende2007Etale}, Resende describes a
correspondence between join inverse monoids---\emph{abstract complete
  pseudogroups} in his terminology---and localic \'etale groupoids. He
establishes this by way of a third notion which he terms an
\emph{inverse quantal frame}. More precisely, he establishes
in~\cite[Theorem~4.15]{Resende2007Etale} an equivalence between the
categories of join inverse monoids and inverse quantal frames, and a
non-functorial correspondence---described
following~\cite[Corollary~5.12]{Resende2007Etale}---between inverse
quantal frames and join inverse monoids.

In fact, Resende's abstract complete pseudogroups are exactly our
complete $\cat{Loc}_p$-pseudogroups. Indeed, if
$\theta \colon S \rightarrow \I(X)$ is a complete
$\cat{Loc}_p$-pseudogroup, then since $\theta$ restricts back to an
isomorphism $E(S) \rightarrow E(\I(X)) \cong X$, the locale $X$ can be
identified with the locale of idempotents of $S$; whereupon
functoriality of $\theta$ forces
$\theta(s) \colon E(S) \rightharpoonup E(S)$ to be the partial locale
map given by $e \mapsto s^\ast e s$. Thus, our Theorem~\ref{thm:7}
specialised to the case $\C = \cat{Loc}_p$ establishes a
\emph{functorial} equivalence between the category of join inverse
monoids (= abstract complete pseudogroups) and the category of localic
\'etale groupoids and cofunctors.


The identification of abstract complete pseudogroups and complete
$\cat{Loc}_p$-pseudogroups has an analogue for join restriction
categories, described in~\cite{Cockett2014Restriction}; using this, we
may generalise Resende's correspondence further. The key definition~is:


\begin{Defn}
  \label{def:30} (cf.~\cite[\sec 4.1]{Cockett2002Restriction}) The
  \emph{fundamental functor} $\O_\A \colon \A \rightarrow \cat{Loc}_p$ of
  a join restriction category $\A$ is given on objects by
  $X \mapsto \O(X)$, and on maps by sending
  $f \colon X \rightarrow Y$ to the partial locale map
  $\O(f) \colon \O(X) \rightharpoonup \O(Y)$ with
  $\O(f)^\ast(e) = \overline{ef}$.
\end{Defn}

The crucial fact about the fundamental functor of $\A$ is that it is
the essentially-unique hyperconnected functor from $\A$ to
$\cat{Loc}_p$. The precise result is the following one, which combines
Proposition~3.3 and Proposition~7.3 of~\cite{Cockett2014Restriction}.

%
%

\begin{Prop}
  \label{prop:29}
  The fundamental functor $\O_\A \colon \A \rightarrow \cat{Loc}_p$ is
  a hyperconnected join restriction functor. For any
  $H \colon \A \rightarrow \cat{Loc}_p$, the following are equivalent:
  \begin{enumerate}[(i)]
  \item $H$ is hyperconnected;
  \item $H$ is a terminal object in $\mathrm{jr}\cat{Cat}(\A,
    \cat{Loc}_p)$;
  \item $H$ is naturally isomorphic to the fundamental functor
    $\O_\A \colon \A \rightarrow \cat{Loc}_p$.
\end{enumerate}
\end{Prop}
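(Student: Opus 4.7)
\smallskip

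The plan is to begin by verifying the unparenthesised claim, namely that $\O_\A \colon \A \rightarrow \cat{Loc}_p$ is indeed a hyperconnected join restriction functor. Functoriality and preservation of the restriction operation are direct calculations from the formula $\O(f)^\ast(e) = \overline{ef}$, using the restriction axioms to check $\O(gf)^\ast = \O(f)^\ast \circ \O(g)^\ast$ and $\O(\bar f)^\ast = (\thg) \wedge \overline f$. Join-preservation follows from Lemma~\ref{lem:18}(i). Finally, hyperconnectedness reduces to the observation that for a locale $L$ the set $\O(L)$ of restriction idempotents in $\cat{Loc}_p$ is canonically identified with $L$ itself (each $a \in L$ gives the partial identity with inverse image $a \wedge ({\thg})$), under which the map $\O(X) \rightarrow \O(\O(X))$ induced by $\O_\A$ is the identity.

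The implication (iii) $\Rightarrow$ (i) is immediate since hyperconnectedness is stable under natural isomorphism. The implication (ii) $\Rightarrow$ (iii) will follow once we know that $\O_\A$ itself is terminal in $\mathrm{jr}\cat{Cat}(\A, \cat{Loc}_p)$: then any two terminal objects are canonically isomorphic via the unique transformations in each direction. So the essential work is the direction (i) $\Rightarrow$ (ii), which in particular will establish that $\O_\A$ is terminal (applying it to $H = \O_\A$).

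For (i) $\Rightarrow$ (ii), let $H \colon \A \rightarrow \cat{Loc}_p$ be hyperconnected and let $F \colon \A \rightarrow \cat{Loc}_p$ be an arbitrary join restriction functor. I will construct a unique total natural transformation $\alpha \colon F \Rightarrow H$. Hyperconnectedness gives an isomorphism $\phi_X \colon \O(X) \rightarrow \O(HX) = HX$ for each $X \in \A$. I will define the total locale map $\alpha_X \colon FX \rightarrow HX$ by specifying its inverse image $\alpha_X^\ast \colon HX \rightarrow FX$ as the composite $e \mapsto \overline{F(\phi_X^{-1}(e))} \in \O(FX) = FX$. Meet- and join-preservation of $\alpha_X^\ast$ reduce to preservation of finite meets and arbitrary joins of restriction idempotents by $F$, which follows from $F$'s being a join restriction functor together with Lemma~\ref{lem:18}. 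Naturality of $\alpha$ is a calculation using the key identity $Hf \circ \phi_X(e) = \phi_Y(\text{something}) \circ Hf$ for restriction idempotents, rearranging via the restriction axiom $\bar g f = f \overline{gf}$. Totality of $\alpha_X$ amounts to $\alpha_X^\ast(\top_{HX}) = \top_{FX}$, which holds because $\phi_X$ preserves top elements and $F$ preserves the identity.

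The main obstacle I anticipate is uniqueness: given any other total transformation $\beta \colon F \Rightarrow H$, I must show $\beta = \alpha$. The crux is that $\beta_X^\ast$, being a meet-join-preserving map $HX \rightarrow FX$ out of the locale $HX \cong \O(X)$, is determined by its values on restriction idempotents $\phi_X(e)$ for $e \in \O(X)$; and naturality of $\beta$ with respect to the restriction idempotents $e \colon X \rightarrow X$ forces $\beta_X^\ast(\phi_X(e)) = \overline{F(e)}$, since $\phi_X(e)$ is (up to the identification $HX = \O(HX)$) the restriction of $He$, and a straightforward diagram chase using totality of $\beta_X$ pins down the required value. This yields $\beta = \alpha$ and completes the proof.
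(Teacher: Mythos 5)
Your proof is correct, and it is essentially the intended argument: the paper gives no proof of this proposition, deferring instead to Propositions 3.3 and 7.3 of \cite{Cockett2014Restriction}, but your construction of the transformation by $\phi_X(e) \mapsto (Fe)^\ast(\top)$, with uniqueness forced by naturality at the restriction idempotents $e \colon X \rightarrow X$ together with totality of the components, is exactly the standard argument from that reference. Indeed, the same pattern appears verbatim in the paper's own proof of the coverage generalisation, Proposition~\ref{prop:38}, so your route coincides with the paper's.
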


We thus obtain the following generalisation of the correspondence
between abstract complete pseudogroups and complete
$\cat{Loc}_p$-pseudogroups.

\begin{Cor}
  \label{cor:8} (\cite[Theorem~7.14]{Cockett2014Restriction}) There is
  an equivalence of categories
  \begin{equation*}
    \cd{
      {\mathrm{jr}\cat{Cat}\mathord{\mkern1mu/\!/_{\!h}\mkern1.5mu}
        \cat{Loc}_p} \ar@<-4.5pt>[r]_-{U} \ar@{<-}@<4.5pt>[r]^-{V} \ar@{}[r]|-{\sim} &
      {\mathrm{jr}\cat{Cat}}
    }
  \end{equation*}
  where $U$ forgets the projection down to $\cat{Loc}_p$, and where
  $V$ sends $\A$ to $(\A, \O_\A)$ and sends
  $F \colon \A \rightarrow \B$ to
  $(F, \alpha) \colon (\A, \O_\A) \rightarrow (\B, \O_\B)$, where
  $\alpha$ has components $\alpha_i \colon \O_B(Fi) \rightarrow
  \O_\A(i)$ given by $\alpha_i^\ast(e) = Fe$ for all $e \in \O_\A(i)$.
\end{Cor}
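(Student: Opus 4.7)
The plan is to establish the equivalence by showing that $UV$ is strictly the identity on $\mathrm{jr}\cat{Cat}$, while $VU$ is naturally isomorphic to the identity on $\mathrm{jr}\cat{Cat}\mathord{\mkern1mu/\!/_{\!h}\mkern1mu} \cat{Loc}_p$, with the natural isomorphism extracted from the terminality property of $\O_\A$ given in Proposition~\ref{prop:29}.

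First I would verify that $V$ is a well-defined functor. On objects, $(\A, \O_\A)$ lies in $\mathrm{jr}\cat{Cat}\mathord{\mkern1mu/\!/_{\!h}\mkern1mu} \cat{Loc}_p$ by Proposition~\ref{prop:29}. For morphisms, given a join restriction functor $F \colon \A \to \B$, the formula $\alpha_i^\ast(e) = Fe$ yields a partial locale map $\alpha_i \colon \O_\B(Fi) \rightharpoonup \O_\A(i)$: the inverse image $\alpha_i^\ast$ preserves binary meets because $F$ preserves the composition of commuting restriction idempotents, and preserves arbitrary joins because $F$ does, by Lemma~\ref{lem:19}. Totality of $\alpha_i$ follows from $F(1_i) = 1_{Fi}$, and naturality in $\A$ reduces to the identity $\overline{e \cdot Ff} = F\overline{ef}$, which is immediate from $F$'s preservation of restriction. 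Strict functoriality is checked by computing the composite $2$-cell in $\rcat$: for $G \colon \B \to \C$, the inverse image of $\alpha^F \circ \alpha^G F$ at $e \in \O_\A(i)$ is $\alpha^{F,\ast}_i \alpha^{G,\ast}_{Fi}(e) = F(Ge) = GF(e)$, matching the $2$-cell of $V(GF)$.

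Since $UV(\A) = \A$ and $UV(F) = F$, we have $UV = \mathrm{id}$ on the nose. For the other direction, given $(\A, P)$ in $\mathrm{jr}\cat{Cat}\mathord{\mkern1mu/\!/_{\!h}\mkern1mu} \cat{Loc}_p$, both $P$ and $\O_\A$ are terminal in $\mathrm{jr}\cat{Cat}(\A, \cat{Loc}_p)$ by Proposition~\ref{prop:29} (using (i)~$\Rightarrow$~(ii)); so there is a unique $2$-cell $\eta_{(\A,P)} \colon P \To \O_\A$, invertible because a unique $2$-cell exists the other way. Each component is an iso in $\cat{Loc}_p$ and hence total, so $(1_\A, \eta_{(\A,P)}) \colon (\A, P) \to (\A, \O_\A) = VU(\A, P)$ is a well-defined isomorphism in $\mathrm{jr}\cat{Cat}\mathord{\mkern1mu/\!/_{\!h}\mkern1mu} \cat{Loc}_p$.

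The only nontrivial remaining point is naturality of $\eta$. Given a morphism $(F, \beta) \colon (\A, P) \to (\B, Q)$, I need to check that $V(F) \circ (1_\A, \eta_{(\A,P)}) = (1_\B, \eta_{(\B,Q)}) \circ (F, \beta)$ in $\mathrm{jr}\cat{Cat}\mathord{\mkern1mu/\!/_{\!h}\mkern1mu} \cat{Loc}_p$. Both sides have first component $F$; on $2$-cells, both sides produce natural transformations $QF \To \O_\A$. But $\O_\A$ is terminal in $\mathrm{jr}\cat{Cat}(\A, \cat{Loc}_p)$, so there is exactly one such transformation, and the two must coincide. This gives the naturality and completes the equivalence.
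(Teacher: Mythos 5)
Your proof is correct and takes exactly the route the paper intends: the paper states this corollary as an immediate consequence of Proposition~\ref{prop:29} (citing \cite[Theorem~7.14]{Cockett2014Restriction}), and your argument is that derivation spelled out --- $UV = \mathrm{id}$ on the nose, and $VU \cong \mathrm{id}$ extracted from terminality of hyperconnected functors into $\cat{Loc}_p$. One small caveat: with the paper's orientation convention (Definition~\ref{def:28}) the $2$-cell of a map $(\A,P) \rightarrow (\B,Q)$ points $QF \Rightarrow P$, so your $\eta_{(\A,P)}$ should be the transformation $\O_\A \Rightarrow P$ (or its inverse), and in the naturality check both composites are transformations $\O_\B F \Rightarrow P$, which coincide by terminality of $P$ --- which you did establish via Proposition~\ref{prop:29}(i)$\Rightarrow$(ii) --- rather than of $\O_\A$.
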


Taking this together with our main results, we thus obtain:

\begin{Thm}
  \label{thm:12}
  There are equivalences of categories
  \begin{equation}\label{eq:61}
    \cd[@C-0.5em]{
      {\mathrm{pe}\cat{Cat}_c(\cat{Loc}_p)} \ar@<-4.5pt>[r]_-{U \Phi}
      \ar@{<-}@<4.5pt>[r]^-{\Psi V} \ar@{}[r]|-{\sim} &
      {\mathrm{jr}\cat{Cat}}
    } \quad \text{and} \quad 
    \cd[@C-0.5em]{
      {\mathrm{pe}\cat{Gpd}_c(\cat{Loc}_p)} \ar@<-4.5pt>[r]_-{U \Phi_g}
      \ar@{<-}@<4.5pt>[r]^-{\Psi V} \ar@{}[r]|-{\sim} &
      {\mathrm{ji}\cat{Cat}}
    }
  \end{equation}
  together with their obvious restrictions to the one-object case.
\end{Thm}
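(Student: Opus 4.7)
The plan is to obtain both equivalences by composing two results already established: the main equivalence of Theorem~\ref{thm:2} (resp.\ Theorem~\ref{thm:5} for the groupoidal variant) applied at $\C = \cat{Loc}_p$, with the equivalence of Corollary~\ref{cor:8} relating $\mathrm{jr}\cat{Cat}\mathord{\mkern1mu/\!/_{\!h}\mkern1.5mu}\cat{Loc}_p$ and $\mathrm{jr}\cat{Cat}$.

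First I would verify that the main theorem applies, which requires $\cat{Loc}_p$ to be a join restriction category with local glueings; this is Example~\ref{ex:3} together with Example~\ref{ex:7}. So Theorem~\ref{thm:2} yields the equivalence
\begin{equation*}
\Phi \colon \mathrm{pe}\cat{Cat}_c(\cat{Loc}_p) \simeq \mathrm{jr}\cat{Cat}\mathord{\mkern1mu/\!/_{\!h}\mkern1.5mu}\cat{Loc}_p \colon \Psi\rlap{ ,}
\end{equation*}
and Corollary~\ref{cor:8} yields
\begin{equation*}
U \colon \mathrm{jr}\cat{Cat}\mathord{\mkern1mu/\!/_{\!h}\mkern1.5mu}\cat{Loc}_p \simeq \mathrm{jr}\cat{Cat} \colon V\rlap{ .}
\end{equation*}
Composing these two equivalences gives the first equivalence of the theorem, with forward direction $U\Phi$ and inverse $\Psi V$.

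For the groupoidal statement, I would proceed identically, but use Theorem~\ref{thm:5} instead of Theorem~\ref{thm:2} to obtain
\begin{equation*}
\Phi_g \colon \mathrm{pe}\cat{Gpd}_c(\cat{Loc}_p) \simeq \mathrm{jr}\cat{Cat}\mathord{\mkern1mu/\!/_{\!h}\mkern1.5mu}\cat{Loc}_p \cap \mathrm{ji}\cat{Cat} \colon \Psi\rlap{ ,}
\end{equation*}
where the right-hand side really means hyperconnected functors $P \colon \A \rightarrow \cat{Loc}_p$ with $\A$ a join inverse category. By Corollary~\ref{cor:8}, restricting the equivalence $U \dashv V$ to those objects $(\A, P)$ whose $\A$ is a join inverse category yields the desired equivalence with $\mathrm{ji}\cat{Cat}$; here one uses that the unit $P \cong \O_\A$ of Proposition~\ref{prop:29} does not change the underlying category $\A$, so it is a join inverse category on one side precisely when it is on the other. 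Composing then yields the second equivalence, with forward direction $U\Phi_g$ and inverse $\Psi V$.

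The only real content, beyond assembling the pieces, is checking that the composite inverse-direction functor $\Psi V$ agrees on the nose with the formula stated, namely that it sends a join restriction category $\A$ to $\Psi$ applied to the fundamental functor $\O_\A \colon \A \rightarrow \cat{Loc}_p$; this is immediate from the definition of $V$ in Corollary~\ref{cor:8}. The only subtle point is that the natural transformation $\alpha$ in the definition of $V$ on morphisms is the canonical one arising from hyperconnectedness, which is unique up to isomorphism by Proposition~\ref{prop:29}(ii); so on restricting to fixpoints the composite is unambiguously defined. No step here should present any genuine obstacle, as the correspondence is entirely formal once the two cited equivalences are in hand.
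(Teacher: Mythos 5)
Your proof is correct and is essentially the paper's own: the paper offers no separate argument for Theorem~\ref{thm:12} beyond composing the main equivalences (Theorem~\ref{thm:2}, resp.\ Theorem~\ref{thm:5}) at $\C = \cat{Loc}_p$ with Corollary~\ref{cor:8}, which is exactly what you do. The one point to phrase more carefully---as you partly do---is that the groupoidal case needs the join-inverse \emph{analogue} of Corollary~\ref{cor:8} rather than a literal restriction of it (a join inverse category is not a join restriction category, so $\mathrm{ji}\cat{Cat}\mathord{\mkern1mu/\!/_{\!h}\mkern1mu}\cat{Loc}_p$ is not a subcategory of $\mathrm{jr}\cat{Cat}\mathord{\mkern1mu/\!/_{\!h}\mkern1mu}\cat{Loc}_p$); but since the fundamental functor and Proposition~\ref{prop:29} carry over verbatim to join inverse categories, this is the same implicit step the paper itself relies on.
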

For the left-to-right direction of these equivalences, we cannot
improve upon the descriptions given in
Sections~\ref{sec:right-adjoint} and~\ref{sec:group-join-inverse}. For
the right-to-left direction, we can use the characterisation of local
glueings in $\cat{Loc}_p$ from Example~\ref{ex:7} to say more. It
suffices to do this in the most general case.

\begin{Prop}
  \label{prop:30}
  Let $\A$ be a join restriction category with object-set $I$. The
  corresponding $I$-partite source-\'etale localic category has:
  \begin{itemize}
  \item Locale
  of objects $X_i = \O(i)$ for $i \in \A$;
\item Locale of arrows $X_{ij}$ for $i,j \in \A$ given by
  \begin{equation}\label{eq:48}
  X_{ij} = \bigl\{\,\bigl(\theta_f \in \O(i) : f \in \A(i,j)\bigr) \mid \theta_f e = \theta_{fe} \text{
  for $f \in \A(i,j)$, $e \in \O(i)$}\,\bigr\}\rlap{ .}
\end{equation}
\item Source--target maps $\sigma_{ij} \colon X_i \leftarrow X_{ij}
  \rightarrow X_j \colon \tau_{ij}$ given by
\begin{equation*}
  \sigma_{ij}^\ast(d) = (\overline{fd} : f \in \A(i,j)) \qquad
  \tau_{ij}^\ast(d) = (\overline{df} : f \in \A(i,j))\rlap{ ;}
\end{equation*}
\item The partial section
$s_f \colon X_i \rightarrow X_{ij}$ of $\sigma_{ij}$ associated to
the map $f \in \A(i,j)$ given by
$s_f^\ast(\theta) = \theta_f$.
\item Identities $\eta_i
\colon X_i \rightarrow X_{ii}$ given by $\eta_{i}^\ast(\theta) =
\theta_{1_i}$;
\item Multiplication $\mu_{ijk} \colon X_{jk} \times_{X_j} X_{ij}
  \rightarrow X_{ik}$ given by
  \begin{equation*}
\mu_{ijk}^\ast(\theta) = \bigl(\theta_{gf}
: f \in \A(i,j), g \in \A(j,k)\bigr)\rlap{ ,}
\end{equation*}
under the identification of
  $X_{jk} \times_{X_j} X_{ij}$ with the locale of all families
\begin{equation}\label{eq:50}
  \bigl\{\,\bigl(\psi_{f,g} \in \O(i) :
  f \in \A(i,j), g \in \A(j,k)\bigr) \mid \psi_{f,g} e =
  \psi_{fe,g} \text{, } \psi_{f,g} \overline{df} = \psi_{f,gd}\,\bigr\}\rlap{ .}
\end{equation}
\end{itemize}
\end{Prop}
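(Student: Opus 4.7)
The plan is to unwind $\Psi V(\A) = \Psi \O_\A$ using Definition~\ref{def:7} together with Example~\ref{ex:7}'s explicit description of local glueings in $\cat{Loc}_p$. Since $\O_\A$ is hyperconnected by Proposition~\ref{prop:29}, it preserves joins of restriction idempotents, and so by Remark~\ref{rk:2} we have $\fneq f g = \O_\A(\sheq f g) = \sheq f g \in \O(i)$ under the canonical identification of $\O(\O(i))$ with $\O(i)$. Thus the atlas defining the source map $\sigma_{ij}$ has components $\sheq{f_1}{f_2} \in \O(i)$ indexed by $\A(i,j) \times \A(i,j)$.

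Applying Example~\ref{ex:7} to this atlas, $X_{ij}$ is the locale of families $(\theta_f \in \O(i) : f \in \A(i,j))$ with $\theta_f \leqslant \bar f$ and $\theta_g \wedge \sheq f g \leqslant \theta_f$. My first step is to verify this is equivalent to the module-style description~\eqref{eq:48}. For the forward direction, given $(\theta_f)$ satisfying $\theta_f e = \theta_{fe}$, setting $e = \bar f$ yields $\theta_f \leqslant \bar f$, while a short distributivity argument using the definition of $\sheq f g$ gives the second inequality. Conversely, applying the atlas condition with $g = fe$ and using $\sheq{fe}{f} = \overline{fe}$ from Lemma~\ref{lem:30}(i), together with the identity $\overline{fe} = \bar f e$ coming from restriction axiom (iii), furnishes both $\theta_f e \leqslant \theta_{fe}$ and the reverse inequality. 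The formulas for source, target, sections, and identity then transcribe directly from Example~\ref{ex:7} combined with the characterization $\tau_{ij}s_f = \O_\A(f)$, using $\bar f d = \overline{fd}$ for $d \in \O(i)$.

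The remaining and principal task is to identify $X_{jk} \times_{X_j} X_{ij}$ as in~\eqref{eq:50} and then derive the multiplication. By Lemma~\ref{lem:26}(ii), this pullback is a local homeomorphism over $X_{ij}$ with basis $\{\tau_{ij}^\ast(s_g) : g \in \A(j,k)\}$ and induced atlas $\tau_{ij}^\ast(\sheq{g_1}{g_2})$. Applying Example~\ref{ex:7} once more and writing each glueing datum as a double-indexed family $(\psi_{f,g})$---with the $f$-index tracking elements of $X_{ij}$ and the $g$-index the patches of the outer glueing---transcribes the atlas conditions into~\eqref{eq:50} by essentially the same argument as in the single-variable case. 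The extra input is the identity $\overline{gf}\,\overline{df} = \overline{gdf}$ for $d \in \O(j)$, which follows from $f\overline{df} = \bar d f$ (restriction axiom (iv)) combined with $g\bar d = gd$ (since restriction idempotents are self-restrictions). Given this identification, the defining equation $\mu_{ijk}\tau_{ij}^\ast(s_g)s_f = s_{gf}$ and Lemma~\ref{lem:10} force $\mu_{ijk}^\ast(\theta) = (\theta_{gf})$, and well-definedness of this family within~\eqref{eq:50} is immediate from associativity in $\A$.

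The main obstacle is the careful bookkeeping required to transcribe the abstract atlas-and-glueing data for the pullback into the explicit description~\eqref{eq:50}; once this translation is in hand, all remaining verifications reduce to direct calculation.
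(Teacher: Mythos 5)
Your proposal is correct and follows essentially the paper's own route: unwind $\Psi\O_\A$ via Definition~\ref{def:7}, use Example~\ref{ex:7} to describe each glueing in $\cat{Loc}_p$ explicitly, convert the atlas-style conditions into the module-style conditions of~\eqref{eq:48} (your conversion, via $\sheq{fe}{f} = \overline{fe}$ and $\overline{fe} = \overline{f}e$, is the same calculation the paper performs), and pin down $\tau_{ij}$ and $\mu_{ijk}$ by their precomposites with the basis sections via Lemma~\ref{lem:10}.

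The one divergence is at the multiplication step. You present $X_{jk} \times_{X_j} X_{ij}$ as a glueing \emph{over $X_{ij}$} of the singly-indexed atlas $\tau_{ij}^\ast(\sheq{g}{g'})$, quoting the pullback clause Lemma~\ref{lem:26}(ii), and then unpack each element of $X_{ij}$ into a family over $\O(i)$ to reach the doubly-indexed description~\eqref{eq:50}; the paper instead presents it, following the proof of Proposition~\ref{prop:4}, as a glueing \emph{over $X_i$} of the doubly-indexed atlas $\sheq f h\,\overline{\sheq g k f}$ with sections $\tau_{ij}^\ast(s_g)s_f$. Both routes hinge on the same kind of calculation (your identity $\overline{gf}\,\overline{df} = \overline{gdf}$ included) and arrive at~\eqref{eq:50}, so the difference is one of bookkeeping rather than substance. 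If you write your version up in full, make one point explicit that you currently elide: Lemma~\ref{lem:10} only forces $\mu_{ijk}^\ast(\theta) = (\theta_{gf})$ once you have identified the composite section $\tau_{ij}^\ast(s_g)s_f$, under your description of the pullback, as the map $\psi \mapsto \psi_{f,g}$ --- this is exactly the small verification the paper records before invoking the characterising equation $\mu_{ijk}\tau_{ij}^\ast(s_g)s_f = s_{gf}$.
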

\begin{proof}
  The identification of $X_i$ is clear. By definition, $X_{ij}$ is the
  glueing in $\cat{Loc}_p$ of the local atlas
  $\bigl(\sheq f g : f,g \in \A(i,j)\bigr)$ on $\O(i)$, and so by
  Example~\ref{ex:7} comprises the locale of all families
  \begin{equation}\label{eq:49}
    \bigl\{\,\bigl(\theta_f \in \O(i) : f \in \A(i,j)\bigr) \mid
    \theta_f \leqslant \overline{f} \text{ and } \theta_g \sheq f g \leqslant
    \theta_f\,\bigr\}\rlap{ .}
  \end{equation}
  We claim these are the same families as in~\eqref{eq:48}. First,
  given a family as in~\eqref{eq:48} we have
  $\theta_f = \theta_{f \overline{f}} = \theta_{f} \overline{f}$, so
  that $\theta_f \leqslant \overline{f}$, and also that
  $\theta_g \sheq f g = \theta_{g \sheq f g} = \theta_{f \sheq f g} =
  \theta_{f}\sheq f g \leqslant \theta_f$. Conversely, given a family
  as in~\eqref{eq:49}, we have
  $\theta_f e = \theta_f \overline{fe} = \theta_f \sheq f {fe}
  \leqslant \theta_{fe}$ and
  $\theta_{fe} = \theta_{fe}\overline{fe}e = \theta_{fe}\sheq {fe} fe
  \leqslant \theta_f e$ so that $\theta_f e = \theta_{fe}$.
  So~\eqref{eq:48} is a correct description of $X_{ij}$.

  The descriptions of $\sigma_{ij}$ and the partial sections $s_f$ now
  follow directly from Example~\ref{ex:7}; this gives also
  $\eta_i = s_{1_i}$. For the target map, it suffices by the
  description in Definition~\ref{def:7} to show that
  $\tau_{ij}s_f = \O(f) \colon \O(i) \rightharpoonup \O(j)$ for all
  $f \in \A(i,j)$, which is so since
  $s_f^\ast \tau_{ij}^\ast(d) = \overline{df}$. Finally, for the
  multiplication, we observe that $X_{jk} \times_{X_j} X_{ij}$ is, as
  in the proof of Proposition~\ref{prop:4}, a glueing of the local
  atlas
  \begin{equation*}
    \bigl(\sheq f h \overline{\sheq g k f} : (f, g), (h,k) \in \A(i,j)
    \times \A(j,k)\bigr)
  \end{equation*}
  with associated family of local sections $\tau_{ij}^\ast(s_g) s_f$.
  A short calculation similar in nature to that given above shows
  that~\eqref{eq:50} is a valid description of this local glueing, and
  that in these terms, the section $\tau_{ij}^\ast(s_g) s_f$ is given
  by $\theta \mapsto \theta_{(f,g)}$. Given this, the description of
  $\mu_{ijk}$ is validated by observing that it satisfies the
  condition $\mu_{ijk}\tau_{ij}^\ast(s_g) s_f = s_{gf}$ which uniquely
  characterises it in Definition~\ref{def:7}.
\end{proof}


\subsection{The Lawson--Lenz correspondence}
\label{sec:laws-lenz-corr}

In~\cite{Lawson2013Pseudogroups}, Lawson and Lenz describe a Galois
adjunction between join inverse monoids and \'etale topological
groupoids, inducing an equivalence between the categories of
fixpoints. They call these fixpoints the \emph{spatial} join inverse
monoids and the \emph{sober} \'etale groupoids; the nomenclature draws
on the Galois adjunction between topological spaces and locales, whose
fixpoints are the sober spaces and the spatial locales.
\begin{Defn}
  \label{def:45}
  A topological space is \emph{sober} if each irreducible closed set
  is the closure of a unique point. A \emph{point} of a locale $X$ is
  a function $p \colon X \rightarrow 2 = \{\bot \leqslant \top\}$
  which preserves finite meets and all joins. A locale $X$ is
  \emph{spatial} if whenever $x \neq y \in X$ there is a point $p$
  with $p(x) \neq p(y)$.
\end{Defn}

We now explain how the Lawson--Lenz adjunction can be re-derived
from our Theorem~\ref{thm:10} by composing the equivalence given there
with the space--locale adjunction.
We begin by describing this latter adjunction in a manner which is
amenable for our applications.

\begin{Lemma}
  \label{lem:12}
  The fundamental functor
  $\O \colon \cat{Top}_p \rightarrow \cat{Loc}_p$ has a right adjoint
  $\mathrm{pt} \colon \cat{Loc}_p \rightarrow \cat{Top}_p$ in
  $\mathrm{jr}\cat{Cat}$. The underlying adjunction of categories
  $\O \dashv \mathrm{pt}$ is Galois, and its fixpoints are the sober
  spaces, respectively, the spatial locales.
\end{Lemma}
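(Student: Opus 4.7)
The plan is to lift the classical (total) spaces--locales adjunction to the partial setting, and then identify the fixpoints.

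First I would define $\mathrm{pt}$ explicitly. For a locale $Y$, take $\mathrm{pt}(Y)$ to be the set of frame maps $p \colon Y \to \mathbf{2}$ (equivalently, locale maps $\mathbf{1} \to Y$, or completely prime filters on $Y$), topologised by the basic opens $U_y = \{p : p(y) = \top\}$ for $y \in Y$. For a partial locale map $f \colon Y \rightharpoonup Z$, I would let $\mathrm{pt}(f) \colon \mathrm{pt}(Y) \rightharpoonup \mathrm{pt}(Z)$ be the partial continuous map defined on the open $U_{f^\ast(\top_Z)}$ by $p \mapsto p \circ f^\ast$. Well-definedness and functoriality are direct, as is the fact that the restriction $\overline{\mathrm{pt}(f)}$ agrees with $\mathrm{pt}(\overline{f})$, since $\overline{f}^\ast = f^\ast(\top_Z) \wedge (-)$. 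For join-preservation: a bicompatible family $\{f_i\}$ of partial locale maps with join $f$ satisfies $f^\ast(z) = \bigvee_i f_i^\ast(z)$, and since points $p \colon Y \to \mathbf{2}$ preserve arbitrary joins, the domain $U_{f^\ast(\top_Z)}$ of $\mathrm{pt}(f)$ is precisely $\bigcup_i U_{f_i^\ast(\top_Z)} = \bigcup_i \mathrm{dom}(\mathrm{pt}(f_i))$, and the values agree.

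Second, I would establish the adjunction $\O \dashv \mathrm{pt}$ by exhibiting the natural bijection between $\cat{Top}_p(X, \mathrm{pt}(Y))$ and $\cat{Loc}_p(\O(X), Y)$. A partial continuous map $X \rightharpoonup \mathrm{pt}(Y)$ defined on open $U \subseteq X$ is a total continuous map $U \to \mathrm{pt}(Y)$; by the classical total spaces--locales adjunction, this corresponds to a frame map $Y \to \O(U)$; and since $\O(U)$ is exactly the down-set $\mathord\downarrow U$ inside $\O(X)$, this is in turn the same as a partial locale map $\O(X) \rightharpoonup Y$ whose domain is $U$. Naturality and the unit/counit description follow: $\eta_X \colon X \to \mathrm{pt}(\O(X))$ sends $x$ to the point $V \mapsto [x \in V]$, while $\varepsilon_Y \colon \O(\mathrm{pt}(Y)) \to Y$ is the frame map $y \mapsto U_y$. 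Both $\eta$ and $\varepsilon$ have total components (the former is defined on all of $X$, the latter is a total locale map), so the adjunction lives in $\mathrm{jr}\cat{Cat}$.

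Third, the Galois property and identification of fixpoints follow from standard locale theory. For any locale $Y$, the space $\mathrm{pt}(Y)$ is sober, since its points are completely determined by which basic opens $U_y$ they lie in, and an irreducible closed subset is the closure of the unique point collecting the opens that meet it; hence $\eta_{\mathrm{pt}(Y)}$ is a homeomorphism, so every object in the image of $\mathrm{pt}$ is a right fixpoint, establishing the Galois property. An $X$ is a right fixpoint precisely when $\eta_X$ is invertible, which is the very definition of $X$ being sober (the classical sobrification characterisation). A $Y$ is a left fixpoint precisely when $\varepsilon_Y$ is invertible; since $\O(\mathrm{pt}(Y))$ is always a spatial locale, this happens exactly when $Y$ is spatial.

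The main technical point is the join-preservation of $\mathrm{pt}$, which rests crucially on points preserving arbitrary joins; everything else is a routine transfer of classical content through the ``partial maps = total maps on an open'' correspondence on both sides.
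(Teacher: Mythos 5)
Your proposal is correct, and it reaches the result by a genuinely different route from the paper in two respects. First, for adjointness you factor a partial continuous map through its open domain of definition and invoke the classical \emph{total} spaces--locales adjunction, identifying frame maps $Y \rightarrow \O(U)$ with partial locale maps $\O(X) \rightharpoonup Y$ of domain $U$; the paper instead verifies the couniversal property of the counit $\varepsilon_X$ directly against arbitrary partial locale maps, which defines $\mathrm{pt}$ on morphisms implicitly rather than by your explicit formula $p \mapsto p \circ f^\ast$. Second, and more significantly, you prove join-preservation of $\mathrm{pt}$ by hand, using the fact that points preserve arbitrary joins (note that your step ``the values agree'' also needs the compatibility inequality $f_i^\ast(z) \wedge f_j^\ast(\top) \leqslant f_j^\ast(z)$ together with preservation of finite meets by points---routine, but it should be said); the paper avoids this computation entirely by observing that $\varepsilon \colon \O \circ \mathrm{pt} \Rightarrow 1$ exhibits $\mathrm{pt}$ as a morphism of ${\mathrm{jr}\cat{Cat}\mathord{\mkern1mu/\!/_{\!h}\mkern1.5mu} \cat{Loc}_p}$ between two hyperconnected objects (the identity of $\cat{Loc}_p$ and the fundamental functor $\O$), whence Lemma~\ref{lem:33} yields join-preservation for free. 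Your route is self-contained and elementary, at the cost of some verification; the paper's is shorter and illustrates the use of the machinery it has already built. Both treatments defer the Galois property and the identification of the fixpoints as the sober spaces and spatial locales to standard locale theory, though you sketch slightly more of that classical argument than the paper's bare citation of Johnstone.
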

\begin{proof}
  Let $X$ be a locale. We define $\mathrm{pt}(X)$ to be the space
  of points of $X$ endowed with the topology with open sets
  $[x] = \{p \in \mathrm{pt}(X) : p(x) = \top\}$ for each $x \in X$.
  We define a total locale map
  $\varepsilon_X \colon \O(\mathrm{pt}(X)) \rightarrow X$ by
  $\varepsilon_X^\ast(x) = [x]$. For any space $Y$ and partial locale
  map $f \colon \O(Y) \rightharpoonup X$, we have a partial continuous map
  $g \colon Y \rightharpoonup \mathrm{pt}(X)$ defined on
  $y \in f^\ast(\top) \subseteq Y$ by $g(y)(x) = \top$ just when
  $y \in f^\ast(x)$; this is the \emph{unique} map
  with $\varepsilon_X \circ \O(g) = f$, and so
  $\O \dashv \mathrm{pt}$ in $\cat{Cat}$.

  By construction, the counit $\varepsilon$ is total; by inspection,
  the unit $\eta$ is also total, and the functor
  $\mathrm{pt} \colon \cat{Loc}_p \rightarrow \cat{Top}_p$ preserves
  restriction. It follows that $\O \dashv \mathrm{pt}$ in
  $\mathrm{r}\cat{Cat}$. Finally, by observing that
  \begin{equation*}
    \cd[@-1em@C-0.5em]{
      {\cat{Loc}_p} \ar[rr]^-{\mathrm{pt}} \ar@{=}[dr] & \ltwocello{d}{\varepsilon} &
      {\cat{Top}_p} \ar[dl]^-{\O} \\ &
      {\cat{Loc}_p}
    }
  \end{equation*}
  is a triangle in
  ${\mathrm{jr}\cat{Cat}\mathord{\mkern1mu/\!/_{\!h}\mkern1.5mu}
    \cat{Loc}_p}$ and applying Lemma~\ref{lem:33}, we see that
  $\mathrm{pt}$ is join-preserving, so that $\O \dashv \mathrm{pt}$
  in $\mathrm{jr}\cat{Cat}$. To see that the underlying adjunction is
  Galois, and the fixpoints are as described, see, for example,~\cite[\sec
  II.1.7]{Johnstone1982Stone}.
\end{proof}

\begin{Cor}
  \label{cor:9}
  There is a Galois adjunction
  \begin{equation}\label{eq:52}
    \cd{
      {\mathrm{jr}\cat{Cat}\mathord{\mkern1mu/\!/_{\!h}\mkern1.5mu}
        \cat{Top}_p} \ar@<-4.5pt>[r]_-{U} \ar@{<-}@<4.5pt>[r]^-{W} \ar@{}[r]|-{\bot} &
      {\mathrm{jr}\cat{Cat}}
    }
  \end{equation}
  where $U$ forgets the projection down to $\cat{Top}_p$, and where
  $W$ sends $\A$ to $(\A, \mathrm{pt} \circ \O_\A)$ and sends
  $F \colon \A \rightarrow \B$ to
  $(F, \mathrm{pt} \circ \alpha)
  $, where $\alpha$ is defined as in Corollary~\ref{cor:8}.
\end{Cor}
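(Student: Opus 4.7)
My plan is to derive Corollary~\ref{cor:9} by lifting the base Galois adjunction $\O \dashv \mathrm{pt}$ of Lemma~\ref{lem:12} to slice $2$-categories, and combining the result with the equivalence of Corollary~\ref{cor:8}. First, any $2$-adjunction $L \dashv R \colon \D \rightleftarrows \E$ in $\mathrm{jr}\cat{Cat}$ induces by post-composition a $2$-adjunction
\begin{equation*}
L \circ - \dashv R \circ - \colon \mathrm{jr}\cat{Cat}\mathord{\mkern1mu/\!/\mkern1mu}\D \rightleftarrows \mathrm{jr}\cat{Cat}\mathord{\mkern1mu/\!/\mkern1mu}\E
\end{equation*}
between slice $2$-categories, whose unit and counit are inherited pointwise from those of $L \dashv R$, and which is Galois when the base is. Applied to $\O \dashv \mathrm{pt}$, this gives a Galois $2$-adjunction between $\mathrm{jr}\cat{Cat}\mathord{\mkern1mu/\!/\mkern1mu}\cat{Top}_p$ and $\mathrm{jr}\cat{Cat}\mathord{\mkern1mu/\!/\mkern1mu}\cat{Loc}_p$. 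Because $\O$ is hyperconnected and hyperconnected functors are closed under composition, the left adjoint $\O \circ -$ restricts to a functor $\mathrm{jr}\cat{Cat}\mathord{\mkern1mu/\!/_{\!h}\mkern1mu}\cat{Top}_p \rightarrow \mathrm{jr}\cat{Cat}\mathord{\mkern1mu/\!/_{\!h}\mkern1mu}\cat{Loc}_p$; composing with the equivalence $U$ of Corollary~\ref{cor:8} recovers exactly the forgetful $U$ of the statement, and composing the inverse equivalence $V$ with $\mathrm{pt} \circ -$ gives the candidate right adjoint $W \colon \A \mapsto (\A, \mathrm{pt} \circ \O_\A)$.

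Second, to verify the adjointness $U \dashv W$ I would establish the hom-set bijection directly. A morphism $(F, \alpha) \colon (\B, P) \rightarrow W(\A) = (\A, \mathrm{pt} \circ \O_\A)$ is a join restriction functor $F \colon \B \rightarrow \A$ together with a total natural transformation $\alpha \colon \mathrm{pt} \circ \O_\A \circ F \Rightarrow P$. Transposing $\alpha$ under $\O \dashv \mathrm{pt}$ yields a total natural transformation $\alpha^\flat \colon \O_\A \circ F \Rightarrow \O \circ P$. A short check shows that if $P$ is hyperconnected then so is $\O \circ P$ (its map on restriction idempotents factors as the iso $\O(i) \cong \O(Pi)$ from hyperconnectedness of $P$ composed with the identification $\O(L) = L$ for $L \in \cat{Loc}_p$); whence by Proposition~\ref{prop:29}, $\O \circ P$ is the terminal object of $\mathrm{jr}\cat{Cat}(\B, \cat{Loc}_p)$. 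Consequently there is a unique such $\alpha^\flat$, and hence a unique $\alpha$, for any given $F$, so pairs $(F, \alpha)$ correspond bijectively to maps $F \colon \B \rightarrow \A$ in $\mathrm{jr}\cat{Cat}$, as required.

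Third, the Galois property of the induced $U \dashv W$ descends from that of $\O \dashv \mathrm{pt}$: at each $\A$ the $\cat{Top}_p$-level map of the counit is built from $\varepsilon_{\O_\A}$ and the unit at $(\B, P)$ from $\eta_P$, each becoming invertible after application of $U$ or $W$ respectively by the Galois property of the base. The main obstacle I anticipate is not the adjointness but verifying that $W(\A) = (\A, \mathrm{pt} \circ \O_\A)$ genuinely lies in $\mathrm{jr}\cat{Cat}\mathord{\mkern1mu/\!/_{\!h}\mkern1mu}\cat{Top}_p$: this requires each canonical map $\O(i) \rightarrow \O(\mathrm{pt}(\O(i)))$ to be an isomorphism, equivalently each restriction-idempotent locale $\O(i)$ to be spatial. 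In the intended applications (join inverse monoids arising from topological groupoids) this holds automatically, and otherwise the issue is circumvented by replacing $W(\A)$ by its sober reflection, which does not disturb the Galois structure since sober spaces are already the right fixpoints of the base adjunction.
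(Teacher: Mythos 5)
Your overall route is the same as the paper's: lift the Galois adjunction $\O \dashv \mathrm{pt}$ of Lemma~\ref{lem:12} to lax slices by post-composition and compose with the equivalence of Corollary~\ref{cor:8}. But you have the handedness of the lifted adjunction backwards, and this is a genuine error rather than a labelling choice. In this paper a morphism $(\A,P)\rightarrow(\B,Q)$ of $\rcat$ carries a $2$-cell $\alpha\colon QF\Rightarrow P$ directed \emph{towards} the domain projection (see~\eqref{eq:28}); with that orientation, post-composition with the \emph{right} adjoint $\mathrm{pt}$ is \emph{left} adjoint to post-composition with $\O$: given $\gamma\colon PG\Rightarrow\mathrm{pt}\,Q$ its transpose is $\varepsilon_Q\circ\O\gamma\colon\O PG\Rightarrow Q$, whereas the transpositions your direction would need run the unit and counit the wrong way around. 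Accordingly, in Corollary~\ref{cor:9} it is $W$ that is the left adjoint and $U$ the right adjoint---this is what the paper's diagram convention asserts (compare~\eqref{eq:64}, where $\Psi$, drawn exactly as $W$ is, is named as the left adjoint), and it is forced by the Lawson--Lenz theorem downstream, whose \emph{unit} must be invertible precisely at the spatial objects of $\mathrm{jr}\cat{Cat}$. Your claimed adjunction $U\dashv W$ is in fact false: it would require, for every $F\colon\B\rightarrow\A$, a unique total transformation $\mathrm{pt}\,\O_\A F\Rightarrow P$, and already for $F$ an identity and $P$ the inclusion into $\cat{Top}_p$ of a single non-sober space $X$ (say, an infinite set with the cofinite topology) no such transformation exists: naturality with respect to the partial identities $e\in\O(X)$ forces its component to send each completely prime filter of opens to a point of $X$ having that filter as neighbourhood filter, and the filter of all nonempty opens is not of this form.

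The same reversal invalidates your hom-set verification: from $\alpha\colon\mathrm{pt}\,\O_\A F\Rightarrow P$ there is no transpose $\alpha^\flat\colon\O_\A F\Rightarrow\O P$, because $\O\dashv\mathrm{pt}$ transposes $2$-cells \emph{into} $\mathrm{pt}({-})$, i.e.\ $\mathrm{Nat}(X,\mathrm{pt}\,Y)\cong\mathrm{Nat}(\O X,Y)$, not $2$-cells out of it. The correct computation runs in the other direction: a morphism $W\B\rightarrow(\A,P)$ is a pair $(G,\gamma)$ with $\gamma\colon PG\Rightarrow\mathrm{pt}\,\O_\B$; transposing gives $\O PG\Rightarrow\O_\B$, and Proposition~\ref{prop:29} says the \emph{target} $\O_\B$ is terminal in $\mathrm{jr}\cat{Cat}(\B,\cat{Loc}_p)$, so each $G$ admits exactly one such $\gamma$---which is precisely the assertion $W\dashv U$. (Note you instead invoke terminality of $\O P$, the wrong object even on your own terms.) Finally, the worry in your last paragraph---that $\mathrm{pt}\circ\O_\A$ need not be hyperconnected---is a fair one, but your repair is vacuous: each space $\mathrm{pt}(\O(i))$ is already sober, so sober reflection changes nothing; the failure is non-spatiality of the locales $\O(i)$, which no modification of the spaces can cure. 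The honest options are to let $W$ land in the full lax slice $\mathrm{jr}\cat{Cat}\mathord{\mkern1mu/\!/\mkern1mu}\cat{Top}_p$ (which is all that the composition with $\Psi\dashv\Phi$ in the following theorem requires, since $\Psi$ is defined on all of $\rcat$), or to note that hyperconnectedness of $\mathrm{pt}\circ\O_\A$ holds exactly at the fixpoints.
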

\begin{proof}
  Compose the equivalence
  of Corollary~\ref{cor:8} with the induced Galois adjunction
  $\O \circ (\thg) \colon 
  {\mathrm{jr}\cat{Cat}\mathord{\mkern1mu/\!/_{\!h}\mkern1.5mu}
    \cat{Top}_p} \leftrightarrows
  {\mathrm{jr}\cat{Cat}\mathord{\mkern1mu/\!/_{\!h}\mkern1.5mu}
    \cat{Loc}_p} \colon \mathrm{pt} \circ (\thg)$.
\end{proof}

Taking this together with the case $\C = \cat{Top}_p$ of our main
result, we recover the Lawson--Lenz correspondence and its
generalisations. 

\begin{Thm}
  \label{thm:12}
  There are Galois adjunctions
  \begin{equation*}
    \cd[@C-0.5em]{
      {\mathrm{pe}\cat{Cat}_c(\cat{Top}_p)} \ar@<-4.5pt>[r]_-{U \Phi}
      \ar@{<-}@<4.5pt>[r]^-{\Psi W} \ar@{}[r]|-{\bot} &
      {\mathrm{jr}\cat{Cat}}
    } \quad \text{and} \quad 
    \cd[@C-0.5em]{
      {\mathrm{pe}\cat{Gpd}_c(\cat{Top}_p)} \ar@<-4.5pt>[r]_-{U \Phi_g}
      \ar@{<-}@<4.5pt>[r]^-{\Psi W} \ar@{}[r]|-{\bot} &
      {\mathrm{ji}\cat{Cat}}
    }
  \end{equation*}
  together with their obvious restrictions to the one-object case. The
  fixpoints to each side are those $X \in
  \mathrm{pe}\cat{Cat}_c(\cat{Top}_p)$ with  sober spaces of objects,
  and those $\A \in \mathrm{jr}\cat{Cat}$ with spatial locales of
  restriction idempotents.
\end{Thm}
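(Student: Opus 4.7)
The plan is to derive this theorem by composing the equivalences supplied by our main results---Theorem \ref{thm:2} in the category case and Theorem \ref{thm:5} in the groupoid case, both specialised at $\C = \cat{Top}_p$---with the Galois adjunction of Corollary \ref{cor:9}. Since composing an equivalence with a Galois adjunction yields again a Galois adjunction, this will immediately produce the claimed Galois adjunctions. For the groupoid variant I would use the inverse-category analogue of Corollary \ref{cor:9}, obtained by applying $\mathrm{PIso}$ to the adjunction $\O \dashv \mathrm{pt}$ of Lemma \ref{lem:12}; the one-object variants arise by further restriction to the single-object subcategories on either side.

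To identify the fixpoints I would begin from Lemma \ref{lem:12}, which identifies the fixpoints of $\O \dashv \mathrm{pt}$ in $\mathrm{jr}\cat{Cat}$ as the sober spaces and the spatial locales. By 2-functoriality of postcomposition, the induced adjunction between $\mathrm{jr}\cat{Cat}/\!/_h \cat{Top}_p$ and $\mathrm{jr}\cat{Cat}/\!/_h \cat{Loc}_p$ has units and counits computed pointwise by those of $\O \dashv \mathrm{pt}$. Consequently $(\A, H) \in \hcon$ is a fixpoint iff every space $HX$ is sober, while an object $(\A, G)$ over $\cat{Loc}_p$ is a fixpoint iff every locale $GX$ is spatial. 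Composing with the equivalence of Corollary \ref{cor:8} then shows that $\A \in \mathrm{jr}\cat{Cat}$ is a fixpoint of the Galois adjunction of Corollary \ref{cor:9} precisely when its fundamental functor $\O_\A$ takes values in spatial locales---equivalently, when each locale $\O(X)$ of restriction idempotents is spatial.

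Finally, I would translate this into a condition on a partite source-\'etale internal category $\mathbb{A}$ by observing that under $\Phi$, the object $\mathbb{A}$ corresponds to the pair $(\Phi \mathbb{A}, \pi_\mathbb{A}) \in \hcon$, where $\pi_\mathbb{A}$ sends each component index $i$ to its space of objects $A_i$. It follows that $\mathbb{A}$ is a fixpoint of the composite Galois adjunction iff $\pi_\mathbb{A}$ is valued in sober spaces, i.e., iff each $A_i$ is sober. The main obstacle I anticipate is carefully tracking the directions of adjointness and the unit/counit components through the lax slice convention of Corollary \ref{cor:9} and then through the main equivalence; once this book-keeping is managed, the identification of fixpoints as claimed follows routinely from the pointwise analysis above.
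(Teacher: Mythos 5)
Your proposal is correct and follows essentially the same route as the paper: the paper obtains the theorem precisely by composing the Galois adjunction of Corollary~\ref{cor:9} (and its evident inverse-category analogue) with the main equivalences of Theorems~\ref{thm:2} and~\ref{thm:5} at $\C = \cat{Top}_p$, with the fixpoints identified pointwise via Lemma~\ref{lem:12}. Your pointwise analysis of the fixpoints through Corollary~\ref{cor:8} and the externalisation $\pi_\mathbb{A}$ is exactly the book-keeping the paper leaves implicit.
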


As we have already discussed in Section~\ref{sec:local-hyperc-maps},
even the one-object inverse case of this theorem is more general
than~\cite{Lawson2013Pseudogroups}, due to the more generous notions
of morphism to each side. To recapture the precise form of the
Lawson--Lenz equivalence, we may employ Theorem~\ref{thm:10} in place of
Theorem~\ref{thm:7}.

\begin{Rk}
  \label{rk:4}
  In~\cite{Lawson2013Pseudogroups}, the \'etale groupoid associated to
  a join inverse monoid $S$ is described in terms of \emph{completely
    prime filters} on $S$: subsets of $S$ which are upwards closed and
  downwards directed, and which contain a join $\bigvee_{i \in I} s_i$
  precisely when they contain at least one of the $s_i$'s. We can
  recover this description, and its generalisation to the other cases
  of our correspondence, by combining the explicit description of
  $\mathrm{pt} \colon \cat{Loc}_p \rightarrow \cat{Top}_p$ from
  Lemma~\ref{lem:12} and the description of local glueings in
  $\cat{Top}_p$ from Example~\ref{ex:5}.
\end{Rk}

\subsection{The Ehresmann--Schein--Nambooripad correspondence}
\label{sec:ehresm-sche-namb}
The correspondence we consider next was first made explicit by Lawson
in~\cite[Chapter~4]{Lawson1998Inverse}, bringing together
contributions by the three named authors. Lawson's version correlates
inverse semigroups with a certain category of ordered groupoids; the
version we state here, for inverse categories, is due to DeWolf and
Pronk~\cite{DeWolf2018The-Ehresmann-Schein-Nambooripad}.

\begin{Defn}
  \label{def:49}
  A partite internal groupoid in $\cat{Pos}_p$ is \emph{inductive} if
  each source (and hence each target) map is a discrete fibration, and
  each poset of objects has finite meets. We write
  $\mathrm{p}\cat{Ind}\cat{Gpd}$ for the category of partite inductive
  groupoids, where maps are partite internal functors whose object
  part preserves finite meets.
\end{Defn}
\begin{Defn}
  \label{def:50}
  If $\A$ is an inverse category with object-set $I$, then the
  $I$-partite inductive groupoid $\G \A$ is defined as follows.
  The poset of objects $\G \A_i$ is $(\O(i), \leqslant)$ while the
  poset of arrows $\G \A_{ij}$ is $(\A(i,j), \leqslant)$. The source
  and target maps are given by $\sigma_{ij}(f) = f^\ast f$ and
  $\tau_{ij}(f) = ff^\ast$, while the identity and composition maps
  $\iota_i$ and $\mu_{ijk}$ are given by identities and composition in $\A$.
\end{Defn}
\begin{Thm}(\cite[Theorem~3.16]{DeWolf2018The-Ehresmann-Schein-Nambooripad})
  \label{thm:16}
  The assignment $\A \mapsto \G(\A)$ is the action on objects of an
  equivalence of categories
  $\G \colon \mathrm{i}\cat{Cat} \rightarrow \mathrm{p}\cat{Ind}\cat{Gpd}$.
\end{Thm}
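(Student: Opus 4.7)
The plan is to construct $\G$ and a pseudo-inverse $\F$ directly, with verifications paralleling those in the proof of our main theorem. Although ESN is not literally an instance of Theorem \ref{thm:2}---inverse categories need not carry joins---the constructions are morally an analogue of applying the internalisation functor $\Psi$ to the fundamental functor $\A \to \cat{Pos}_p$ of an inverse category.

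First, I would verify $\G \A$ is a partite inductive groupoid. The source map $\sigma_{ij}(f) = \bar f$ is a discrete fibration: for $e \leqslant \bar f$ in $\O(i)$, the unique $f' \leqslant f$ with $\bar{f'} = e$ is $fe$, and dually for $\tau_{ij}$. Each object poset $\O(i)$ has finite meets since restriction idempotents commute and are closed under composition. All partite internal groupoid axioms transcribe directly from the inverse category structure, with the involution $\iota_{ij}(f) = f^\ast$ providing the groupoid inverses. Functoriality of $\G$ follows from the fact that any functor between inverse categories preserves restriction---since it preserves idempotents and inverses are unique---so it acts monotonically on each $\O(i)$ and on each $\A(i,j)$, and the object part preserves finite meets because these are composites of idempotents.

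For the reverse direction, define $\F \colon \mathrm{p}\cat{Ind}\cat{Gpd} \to \mathrm{i}\cat{Cat}$ by the classical recipe. Given a partite inductive groupoid $\mathbb{A}$, let $\F\mathbb{A}$ have object set $I$, hom-sets $\F\mathbb{A}(i,j) \defeq A_{ij}$, identities $\eta_i(\top_i)$ (with $\top_i \in A_i$ the top element, arising as the empty meet), and composition defined as follows: given $f \in A_{ij}$ and $g \in A_{jk}$, set $m \defeq \tau_{ij}(f) \wedge \sigma_{jk}(g)$ and take $gf \defeq \mu_{ijk}(g\vert_m, f\vert_m)$, where $f\vert_m$ and $g\vert_m$ are the unique restrictions provided by the discrete-fibration property of $\tau_{ij}$ and $\sigma_{jk}$. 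The partial inverse of $f$ is $\iota_{ij}(f)$.

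The main obstacle will be verifying that $\F\mathbb{A}$ genuinely satisfies the inverse category axioms---associativity, unit laws, the identities $ff^\ast f = f$ and $f^\ast ff^\ast = f^\ast$, and commutativity of idempotents. Each reduces to an identity of total maps derivable from the partite internal groupoid axioms combined with the finite-meet structure and discrete-fibration property; the latter allows the ad-hoc composites to be rewritten in terms of the genuine internal multiplication $\mu_{ijk}$, in a manner parallel to how partial sections of source maps are composed via $\tau_{ij}^\ast$ in Section \ref{sec:left-adjoint}. Functoriality of $\F$ on meet-preserving internal functors is then automatic (preservation of tops yields preservation of identities), and the mutually-inverse equivalences $\F\G \A \cong \A$ and $\G\F\mathbb{A} \cong \mathbb{A}$ follow immediately from tautological bijections on the underlying data.
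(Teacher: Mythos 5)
Your proposal is correct in outline, but it is a genuinely different route from the paper's: what you sketch is the classical direct proof of Lawson and DeWolf--Pronk (the very sources cited in the statement of Theorem~\ref{thm:16}), whereas the paper deliberately does not argue this way and instead derives the result from its main correspondence. The paper's chain is: identify $\mathrm{p}\cat{Ind}\cat{Gpd}$ with $\mathrm{pe}\cat{Gpd}(\cat{Msl},\cat{Pos}_p)$, using that discrete fibrations are exactly the local homeomorphisms of $\cat{Pos}_p$ (equation~\eqref{eq:53}); transport along the hyperconnected fundamental functor $\O \colon \cat{Pos}_p \rightarrow \cat{Loc}_p$ to partite \'etale localic groupoids with supercoherent objects of objects~\eqref{eq:56}; convert internal \emph{functors} into internal \emph{cofunctors} via the adjoints $f^\vee$ of supercoherent locale maps~\eqref{eq:55}; and then apply the restricted main equivalence (Theorem~\ref{thm:13}) together with the Resende-style identification over $\cat{Loc}_p$ and the equivalence $\mathrm{scji}\cat{Cat} \simeq \mathrm{i}\cat{Cat}$ of Proposition~\ref{prop:36}. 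That last step is where the paper handles the obstacle you correctly flag (inverse categories lack joins, so ESN is not literally an instance of Theorem~\ref{thm:2}): it freely adjoins joins via $\mathrm{j}$ and characterises the image by supercoherence, rather than working joinlessly as you do. Your route buys elementarity and self-containedness; the paper's route costs heavy machinery but buys structural insight---ESN exhibited as a shadow of the \'etale groupoid--pseudogroup correspondence (the triangle~\eqref{eq:60}), inductive groupoids seen as presentations of localic/topological \'etale groupoids, and the link to Paterson's universal groupoid---while avoiding all bare-hands verification.

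One caveat on your final paragraph: the words ``automatic'', ``immediately'' and ``tautological'' undersell what remains, since those deferred verifications \emph{are} the classical theorem. Associativity of the pseudo-product $\mu_{ijk}(g\vert_m, f\vert_m)$, the inverse-category axioms for $\F\mathbb{A}$, the fact that every idempotent of $\F\mathbb{A}$ lies in the image of $\eta_i$ (without which you cannot identify the objects-of-objects of $\G\F\mathbb{A}$, i.e.\ the restriction idempotents of $\F\mathbb{A}$, with $A_i$), and the coincidence of the natural partial order on $\F\mathbb{A}(i,j)$ with the given order on $A_{ij}$ all require genuine (if standard) arguments. As written, your text is an accurate plan for the Lawson/DeWolf--Pronk proof rather than a complete one; none of the gaps is fatal, but they are where the work lives.
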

Our goal is to explain how this equivalence can be obtained from our
main correspondence. We should say up front that the proof we describe
has many more moving parts than the approach
of~\cite{Lawson1998Inverse,
  DeWolf2018The-Ehresmann-Schein-Nambooripad}; nonetheless, we
consider it illuminating to see how this correspondence fits into our
framework. In doing so, we make use of the following mild refinement
 of our main result.

\begin{Defn}
  \label{def:46}
  Let $\C$ be a join restriction category with local glueings, and let $\C'$
  be a subcategory of the category of total maps in $\C$. We write
  $\rcatt$ for the subcategory of $\rcat$ with:
  \begin{itemize}
  \item \textbf{Objects}: those $P \colon \A \rightarrow \C$ for which
    each object $Pi$ lies in $\C'$;
  \item \textbf{Morphisms}: those $(F, \alpha) \colon (\A, P)
    \rightarrow (\B, Q)$ for which each component $\alpha_i \colon QFi
    \rightarrow Pi$ lies in $\C'$.
  \end{itemize}
  On the other hand, we write $\partcc$ (resp.,
  $\mathrm{p}\cat{Cat}(\C', \C)$) for the subcategory of
  $\partc$ (resp., $\mathrm{p}\cat{Cat}(\C)$) with:
  \begin{itemize}
  \item \textbf{Objects}: those partite internal categories
    $\mathbb{A}$ for which each $A_i$ lies in $\C'$;
  \item \textbf{Morphisms}: those partite internal cofunctors (resp., internal
    functors) whose object mappings all lie in $\C'$.
  \end{itemize}
\end{Defn}
\begin{Thm}
  \label{thm:13}
  The adjunction~\eqref{eq:7} restricts back to a Galois adjunction as
  to the left below, and taking fixpoints yields an equivalence as to
  the right.
  \begin{equation*}
  \cd{
    {\partcc} \ar@<-4.5pt>[r]_-{\Phi} \ar@{<-}@<4.5pt>[r]^-{\Psi} \ar@{}[r]|-{\bot} &
    {\rcatt}
  } \qquad
  \cd{
    {\partee} \ar@<-4.5pt>[r]_-{\Phi} \ar@{<-}@<4.5pt>[r]^-{\Psi} \ar@{}[r]|-{\sim} &
    {\hconn}\rlap{ .} 
  }
  \end{equation*}
\end{Thm}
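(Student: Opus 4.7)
The plan is to show that the Galois adjunction~\eqref{eq:7} and its fixpoint equivalence~\eqref{eq:33} cut down directly to the announced restricted versions, essentially because the subcategories $\partcc \subseteq \partc$ and $\rcatt \subseteq \rcat$ are defined by conditions that the functors $\Phi$ and $\Psi$, together with the unit and counit of~\eqref{eq:7}, manifestly preserve.

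First I would verify that $\Phi$ and $\Psi$ restrict. For $\Phi$, if $\mathbb{A} \in \partcc$ then $\pi_\mathbb{A}(i) = A_i \in \C'$ by hypothesis, placing $(\Phi \mathbb{A}, \pi_\mathbb{A})$ in $\rcatt$; and for a cofunctor $F \colon \mathbb{A} \rightsquigarrow \mathbb{B}$ in $\partcc$, the $2$-cell $\varpi^F$ has $i$-th component $F_i \in \C'$, so $(\Phi F, \varpi^F)$ is a morphism of $\rcatt$. For $\Psi$, if $P \colon \A \rightarrow \C$ lies in $\rcatt$ then $(\Psi P)_i = Pi \in \C'$, so $\Psi P \in \partcc$; and for a morphism $(F, \alpha)$ of $\rcatt$, the associated cofunctor $\Psi(F,\alpha)$ has object-action $F_i = \alpha_i \in \C'$ by the explicit description given in the proof of Theorem~\ref{thm:4}, hence lies in $\partcc$.

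Next I would check that the unit and counit of~\eqref{eq:7} live in the restricted setting. The unit component~\eqref{eq:16} at $(\A,P) \in \rcatt$ has identity $2$-cell part, whose components $1_{Pi}$ lie in $\C'$ as $\C'$ is a subcategory containing the relevant objects. The counit at $\mathbb{A} \in \partcc$, as described in the proof of Proposition~\ref{prop:9}, is the identity on components and on objects, so its object-action is $1_{A_i}$, again a morphism of $\C'$. The triangle identities of~\eqref{eq:7} therefore restrict at once to yield a Galois adjunction between $\partcc$ and $\rcatt$ with the same unit and counit as the ambient one.

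Finally, passing to fixpoints, the left (respectively right) fixpoints of the restricted adjunction are exactly those objects of $\partcc$ (respectively $\rcatt$) which are left (resp.\ right) fixpoints of~\eqref{eq:7}. By Propositions~\ref{prop:8} and~\ref{prop:9} these are the source-\'etale partite internal categories $\mathbb{A}$ with every $A_i \in \C'$, namely $\partee$, and the objects of $\rcatt$ of the form $P \colon \A \rightarrow \C$ with $\A$ a join restriction category and $P$ hyperconnected, namely $\hconn$. Since no argument beyond that already carried out in Section~\ref{sec:main-theorem} is needed, there is no genuine obstacle---the only mild vigilance required is to confirm that each morphism-side condition of Definition~\ref{def:46} really is tracked by the constructions of $\Phi$, $\Psi$, and their unit and counit, which is exactly what the verification above does.
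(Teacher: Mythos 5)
Your proposal is correct and is exactly the argument the paper intends: its proof of this theorem simply says the result is ``direct from the constructions giving Theorem~\ref{thm:4} and Theorem~\ref{thm:2}'', and your verification that $\Phi$, $\Psi$, the unit, and the counit all preserve the $\C'$-conditions of Definition~\ref{def:46}, followed by identifying the fixpoints via Propositions~\ref{prop:8} and~\ref{prop:9}, is precisely that check spelled out.
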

\begin{proof}
  Direct from the constructions giving Theorem~\ref{thm:4} and
  Theorem~\ref{thm:2}.
\end{proof}

We begin by recalling from Example~\ref{ex:12} that the local
homeomorphisms in $\cat{Pos}_p$ are \emph{exactly} the discrete
fibrations. Thus, writing $\cat{Msl} \subseteq \cat{Pos}_p$ for the
subcategory of meet-semilattices and finite-meet-preserving total
maps, we have 
\begin{equation}\label{eq:53}
\mathrm{p}\cat{Ind}\cat{Gpd} = \mathrm{pe}\cat{Gpd}(\cat{Msl},
\cat{Pos}_p)\rlap{ .}
\end{equation}

We now transform the right-hand side of this equality by way of the
fundamental functor $\O \colon \cat{Pos}_p \rightarrow \cat{Loc}_p$.
This takes a poset $P$ to the locale $\O(P)$ of downsets in $P$, and
takes a poset map $f \colon P \rightarrow Q$ to the locale map
$\O(f) \colon \O(P) \rightarrow \O(Q)$ given by
$\O(f)^\ast(B \subseteq Q) = f^{-1}(B) \subseteq P$. It is easy to see
that $\O$ is faithful and is full on isomorphisms; it therefore
establishes an equivalence between $\cat{Pos}_p$ and its replete image
in $\cat{Loc}_p$. Since, moreover, $\O$ is hyperconnected, it induces
as in Remark~\ref{rk:5} an equivalence between local homeomorphisms
over $P \in \cat{Pos}_p$ and over $\O(P) \in \cat{Loc}_p$. It follows
that a partite internal groupoid in $\cat{Loc}_p$ is in the replete
image of $\O$ just when each of its objects of objects is so. Thus,
writing $\O(\cat{Msl})$ for the replete image of
$\cat{Msl} \subseteq \cat{Pos}_p$ in $\cat{Loc}_p$, we conclude that
the action of $\O$ induces an equivalence of categories
\begin{equation}\label{eq:56}
  \O \colon \mathrm{pe}\cat{Gpd}(\cat{Msl},
\cat{Pos}_p) \rightarrow \mathrm{pe}\cat{Gpd}(\O(\cat{Msl}),
\cat{Loc}_p)\rlap{ .}
\end{equation}


The next step is delicate: we transform the category of partite
groupoids and \emph{functors} to the right above into a category of
partite groupoids and \emph{cofunctors}. To do so, we must explicitly
identify the subcategory $\O(\cat{Msl}) \subseteq \cat{Loc}_p$.

\begin{Defn}
  \label{def:53}
  An element $\ell$ of a locale $L$ is \emph{supercompact} if
  $\ell \leqslant \bigvee D$ implies $\ell \leqslant d$ for some
  $d \in D$. We call $L$ \emph{supercoherent} if the supercompact
  elements form a meet-semilattice, and each $\ell \in L$ is a join of
  supercompact elements. We write $\mathrm{sc}\cat{Loc}$ for the
  category whose objects are supercoherent locales, and whose maps
  $f \colon L \rightarrow M$ are total locale maps for which $f^\ast$
  preserves supercompactness.
\end{Defn}
The objects of $\O(\cat{Msl})$ are exactly the supercoherent locales.
On the other hand, a morphism $f \colon M \rightarrow L$ of
$\O(\cat{Msl})$ is a total locale map for which
$f^\ast \colon L \rightarrow M$ has a finite-meet-preserving left
adjoint $f_! \colon M \rightarrow L$. This $f_!$ is then the inverse
image of a supercoherent locale map $f^\vee \colon M \rightarrow L$,
and every supercoherent map arises in this way. We thus have an
identity-on-objects isomorphism of categories
$(\thg)^\vee \colon \O(\cat{Msl})^\mathrm{op} \rightarrow
\mathrm{sc}\cat{Loc}$. We claim that this induces an
identity-on-objects isomorphism of categories
\begin{equation}\label{eq:55}
  (\thg)^\vee \colon \mathrm{pe}\cat{Gpd}(\O(\cat{Msl}),
  \cat{Loc}_p) \rightarrow \mathrm{pe}\cat{Gpd}_c(\mathrm{sc}\cat{Loc},
\cat{Loc}_p)\rlap{ .}
\end{equation}

The key observation is as follows. Given a total locale map
$f \colon L \rightarrow M$, pullback along $f$ gives
a functor
$\Delta_f \colon \cat{Loc}_p \mathord{\mkern1.5mu/_{\!\ell
    h}\mkern1.5mu}M \rightarrow \cat{Loc}_p
\mathord{\mkern1.5mu/_{\!\ell h}\mkern1.5mu}L$. When $f$
is a map of $\O(\cat{Msl})$, we also have the adjoint map
$f^\vee \colon M \rightarrow L$ and in this case, we
have that
$\Delta_{f^\vee} \dashv \Delta_f$.
Indeed, on identifying
${\cat{Loc}_p \mathord{\mkern1.5mu/_{\!\ell h}\mkern1.5mu}M}$ with
$\cat{Sh}(M)$ via
Theorem~\ref{thm:1}, this follows
from the $2$-functoriality~\cite[\sec
C1.4]{Johnstone2002Sketches} of the assignation
$M \mapsto \cat{Sh}(M)$.

In concrete terms, the
adjointness~$\Delta_{f^\vee} \dashv \Delta_f$ states that if $p \colon A \rightarrow L$
and $q \colon B \rightarrow M$ are local homeomorphisms, then there is
a bijection between total locale
maps $g$ as to the left, and total locale maps $\tilde g$ as to the right in:
\begin{equation*}
  \cd{
    {A} \ar[r]^-{g} \ar[d]_{p} &
    {B} \ar[d]^{q} \ar@{}[drr]|-{\displaystyle\leftrightsquigarrow}& &
    {A} \ar@{<-}[r]^-{} \ar[d]_{p} &
    {(f^\vee)^\ast(A)} \ar[d]^{} \pullbackcorner[dl] \ar[r]^-{\tilde
      g} & B\rlap{ ;} \ar[dl]^-{q} \\
    {L} \ar[r]^-{f} &
    {M} & &
    {L} \ar@{<-}[r]^-{f^\vee} &
    {M}
  }
\end{equation*}
using this, we can describe the isomorphism~\eqref{eq:55} as sending a
partite internal functor $F \colon \mathbb{A} \rightarrow \mathbb{B}$
to the partite internal cofunctor $F^\vee \colon \mathbb{A}
\rightsquigarrow \mathbb{B}$ with action on
components, objects and arrows  $F^{\vee}i = Fi$,  $(F^{\vee})_i = (F_i)^\vee$,
and  $(F^\vee)_{ij} = \smash{\widetilde{F_{ij}}}$.

We now arrive at the final step, which is to exhibit the right-hand
side of~\eqref{eq:55} as equivalent to the category of inverse
categories. The starting point is the \emph{join completion} of an
inverse category.
\begin{Defn}
  \label{def:47}
  Let $\I$ be an inverse category. Its \emph{join completion}
  $\mathrm{j}(\I)$ is the join inverse category with the same objects,
  and with maps $S \colon A \rightarrow B$ being downclosed
  bicompatible families $S \subseteq \I(A,B)$.
\end{Defn}
That $\mathrm{j}(\I)$ is a join inverse category can be verified by
following exactly the same argument as
in~\cite[Theorem~23]{Lawson1998Inverse}; while by following Theorem~24
of \emph{loc.~cit.}, we see that $\mathrm{j(\I)}$ is the free join
restriction category on $\I$, in the sense of providing the value at
$\I$ of a left $2$-adjoint
$\mathrm{j} \colon \mathrm{i}\cat{Cat} \rightarrow
\mathrm{ji}\cat{Cat}$ to the obvious forgetful $2$-functor.

We now wish to characterise the objects and morphisms in the image of
$\mathrm{j}$. We do so by following the approach
of~\cite[Section~3]{Lawson2013Pseudogroups}. 

\begin{Defn}
  \label{def:58}
  A join inverse category $\C$ is called \emph{supercoherent} if
  $\O(A)$ is a supercoherent locale for all $A \in \C$. A
  functor $F
  \colon \C \rightarrow \D$ between supercoherent join inverse
  categories is called \emph{supercoherent} if each function $\O(A) \rightarrow \O(FA)$ is the inverse
  image of a supercoherent locale map. We write
  $\mathrm{scji}\cat{Cat}$ for the subcategory of
  $\mathrm{ji}\cat{Cat}$ determined by the supercoherent objects and morphisms.
\end{Defn}
\begin{Prop}
  \label{prop:36}
  The functor
  $\mathrm{j} \colon \mathrm{i}\cat{Cat} \rightarrow
  \mathrm{ji}\cat{Cat}$ lands inside the subcategory
  $\mathrm{scji}\cat{Cat}$, and when restricted to this codomain
  yields an equivalence
  \begin{equation}\label{eq:51}
      \cd{
    {\mathrm{scji}\cat{Cat}} \ar@<-4.5pt>[r]_-{K_0} \ar@{<-}@<4.5pt>[r]^-{\mathrm{j}} \ar@{}[r]|-{\sim} &
    {\mathrm{i}\cat{Cat}} \rlap{ ,}
  } 
\end{equation}
where $K_0$ sends $\C$ to the subcategory $K_0(\C)$ composed of all
the objects, and all maps $s \colon A \rightarrow B$ for which
$s^\ast s \in \O(A)$ is supercompact.
\end{Prop}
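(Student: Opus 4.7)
The strategy is to prove separately that (a) $\mathrm{j}$ takes values in $\mathrm{scji}\cat{Cat}$, (b) $K_0$ is a well-defined functor on supercoherent inputs, and (c) the two compose to natural (pseudo-)identities. The main obstacle is proving that $K_0(\C)$ is closed under composition for an arbitrary supercoherent join inverse category $\C$, which rests on a Galois-style isomorphism relating the idempotents below $\overline{s}$ and $\overline{s^\ast}$.

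For (a), I compute that $\O_{\mathrm{j}(\I)}(A) = \mathrm{Dn}(\O_\I(A))$, the locale of downsets of the meet-semilattice of $\I$-idempotents at $A$. The supercompact elements of such a downset locale are exactly the principal downsets $\mathop\downarrow e$; these form a meet-semilattice containing the top, since $\mathop\downarrow e \cap \mathop\downarrow e' = \mathop\downarrow(e \wedge e')$ and $\mathop\downarrow 1 = \O_\I(A)$, and every downset $T$ is the union $\bigcup_{t \in T} \mathop\downarrow t$ of its principal sub-downsets. Hence $\mathrm{j}(\I)$ is supercoherent. For an inverse functor $F \colon \I \rightarrow \I'$, the induced $\mathrm{j}(F)$ acts on idempotents by $\mathop\downarrow e \mapsto \mathop\downarrow Fe$, preserving supercompacts, so $\mathrm{j}(F)$ is a supercoherent morphism.

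For (b), the critical observation is that for any $s \in \C(A,B)$, the assignations $e \mapsto s^\ast e s$ and $d \mapsto sds^\ast$ restrict to mutually inverse locale isomorphisms $\alpha_s \colon \mathop\downarrow \overline{s^\ast} \cong \mathop\downarrow \overline{s} \colon \beta_s$, as verified directly from $s \overline{s}= s$ and $s s^\ast s = s$. Since $\alpha_s$ maps tops to tops and preserves supercompactness, this yields: (i) $\overline{s}$ is supercompact in $\O(A)$ iff $\overline{s^\ast}$ is supercompact in $\O(B)$; and (ii) for $s \in K_0(\C)$ and $e \in \O(B)$ supercompact, the identity $s^\ast e s = \alpha_s(e \wedge \overline{s^\ast})$ combined with closure of supercompacts under meet shows $s^\ast e s$ is supercompact. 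Specialising to $e = \overline{t}$ gives $\overline{ts} = s^\ast \overline{t} s$ supercompact, so $K_0(\C)$ is closed under composition. Identities lie in $K_0(\C)$ because in any supercoherent locale, $1 = \bigvee e_i$ with each $e_i$ supercompact forces $1 \leqslant e_i$, hence $1 = e_i$, for some $i$. Functoriality of $K_0$ on supercoherent morphisms is immediate, since these preserve supercompact idempotents by definition.

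For (c), the unit $\eta_\I \colon \I \rightarrow K_0(\mathrm{j}(\I))$ sends $s$ to $\mathop\downarrow s$; since $\overline{S}$ is a principal downset iff $S$ is principal, this is bijective on morphisms and a natural inverse isomorphism. The counit $\varepsilon_\C \colon \mathrm{j}(K_0(\C)) \rightarrow \C$ sends a downclosed bicompatible $S$ to $\bigvee S$; this is a join restriction functor by direct calculation, and trivially bijective on objects. It is full because given $f \in \C(A,B)$, a decomposition $\overline{f} = \bigvee_i e_i$ into supercompacts gives $f = \bigvee_i f e_i$ where each $fe_i$ has $\overline{fe_i} = e_i$ supercompact, hence lies in $K_0(\C)$; the downclosure of $\{fe_i\}$ is then a $\mathrm{j}(K_0(\C))$-map mapping to $f$. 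Faithfulness: if $\bigvee S = \bigvee T$ and $s \in S$, then $s = (\bigvee T)\overline{s} = \bigvee_{t\in T} t\overline{s}$, so $\overline{s} = \bigvee_t \overline{t}\,\overline{s}$; supercompactness of $\overline{s}$ picks some $t \in T$ with $\overline{s} \leqslant \overline{t}$, and since $s,t \leqslant \bigvee T$ they are compatible, yielding $s = s\overline{t} = t\overline{s} \leqslant t$, whence $s \in T$ by downclosure. Naturality of both unit and counit is clear from the explicit formulae, completing the equivalence.
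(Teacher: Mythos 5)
Your proposal is necessarily self-contained where the paper is not: the paper's entire proof of this proposition is a citation to \cite{Lawson2013Pseudogroups} (Lemmas~3.3, 3.4 and Proposition~3.5 there), with ``coherent'' replaced by ``supercoherent'' throughout. Your argument reconstructs exactly the outline that substitution would produce: identify $\O_{\mathrm{j}(\I)}(A)$ as the downset locale of $\O_\I(A)$, with the principal downsets as its supercompacts; use the conjugation isomorphism $\mathord\downarrow \overline{s^\ast} \cong \mathord\downarrow \overline{s}$, $e \mapsto s^\ast e s$, to obtain closure of $K_0(\C)$ under partial inverses and (via closure of supercompacts under meets) under composition; and verify that the unit $s \mapsto \mathord\downarrow s$ and the counit $S \mapsto \bigvee S$ are isomorphisms. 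The substantive steps are correct: in particular, your characterisation of the morphisms of $K_0(\mathrm{j}(\I))$ as exactly the principal downsets, and your faithfulness argument for $\varepsilon_\C$, which uses supercompactness of $\overline{s}$ to locate $s$ inside $T$, are precisely the right uses of the hypotheses.

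One step is broken as written: the claim that identities lie in $K_0(\C)$. You argue that writing $1 = \bigvee_i e_i$ with each $e_i$ supercompact ``forces $1 \leqslant e_i$ for some $i$'' --- but that implication \emph{is} the supercompactness of $1$, which is what you are trying to prove; the argument is circular. Moreover, under a weak reading of Definition~\ref{def:53} in which the supercompacts need only be closed under binary meets, the conclusion is actually false: in the locale of downsets of the chain $(\mathbb{Z}, \min)$, the supercompacts are the principal downsets, which are closed under binary meets and join-dense, yet the top is not supercompact. What rescues the step is the convention the paper needs elsewhere (its claim that the supercoherent locales are exactly those of the form $\O(M)$ for $M \in \cat{Msl}$, where ``finite meets'' includes the empty meet): the meet-semilattice of supercompacts has a greatest element $k$, and then join-density gives $\top = \bigvee(\text{supercompacts}) \leqslant k \leqslant \top$, so $\top = k$ is supercompact. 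Replace your sentence with this argument (or simply with the remark that the empty meet of supercompacts, computed in the locale, is $\top$) and the proof closes up. A smaller cosmetic point: when you transport supercompactness along $\alpha_s$, you implicitly use that an element of $\mathord\downarrow u$ is supercompact in $\mathord\downarrow u$ if and only if it is supercompact in the ambient locale; this is true (meet any covering family with $u$), but deserves a line, since both directions of your point (i) and the composition argument in (ii) depend on it.
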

\begin{proof}
  On substituting ``coherent'' for ``supercoherent'', this is,
  \emph{mutatis mutandis}, the argument
  of~\cite[Lemma~3.3,~Lemma~3.4~\&~Proposition~3.5]{Lawson2013Pseudogroups}.
\end{proof}

Now, it is direct from the definition of supercoherence
that the equivalence to the right of~\eqref{eq:61} restricts to an
equivalence
${\mathrm{ji}\cat{Cat}\mathord{\mkern1mu/\!/_{\!h}\mkern1.5mu}
  (\mathrm{sc}\cat{Loc}, \cat{Loc}_p)} \simeq
{\mathrm{scji}\cat{Cat}}$; while from
 Theorem~\ref{thm:13} we have 
${\mathrm{pe}\cat{Gpd}
  (\mathrm{sc}\cat{Loc}, \cat{Loc}_p)} \simeq {\mathrm{ji}\cat{Cat}\mathord{\mkern1mu/\!/_{\!h}\mkern1.5mu}
  (\mathrm{sc}\cat{Loc}, \cat{Loc}_p)}$. Putting these together
with~\eqref{eq:51}, we obtain the desired equivalence
\begin{equation*}
      \cd{
    {\mathrm{pe}\cat{Gpd}
  (\mathrm{sc}\cat{Loc}, \cat{Loc}_p)} \ar@<-4.5pt>[r]_-{K_0U\Phi_g}
\ar@{<-}@<4.5pt>[r]^-{\Psi W\mathrm{j}} \ar@{}[r]|-{\sim} &
    {\mathrm{i}\cat{Cat}} \rlap{ .}
  } 
\end{equation*}

Combining this with the equality~\eqref{eq:53}, the
equivalence~\eqref{eq:56} and the isomorphism~\eqref{eq:55}
reconstructs the Ehresmann--Schein--Nambooripad correspondence
$\mathrm{p}\cat{Ind}\cat{Gpd} \simeq \mathrm{i}\cat{Cat}$. However, it
is perhaps more illuminating to observe that we have a
pseudo-commuting triangle of equivalences
\begin{equation}\label{eq:60}
  \cd[@!C@-0.5em@C-5em]{
    & {\mathrm{i}\cat{Cat}} \ar[dl]_-{\G} \ar[dr]^-{\Psi V \mathrm{j}} \\
    {\mathrm{p}\cat{Ind}\cat{Gpd}} \ar[rr]_-{(\thg)^\vee \circ \O} & &
    {\mathrm{pe}\cat{Gpd}_c(\mathrm{sc}\cat{Loc}, \cat{Loc}_p)}\rlap{ .}
  }
\end{equation}

On objects, this says that the (partite) inductive groupoid associated
to an inverse category is really a presentation of the associated
(partite) supercoherent localic groupoid. Equivalently, since
supercoherent locales are spatial, it is a presentation of the
associated supercoherent \emph{topological} groupoid---which,
from the definition of
$\mathrm{pt} \colon \cat{Loc}_p \rightarrow \cat{Top}_p$, we see to be
Paterson's universal groupoid of an inverse semigroup, described as
in~\cite{Lawson2013The-etale} as a groupoid of filters.

To conclude this section, let us comment briefly on the
\emph{non-groupoidal} analogue of the theory presented above.
In~\cite{Gould2010Restriction}, Gould and Hollings describe a version
of the Ehresmann--Schein--Nambooripad theorem which, to the one side
replaces inverse monoids by restriction monoids. To the other side,
their result replaces inductive groupoids with what they call
\emph{inductive constellations}.

Although it appears that the details are even more delicate, it
appears that, in exactly the same way that inductive groupoids present the
\'etale localic groupoids associated to free join inverse monoids,
inductive constellations present the source-\'etale localic categories
associated to free join restriction categories. We will leave the
details of this claim to future work.

%

\subsection{Completion processes}
\label{sec:completion-processes}

Above we have discussed how our result reconstructs various
equivalences from the literature. In this final section, we describe
how we may exploit the larger adjunctions to construct various
\emph{completions}.

\subsubsection{Full monoids}
\label{sec:full-inverse}
Let $M$ be a (discrete) monoid which acts by continuous maps on a
topological space $X$. We can view $M$ as a one-object restriction
category $\M$ wherein every map is total, and view the action of $M$
on $X$ as a restriction functor
$\alpha \colon \M \rightarrow \cat{Top}_p$ sending the unique object
of $\M$ to the space $X$.

Applying the functor $\Psi$ to
$\alpha \colon \M \rightarrow \cat{Top}_p$ yields a source-\'etale
topological category whose space of objects is $X$, and whose space of
arrows is the glueing of the $M$-object local atlas $\varphi$ with
$\varphi_{mn} = 1_X$ if $m = n$ and $\varphi_{mn} = \bot$ otherwise.
It is easy to see from Example~\ref{ex:5} that this glueing is the
product space $M \times X$, where $M$ is endowed with the discrete
topology. The source and target maps
$\sigma,\tau \colon M \times X \rightarrow X$ are given by
$\sigma(m,x) = x$ and $\tau(m,x) = m \cdot x$; the identity map is
$\iota(x) = (e_M,x)$; while composition is given by
$\mu\bigl((n,y),(m,x)\bigr) = (nm, x)$.

So $\Psi(\alpha)$ is the well-known \emph{action category} of $M$
acting on $X$. It follows that $\Phi\Psi(\alpha)$ is the join
restriction monoid whose elements are pairs $(U,s)$, where
$U \subseteq X$ and $s \colon U \rightarrow M$ is a continuous (i.e.,
locally constant) function. The identity is $(X, x \mapsto e)$, while
the composite $(V,t)(U,s)$ is given by $(W,u)$, where
\begin{equation*}
  W = \{x \in X : x \in U \text{ and } s(x) \cdot x \in V\} \qquad
  \text{and} \qquad 
  u(x) = t(s(x) \cdot x) s(x)\rlap{ .}
\end{equation*}
In particular, the monoid of total elements in $\Phi\Psi(\alpha)$
comprises all continuous functions $s \colon X \rightarrow M$ under
the multiplication $(t \circ s)(x) = t(s(x) \cdot x) s(x)$. When $M$
acts faithfully on $X$, we can identify this monoid with a submonoid
of $\I(X)$, comprising all endomorphisms of $M$ which act locally like
an element of $M$; we might reasonably call this the \emph{topological
  full monoid} of $M$ in $X$.

If now $M$ is a group, then $\M$ is an inverse category, and so
$\Psi(\alpha)$ is by Theorem~\ref{thm:3} a groupoid. In this case, we
have not only the \'etale join restriction monoid $\Phi\Psi(\alpha)$,
but also the join inverse monoid $\Phi_g\Psi(\alpha)$; this has as
elements those $(U,s)$ for which the mapping $x \mapsto s(x) \cdot x$
is an open injection. Finally, the group of units of
$\Phi_g\Psi(\alpha)$ may be identified with the group of those
continuous functions $s \colon X \rightarrow M$ for which
$x \mapsto s(x) \cdot x$ is a homeomorphism. Like before, when $M$
acts faithfully on $X$, this yields the topological full group of $M$
in $X$.

Of course, everything described above works equally well for actions
on spaces by discrete categories or groupoids; we leave the
adaptations to the reader.

\subsubsection{Relative join completions}
\label{sec:cover-join-restr}
\cite{Lawson2013Pseudogroups} introduces the notion of a
\emph{coverage} $C$ on an inverse semigroup $S$, and describes in
particular cases the \emph{relative join completion} of $S$ with
respect to $C$; this is the free join inverse semigroup admitting a
map from $S$ which sends each cover in $C$ to a join.

The purpose of this section is to construct relative join completions
in greater generality by exploiting our main result. We begin with the
necessary definitions.

\begin{Defn}
  \label{def:54}
  A \emph{sieve} on a map $f \colon i \rightarrow j$ of a restriction
  category $\A$ is a down-closed subset of
  $\mathord \downarrow g = \{g \in \A(i,j) : g \leqslant f\}$. A \emph{coverage} on $\A$ is
  given by specifying, for each $f \in \A(i,j)$, a collection $C(f)$
  of sieves on $f$, called \emph{covers}, satisfying the following
  axioms:
  \begin{enumerate}[(i)]
  \item If $f \in
    \A(i,j)$ and $g \in \A(j,k)$, then $X \in C(g)$ implies $Xf \in
    C(gf)$.
  \item $X \in C(f)$ if and only if $\overline{X} \in C(\overline{f})$.
  \end{enumerate}
\end{Defn}
Our definition of coverage is modelled after the notion of coverage on
a meet-semilattice in~\cite{Johnstone1982Stone}, and indeed reduces to
it in the case where $\C$ is the one-object join restriction category
$\Sigma M$ associated to a meet-semilattice $M$. We omit some of the
additional clauses for a coverage listed
in~\cite{Lawson2013Pseudogroups}, which merely impose inessential
additional ``saturation'' conditions on the class of covers. We will,
however, note the following consequence of the axioms, which will be
useful later.
\begin{Lemma}
  \label{lem:40}
  Let $C$ be a coverage on $\A$. If $f \in \A(i,j)$ and $g \in
  \A(j,k)$, then $X \in C(f)$ implies $gX \in C(gf)$.
\end{Lemma}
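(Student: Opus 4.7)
The plan is to derive this postcomposition property from the precomposition axiom (i) by passing through restrictions via axiom (ii); the key leverage comes from the fact that restriction idempotents commute, so that on idempotents there is no distinction between pre- and post-composition. First I would do the routine check that $gX$ is a sieve on $gf$: for $x \in X$ with $x \leqslant f$, writing $x = f\overline{x}$ yields $gx = gf\overline{x} \leqslant gf$; and if $h \leqslant gx$, then $h = gx\,\overline{h} = g(x\overline{h})$ where $x\overline{h} \in X$ by down-closure of $X$.

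The crucial observation is the identity $\overline{gX} = \overline{X} \cdot \overline{gf}$. Indeed, for $x \in X$, writing $x = f\overline{x}$ and applying restriction axiom (iii) gives $\overline{gx} = \overline{gf\overline{x}} = \overline{gf}\cdot \overline{x}$; and since restriction idempotents commute, this equals $\overline{x}\cdot\overline{gf}$. Collecting over $x \in X$ yields the claimed identity.

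Given this, the proof chains together the two coverage axioms. From $X \in C(f)$, axiom (ii) produces $\overline{X} \in C(\overline{f})$, a cover of the restriction idempotent $\overline{f} \colon i \rightarrow i$. Since $\overline{gf} \leqslant \overline{f}$ in $\O(i)$ we have $\overline{f}\cdot\overline{gf} = \overline{gf}$, so axiom (i)—applied to precompose the cover $\overline{X}$ of $\overline{f}$ by the idempotent $\overline{gf}$—delivers $\overline{X}\cdot\overline{gf} \in C(\overline{gf})$. By the identity of the previous paragraph this reads $\overline{gX} \in C(\overline{gf})$, and a final application of axiom (ii) in reverse (noting $\overline{gf}$ is the restriction of $gf$) yields $gX \in C(gf)$.

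There is no genuine obstacle once the identity $\overline{gX} = \overline{X}\cdot\overline{gf}$ is isolated; the lemma is essentially the statement that axiom (ii) transports the precomposition axiom into a postcomposition axiom, because the transport into idempotents washes out the handedness of composition.
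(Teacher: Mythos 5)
Your proof is correct and follows essentially the same route as the paper's: both shuttle between maps and their restrictions via axiom (ii) and reduce postcomposition by $g$ to precomposition by the idempotent $\overline{gf}$ via axiom (i), the only difference being that you apply (ii) first and run (i) entirely at the level of idempotents (getting $\overline{X}\cdot\overline{gf} \in C(\overline{gf})$), whereas the paper applies (i) first to obtain $\overline{g}X = X\overline{gf} \in C(\overline{g}f)$ and then uses (ii) twice. The computational core is the same restriction-calculus identity, appearing as $\overline{gx} = \overline{x}\,\overline{gf}$ in your version and as $x\overline{gf} = \overline{g}x$ in the paper's.
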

\begin{proof}
  We first show that $\overline{g} X \in C(\overline{g}f)$. First,
  since $\overline{g}f = f \overline{gf}$, we have by axiom (i) that
  $X\overline{gf} \in C(\overline{g}f)$. But $X\overline{gf} = \{x
  \overline{gf} : x \in X\}$, and since $x \leqslant f$ we have
  $x\overline{gf} = f \overline{x}\overline{gf} = f \overline{gf}
  \overline{x} =\overline{g}f\overline{x} = \overline{g}x$; whence $X
  \overline{gf} = \overline{g}X \in C(\overline{g}f)$ as claimed.

  We now show $gX \in C(gf)$. Since $\overline{g} X \in
  C(\overline{g}f)$ we have by axiom (ii) that $\overline{\overline{g}
    X} \in C(\overline{\overline{g} f})$, whence $\overline{gX} \in
  C(\overline{gf})$, whence by axiom (ii) again, $gX \in C(gf)$.
\end{proof}

\begin{Defn}
  \label{def:56}
  Let $\A$ be a restriction category with a coverage $C$. If $\B$ is a
  join restriction category, then a restriction functor
  $F \colon \A \rightarrow \B$ is called a \emph{cover-to-join map}
  if, for each $X \in \C(f)$, we have $\bigvee_{x \in X} Fx = Ff$. By
  a \emph{relative join completion} of $\A$ with respect to $C$, we
  mean a join restriction category $\mathrm{j}_C(\A)$ endowed with a
  cover-to-join functor $\eta \colon \A \rightarrow \mathrm{j}_C(\A)$,
  such that any other cover-to-join map $F \colon \A \rightarrow \B$
  factors through $\eta$ via a unique join restriction functor
  $F' \colon \mathrm{j}_C(\A) \rightarrow \B$.
\end{Defn}

Our objective is to construct relative join completions by exploiting
the adjunction of our main theorem together with the known
construction of the relative join completion for meet-semilattices. We
begin by discussing the latter. As noted above, a coverage in the
sense of~\cite{Johnstone1982Stone} on a meet-semilattice $M$ is the
same as a coverage in our sense on $\Sigma M$, so that by~\cite[\sec
II.2.11]{Johnstone1982Stone} we have:
\begin{Prop}
  \label{prop:37}
  Let $C$ be a coverage on a meet semilattice $M$. The relative join
  completion of $\Sigma M$ with respect to $C$ is
  $\Sigma \bigl(C\text-\mathrm{Idl}(M)\bigr)$, where
  $C\text-\mathrm{Idl}(M)$ is the locale of \emph{$C$-closed ideals}
  in $M$, whose elements are down-closed subsets $D \subseteq M$ with
  the property that $A \subseteq D$ and $A \in C(x)$ imply $x \in D$.
  The universal cover-to-join map
  $\eta \colon \Sigma M \rightarrow
  \Sigma\bigl(C\text-\mathrm{Idl}(M)\bigr)$ sends $m \in M$ to the
  $C$-closed ideal generated by $m$.
\end{Prop}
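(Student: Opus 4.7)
The plan is to verify directly that $\eta \colon \Sigma M \to \Sigma(C\text-\mathrm{Idl}(M))$ has the universal property of a relative join completion; this is essentially a transcription of the standard construction of the classifying locale of a coverage on a meet-semilattice, as in~\cite[\sec II.2.11]{Johnstone1982Stone}. Note that a restriction functor out of $\Sigma M$ is exactly a morphism of meet-semilattices from $M$ (with top), since in $\Sigma M$ composition is meet, the identity is the top, and every element is its own restriction.

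First I would confirm that $C\text-\mathrm{Idl}(M)$ is a locale. The collection of $C$-closed ideals is closed under arbitrary intersection, yielding a complete lattice with meet given by intersection; joins $\bigvee_i D_i$ are obtained by $C$-closing $\bigcup_i D_i$ via the transfinite procedure of adding, at each stage, those $x$ for which some cover $A \in C(x)$ lies in the current set. The frame distributive law $D \wedge \bigvee_i D_i = \bigvee_i (D \wedge D_i)$ is proved by induction on this procedure: if $x$ enters the $C$-closure because some $A \in C(x)$ lies in the current set, and $x \in D$, then by Lemma~\ref{lem:40} we have $\{x \wedge a : a \in A\} \in C(x)$, and by the inductive hypothesis each $x \wedge a$ lies in $\bigvee_i (D \cap D_i)$, forcing $x$ in as well.

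Second, I would verify that $\eta(m) := \langle m \rangle$ defines a restriction functor. Meet preservation $\langle m \wedge n \rangle = \langle m \rangle \cap \langle n \rangle$ has the easy inclusion $\subseteq$; for $\supseteq$ I would prove by double transfinite induction that $x \in \langle m \rangle$ and $y \in \langle n \rangle$ imply $x \wedge y \in \langle m \wedge n \rangle$, the inductive step using Lemma~\ref{lem:40} to transport covers of $x$ through the meet with $y$ (and vice versa); taking $y = x$ gives the result. Preservation of the top is immediate, and preservation of restriction is automatic since every element of a locale is its own restriction. That $\eta$ is cover-to-join is direct: for $X \in C(m)$ the join $\bigvee_{x \in X} \eta(x)$ is the $C$-closure of $\bigcup_{x \in X} \mathord\downarrow x$, which contains $m$ by the defining property of $C$-closed ideals applied to the cover $X \in C(m)$, and so equals $\langle m \rangle$.

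Finally, for the universal property, given a cover-to-join restriction functor $F \colon \Sigma M \to \B$ I would set $F'(D) := \bigvee_{d \in D} F(d)$. The family $\{F(d) : d \in D\}$ is compatible in $\B$ because each $F(d)$ is a restriction idempotent (as $d = \overline{d}$ in $\Sigma M$) and restriction idempotents always pairwise commute. A transfinite induction on the $C$-closure stages, invoking that $F$ sends covers to joins, gives $F'\eta(m) = F(m)$. Preservation of composition reduces, via distributivity of composition over joins in $\B$, to the identity $\bigvee_{d \in D, d' \in D'} F(d \wedge d') = \bigvee_{z \in D \cap D'} F(z)$, which holds because $d \wedge d' \in D \cap D'$ whenever $d \in D, d' \in D'$. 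Preservation of the identity, of restriction, and of joins are routine. Uniqueness follows from $D = \bigvee_{d \in D} \eta(d)$ in the locale, which forces any join-preserving extension to agree with $F'$. The main obstacle throughout is the transfinite induction used to establish the distributive law and meet preservation of $\eta$; once these are in hand, the rest of the verification is a mechanical application of the coverage axioms.
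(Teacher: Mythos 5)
Your proof is correct, but you should know that the paper offers no proof of this proposition at all: in the sentence immediately preceding the statement it observes that a coverage on $\Sigma M$ is the same thing as a coverage on the meet-semilattice $M$ in Johnstone's sense, and then simply cites \cite[\sec II.2.11]{Johnstone1982Stone}. What you have written is that standard argument in full: the complete lattice of $C$-ideals, the distributive law via meet-stability of covers (your transfinite induction over closure stages, invoking Lemma~\ref{lem:40}, is a rephrasing of Johnstone's nucleus argument), and the direct verification of the universal property. Writing it out does buy something the citation leaves implicit: Johnstone's universal property concerns frame homomorphisms into a frame, whereas Definition~\ref{def:56} quantifies over arbitrary join restriction categories $\B$. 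Your opening observation---that a cover-to-join restriction functor $\Sigma M \rightarrow \B$ is precisely a top-preserving, cover-to-join meet-semilattice map $M \rightarrow \O(b)$ into the locale of restriction idempotents of $b = F(\ast)$, and that a join restriction functor out of $\Sigma\bigl(C\text{-}\mathrm{Idl}(M)\bigr)$ is likewise a frame map into $\O(b)$---is exactly the bridge that reduces the paper's statement to Johnstone's, and making it explicit is a genuine improvement on the bare citation.

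Two points to tighten, both minor. First, in the transfinite closure of $\bigcup_i D_i$ your inductive hypothesis is applied to the elements $x \wedge a$ with $a$ in an earlier stage, so you need each stage to be down-closed; this holds, but deserves a line (covers of $x$ restrict to covers of any $y \leqslant x$ by Lemma~\ref{lem:40}, so down-closure is preserved stage by stage). Second, preservation of joins by $F'$ is not quite routine: since $\bigvee_i D_i$ is the $C$-closure of $\bigcup_i D_i$ rather than the union itself, you need one more instance of the same transfinite induction together with the cover-to-join hypothesis on $F$, exactly as in your verification that $F'\eta = F$.
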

We now exploit this result to construct a variant of the fundamental
functor for a restriction category $\A$ endowed with a coverage $C$.
\begin{itemize}
\item For each object $i \in \A$, the coverage $C$ restricts to a coverage of the
same name on the meet-semilattice $\O(i)$. We write $\O_C(i)$ for
the locale $C\text-\mathrm{Idl}\bigl(\O(i)\bigr)$.
\item For each map $f \colon i \rightarrow j$ in $\A$, we claim there
  is a unique partial locale map
  $\O_C(f) \colon \O_C(i) \rightharpoonup \O_C(j)$ whose inverse image
  map renders commutative the square of binary-meet-preserving
  functions to the left in:
  \begin{equation}\label{eq:63}
    \cd[@C+1em]{
      {\O(j)} \ar[r]^-{\overline{(\thg)f}} \ar[d]_{\eta} &
      {\O(i)} \ar[d]^{\eta} &
            {\O(j)} \ar[r]^-{\overline{(\thg)f}} \ar[d]_{\eta} &
            {\mathord \downarrow \overline{f}} \ar[d]^{\eta} \\
      {\O_C(j)} \ar[r]^-{\O_C(f)^\ast} &
      {\O_C(i)} &
      {\O_C(j)} \ar@{-->}[r] &
      {\mathord \downarrow \eta(\overline{f})}\rlap{ .}
    }
  \end{equation}
  This is equally to say there is a unique \emph{total} locale
  map whose inverse image renders commutative the right square of
  finite-meet-preserving functions. By Proposition~\ref{prop:37}, it
  suffices for this to show that the upper right composite is a
  cover-to-join map. But for all $e \in \O(j)$ and $X \in C(e)$, the
  coverage axioms imply that $\overline{Xf} \in C(\overline{ef})$;
  since $\eta$ is a cover-to-join map, we conclude that
  $\eta(\overline{ef}) = \bigvee_{x \in X} \eta(\overline{xf})$ in
  $\O_C(i)$, and hence also in
  ${\mathord \downarrow \eta(\overline{f})}$, as required.
\end{itemize}

It follows from the unicity in~\eqref{eq:63} that the above
assignations underlie a restriction functor
$\O_C \colon \A \rightarrow \cat{Loc}_p$. We now show that $\O_C$ has a
universal characterisation similar in spirit to
Proposition~\ref{prop:29} above.
\begin{Prop}
  \label{prop:38}
  $\O_C \colon \A \rightarrow \cat{Loc}_p$ is a terminal object in the
  category whose objects are cover-to-join maps
  $\A \rightarrow \cat{Loc}_p$ and whose maps are total
  transformations.
  Furthermore, any $F \colon \A \rightarrow \cat{Loc}_p$ which does admit
  a total transformation $F \Rightarrow \O_C$ is necessarily a
  cover-to-join map.
\end{Prop}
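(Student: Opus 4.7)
The overall plan is to establish terminality hom-wise, leveraging the universal property of each locale $\O_C(i) = C\text-\mathrm{Idl}(\O(i))$ from Proposition~\ref{prop:37}. Given a cover-to-join map $F \colon \A \rightarrow \cat{Loc}_p$, I would first construct the candidate component $\alpha_i \colon Fi \rightarrow \O_C(i)$ of a total transformation $\alpha \colon F \Rightarrow \O_C$ by specifying its inverse-image frame homomorphism $\alpha_i^\ast \colon \O_C(i) \rightarrow Fi$. Using the standard bijection $\O(L) \cong L$ sending a restriction idempotent $e$ on a locale $L$ to $e^\ast(\top_L)$---under which joins of compatible restriction idempotents correspond to joins in $L$---the assignation $e \in \O(i) \mapsto (Fe)^\ast(\top_{Fi}) \in Fi$ is a finite-meet-preserving map, and since $F$ is cover-to-join, it sends each $X \in C(e)$ (viewed as a cover in the meet-semilattice $\O(i)$) to a join in $Fi$. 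Proposition~\ref{prop:37} then supplies a unique extension to a frame homomorphism $\alpha_i^\ast$.

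Next I would verify naturality of $\alpha$. For $f \colon i \rightarrow j$, naturality at $f$ translates, via inverse images, into the equality $(Ff)^\ast \circ \alpha_j^\ast = \alpha_i^\ast \circ \O_C(f)^\ast \colon \O_C(j) \rightarrow Fi$. Both composites preserve arbitrary joins, so by Proposition~\ref{prop:37} it suffices to check agreement on generators $\eta(e)$ for $e \in \O(j)$. Using the defining square~\eqref{eq:63} of $\O_C(f)^\ast$, the right-hand side evaluates at $\eta(e)$ to $\alpha_i^\ast(\eta(\overline{ef})) = F(\overline{ef})^\ast(\top)$, while the left-hand side gives $(Ff)^\ast((Fe)^\ast(\top)) = F(ef)^\ast(\top) = F(\overline{ef})^\ast(\top)$, using functoriality of $F$ on partial maps and preservation of restriction. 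For uniqueness, any total $\beta \colon F \Rightarrow \O_C$ must satisfy naturality at each restriction idempotent $e \in \A(i,i)$; evaluating the inverse-image form of this square at $\top \in \O_C(i)$, and invoking totality of $\beta_i$ together with $\O_C(e)^\ast(\top) = \eta(e)$, forces $\beta_i^\ast(\eta(e)) = (Fe)^\ast(\top)$. Hence $\beta_i^\ast$ and $\alpha_i^\ast$ agree on $\eta(\O(i))$ and coincide by the uniqueness clause of Proposition~\ref{prop:37}.

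For the converse, suppose $F$ admits a total transformation $\alpha \colon F \Rightarrow \O_C$; the uniqueness computation above yields $\alpha_i^\ast(\eta(e)) = (Fe)^\ast(\top)$ for every restriction idempotent $e$. Given $X \in C(f)$ with $f \colon i \rightarrow j$, coverage axiom (ii) gives $\overline{X} \in C(\overline{f})$; since $C$-closed ideal joins are obtained by $C$-closure of unions, and any $C$-closed ideal containing $\overline{X}$ must contain $\overline{f}$, I conclude $\eta(\overline{f}) = \bigvee_{x \in X} \eta(\overline{x})$ in $\O_C(i)$. Applying the frame map $\alpha_i^\ast$ and translating back to restriction idempotents in $\cat{Loc}_p(Fi,Fi)$ yields $F(\overline{f}) = \bigvee_{x \in X} F(\overline{x})$. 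Writing $x = f\overline{x}$ and using distributivity of joins over composition (Lemma~\ref{lem:18}), we deduce
\begin{equation*}
  \textstyle\bigvee_{x \in X} Fx = Ff \circ \bigvee_{x \in X} F(\overline{x}) = Ff \circ F(\overline{f}) = Ff\rlap{ ,}
\end{equation*}
so $F$ is cover-to-join. The main bookkeeping obstacle is shuttling between partial locale maps and their inverse-image frame maps, and between restriction idempotents on a locale $L$ and elements of $L$; the essential mathematical input is the universal property of $C\text-\mathrm{Idl}$ applied hom by hom.
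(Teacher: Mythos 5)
Your construction of the transformation, the naturality check on generators, the uniqueness argument, and the converse are all correct, and they follow essentially the paper's route: each component $\alpha_i^\ast$ is pinned down on generators by $\eta(e) \mapsto (Fe)^\ast(\top)$ and extended using the universal property of $C\text-\mathrm{Idl}\bigl(\O(i)\bigr)$ from Proposition~\ref{prop:37}. (Where the paper cites~\cite{Cockett2014Restriction} for finite-meet-preservation and naturality, you verify naturality directly on generators; and where the paper proves the ``Furthermore'' clause in the stronger general form---any restriction functor admitting a total transformation into \emph{any} cover-to-join map is itself cover-to-join---you give an argument specific to $\O_C$. Both variants are fine.)

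There is, however, a genuine omission: you never show that $\O_C$ is itself a cover-to-join map. This is not a formality. The proposition asserts that $\O_C$ is a terminal \emph{object} of the category whose objects are cover-to-join maps $\A \rightarrow \cat{Loc}_p$, so membership of $\O_C$ in that category is part of the claim, and it is exactly what the first paragraph of the paper's proof establishes: for $f \in \A(i,j)$ and $X \in C(f)$ one must check $\O_C(f) = \bigvee_{x \in X}\O_C(x)$, which by unicity in~\eqref{eq:63} reduces to showing $\overline{eX} \in C(\overline{ef})$ for all $e \in \O(j)$---and this is where Lemma~\ref{lem:40}, itself a nontrivial consequence of the coverage axioms, gets used. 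Your proof as written nowhere touches this point. Fortunately the gap can be filled with material you already have: your ``Furthermore'' argument uses only naturality and totality of the given transformation, so applying it to the identity transformation $\O_C \Rightarrow \O_C$ (for which $\alpha_i^\ast(\eta(e)) = \O_C(e)^\ast(\top) = \eta(e)$) shows that $\O_C$ is cover-to-join. But as it stands, the proposal proves ``every cover-to-join $F$ admits a unique total transformation to $\O_C$'' without establishing that $\O_C$ belongs to the category in which it is claimed to be terminal.
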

\begin{proof}
  First we show that $\O_C$ is indeed a cover-to-join map. Let
  $f \in \A(i,j)$ and $X \in C(f)$. We must show that
  $\O_C(f) = \bigvee_{x \in X} \O_C(x)$. By unicity in~\eqref{eq:63},
  it suffices for this to exhibit an equality of
  binary-meet-preserving maps
  \begin{equation*}
    \eta \circ \overline{(\thg)f} = \bigvee_{x \in X} \eta \circ
    \overline{(\thg)x} \colon \O(j) \rightarrow \O_C(i)\rlap{ .}
  \end{equation*}
  Evaluating at $e \in \O(j)$ and using the fact that
  $\eta$ is a cover-to-join map, it suffices to show that
  $\overline{eX} \in C(\overline{ef})$---which follows from $X \in
  C(f)$ using the axioms and Lemma~\ref{lem:40}.

  We now show that $\O_C$ is terminal among cover-to-join maps.
  Indeed, suppose that $F \colon \A \rightarrow \cat{Loc}_p$ is
  another such. If we were to have a total natural transformation
  $\alpha \colon F \rightarrow \O_C$, then for every $i \in \A$ and
  $e \in \O(e)$, we would have the naturality square of inverse image
  mappings as to the left in:
  \begin{equation*}
    \cd{
      {Fi} \ar@{<-}[r]^-{\alpha_i^\ast} \ar@{<-}[d]_{Fe^\ast} &
      {\O_C(i)} \ar@{<-}[d]^{\O_C(e)^\ast} &
      {Fe^\ast(\top) = \alpha_i^\ast(\eta(e))} \ar@{<-|}[r]^-{\alpha_i^\ast} \ar@{<-|}[d]_{Fe^\ast} &
      {\eta(e)} \ar@{<-|}[d]^{\O_C(e)^\ast} \\
      {Fi} \ar@{<-}[r]^-{\alpha_i^\ast} &
      {\O_C(i)} &
      {\top} \ar@{<-|}[r]^-{\alpha_i^\ast} &
      {\top}
    }
  \end{equation*}
  and so evaluating at the top element of $\O_C(i)$, the equality to
  the right. So the join- and finite-meet-preserving map
  $\alpha_i^\ast$ must map $\eta(e)$ to $Fe^\ast(\top)$ for each
  $e \in \O(i)$, and by the universal property of
  $\eta \colon \O(i) \rightarrow \O_C(i)$, this completely determines
  $\alpha_i^\ast$. So there is \emph{at most} one total transformation
  $\alpha \colon F \rightarrow \O_C$. It remains to show that defining
  $\alpha_i^\ast$ in this way yields a well-defined total
  transformation.

  For $\alpha_i^\ast$ to be well-defined, we must show that
  $e \mapsto Fe^\ast(\top)$ is a finite-meet-preserving cover-to-join
  map $\O(i) \rightarrow Fi$. Finite meet preservation is as
  in~\cite[Proposition 3.2]{Cockett2014Restriction}; on the other
  hand, if $e \in \O(i)$ and $X \in C(e)$, then, since $F$ is a
  cover-to-join map, we must have $Fe = \bigvee_x Fx$, and so
  $Fe^\ast(\top) = \bigvee_x Fx^\ast(\top)$, as desired. Finally, for
  naturality of $\alpha$, we again argue as in~\cite[Proposition
  3.2]{Cockett2014Restriction}.

  It remains to prove the final claim of the proposition. We will
  prove more generally that, if $F \Rightarrow G \colon \A \rightarrow
  \B$ is a total transformation and $G$ is a cover-to-join map, then
  $F$ is too. So let $f \in \A(i,j)$ and $X \in C(f)$. We know that
  $\bigvee_{x \in X}Fx \leqslant Ff$ and so it suffices to show both
  sides have the same restriction, for which we calculate (using
  totality of $\alpha$) that:
  \begin{equation*}
    \overline{Ff} = \overline{\alpha_j Ff} = \overline{Gf \alpha_i} =
    \overline{\bigvee_x Gx \circ \alpha_i} = \overline{\bigvee_x
      \alpha_j Fx}  = 
    \overline{\bigvee_x Fx}\text{. }\qedhere
  \end{equation*}

\end{proof}

We can now use this to prove:
\begin{Thm}
  \label{thm:17}
  Let $C$ be a coverage on the restriction category $\A$. The unit 
  \begin{equation*}
    \cd[@!C@C-1em]{
      {\A} \ar[rr]^-{\eta} \ar[dr]_-{\O_C} & &
      {\Phi \Psi(\O_C)} \ar[dl]^-{\pi_{\Psi \O_C} = \O_{\Phi \Psi(\O_C)}} \\ &
      {\cat{Loc}_p}
    }
  \end{equation*}
  at $\O_C$ of our main adjunction~\eqref{eq:7} exhibits $\Phi
  \Psi(\O_C)$ as the relative join completion of $\A$ with respect to $\C$.
\end{Thm}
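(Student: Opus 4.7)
The plan is to deduce this from the reflection $\hcon$ into $\rcat$ (Corollary 1) combined with the universal characterization of $\O_C$ in Proposition 38. The desired factorization of a cover-to-join map $F \colon \A \rightarrow \B$ through $\eta$ will arise by first lifting $F$ to a morphism in $\rcat$ using Proposition 38, and then applying the reflection to obtain a join restriction functor out of $\Phi\Psi(\O_C)$.

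First I would verify that $\eta \colon \A \rightarrow \Phi\Psi(\O_C)$ is itself a cover-to-join map. Using the description in Proposition 26, $\eta$ acts as the identity on objects and sends $f \in \A(i,j)$ to $s_f$. For $X \in C(f)$, monotonicity gives $\bigvee_{x \in X} s_x \leqslant s_f$, and both sides coincide iff they have the same restriction in $\O_{\Phi\Psi(\O_C)}(i) = \O_C(i)$. By Lemma 15(i), the restriction of $s_f$ is $\O_C(\overline f)$ and that of the join is $\bigvee_x \O_C(\overline{x})$; these agree because $\O_C$ is a cover-to-join map (Proposition 38) and because coverage axiom (ii) ensures $\overline{X} \in C(\overline{f})$.

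Next I would check the universal property. Given a cover-to-join map $F \colon \A \rightarrow \B$ into a join restriction category, the composite $\O_\B F \colon \A \rightarrow \cat{Loc}_p$ is again cover-to-join because $\O_\B$ preserves joins (being hyperconnected, by Lemma 19). By the terminality part of Proposition 38, there is a unique total transformation $\alpha \colon \O_\B F \Rightarrow \O_C$, and the resulting $(F,\alpha)$ is a morphism $(\A,\O_C) \rightarrow (\B, \O_\B)$ in $\rcat$. Since $(\B, \O_\B)$ lies in $\hcon$, the reflection (Corollary 1) produces a unique factorisation through $\eta_{\O_C}$ by a morphism $(F',\alpha') \colon (\Phi\Psi(\O_C), \pi_{\Psi\O_C}) \rightarrow (\B, \O_\B)$ in $\hcon$; by Lemma 33, $F'$ is a join restriction functor, and by construction $F' \eta = F$.

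Finally I would deduce uniqueness. Suppose $F'' \colon \Phi\Psi(\O_C) \rightarrow \B$ is any join restriction functor with $F''\eta = F$. Since $\pi_{\Psi\O_C}$ is hyperconnected, Proposition 29 provides a unique total transformation $\alpha'' \colon \O_\B F'' \Rightarrow \pi_{\Psi\O_C}$, promoting $F''$ to a morphism in $\hcon$. Its precomposite with $\eta$ is a morphism $(\A,\O_C) \rightarrow (\B,\O_\B)$ extending $F$, and the uniqueness clause of Proposition 38 forces the $2$-cell part to be $\alpha$; thus $(F'',\alpha'') \circ \eta = (F,\alpha)$, and the uniqueness of the reflection then gives $F'' = F'$. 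The main obstacle I anticipate is keeping the three layers of universal property (the Galois reflection of the main theorem, the terminality of $\O_C$ among cover-to-join maps to $\cat{Loc}_p$, and Proposition 29) synchronised so that the $2$-cell data match up on both sides; once this bookkeeping is done, the theorem follows immediately.
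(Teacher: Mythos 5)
Your proof is correct and follows essentially the same route as the paper's: both rest on the terminality of $\O_C$ among cover-to-join maps to $\cat{Loc}_p$ (Proposition~\ref{prop:38}) combined with the universal property of the Galois adjunction $\Psi \dashv \Phi$ (Corollary~\ref{cor:1}), together with the hyperconnectedness facts (Lemma~\ref{lem:19}, Lemma~\ref{lem:33}, Proposition~\ref{prop:29}). The paper merely compresses your three steps into the single claim that a restriction functor $F \colon \A \rightarrow \B$ is cover-to-join if and only if it extends, necessarily uniquely, to a map $(\A, \O_C) \rightarrow (\B, \O_\B)$ over $\cat{Loc}_p$, leaving implicit what you verify explicitly in your first paragraph (that $\eta$ is itself cover-to-join) and in your uniqueness paragraph.
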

\begin{proof}
  We claim that, for any join restriction category $\B$, a restriction
  functor $F \colon \A \rightarrow \B$ is a cover-to-join map if
  and only if it can be extended, necessarily uniquely, to a map
  \begin{equation*}
    \cd{
      {\A} \ar[rr]^-{F} \ar[dr]_-{\O_C} & \ltwocell{d}{} &
      {\B} \ar[dl]^-{\O_\B} \\ &
      {\cat{Loc}_p}
    }
  \end{equation*}
  of
  ${\mathrm{r}\cat{Cat}\mathord{\mkern1mu/\!/\mkern1mu}\cat{Loc}_p}$.
  Indeed, from the fact that $\O_\B$ is hyperconnected it follows
  easily that $F$ is a cover-to-join map if and only if $\O \circ F$
  is so; and now by Proposition~\ref{prop:38}, $\O \circ F$ is a
  cover-to-join map if and only if it admits a, necessarily unique
  total transformation as displayed to $\O_C$.
  
  The theorem now follows immediately from the above observation and
  the universal property of the adjunction $\Psi \dashv \Phi$.
\end{proof}

\bibliographystyle{acm}
\bibliography{bibdata}

\end{document}